\documentclass[11pt]{article}

\usepackage{amsmath,amssymb,mathtools,amsthm,mathrsfs}
\usepackage{enumitem}
\usepackage[hidelinks]{hyperref}
\usepackage{verbatim}
\usepackage{placeins}
\usepackage{graphicx}
\usepackage{microtype}
\usepackage{booktabs,longtable}

\usepackage[margin=1.5cm]{geometry}

\setlist[itemize]{leftmargin=*,itemsep=2pt,topsep=4pt}
\setlist[enumerate] {leftmargin=*,itemsep=2pt,topsep=4pt}

\usepackage{caption}
\usepackage{booktabs} 

\renewcommand{\S}{\mathbb{S}}

\newcommand{\R}{\mathbb{R}}
\newcommand{\N}{\mathbb{N}}
\newcommand{\Z}{\mathbb{Z}}

\newtheorem{theorem}{Theorem}[section]
\newtheorem{lemma}[theorem]{Lemma}
\newtheorem{remark}[theorem]{Remark}
\newtheorem{proposition}[theorem]{Proposition}
\newtheorem{corollary}[theorem]{Corollary}
\newtheorem{definition}[theorem]{Definition}

\newtheorem*{theorem*}{Theorem}

\title{On ideal lemniscates, butterflies, and circular waves}
\author{Shinya Okabe\and Glen Wheeler}
\date{\today}

\begin{document}
\maketitle

\begin{abstract}
We prove the existence of infinitely many lemniscate-like, butterfly-like, and circular-wave critical points for the length-penalised ideal energy.
For the lemniscate and butterfly family, we use a staged direct minimisation process to establish existence.
A boundary-layer analysis reveals critical points near two Fresnel phases: $3\pi/4$ modulo $2\pi$, corresponding to lemniscates, and $7\pi/4$ modulo $2\pi$, which are the butterflies.
The family of circular waves bifurcates, in a sense, from multiply-covered circles.
We use an adapted shooting method  to establish their existence.
\end{abstract}

\tableofcontents

\section{Introduction}

\subsection{Historical context}
We study planar curves that are critical (and, in constrained classes, minimising) for the
length-penalised {ideal} functional
\[
J_\lambda[\gamma]
:=\frac12\int_\gamma k_s^2\,ds+\lambda\,L[\gamma],
\qquad \lambda>0,
\]
where $\gamma$ is parametrised by arclength, $k$ is curvature, and $k_s$ the derivative of curvature with respect to arclength.

A useful historical precursor of the ideal energy comes from the Bernoulli-Euler story behind Euler's spiral.
In an inverse elasticity problem posed by James Bernoulli, one asks for the natural shape of a thin lamina that becomes straight when a weight is attached at one end. Once the strip is straightened, the bending moment at arclength distance $s$ from the force is $M=Fs$; assuming the strip does not stretch, the curvature of the unstressed lamina is therefore proportional to arclength, $k(s)=cs$, so the curve is Euler's spiral \cite{Levien2008EulerSpiralHistory,Levien2009FromSpiralToSpline}. In modern language, Euler spirals are precisely curves with constant non-zero $k_s$, so this classical mechanical thought experiment points naturally toward energies that penalise curvature variation. 

The ideal energy appeared next in  geometric design.
We refer the interested reader to Section \ref{sec:related}, where we give a brief survey.

\subsection{Main results}
When $\lambda=0$ one recovers $J_0$, the \emph{free ideal energy}.
In the closed planar setting, stationary points of $J_0$ have been completely classified: they consist of multiply-covered circles
\cite{AMWW20}.
From the variational and flow viewpoints, the scaling of $J_0$ strongly favours length growth, making compactness and
global control delicate; one natural remedy is to impose an external length constraint or to couple the energy to a
length term, motivating the study of $J_\lambda$.
 
Adding the term $\lambda L$ to $J_0$ (to produce $J_\lambda$) is a singular perturbation.
The primary task of this paper is to describe the space of critical points of $J_\lambda$.
Our construction produces lemniscate- and butterfly-type closed critical points whose collection contains infinitely
many geometrically distinct curves.
The critical points for $J_0$, the multiply-covered circles, are not close to these new families of critical points.
Instead, the circles become periodic but not closed curves: circular waves.
We describe these below.

\begin{itemize}
\item \emph{Lemniscate and butterfly type.}
For every fixed $\lambda>0$ we construct infinitely many geometrically distinct smooth closed immersed critical points
\(
\{\Gamma_{n,j}\}
\)
with turning number $0$.
These are four glued copies of a fundamental quarter-arc, itself constructed by direct minimisation.
These quarter-arcs may not glue smoothly together to form a closed curve, however, there is one degree of freedom retained: the midpoint angle of the quarter-arc.
Exploiting this freedom, we discover two Fresnel phases
\(
3\pi/4,
7\pi/4
\pmod{2\pi}.
\)
At the selected value the midpoint angle is free at first order, and this enables the glued curve to be smooth across the join and thus be a new critical point.
The collection contains infinitely many geometrically distinct critical points; we prove in fact that
\(
J_\lambda[\Gamma_{n,j}]\to\infty
\)
as $n$ tends to infinity; see Theorem~\ref{thm:main-infinite}.

\item \emph{Circular waves.}
Although circles cease to be stationary among {closed} curves once $\lambda>0$, remnants of the $\lambda=0$
degeneracy persist as {non-closed} stationary immersions
\(
\Gamma:\R\to\R^2
\)
whose curvature jet and tangent are periodic but whose position drifts by a nonzero translation per period.
Starting from a multiply-covered semicircle at $\lambda=0$, a perturbative shooting/implicit-function-theorem argument
produces, for each winding index $N\in\N_0$, a one-parameter family of such waves.  Their intrinsic mean turning rate
distinguishes the fixed-length branches.  After dilation, a suitable choice of one wave from each branch gives, for
every prescribed $\lambda>0$, infinitely many pairwise non-congruent circular waves stationary for $J_\lambda$.
See Theorem~\ref{thm:circular-waves-infinite}.
\end{itemize}

We present visualisation of the families exposed here in Figures \ref{fig:selected-first-pair}, \ref{fig:circular-waves}, \ref{fig:selected-lemniscate-type-candidates}, and \ref{fig:selected-butterfly-type-candidates}.

Although there is no critical point of $J_0$ with turning number zero, a homothetic solution to the free ideal flow, whose profile is called the `ideal lemniscate', has been conjectured to exist in \cite{AW26}.
It is tempting to conjecture then that this `ideal lemniscate' becomes, after singular perturbation in the energy by length to form $J_\lambda$, the least-energy selected lemniscate-type critical point.  The lemniscate in Figure \ref{fig:selected-first-pair} is suggestive of this possibility.
We note that the least-energy lemniscate was also identified recently by Okabe-Yamaguchi \cite{OY26}.

If this does indeed turn out to be the case, then that would open the door to associate all of the critical points found here for $J_\lambda$ to (distinct) homothetic solutions to the free ideal flow.
That would greatly expand the known homothetic solutions, which currently consists only of the epicyclic expanders constructed in \cite{AW26}.

\begin{figure}[!t]
  \centering
  
  \hspace{1cm}\includegraphics[width=0.37\linewidth,height=0.36\textheight,keepaspectratio]{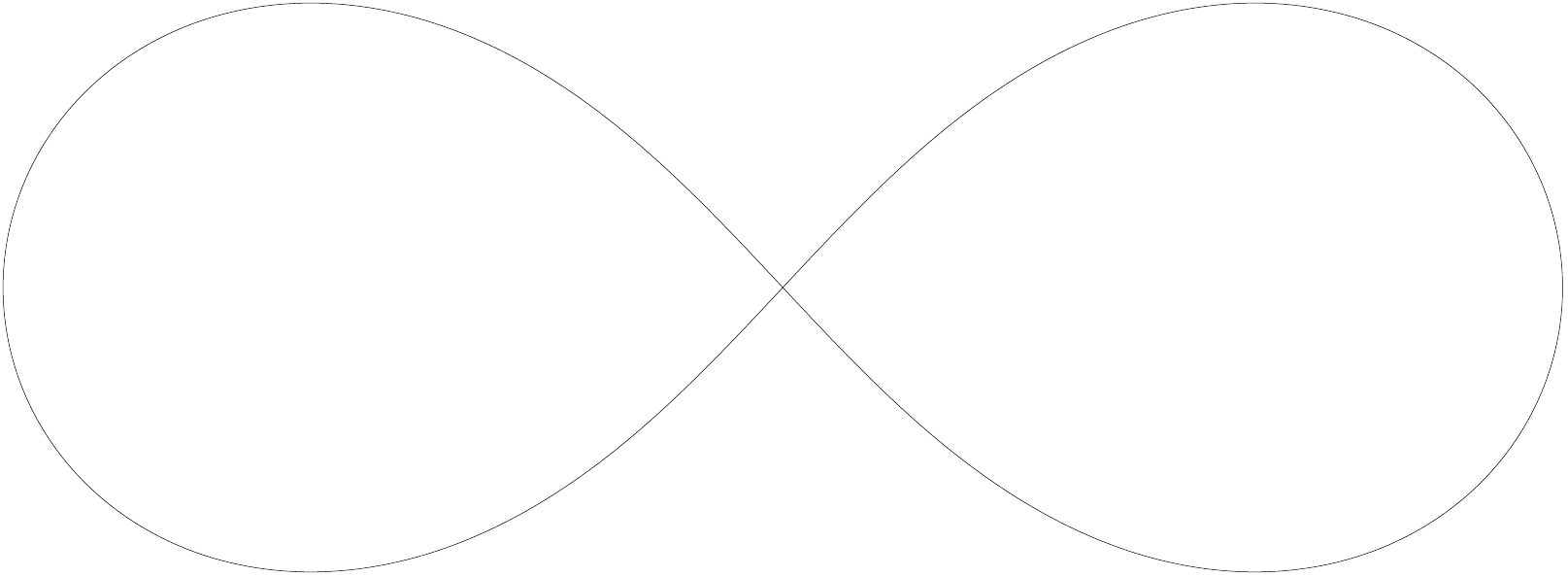}
  \hspace{3cm}\includegraphics[width=0.17\linewidth,height=0.36\textheight,keepaspectratio]{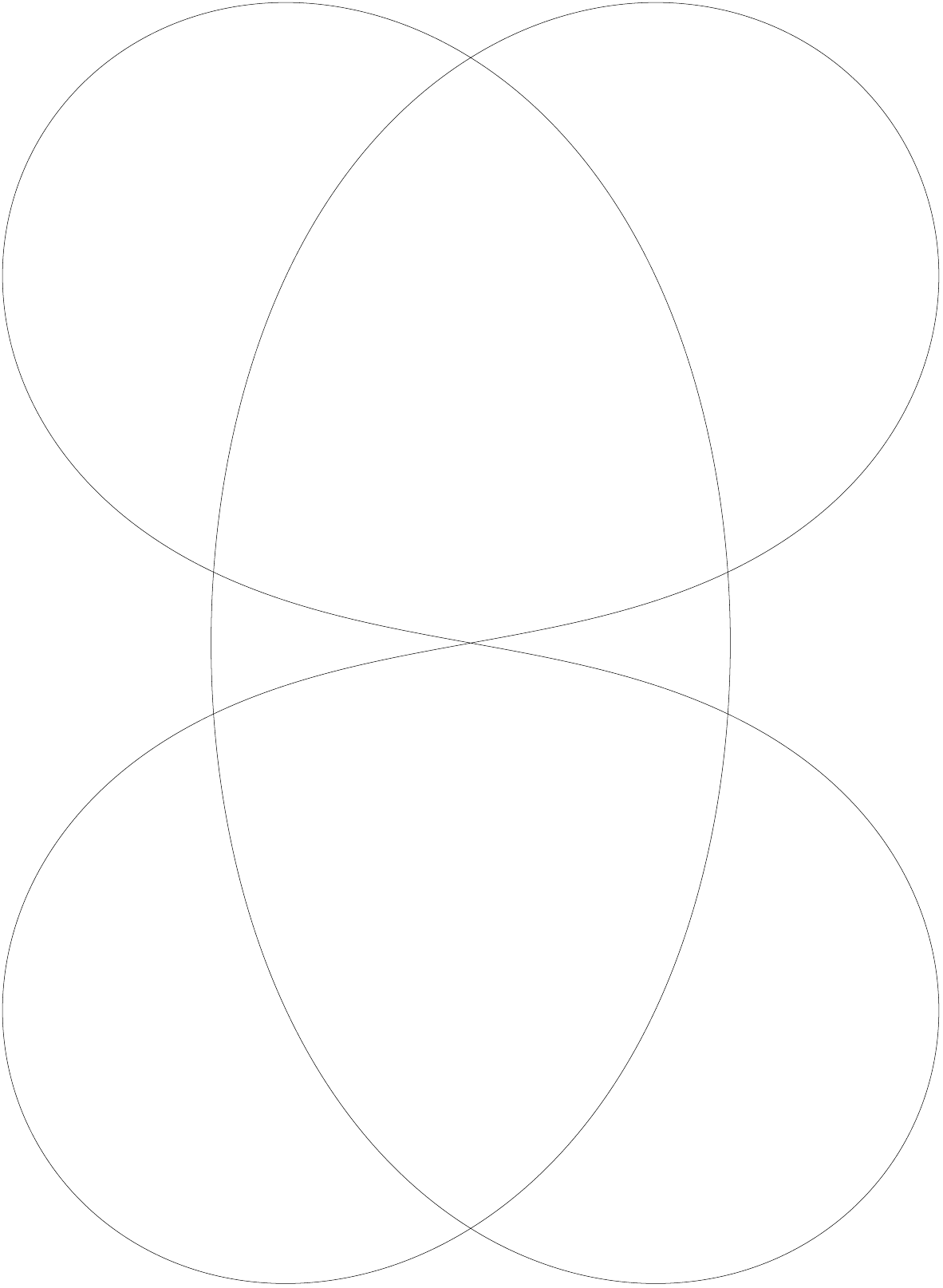}
  \hspace{2cm}
  
  \caption{The two least-energy numerically selected critical point candidates for \(J_1\).  Left: curve \(1\), the least-energy lemniscate-type candidate.  Right: curve \(2\), the least-energy butterfly-type candidate.  These are the first two selected zeros of the midpoint balance defect \(Q\) in the numerical scan.
  We present many further examples from each class in Figures \ref{fig:selected-lemniscate-type-candidates} and \ref{fig:selected-butterfly-type-candidates}, and diagnostics for the critical curves in Table \ref{tab:selected-critical-point-data}.}
  \label{fig:selected-first-pair}
\end{figure}

\subsection{Related literature}
\label{sec:related}
The free ideal energy $J_0$ measures the $L^2$-variation of curvature along the curve. The same unpenalised curvature-variation functional appeared earlier in computer-aided geometric design under the name \emph{minimum variation curve} (MVC) energy, where one minimises the arclength integral of the square of the arclength derivative of curvature to obtain fair curves \cite{MoretonSequin1992FairSurface,Moreton1992MVC}.
An independent CAD precursor appears in Ohlin \cite{O85,O87}: Moreton \cite{Moreton1992MVC} attributes to Ohlin a ``curvature variation minimising'' spline.
In that literature, Euler spirals are identified as one family of minimisers when endpoint curvatures are prescribed \cite{Levien2008EulerSpiralHistory,Levien2009FromSpiralToSpline}. Energies of this type
(and closely related discrete surrogates) arise naturally in curve fairing and geometric design,
where one seeks ``minimum-variation'' shapes with slowly varying curvature; see, for example,
the optimisation framework of Moreton-S\'equin and subsequent algorithmic work
\cite{MoretonSequin1992FairSurface,HararyTal2010EulerSpirals}.
From the analytic side, the $L^2(ds)$-steepest descent gradient flow for $J_0$ is a sixth-order geometric evolution equation called the \emph{ideal flow}.
For closed planar curves, global existence
and convergence theory under a uniform length bound or in a scale-invariant neighbourhood of a multiply-covered circle are available \cite{AMWW20}; there it is additionally shown that the only smooth
critical points of $J_0$ among closed curves are multiply-covered circles.
As mentioned, a key difficulty is the intrinsic scaling of $J_0$, which encourages growth of
$L[\gamma]$ along the free ideal flow; one way to remedy this is to impose or enforce length control.
In this direction, the {length-constrained ideal curve flow} was introduced, for which
long-time existence and exponential convergence (for initial data close to a multiply-covered circle in a scale-invariant sense) is known  \cite{MWW}.
Very recently this analysis was generalised to higher-order ideal energies \cite{McW26}.
Related well-posedness and stability results are also available for boundary-value problems for the ideal flow and,
more generally, for higher-order curvature-derivative energies under Neumann-type conditions
\cite{McCoyWheelerWu2020SixthOrderBC,Wu2021ShortTimeExistence,McCoyWheelerWu2022HighOrderNeumann}.


\subsection{Angle formulation and the symmetry-reduced Euler-Lagrange equation}

Let $\Gamma:[0,L]\to\R^2$ be a unit-speed immersion with tangent
$T=(\cos\theta,\sin\theta)$ and curvature $k=\theta_s$.
Then
\[
\frac12\int_\gamma k_s^2\,ds=\frac12\int_0^L \theta_{ss}^2\,ds,
\qquad
L[\gamma]=L.
\]
Stationarity of $J_\lambda$ subject to particular constraints (see Lemma \ref{lem:EL-and-natural}) yields the scalar fourth-order Euler-Lagrange equation
\begin{equation}\label{eq:thetaequiv}
\theta_{ssss}+\alpha\cos\theta=0,
\end{equation}
where $\alpha\in\R$ is the Lagrange multiplier associated with the closure constraint.  Equivalently, one may write $\theta_{ssss}=c\cos\theta$ with $c:=-\alpha$.
In the shooting formulation this constant $c$ is expressed in terms of the conserved quantities (and, in the penalised
problem, $\lambda$).

Conversely, if $\theta$ is a smooth solution of \eqref{eq:thetaequiv}
satisfying the boundary/seam/symmetry conditions of the relevant
construction, and if
\begin{equation}
\Gamma(s):=\int_0^s(\cos\theta(\sigma),\sin\theta(\sigma))\,d\sigma
\label{eq:defgamma}
\end{equation}
is an immersion, then $\Gamma$ is stationary for the corresponding fixed-length angle problem within that
class.  Full geometric stationarity for $J_\lambda$ additionally requires the length-stationarity/Hamiltonian
condition established in Proposition~\ref{prop:tw-hamiltonian-upgrade}.

\subsection{Theorem Statements}

We collect here the principal results of the paper.

We say that a closed unit-speed immersion $\Gamma:\R/L[\Gamma]\Z\to\R^2$ is a
\emph{symmetric null-turning curve} 
if it is obtained by 
\[
\text{(doubled arc)} \;\longrightarrow\;
\text{($x$-axis reflection)}.
\]
If $\Gamma$ is additionally a  smooth critical point of $J_\lambda$ among closed immersions then we call it a \emph{symmetric null-turning critical point} of $J_\lambda$.
Such curves have two discrete symmetries and zero average curvature.

We work in the $(\ell,\theta)$ variables and define the doubled admissible set
\begin{equation}\label{eq:Adouble}
\begin{aligned}
\mathcal{A}^{(2)}(\Phi):=\big\{(\ell,\theta):\;&\ \ell>0,\quad \theta\in H^2(0,2\ell),\\
&\theta(0)=\tfrac{\pi}{2},\quad
  \theta(\ell)=\tfrac{\pi}{2}+\Phi,\quad
  \theta(2\ell)=\tfrac{\pi}{2},\\
&\theta\ \text{even about }\ell,\quad
  \int_0^\ell\sin\theta\,ds=0\big\}.
\end{aligned}
\end{equation}
For $(\ell,\theta)\in\mathcal{A}^{(2)}(\Phi)$ we set
\begin{equation}\label{eq:Jdouble}
J_\lambda^{(2)}(\ell,\theta)
:=\frac12\int_0^{2\ell}\theta_{ss}^2\,ds+2\lambda\,\ell.
\end{equation}
If $\gamma_{\ell,\theta}:[0,2\ell]\to\R^2$ is the doubled arc reconstructed from $\theta$, then
$J_\lambda^{(2)}(\ell,\theta)=J_\lambda[\gamma_{\ell,\theta}]$.  After reflection across the $x$-axis, the resulting
closed curve has twice this energy; see Lemma~\ref{lem:doubled-energy-additivity}.

For a compact interval $I\subset\R$ we also use the corresponding turning-window class
\begin{equation}\label{eq:Adouble-window}
\mathcal{A}^{(2)}(I)
:=
\bigcup_{\Phi\in I}\mathcal{A}^{(2)}(\Phi).
\end{equation}

\begin{theorem}[Infinitely many symmetric null-turning critical points]\label{thm:main-infinite}
Fix $\lambda>0$ and set
\[
\rho_1:=\frac{3\pi}{4} \quad\text{(lemniscates)},\qquad \rho_2:=\frac{7\pi}{4} \quad\text{(butterflies)}.
\]
There exist $\delta>0$ and $N_0\in\N$ with the following property.
For each turning window
\[
I_{n,j}:=[2\pi n+\rho_j-\delta,\,2\pi n+\rho_j+\delta],\quad 
n\ge N_0,\qquad j\in\{1,2\},
\]
there is a minimiser $(\ell_{n,j},\theta_{n,j})$ of $J^{(2)}_\lambda$ over the window class
$\mathcal{A}^{(2)}(I_{n,j})$ that satisfies the midpoint transversality condition
\[
\lim_{s\uparrow\ell_{n,j}}(\theta_{n,j})_{sss}(s)=0.
\]
Here and below, a superscript $-$ on a midpoint value denotes the trace from the first half-interval.  Define the
selected lifted turning by
\[
\Phi_{n,j}^{\dagger}:=\theta_{n,j}(\ell_{n,j})-\frac\pi2\in\operatorname{int}I_{n,j}.
\]
Consequently the doubled arc reconstructed from $\theta_{n,j}$ is smooth through the midpoint, and its
$x$-axis reflection produces a $C^\infty$ closed unit-speed immersed critical point
\[
\Gamma_{n,j}:\R/(4\ell_{n,j})\Z\to\R^2
\]
of $J_\lambda$.

The curve $\Gamma_{n,j}$ has the following properties.
\begin{enumerate}[label=\textup{(\roman*)},leftmargin=2.2em]
\item \textbf{Symmetries.}
Let $B_{n,j}:=\Gamma_{n,j}(\ell_{n,j})$.  Then $B_{n,j}=\Gamma_{n,j}(3\ell_{n,j})$ and $\Gamma_{n,j}$ enjoys point reflection about $B_{n,j}$ and reflection across the $x$-axis, in the sense stated in \eqref{eq:point-symmetry-gamma} and \eqref{eq:doubled-xaxis-close}.

\item \textbf{Criticality for $J_\lambda$.}
The curve $\Gamma_{n,j}$ is a smooth closed critical point of $J_\lambda$ among all smooth closed immersions:
\[
\delta J_\lambda[\Gamma_{n,j}](V)=0
\quad\text{for every smooth variation field }V\text{ along }\Gamma_{n,j}.
\]

\item \textbf{Selected quarter-arc turning and turning number.}
If $\Theta_{n,j}$ is a continuous lifted tangent angle for $\Gamma_{n,j}$ normalised by $\Theta_{n,j}(0)=\pi/2$, then
\[
\Theta_{n,j}(\ell_{n,j})=\frac\pi2+\Phi_{n,j}^{\dagger},\quad
\Theta_{n,j}(2\ell_{n,j})=\frac\pi2,
\]
\[
\Theta_{n,j}(3\ell_{n,j})=\frac\pi2-\Phi_{n,j}^{\dagger},\quad
\Theta_{n,j}(4\ell_{n,j})=\frac\pi2.
\]
Thus the four quarter-arc turnings are
\[
\Phi_{n,j}^{\dagger},\ -\Phi_{n,j}^{\dagger},\ -\Phi_{n,j}^{\dagger},\ \Phi_{n,j}^{\dagger},
\]
and the total turning of $\Gamma_{n,j}$ is $0$.

\item \textbf{Quantitative energy lower bound and distinctness.}
The energy satisfies
\begin{equation}\label{eq:main-lemniscate-energy-lower}
J_\lambda[\Gamma_{n,j}]
\ge
\frac{16}{3}\Big(\frac{9}{2}\Big)^{\!1/4}\lambda^{3/4}|\Phi_{n,j}^{\dagger}|^{1/2}.
\end{equation}
In particular $J_\lambda[\Gamma_{n,j}]\to\infty$ as $n\to\infty$ for each $j\in\{1,2\}$.  Consequently the
collection $\{\Gamma_{n,j}\}$ contains an infinite pairwise non-congruent subsequence.
\end{enumerate}
\end{theorem}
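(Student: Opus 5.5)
The proof runs in five stages: existence of window minimisers, their Euler--Lagrange structure, interior selection of the turning, upgrade to a closed critical point, and the energy bound.

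\textbf{Stage 1: direct method on a window.} For fixed compact $I=I_{n,j}$ I would minimise $J^{(2)}_\lambda$ over $\mathcal{A}^{(2)}(I)$. A minimising sequence has $\ell$ bounded above, since the energy dominates $2\lambda\ell$. It also has $\ell$ bounded below: the constraint $\theta(\ell)-\theta(0)=\Phi$ with $|\Phi|\ge \pi/2$ forces $\int_0^\ell\theta_{ss}^2\,ds\gtrsim \Phi^2/\ell^3$, via a Poincaré/interpolation estimate using the boundary data, so $\ell\to0$ is penalised. I would rescale to the fixed interval $[0,1]$, obtain weak $H^2$ compactness, and use the uniform convergence of $\theta$ to pass the constraints $\int_0^\ell\sin\theta=0$ and the endpoint/evenness conditions to the limit. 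Lower semicontinuity then gives a minimiser $(\ell_{n,j},\theta_{n,j})$, with turning $\Phi^\dagger\in I$.

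\textbf{Stage 2: Euler--Lagrange equation and natural conditions.} Applying Lemma~\ref{lem:EL-and-natural} on $(0,\ell)$, I get $\theta_{ssss}+\alpha\cos\theta=0$ together with a natural boundary condition $\theta_{ss}=0$ at $s=0$ (free curvature at the seam). Because $\theta$ is even about $\ell$, the reflected doubled arc has $\theta_s$ odd, so at the midpoint $k=\theta_s(\ell)$ is forced only by symmetry. The jump in $\theta_{sss}$ across $\ell$ is $2\theta_{sss}(\ell^-)$. When $\Phi^\dagger$ lies in $\operatorname{int}I$, variation of the midpoint value (the free turning) is admissible, and its first variation is exactly proportional to $\theta_{sss}(\ell^-)$; hence this quantity vanishes. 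Variation in $\ell$ gives the Hamiltonian/length-stationarity identity.

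\textbf{Stage 3: interior selection, the main obstacle.} The hard step is showing that the minimiser's turning is not at $\partial I_{n,j}$. My approach is a boundary-layer analysis for large $n$. Rescaling $s=\ell\sigma$ with $\ell\sim |\Phi|^{1/2}\lambda^{-1/4}$, the curve is a slowly turning arc plus boundary layers near the endpoints. In those layers, $\theta_{ssss}=-\alpha\cos\theta$ linearises to an Airy/Fresnel-type problem. I would compute the reduced energy $E(\Phi)=\min_{\mathcal{A}^{(2)}(\Phi)}J^{(2)}_\lambda$ asymptotically as
\[
E(\Phi)=c_0\lambda^{3/4}|\Phi|^{1/2}+\varepsilon_n\,g(\Phi \bmod 2\pi)+o(\varepsilon_n),
\]
where $g$ is an oscillatory correction from the Fresnel integrals generated by the closure constraint $\int\sin\theta=0$. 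The leading term is monotone, but on a $\delta$-window its variation is $O(\delta n^{-1/2})$; the aim is to show that this is dominated by the oscillation of $g$, so that $E$ restricted to $I_{n,j}$ attains its minimum in the interior. The sign structure should select $\rho_1=3\pi/4$ and $\rho_2=7\pi/4$ as the strict local minima of the phase function, once the monotone drift is absorbed. If the scales do not cooperate, the fallback is to argue that the one-sided derivative of the reduced energy, given by the sign of $\theta_{sss}(\ell^-)$, has opposite signs at the two window ends, and to conclude by intermediate-value arguments along the minimiser branch. This comparison of scales is the step I expect to be hardest.

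\textbf{Stage 4: properties (i)--(iii).} Smoothness across the midpoint follows from $\theta_{sss}(\ell^-)=0$ together with the evenness of $\theta$, and bootstrapping the ODE gives $C^\infty$ regularity. The $x$-axis reflection of the doubled arc closes the curve by \eqref{eq:doubled-xaxis-close}. Property (ii), criticality among all closed immersions, follows from Proposition~\ref{prop:tw-hamiltonian-upgrade}, using the length-stationarity from $\ell$-variation. The lifted-angle values in (iii) are read off directly from the construction, and the turning number is the sum of the four quarter-arc turnings, which is $0$.

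\textbf{Stage 5: the energy bound (iv).} On each quarter-arc I would minimise $\tfrac12\int\theta_{ss}^2+\lambda\ell$ under only the turning constraint, which gives a lower bound $a\Phi^2/\ell^3+\lambda\ell$. Optimising in $\ell$ produces the constant $\frac{16}{3}(9/2)^{1/4}\lambda^{3/4}|\Phi|^{1/2}$ after summing the four quarters. Since $|\Phi^\dagger_{n,j}|\to\infty$, the energies diverge, and choosing one curve per level of energy gives the infinite pairwise non-congruent subsequence.
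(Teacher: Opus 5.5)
\textbf{Stage 3 is a genuine gap.} Stages 1, 2 and 4 follow the paper's route: a window minimiser, the free midpoint angle forcing $\theta_{sss}(\ell^-)=0$, and length stationarity fed into Proposition~\ref{prop:tw-hamiltonian-upgrade}. But Stage 3 is the actual content of the theorem, and you give only a heuristic for it. You posit $E(\Phi)=c_0\lambda^{3/4}|\Phi|^{1/2}+\varepsilon_n g(\Phi\bmod 2\pi)+o(\varepsilon_n)$ without identifying $\varepsilon_n$ or $g$. You do not show that $\varepsilon_n$ beats the $O(\delta n^{-1/2})$ drift, and you give no reason why $3\pi/4$ and $7\pi/4$ are selected.

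The missing idea is to measure everything against the equality profile $H(t)=\tfrac32t-\tfrac12t^3$. Write $\phi=\tfrac\pi2+\Phi H+h$ with $h(0)=h(1)=h'(1)=0$. Then the bending energy splits exactly as $3\Phi^2+\int_0^1(h'')^2\,dt$, so the question becomes how much $h$-energy is needed to restore $\int_0^1\cos(\Phi H+h)\,dt=0$.

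The closure defect of $\Phi H$ comes from the stationary point of the phase at the midpoint $t=1$, where $k=0$, not from the outer endpoints. It equals $\sqrt{\pi/(6\Phi)}\cos(\Phi-\pi/4)+o(\Phi^{-1/2})$, and the selected phases are exactly the zeros of $\cos(\rho-\pi/4)$. Two further ingredients are then needed:
\begin{itemize}
\item[(a)] A stability statement: corrections with $\int(h'')^2=o(\Phi^{3/2})$ do not change the limit of $\sqrt\Phi\int_0^1\cos(\Phi H+h)\,dt$. This gives $A(\Phi)\ge3\Phi^2+c\Phi^{3/2}$ at phases $\rho_j\pm\delta$.
\item[(b)] An explicit boundary-layer corrector $a\psi(\sqrt\Phi(1-t))$, which gives $A\le3\Phi^2+o(\Phi^{3/2})$ at $2\pi n+\rho_j$.
\end{itemize}
Moving $\Phi$ by $\delta$ changes $3\Phi^2$ only by $O(\Phi)$, so the window minimum is interior. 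In your normalisation $E\propto A^{1/4}$, the $\Phi^{3/2}$ excess is of order one, and that is what dominates the drift. Two one-sided inequalities suffice; you need neither an asymptotic profile $g$ nor any linearised ODE analysis in the layer. Note also that $\theta$ is far from constant there, so linearising $\cos\theta$ is not meaningful.

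\textbf{Your fallback does not close the gap.} The value function need not be differentiable, and minimisers need not form a continuous branch, so no intermediate-value argument is available. Determining the sign of $\theta_{sss}(\ell^-)$ at the window ends would also require finer information about minimisers than the energy comparison itself.

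\textbf{Stage 5 needs the seam condition.} The turning constraint alone gives no lower bound: a constant-curvature arc has $\theta_{ss}\equiv0$ for any turning. What you need is the condition $k(\ell)=0$ forced by evenness. Via $\Phi=-\int_0^\ell s\,k_s\,ds$ and Cauchy--Schwarz, it gives the sharp bound $\int_0^\ell k_s^2\,ds\ge3\Phi^2/\ell^3$. Then $4\min_{\ell>0}\bigl(\tfrac{3\Phi^2}{2\ell^3}+\lambda\ell\bigr)$ yields exactly $\tfrac{16}{3}(9/2)^{1/4}\lambda^{3/4}|\Phi|^{1/2}$.

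A minor point: the jump of $\theta_{sss}$ across the midpoint is $-2\theta_{sss}(\ell^-)$, not $+2\theta_{sss}(\ell^-)$. This does not affect the conclusion.
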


\begin{remark}
\label{rem:lpif}
The selected lemniscate-type critical points from Theorem \ref{thm:main-infinite} bear a striking visual similarity to the super-lemniscates found in \cite{MW26}, despite satisfying different equations.
There is an interesting non-trivial overlap between the set of (open) curves with curvature satisfying \eqref{eq:thetaequiv} and the super-lemniscates.
We give the details in Appendix \ref{app:lpif}.
\end{remark}

For the circular waves, we set up the following curvature-jet system:
\[
k:=\theta_s,\qquad p:=k_s=\theta_{ss},\qquad q:=k_{ss}=\theta_{sss}.
\]
For stationary curves of the length-penalised ideal functional the Euler-Lagrange system admits an autonomous first-order formulation in $(k,p,q,\theta)$.
In the presence of a vertical initial tangent $\theta(0)=\pi/2$ and the reflection-seam condition $p(0)=k_s(0)=0$,
the conserved quantities reduce to a single constant forcing
\begin{equation}\label{eq:c-def-cwN}
c:=ab+\lambda,\qquad a:=k(0),\quad b:=q(0),
\end{equation}
and the stationary system becomes
\begin{equation}\label{eq:shooting-cwN}
\left\{
\begin{aligned}
k_s&=p,\\
p_s&=q,\\
q_s&=c\cos\theta,\\
\theta_s&=k,
\end{aligned}
\right.
\qquad
k(0)=a,\quad p(0)=0,\quad q(0)=b,\quad \theta(0)=\frac{\pi}{2}.
\end{equation}
Equivalently, $\theta$ satisfies the fourth-order scalar ODE $\theta_{ssss}=c\cos\theta$.

\begin{theorem}[Infinitely many geometrically distinct circular waves]\label{thm:circular-waves-infinite}
Fix $\ell>0$.
For each $N\in\N_0$ there exist $\varepsilon_{0,N}>0$ and $C^\infty$ functions
\[
\varepsilon\longmapsto (a_N(\varepsilon),b_N(\varepsilon),\lambda_N(\varepsilon)),
\qquad |\varepsilon|<\varepsilon_{0,N},
\]
with $(a_N(0),b_N(0),\lambda_N(0))=((2N{+}1)\pi/\ell,0,0)$, such that for each $|\varepsilon|<\varepsilon_{0,N}$ the
solution of the curvature-jet system \eqref{eq:shooting-cwN} satisfies the endpoint conditions
\[
\theta(\ell)=\frac{\pi}{2}+(2N+1)\pi,\qquad p(\ell)=0,\qquad q(\ell)=\varepsilon.
\]
Moreover:
\begin{enumerate}[label=\textup{(\roman*)},leftmargin=2.2em]
\item $\lambda_N(\varepsilon)>0$ for all sufficiently small $\varepsilon\neq 0$.
\item For $0<|\varepsilon|<\varepsilon_{0,N}$ the corresponding arc extends by repeated reflection to a $C^\infty$
unit-speed stationary immersion $\Gamma_{N,\varepsilon}:\R\to\R^2$ whose tangent is $2\ell$-periodic and whose position
is periodic up to translation:
\[
\Gamma_{N,\varepsilon}(s+2\ell)=\Gamma_{N,\varepsilon}(s)+P_{N,\varepsilon},
\qquad P_{N,\varepsilon}\neq 0.
\]
In particular, $\Gamma_{N,\varepsilon}$ is not closed.
\item The waves are pairwise non-congruent across different $N$:
if $N\neq M$, then $\Gamma_{N,\varepsilon}$ and $\Gamma_{M,\varepsilon'}$ are not congruent for any sufficiently small
$\varepsilon,\varepsilon'\neq 0$.  More precisely, their intrinsic mean turning rates are
$\Omega(\Gamma_{N,\varepsilon})=(2N+1)\pi/\ell$.
\item For every prescribed $\lambda_\ast>0$, one can choose one sufficiently small nonzero parameter on each
$N$-branch and dilate the corresponding waves to obtain infinitely many pairwise non-congruent circular waves
stationary for $J_{\lambda_\ast}$.
\end{enumerate}
Consequently, there exist infinitely many geometrically distinct circular waves.
\end{theorem}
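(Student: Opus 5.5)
The argument is a perturbative shooting argument about the multiply-covered semicircle, closed with the implicit function theorem. At $\lambda=0$, $c=ab$, and with $b=0$ the solution of \eqref{eq:shooting-cwN} is $\theta(s)=\pi/2+as$, $k\equiv a$, $p=q\equiv0$. Choosing $a=(2N+1)\pi/\ell$ gives exactly $\theta(\ell)=\pi/2+(2N+1)\pi$, $p(\ell)=0$, $q(\ell)=0$.

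I will define the shooting map
\[
F(a,b,\lambda,\varepsilon):=\Big(\theta(\ell)-\tfrac\pi2-(2N+1)\pi,\;p(\ell),\;q(\ell)-\varepsilon\Big),
\]
which is smooth by smooth dependence of ODE solutions on parameters, with $c=ab+\lambda$. The linearisation at the base point is computed from the variational equations along the circle. These are $\dot q_s=-c\sin\theta\,\dot\theta+\dot c\cos\theta$, with $c=0$ at the base, so $\dot q_s=\dot c\cos(\pi/2+as)=-\dot c\sin(as)$ and $\dot c=a\dot b+\dot\lambda$ (because $b=0$). Integrating three times from the initial data $(\dot a,0,\dot b,0)$ gives explicit trigonometric and polynomial expressions for $(\dot\theta,\dot p,\dot q)(\ell)$ in terms of $(\dot a,\dot b,\dot\lambda)$.

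\textbf{Main obstacle: invertibility of the $3\times3$ Jacobian.} Note that $\int_0^\ell\sin(as)\,ds=2/a\neq0$, since $a\ell$ is an odd multiple of $\pi$; this is what should make the $\dot c$ column nondegenerate. The $\dot a$ column contributes only to $\dot\theta(\ell)=\ell\dot a$. Still, $\dot b$ and $\dot\lambda$ enter $\dot q,\dot p$ through two independent routes, directly via $\dot b$ and both via $\dot c$. One must therefore check that the $2\times2$ block in $(\dot b,\dot\lambda)$ for $(\dot p(\ell),\dot q(\ell))$ is nonsingular. Concretely,
\[
\dot q(\ell)=\dot b-\dot c\,\frac{2}{a},\qquad
\dot p(\ell)=\ell\dot b-\dot c\int_0^\ell\!\!\int_0^t\sin(as)\,ds\,dt=\ell\dot b-\dot c\,\frac{\ell}{a}.
\]
With $\dot c=a\dot b+\dot\lambda$, the determinant works out to a nonzero multiple of $\ell/a$. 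If it degenerated I would instead reparametrise by $c$ directly.

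Once the Jacobian is invertible, the implicit function theorem yields smooth $(a_N,b_N,\lambda_N)(\varepsilon)$.

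For (i), I will expand $\lambda_N$ to first or second order. From the linear system above, $\dot\lambda$ is proportional to $\varepsilon$ at first order, which would give the wrong sign on one side. So I expect the first-order coefficient to vanish by the reflection symmetry $\varepsilon\mapsto-\varepsilon$; the paper's claim suggests the leading behaviour is $\lambda_N\sim C\varepsilon^2$. I will therefore compute the second-order term, using the Hamiltonian identity $c=ab+\lambda$ together with the endpoint conservation law, which relates $\lambda$ to $q(\ell)^2$-type quantities, and show $C>0$. This sign computation is the delicate step.

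For (ii), the endpoint conditions $p=0$ at $s=0$ and $s=\ell$ together with $\theta\equiv\pi/2\pmod\pi$ there let me extend by the reflection symmetry of $\theta_{ssss}=c\cos\theta$ about these seams ($\theta\mapsto$ reflected angle, $s\mapsto 2\ell-s$). This gives smoothness, a $2\ell$-periodic tangent, and a translation $P_{N,\varepsilon}$. To show $P\neq0$ I will argue that the $x$-component of $\int_0^{2\ell}T$ is $O(\varepsilon)$ with a nonzero coefficient, obtained from the linearisation $\int\!-\sin(\pi/2+as)\,\dot\theta\,ds$.

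Stationarity for $J_\lambda$ follows from Proposition~\ref{prop:tw-hamiltonian-upgrade}.

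For (iii), the mean turning rate $\Omega=(\theta(2\ell)-\theta(0))/(2\ell)=(2N+1)\pi/\ell$ is a congruence invariant, which separates different $N$.

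For (iv), under dilation by $r$ the functional rescales so that $\lambda$ becomes $\lambda r^{3}$ (the $\tfrac12\int k_s^2$ term scales like $r^{-3}$ relative to length). Since each $\lambda_N(\varepsilon_N)>0$, I choose $r_N$ with $\lambda_N(\varepsilon_N)r_N^{3}=\lambda_\ast$. Dilation scales $\Omega$ by $1/r_N$, so I choose the $\varepsilon_N$ so that the resulting rates $(2N+1)\pi/(\ell r_N)$ are pairwise distinct, for instance by making them strictly monotone in $N$. This is possible because $\lambda_N$ varies continuously and positively near $0$, so $r_N$ can be steered.
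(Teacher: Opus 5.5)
Your shooting map, base point and Jacobian computation are correct and match the paper's. The $(\dot b,\dot\lambda)$ block for $(\dot p(\ell),\dot q(\ell))$ has determinant $-\ell/a$, and the $\dot a$ column is $(\ell,0,0)^T$. The reflection/tiling and turning-rate steps also follow the paper.

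\textbf{The gap: non-closure in (ii).} The $x$-component of $P_{N,\varepsilon}$ cannot detect non-closure, because it vanishes identically. On $[\ell,2\ell]$ the extended angle is $\Theta(s)=C_N-\theta(2\ell-s)$ with $C_N\equiv\pi\pmod{2\pi}$. So $\cos\Theta(s)=-\cos\theta(2\ell-s)$, while $\sin\Theta(s)=\sin\theta(2\ell-s)$. Geometrically, reflecting at a downward vertical tangent is reflecting in the horizontal line through $\Gamma(\ell)$. Hence $P_{N,\varepsilon}=(0,2y(\ell;\varepsilon))$ for every $\varepsilon$. Indeed, once you insert the first-order profile $\dot\theta=\sin(as)/a^3$, your linearisation evaluates to $-a^{-3}\int_0^{2\ell}\cos(as)\sin(as)\,ds=0$. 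You need the $y$-component instead: $\partial_\varepsilon y(\ell;\varepsilon)\big|_{\varepsilon=0}=\int_0^\ell\cos\theta_\ast\,\dot\theta\,ds=-a^{-3}\int_0^\ell\sin^2(as)\,ds=-\ell/(2a^3)\neq0$. This is Lemma~\ref{lem:y-drift-cwN}.

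\textbf{Item (i) is left open.} Your reasoning about the first-order term is off. Your own formula gives $\dot p(\ell)=\ell\dot b-\dot c\,\ell/a=-\ell\dot\lambda/a$, so the endpoint condition forces $\dot\lambda=0$ directly; no symmetry argument is needed. The positive $\varepsilon^2$ coefficient must still be computed; the paper does a full second-order expansion and gets $\ddot\lambda=5/(2a^2)$. Your Hamiltonian idea gives it more cheaply:
\[
\Lambda=-kq+\tfrac12p^2+c\sin\theta \text{ is constant, with } \Lambda(0)=c-ab=\lambda .
\]
Integrating over $[0,\ell]$ and using $p(0)=p(\ell)=0$ gives $\ell\lambda=\tfrac32\int_0^\ell p^2\,ds+c\,y(\ell)$. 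The first-order data $\dot p=-\sin(as)/a$, $\dot c=-a$, $\dot y(\ell)=-\ell/(2a^3)$ then yield $\lambda=\tfrac{5}{4a^2}\varepsilon^2+O(\varepsilon^3)$.

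\textbf{Stationarity.} The same observation, $\Lambda\equiv\lambda$ evaluated at $s=0$, is what gives stationarity. Proposition~\ref{prop:tw-hamiltonian-upgrade} as stated identifies $\Lambda$ via closure and length-stationarity, which you do not have for the waves.

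\textbf{Dilation exponent in (iv).} The exponent is wrong: $J_\lambda[r\gamma]=r^{-3}J_{\lambda r^4}[\gamma]$, so a wave stationary for $\mu$ dilates to one stationary for $\mu r^{-4}$, not $\mu r^{3}$. The conclusion survives, but the correct rate is $\Omega(\widehat\Gamma_N)=\frac{(2N+1)\pi}{\ell}\bigl(\lambda_\ast/\lambda_N(\varepsilon_N)\bigr)^{1/4}$. This tends to $\infty$ as $\varepsilon_N\to0$, and that divergence is what makes the recursive choice of strictly increasing rates work.
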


\paragraph{Organisation of the paper.}
Section~\ref{sec:aux} collects the coercive turning estimates, scaling identities, and reflection/gluing lemmata used
throughout.  In Section~\ref{sec:doubled-direct-method} we set up and solve the symmetry-built-in direct-method
minimisation on a doubled interval.  Section~\ref{sec:infinite-family} applies this construction for a sequence of
lifted turnings to prove Theorem~\ref{thm:main-infinite}.  Finally, Section~\ref{sec:circular-waves} develops the
perturbative shooting and reflection scheme from multiply-covered semicircles to prove
Theorem~\ref{thm:circular-waves-infinite}. 
Appendix~\ref{app:lpif} details the relationship between the lemniscate critical points observed here and the super-lemniscates found in \cite{MW26}.
Appendix~\ref{sec:symcrit} records a finite-group version of the principle of symmetric criticality for comparison.
Appendix~\ref{sec:numerical-visualisation} briefly describes the numerical
procedure used to generate the figures.

\subsection*{Acknowledgements}

This work began in earnest after the MATRIX research program ``Gradient Flows in Geometry and PDE'' held in January 2025.
It was continued over subsequent visits, for which the authors thank their home institutions and grants FT250100880 and DP250101080 for partial financial support.

\begin{figure}[hp]
  \centering

\includegraphics[width=0.5\linewidth,trim=0cm 2cm 0cm 0cm,clip=true,angle=270]{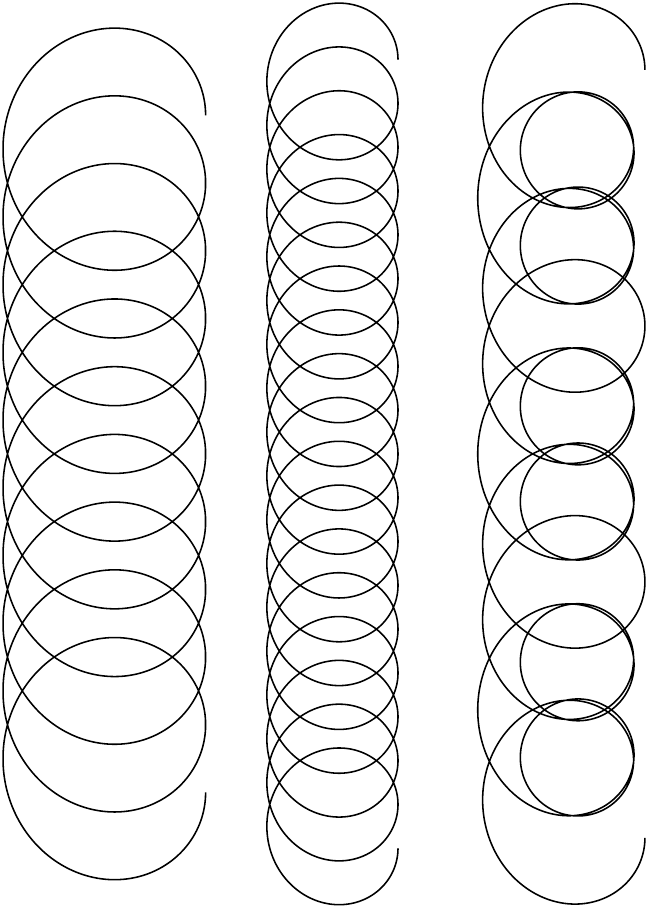}

  \vspace{1.0em}

  \begin{minipage}{\linewidth}
    \centering
    \small
    \setlength{\tabcolsep}{6pt}
    \renewcommand{\arraystretch}{1.10}

    \begin{tabular}{r r r r r}
      \toprule
      $N$ & $\Delta\Theta$ (shooting cell) & $L_{\mathrm{cell}}$ &
      $\tfrac12\!\int_{\mathrm{cell}} k_s^2\,ds$ & $J_{1,\mathrm{cell}}$\\
      \midrule
      0 & $2\pi$  & 3.054066  & 0.608250 & 3.662317 \\
      1 & $6\pi$  & 9.162204  & 1.824752 & 10.986955 \\
      2 & $10\pi$ & 15.241431 & 3.273536 & 18.514967 \\
      \bottomrule
    \end{tabular}
  \captionof{table}{Diagnostics over the reflected shooting cell for the circular waves above, after rescaling so that
    the stationary parameter is $\lambda=1$.  Here $\Delta\Theta=2(2N+1)\pi$ is the lifted tangent-angle increment over
    the chosen cell, $L_{\mathrm{cell}}$ is its arclength, and
    $J_{1,\mathrm{cell}}=\frac12\int_{\mathrm{cell}}k_s^2\,ds+L_{\mathrm{cell}}$.  The cell is a tangent period but is not
    asserted to be primitive.}
    \label{tab:circular-waves}
    
    \end{minipage}

  \vspace{0.4em}

  \caption{Circular wave solutions for indices $N=0,1,2$.
  For each $N$ we shoot a semicircle-like stationary arc on $[0,\ell]$ with lifted endpoint angle
  $\theta(\ell)=\frac{\pi}{2}+(2N+1)\pi$ and seam conditions $k_s(0)=k_s(\ell)=0$, then extend to a shooting cell
  $[0,2\ell]$ by reflection and tile by the resulting translation vector.  Each wave is then rescaled by the
  dilation $\Gamma\mapsto\rho\,\Gamma$ with $\rho=\lambda(\varepsilon)^{1/4}$ so that the stationary parameter becomes
  $\lambda=1$.  The curves are periodic up to translation,
  hence not closed, but are smooth stationary immersions for $J_1$.}
  \label{fig:circular-waves}
\end{figure}

\newcommand{\SelectedCriticalPlot}[1]{%
  \begin{minipage}[c][0.118\textheight][c]{0.333333\linewidth}%
    \centering
    \includegraphics[width=0.985\linewidth,height=0.114\textheight,keepaspectratio]{figs/selected/#1}%
  \end{minipage}%
}

\begin{figure}[p]
  \centering
  \setlength{\parindent}{0pt}%
  \setlength{\parskip}{0pt}%
  \noindent%
  \SelectedCriticalPlot{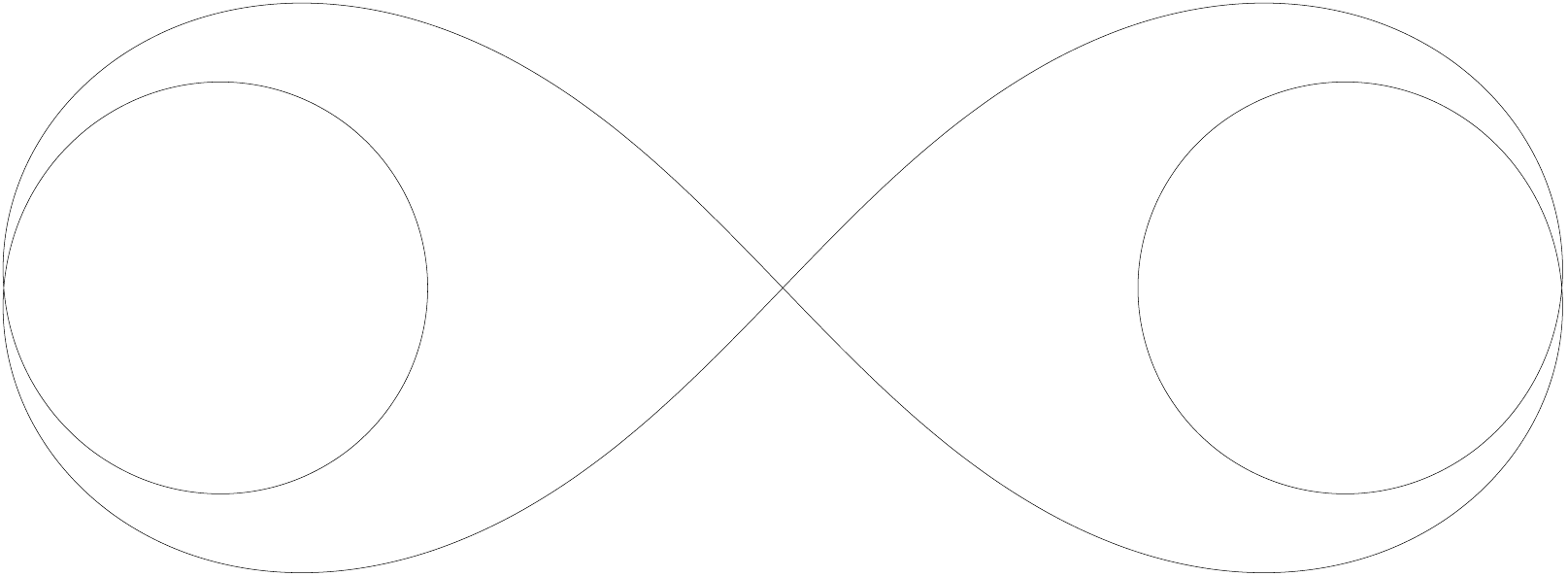}%
  \SelectedCriticalPlot{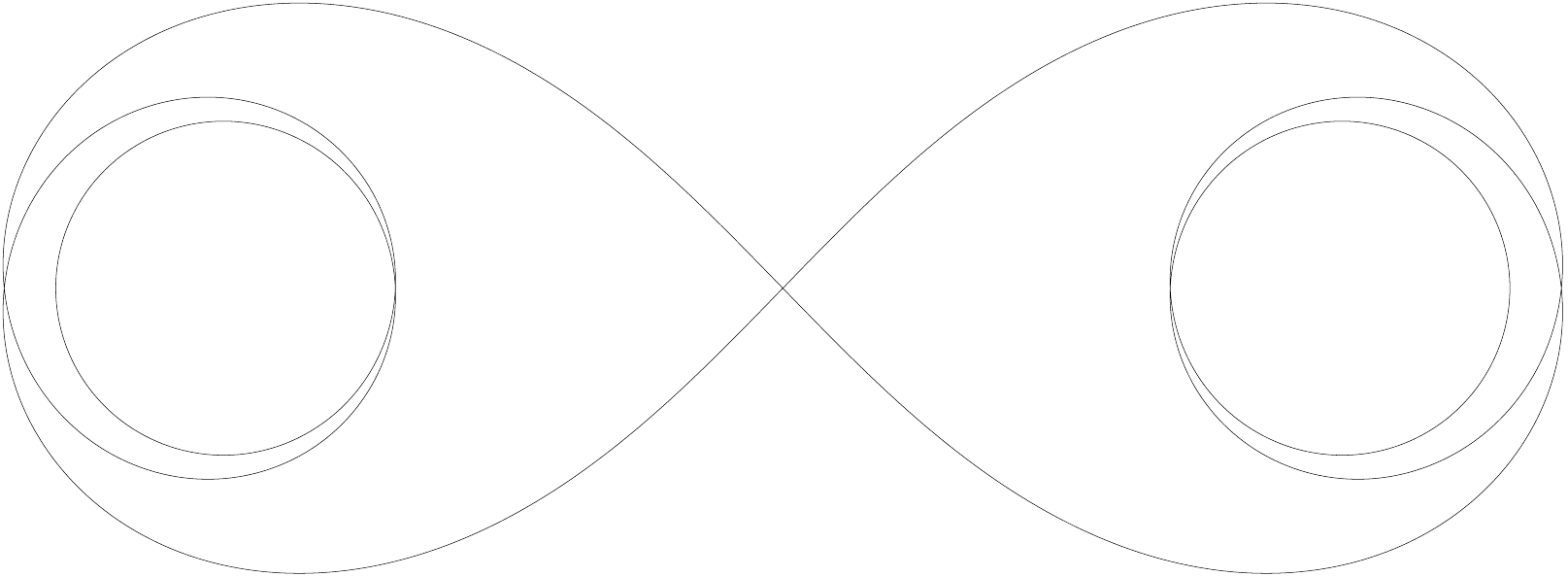}%
  \SelectedCriticalPlot{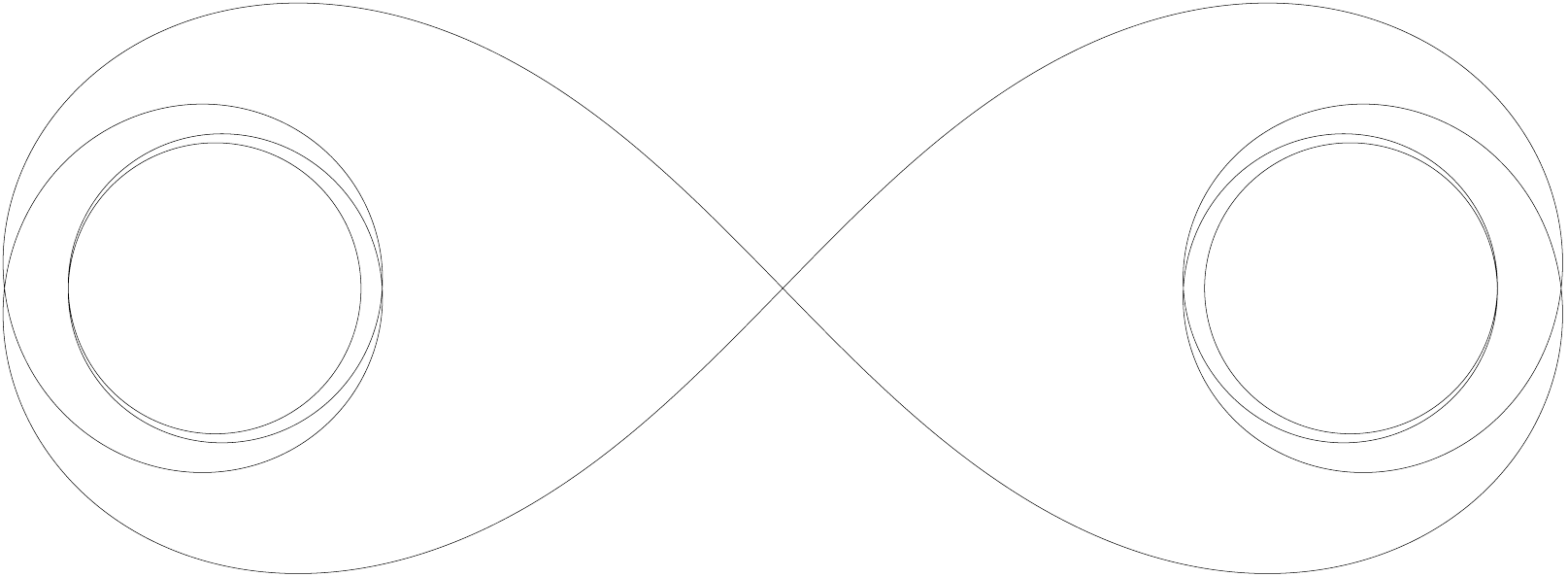}%
  \par\nointerlineskip%
  \SelectedCriticalPlot{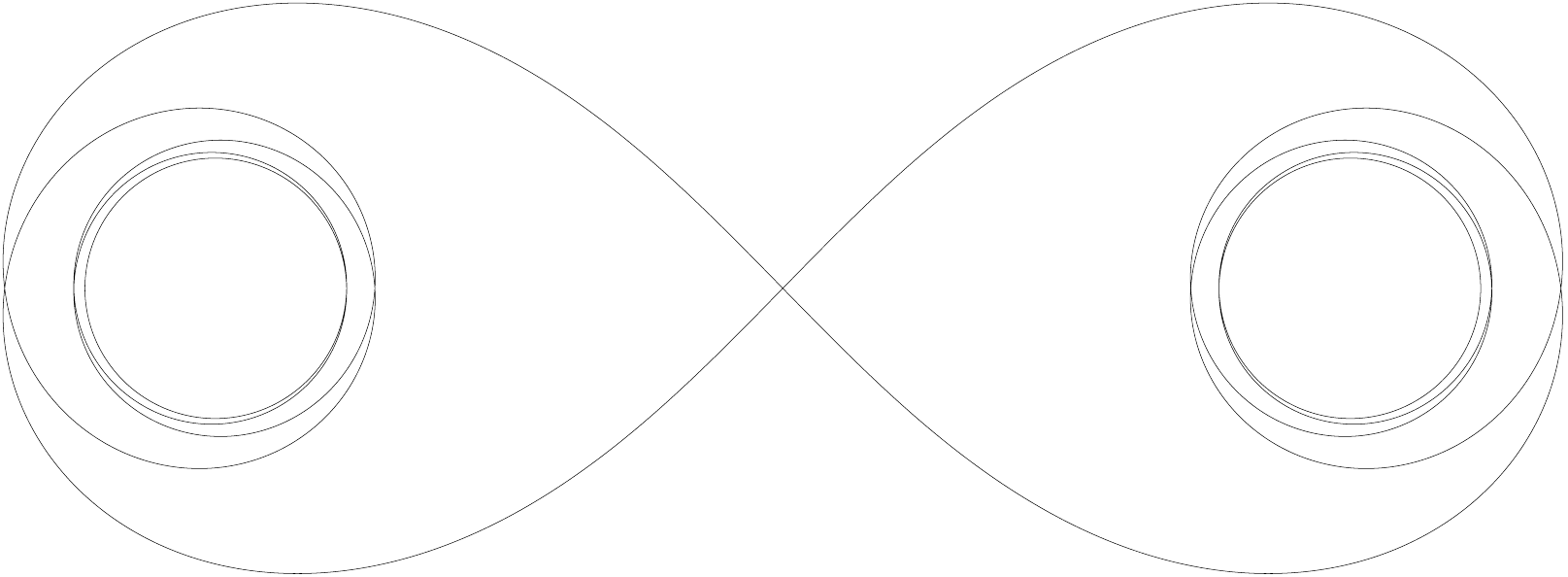}%
  \SelectedCriticalPlot{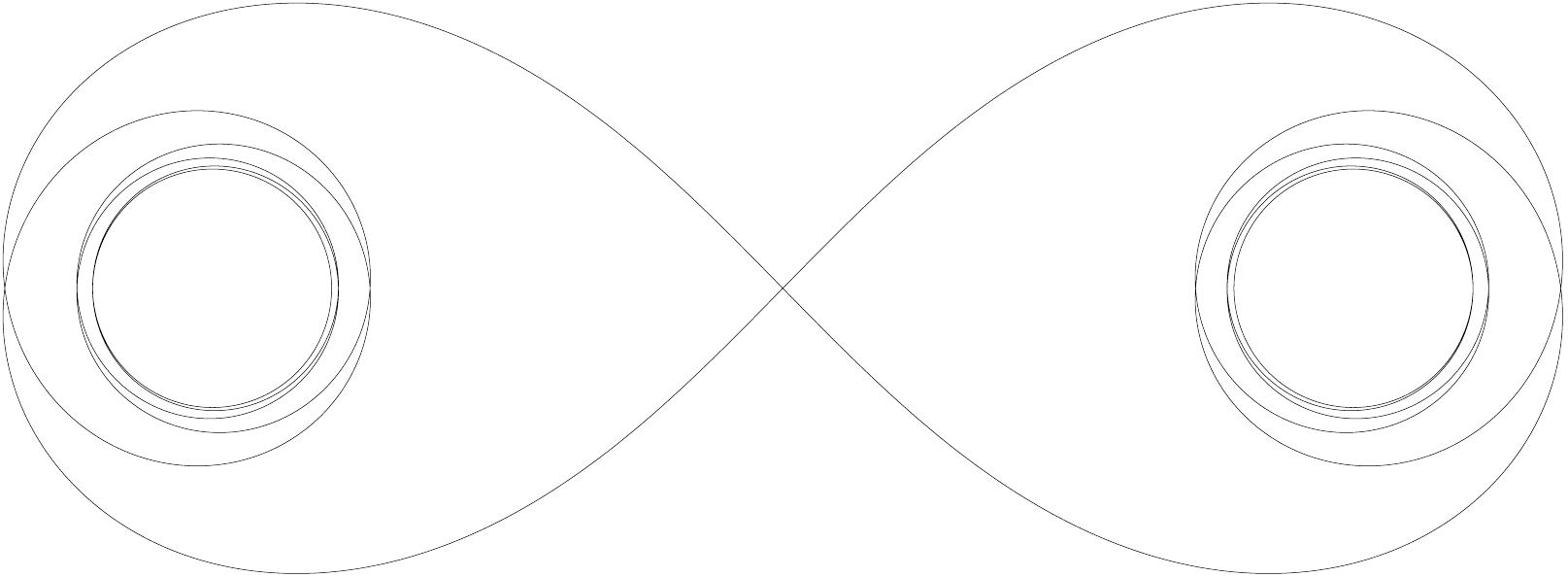}%
  \SelectedCriticalPlot{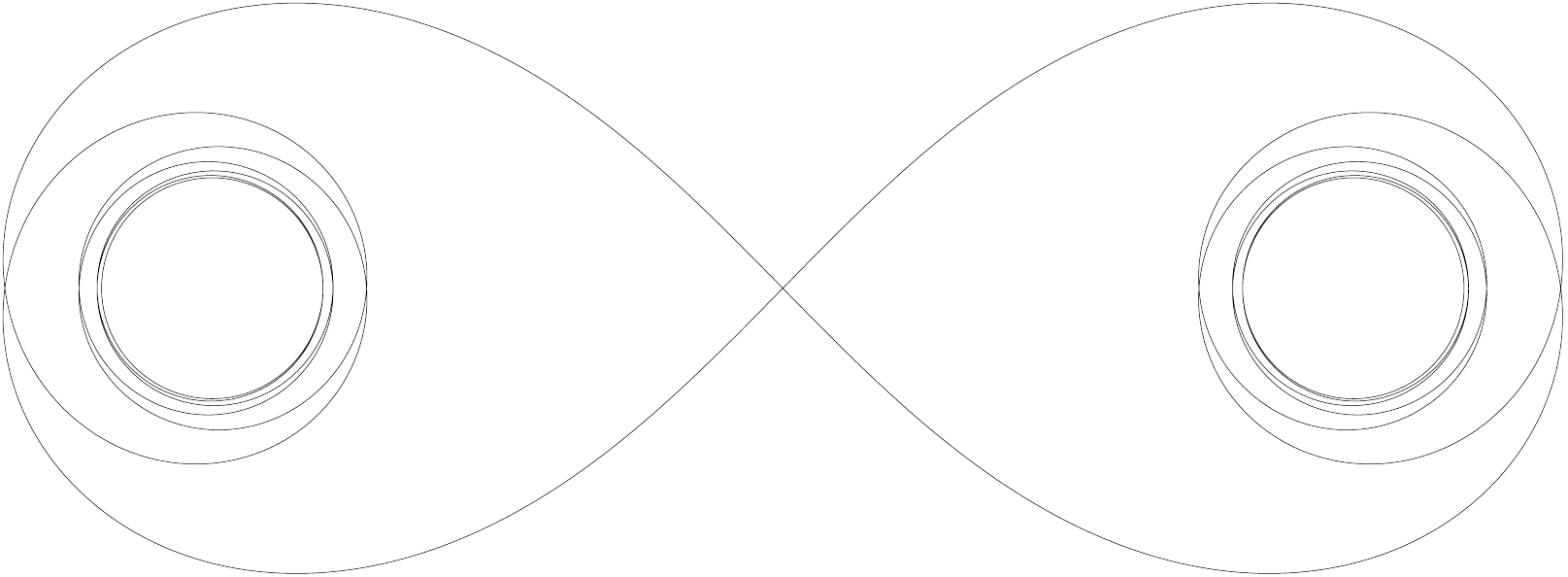}%
  \par\nointerlineskip%
  \SelectedCriticalPlot{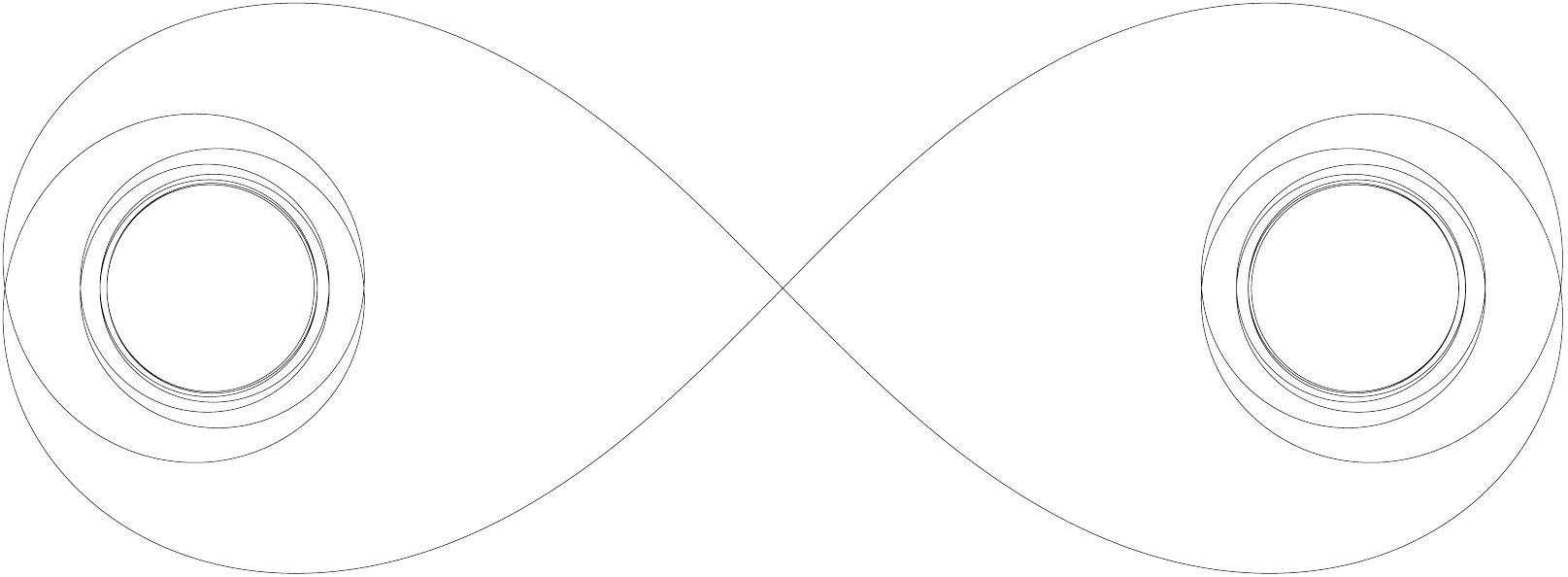}%
  \SelectedCriticalPlot{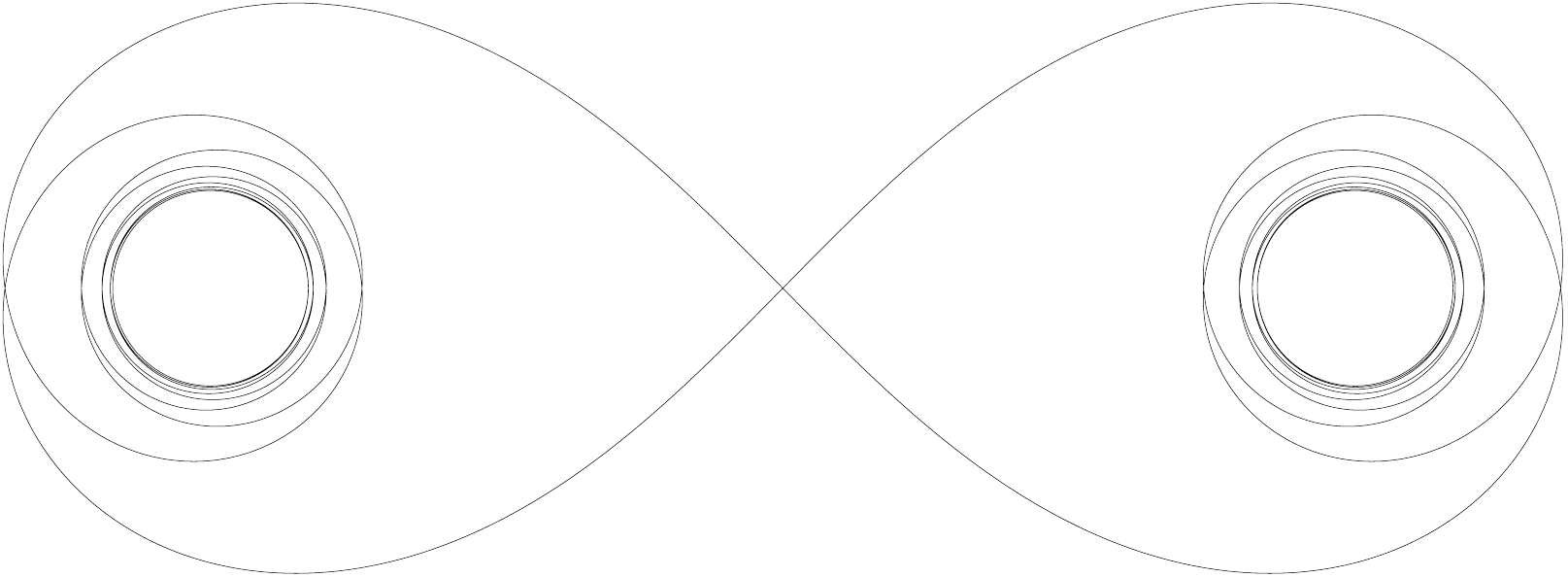}%
  \SelectedCriticalPlot{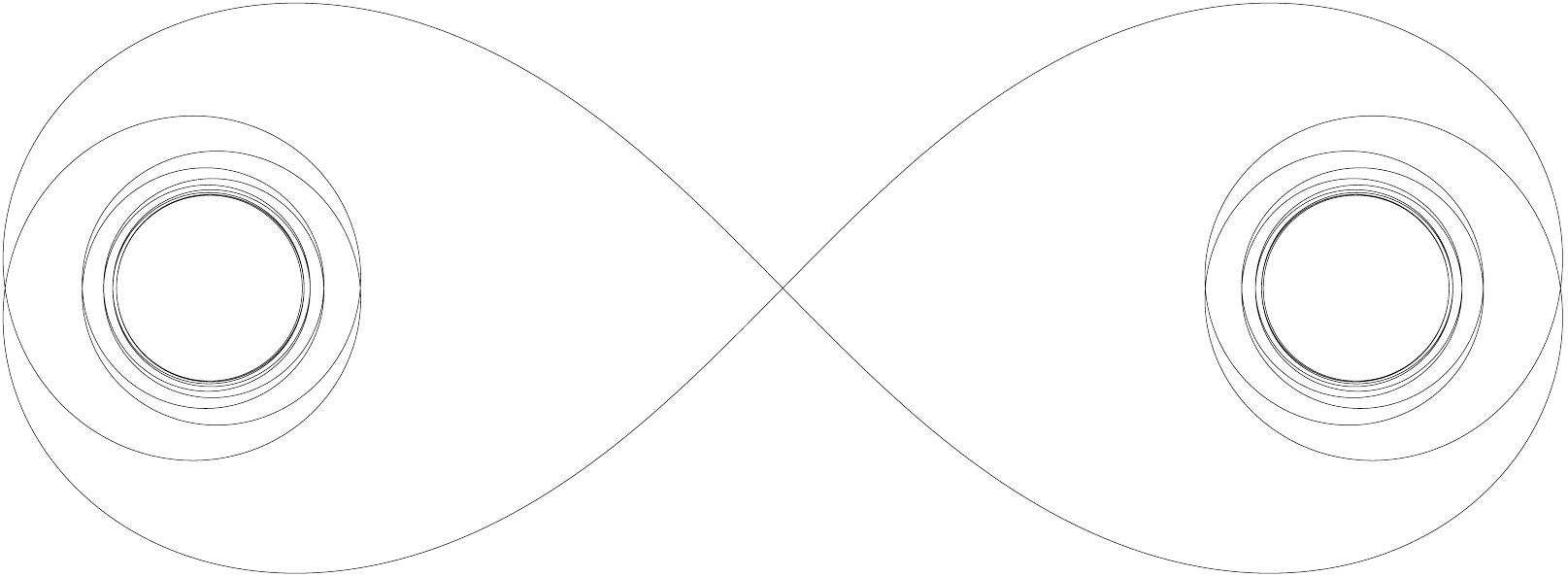}%
  \par\nointerlineskip%
  \SelectedCriticalPlot{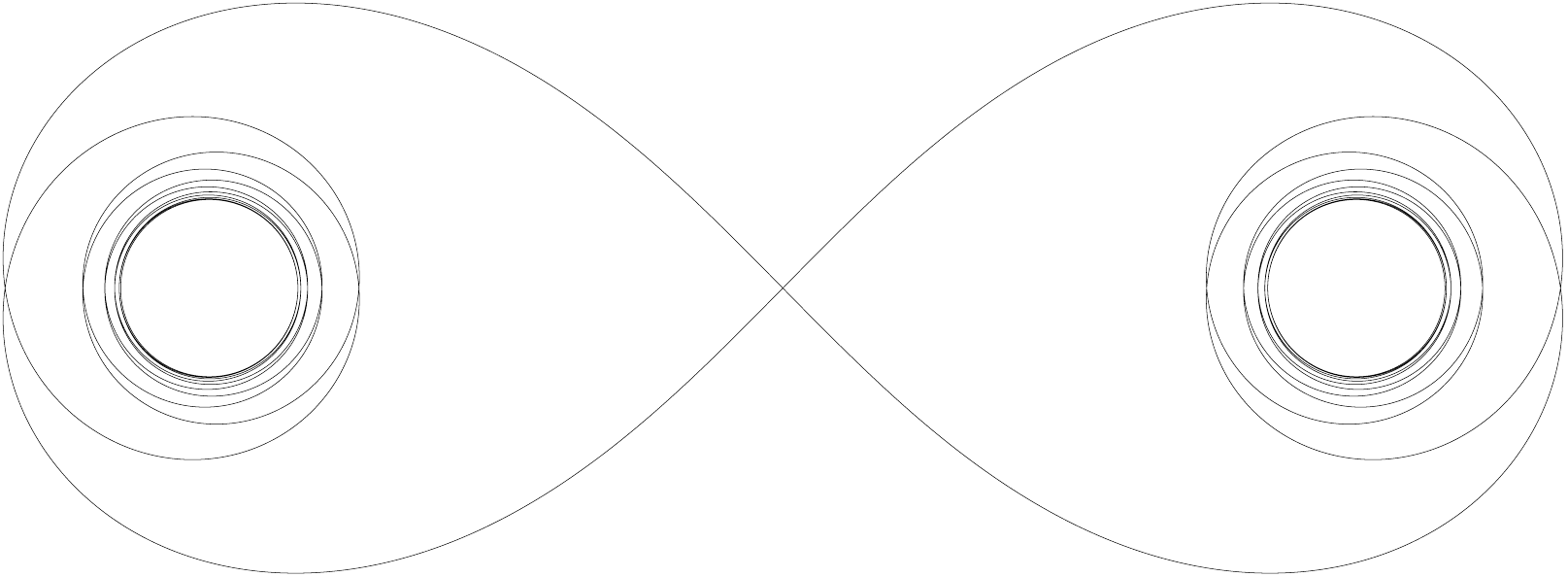}%
  \SelectedCriticalPlot{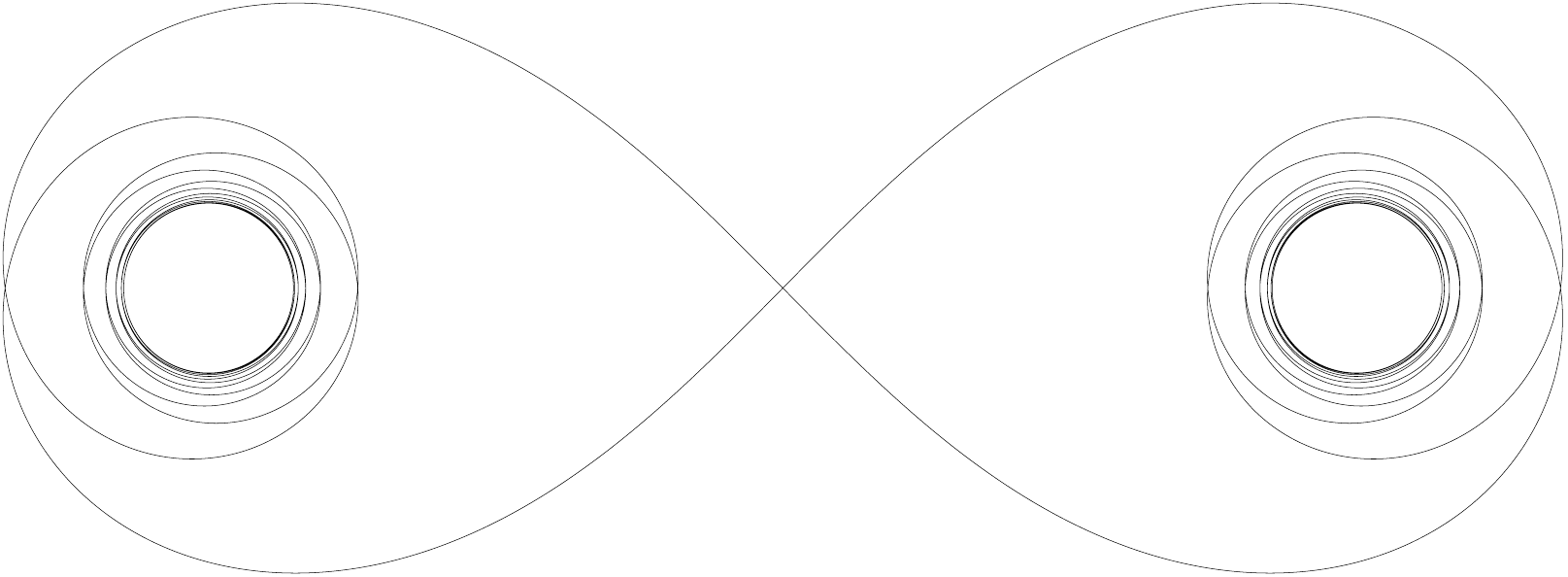}%
  \SelectedCriticalPlot{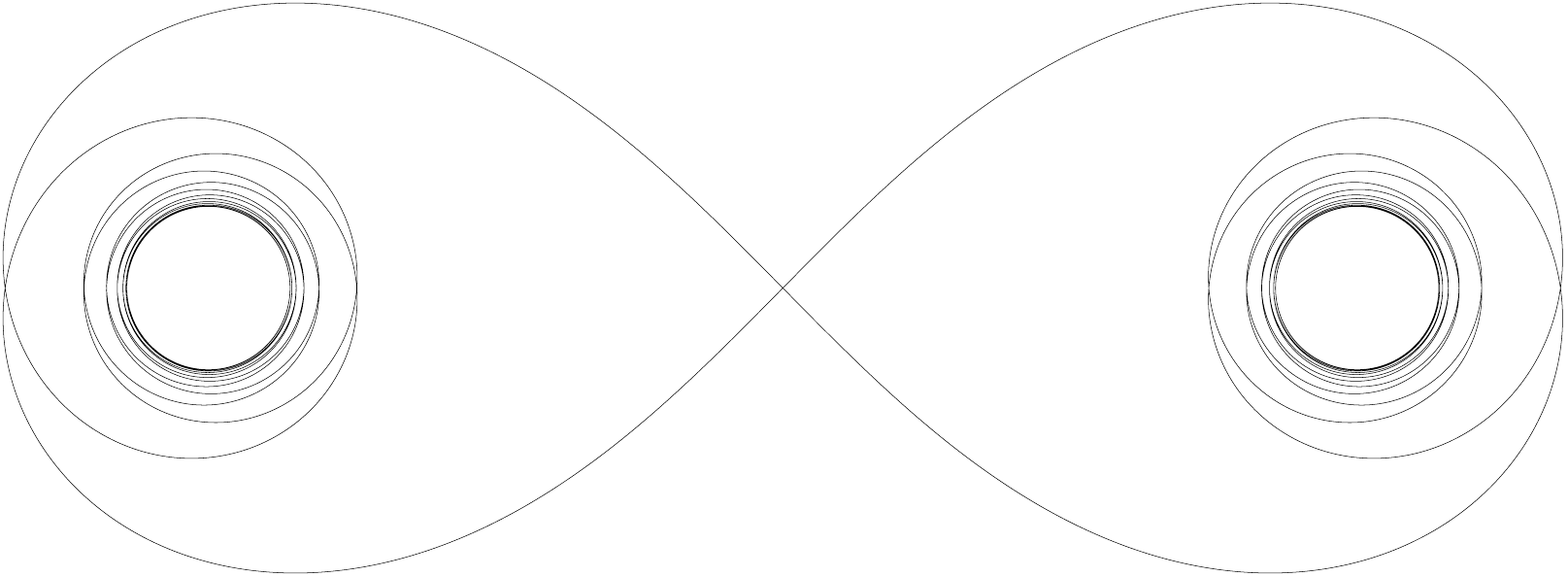}%
  \par\nointerlineskip%
  \SelectedCriticalPlot{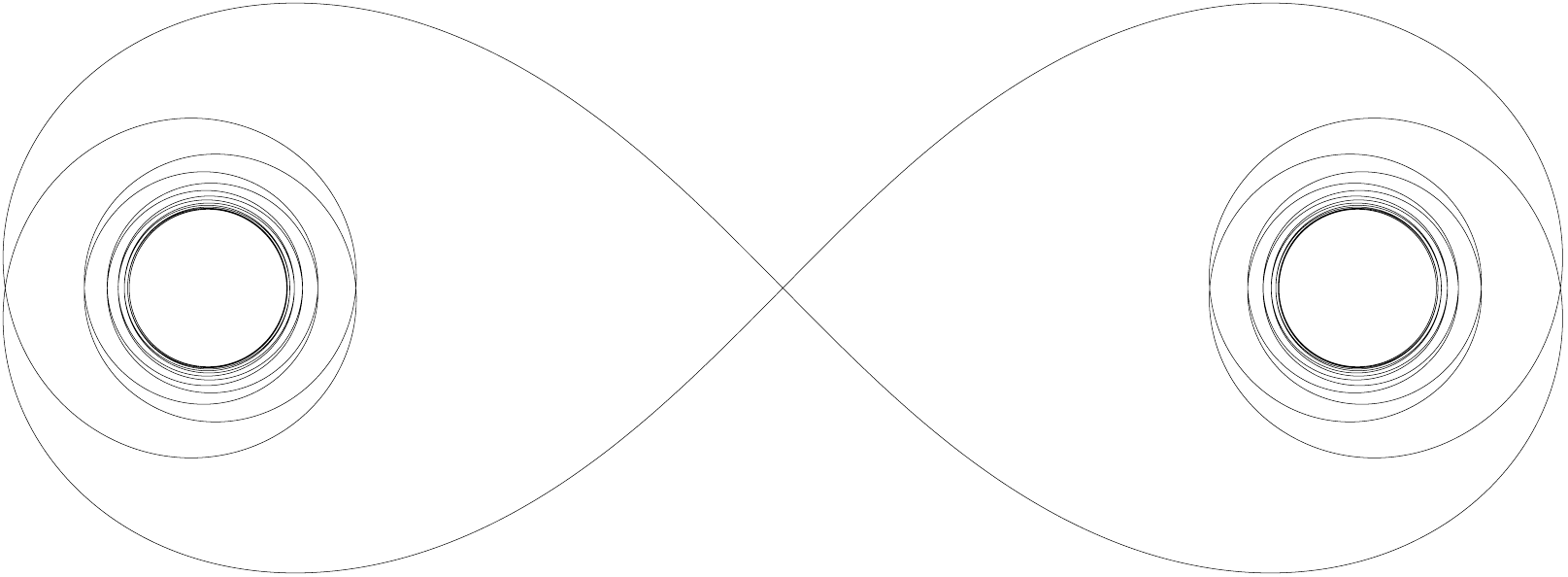}%
  \SelectedCriticalPlot{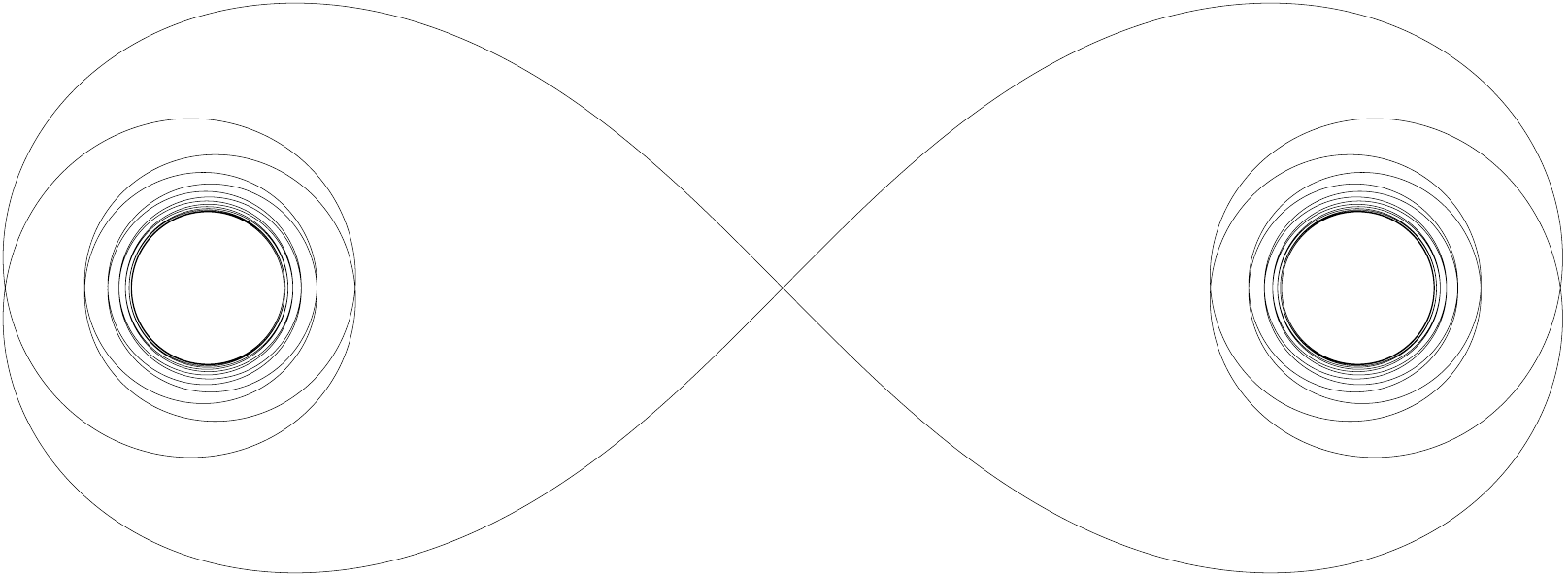}%
  \SelectedCriticalPlot{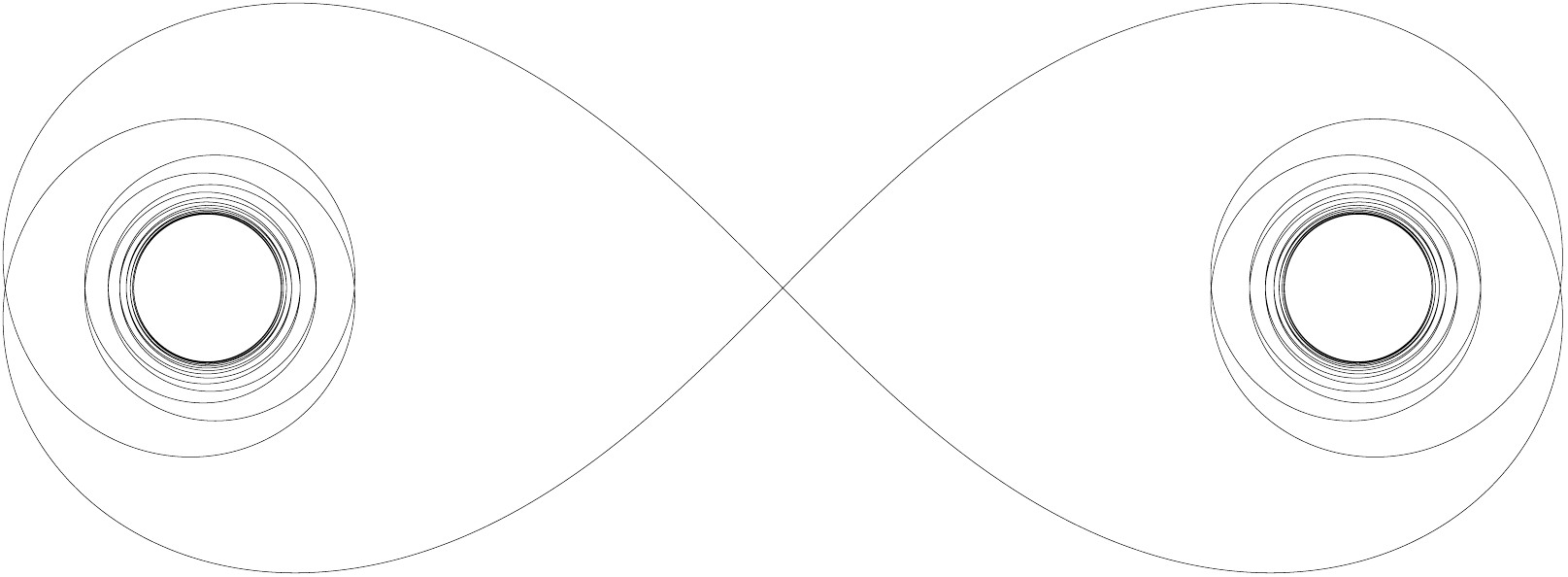}%
  \par\nointerlineskip%
  \SelectedCriticalPlot{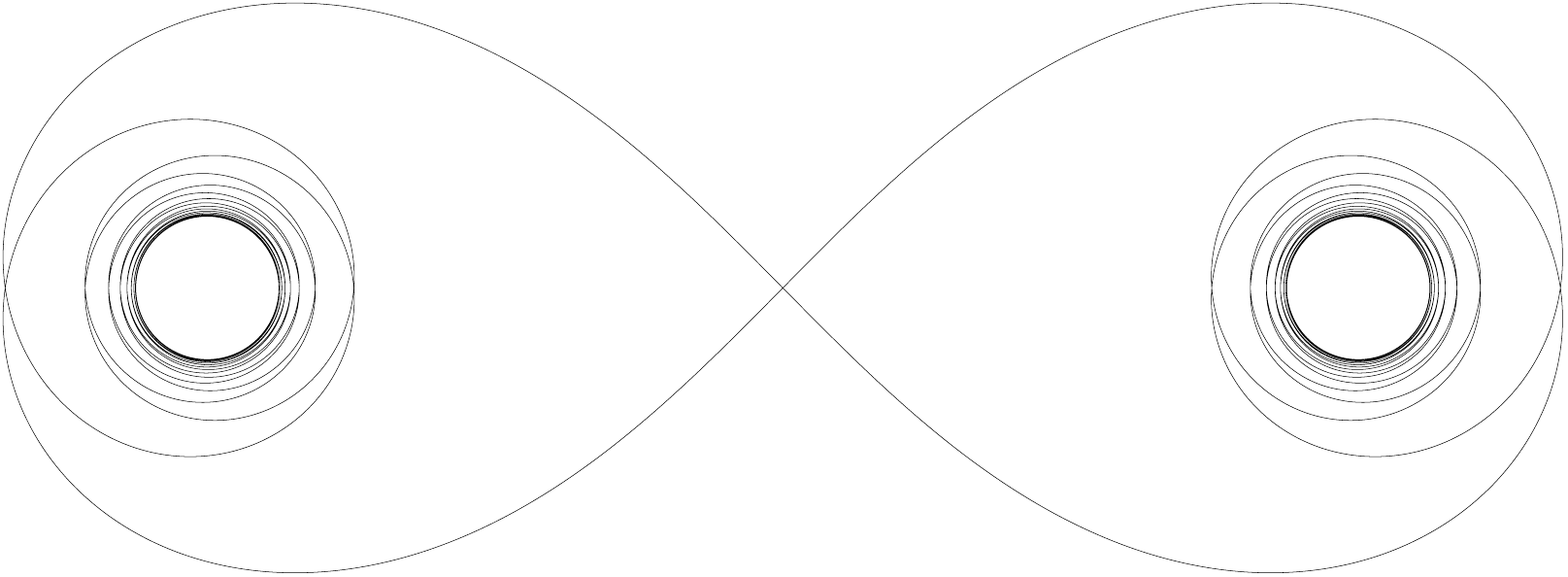}%
  \SelectedCriticalPlot{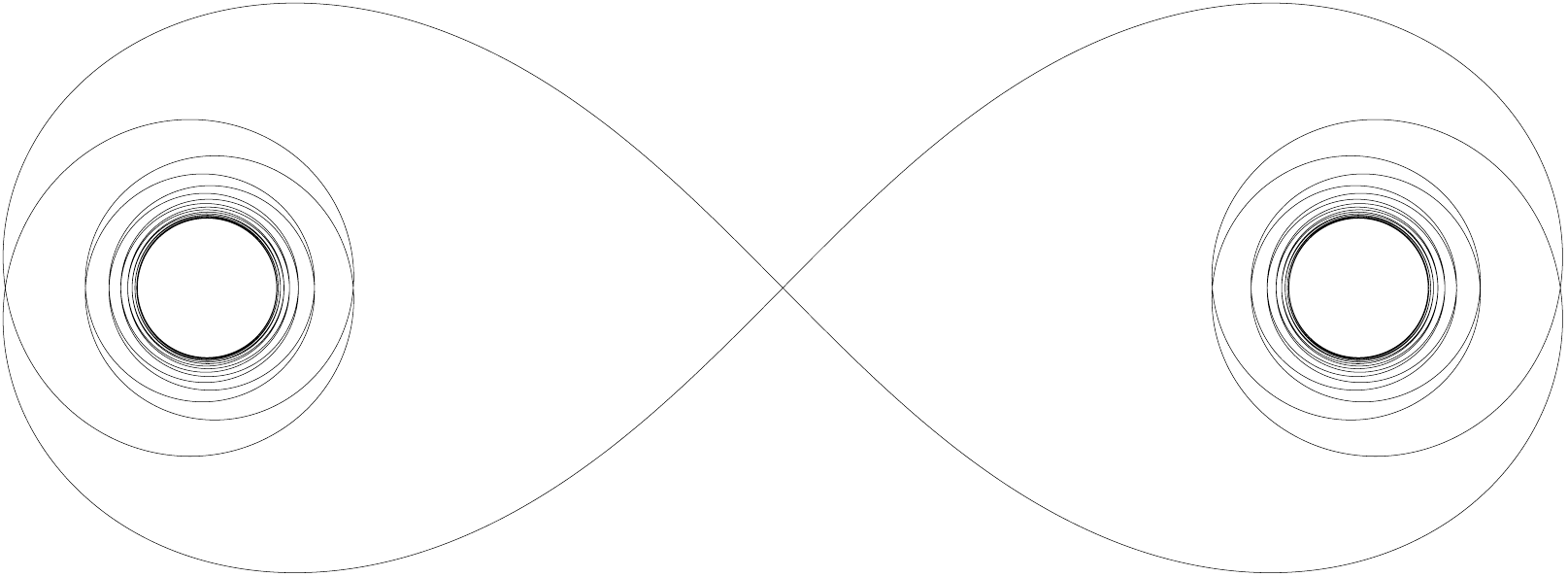}%
  \SelectedCriticalPlot{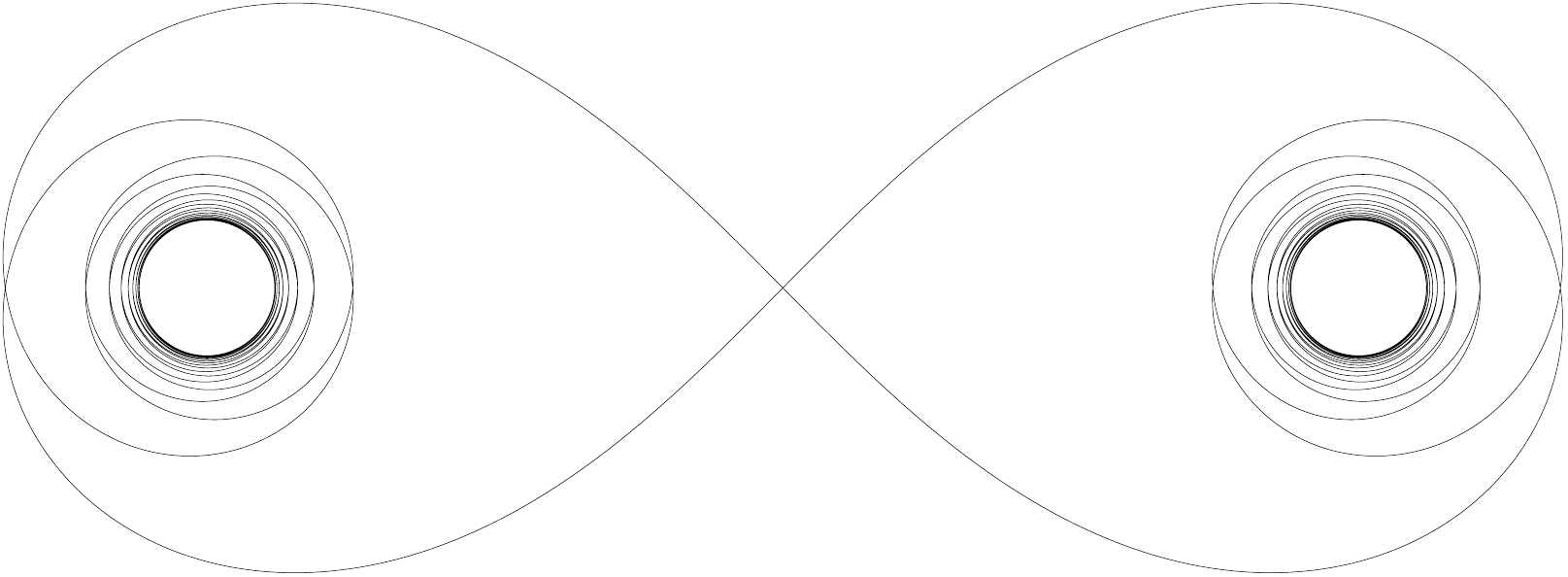}%
  \par\nointerlineskip%
  \SelectedCriticalPlot{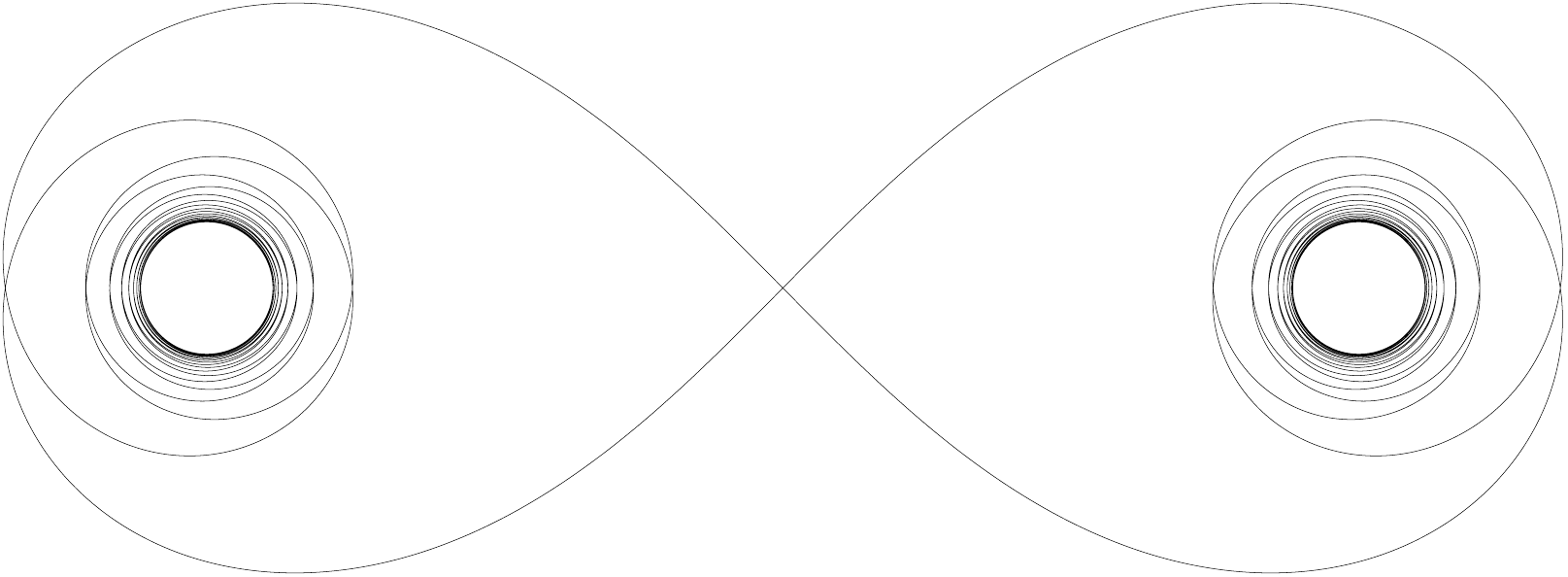}%
  \SelectedCriticalPlot{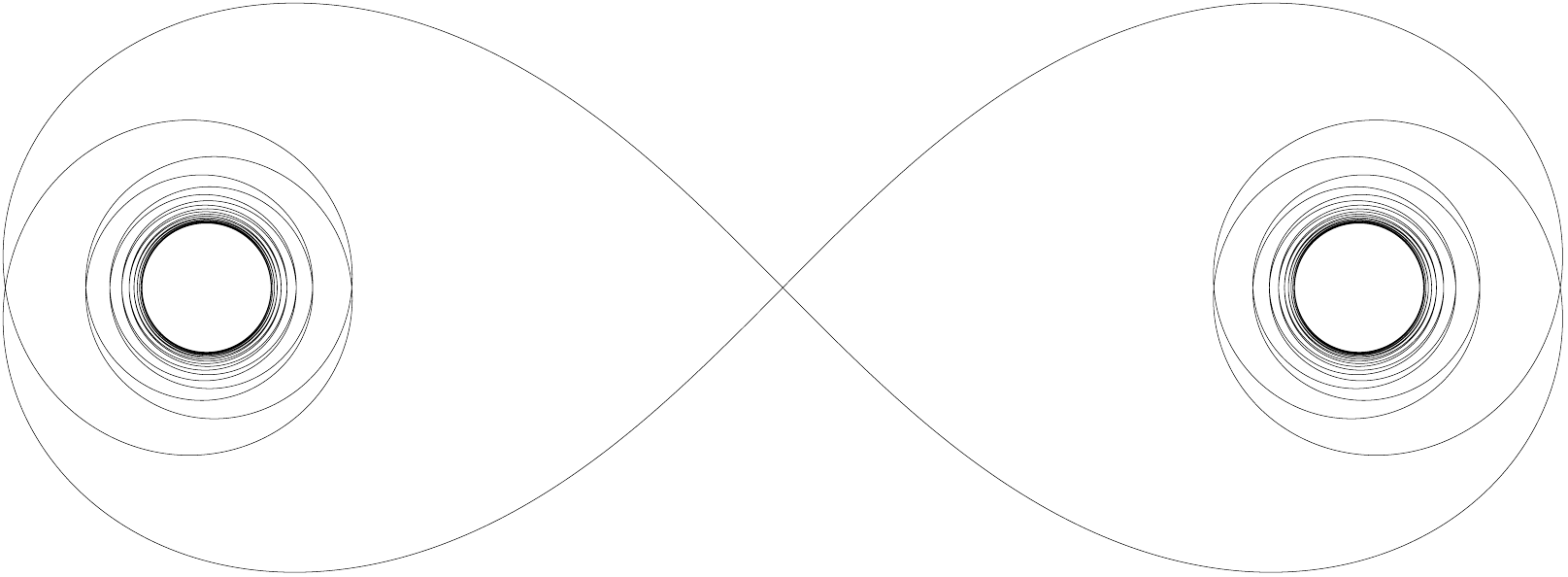}%
  \SelectedCriticalPlot{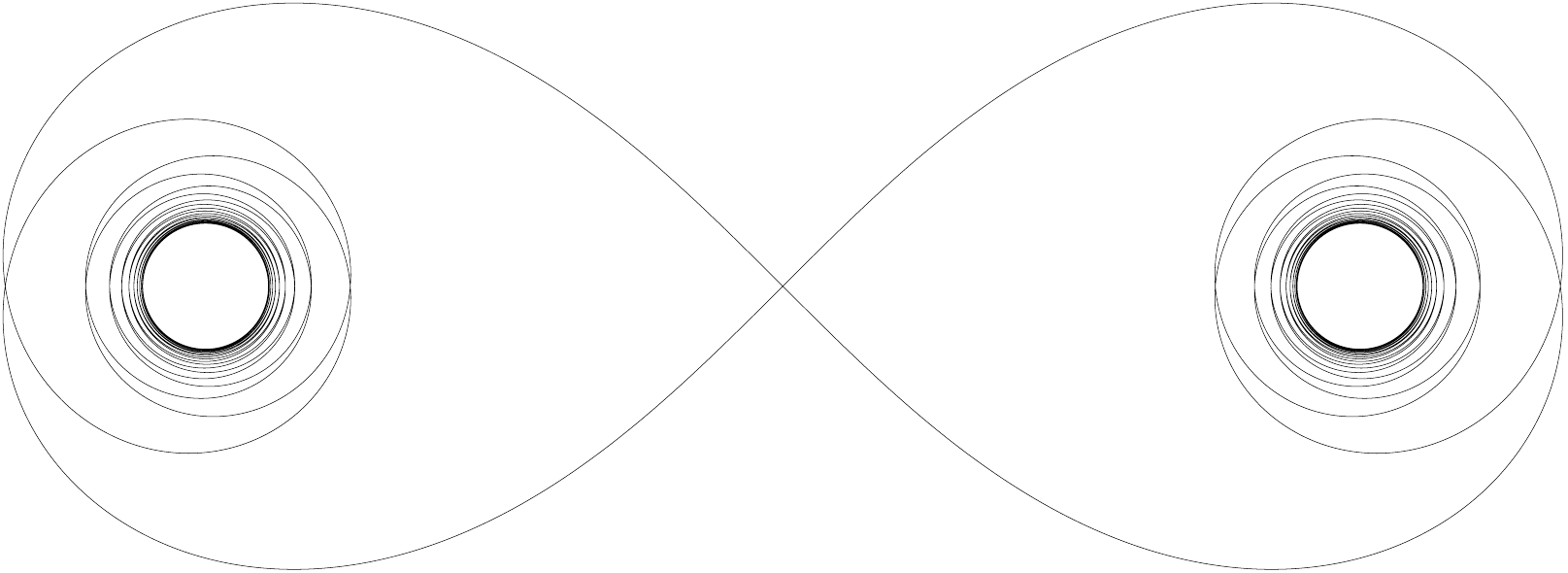}%
  \par\nointerlineskip%

  \caption{Selected lemniscate-type critical point candidates for \(J_1\).  The plots are ordered by their index in the numerical scan, and thus have increasing energy, from top-left to bottom-right.}
  \label{fig:selected-lemniscate-type-candidates}
\end{figure}

\begin{figure}[p]
  \centering
  \setlength{\parindent}{0pt}%
  \setlength{\parskip}{0pt}%
  \noindent%
  \SelectedCriticalPlot{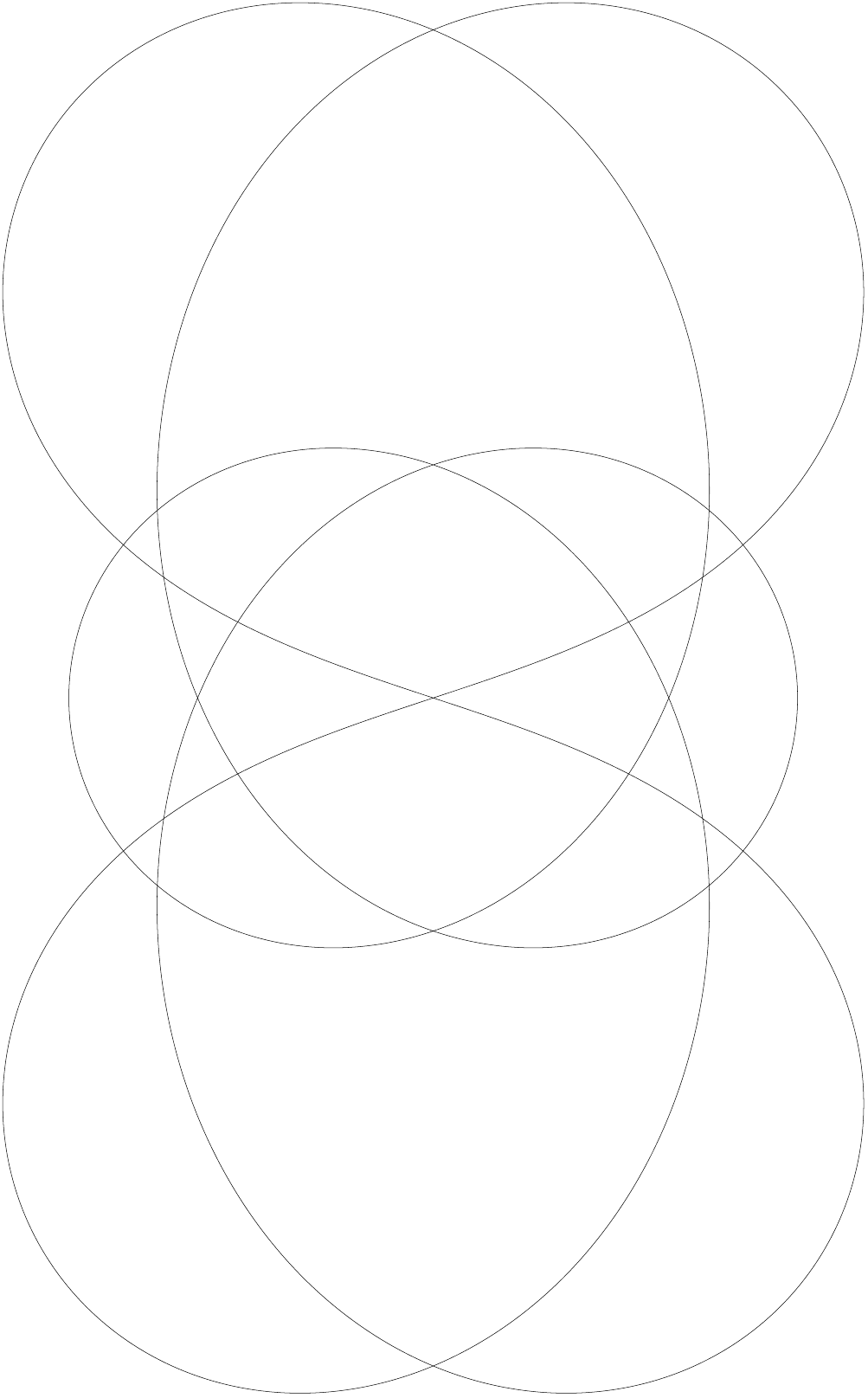}%
  \SelectedCriticalPlot{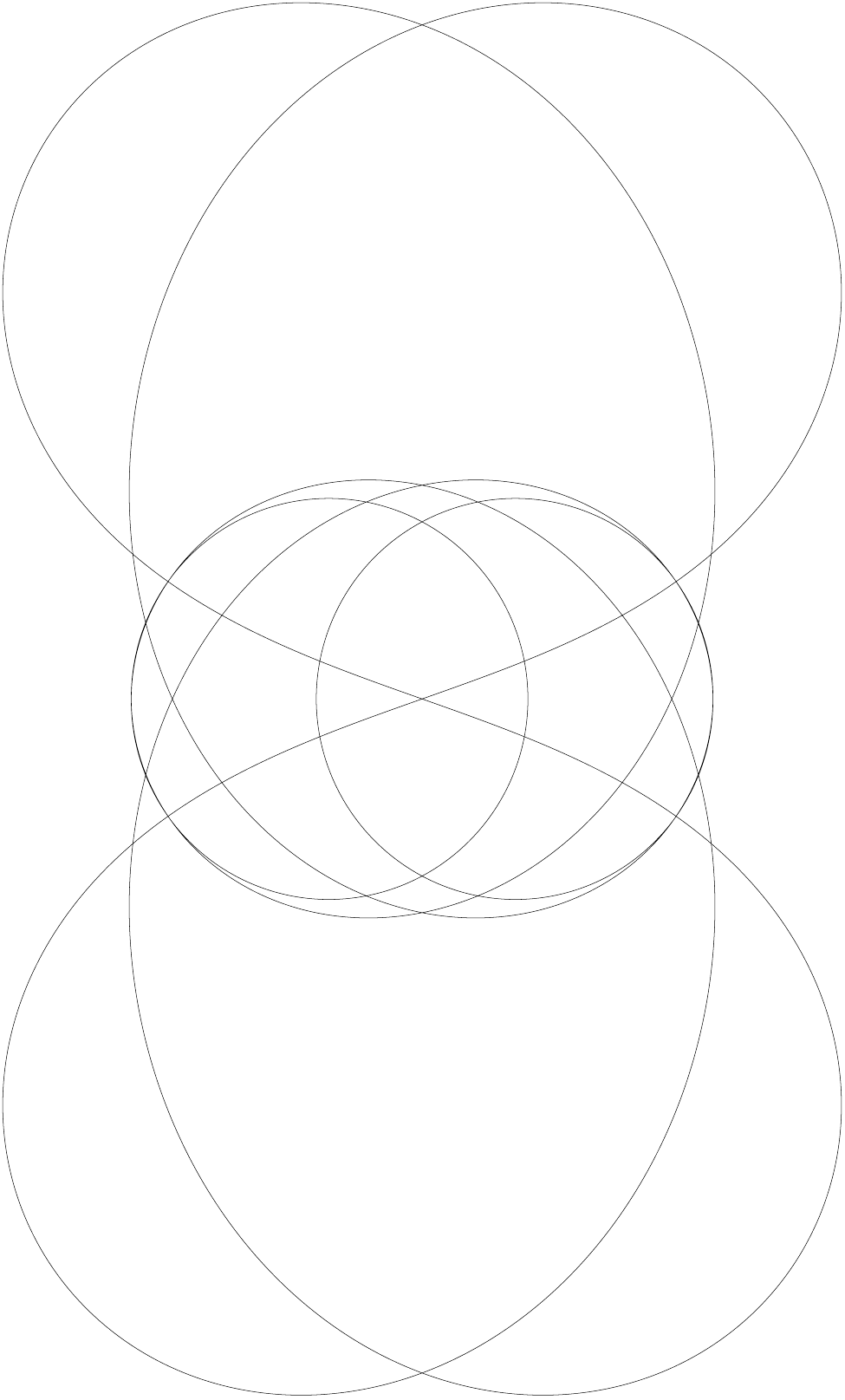}%
  \SelectedCriticalPlot{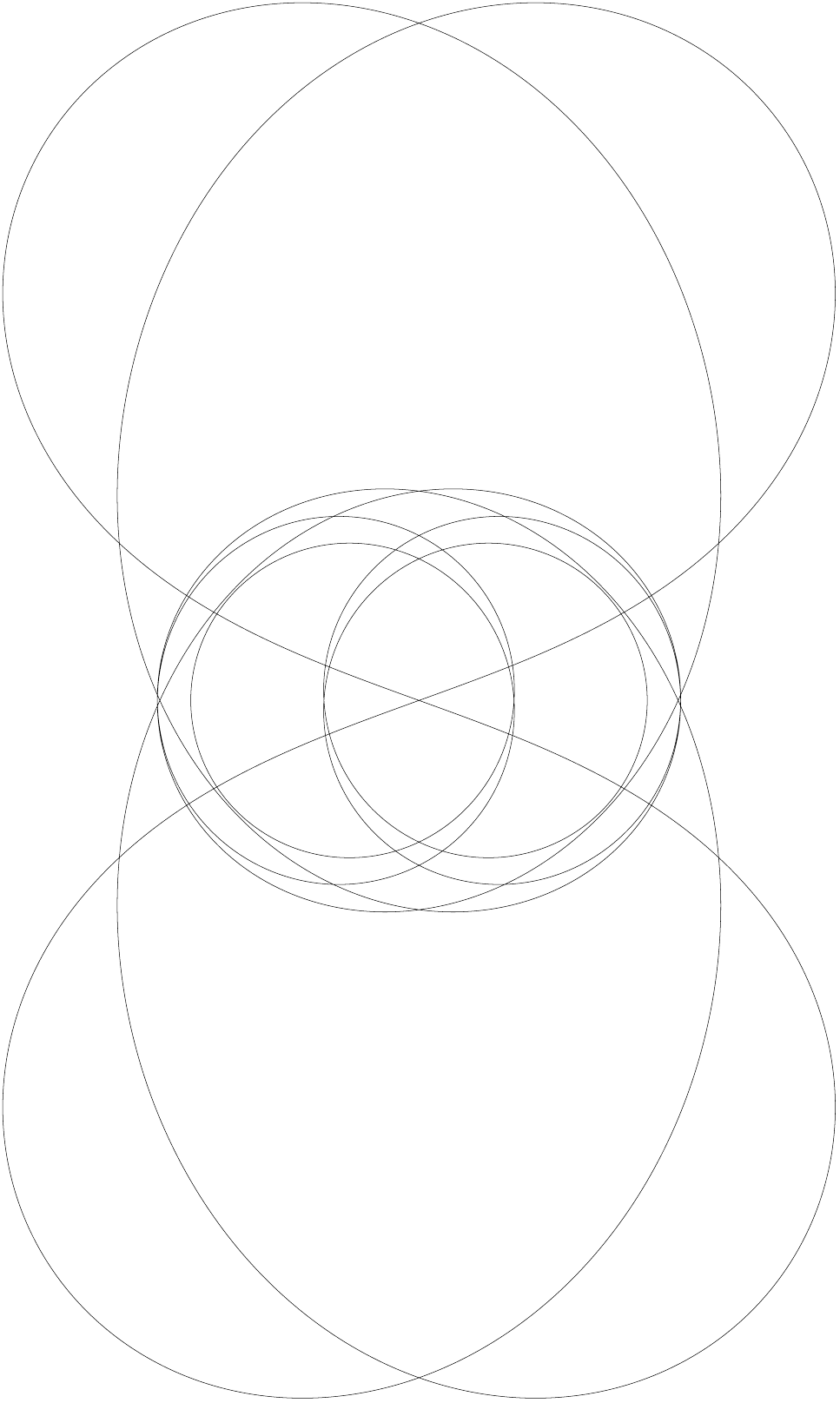}%
  \par\nointerlineskip%
  \SelectedCriticalPlot{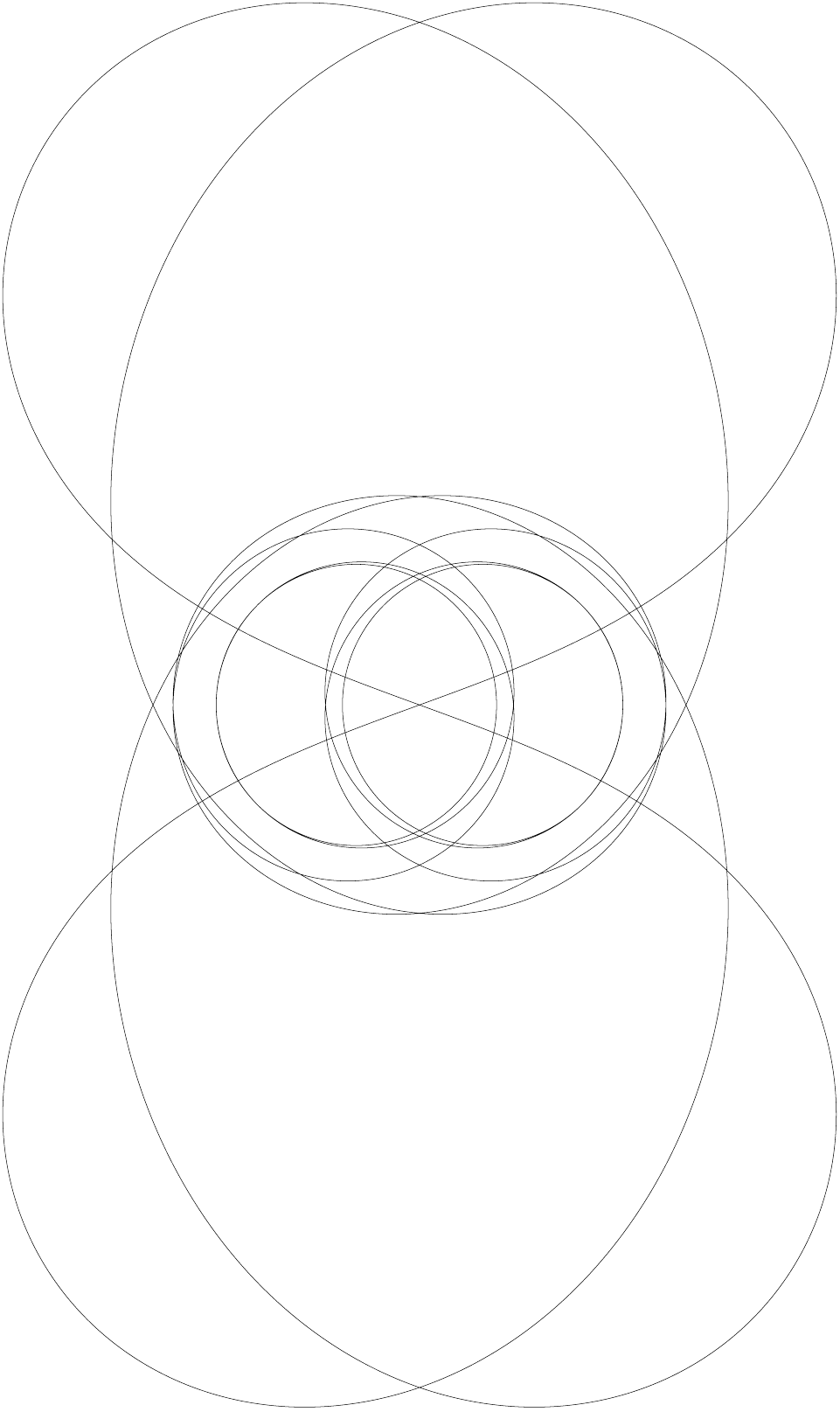}%
  \SelectedCriticalPlot{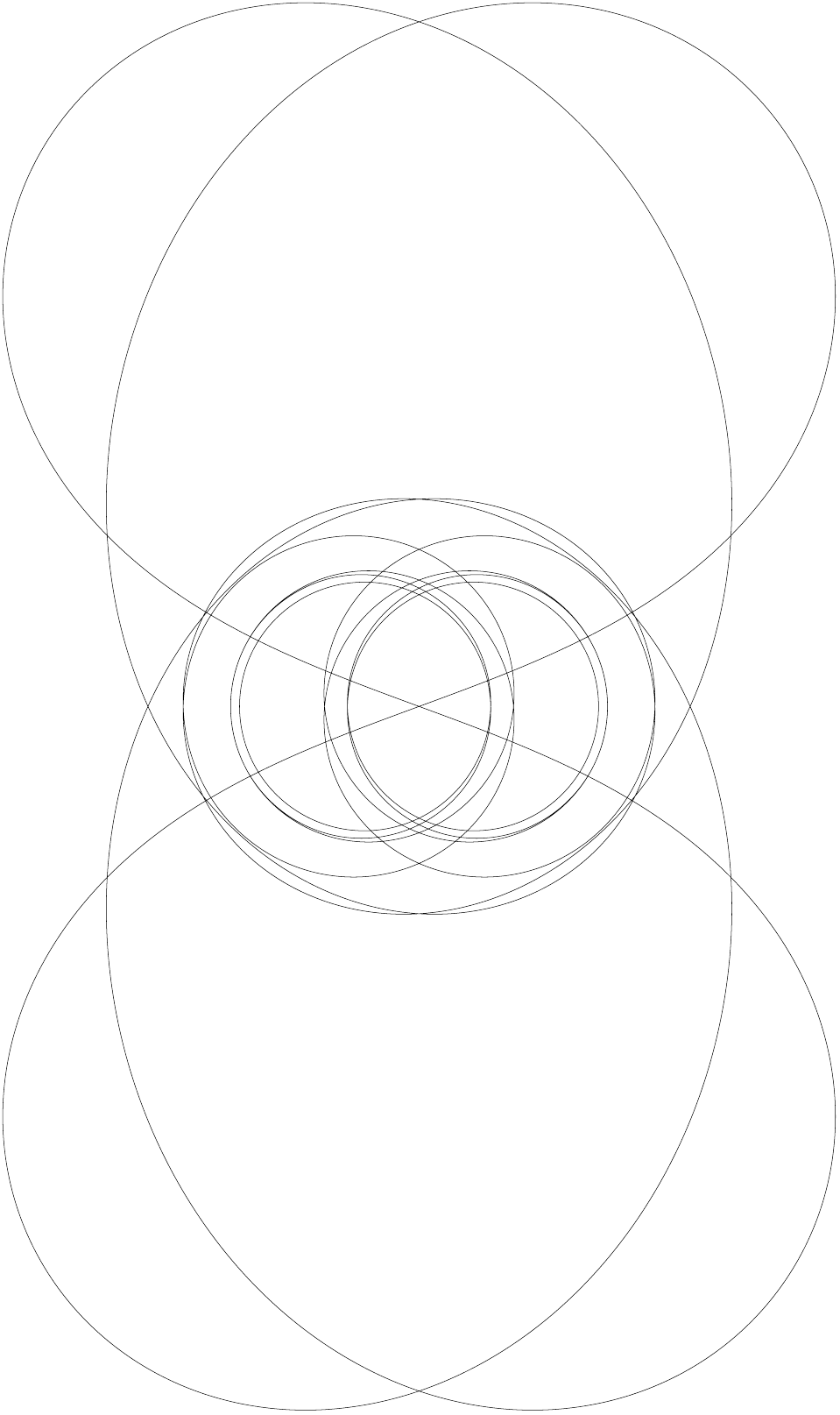}%
  \SelectedCriticalPlot{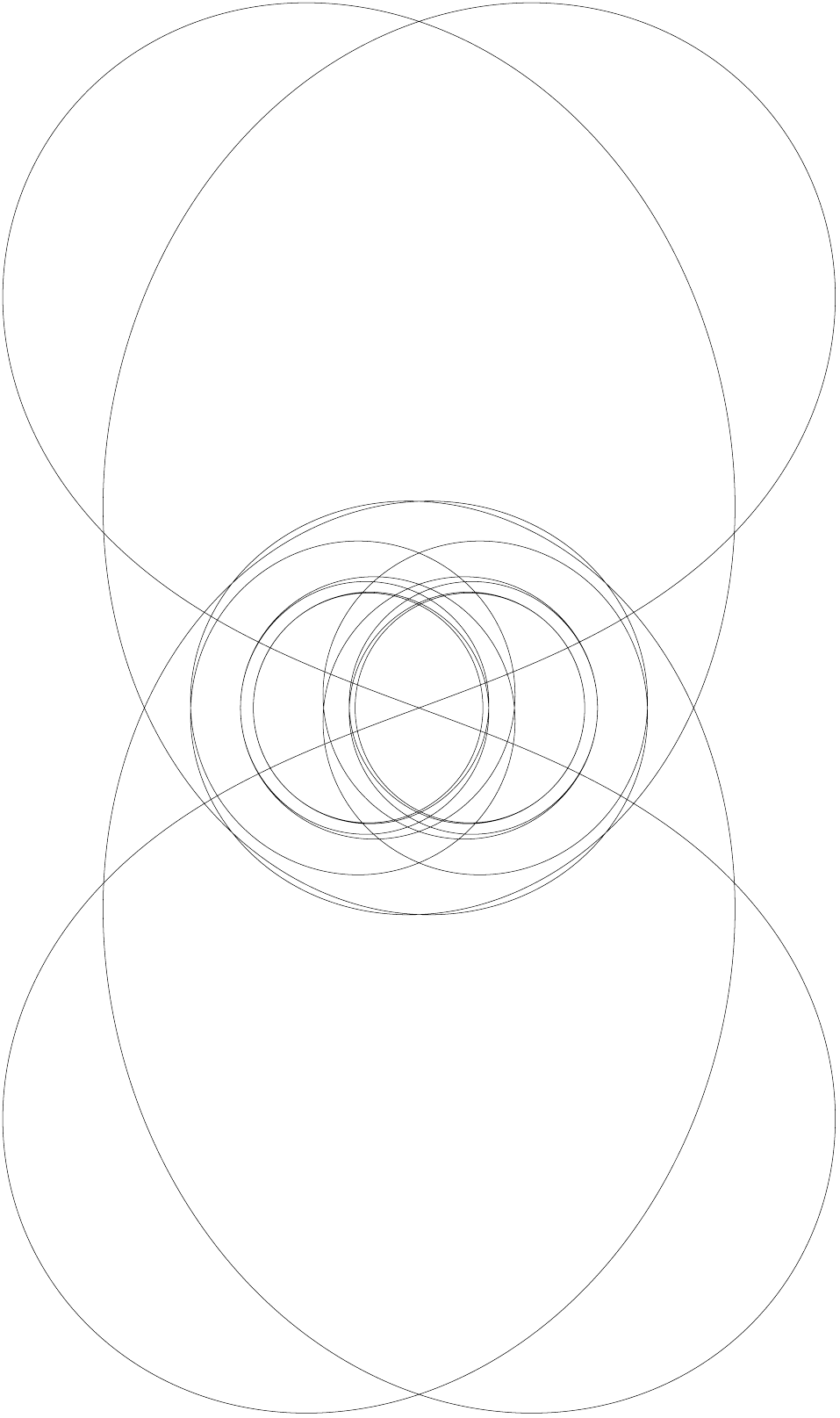}%
  \par\nointerlineskip%
  \SelectedCriticalPlot{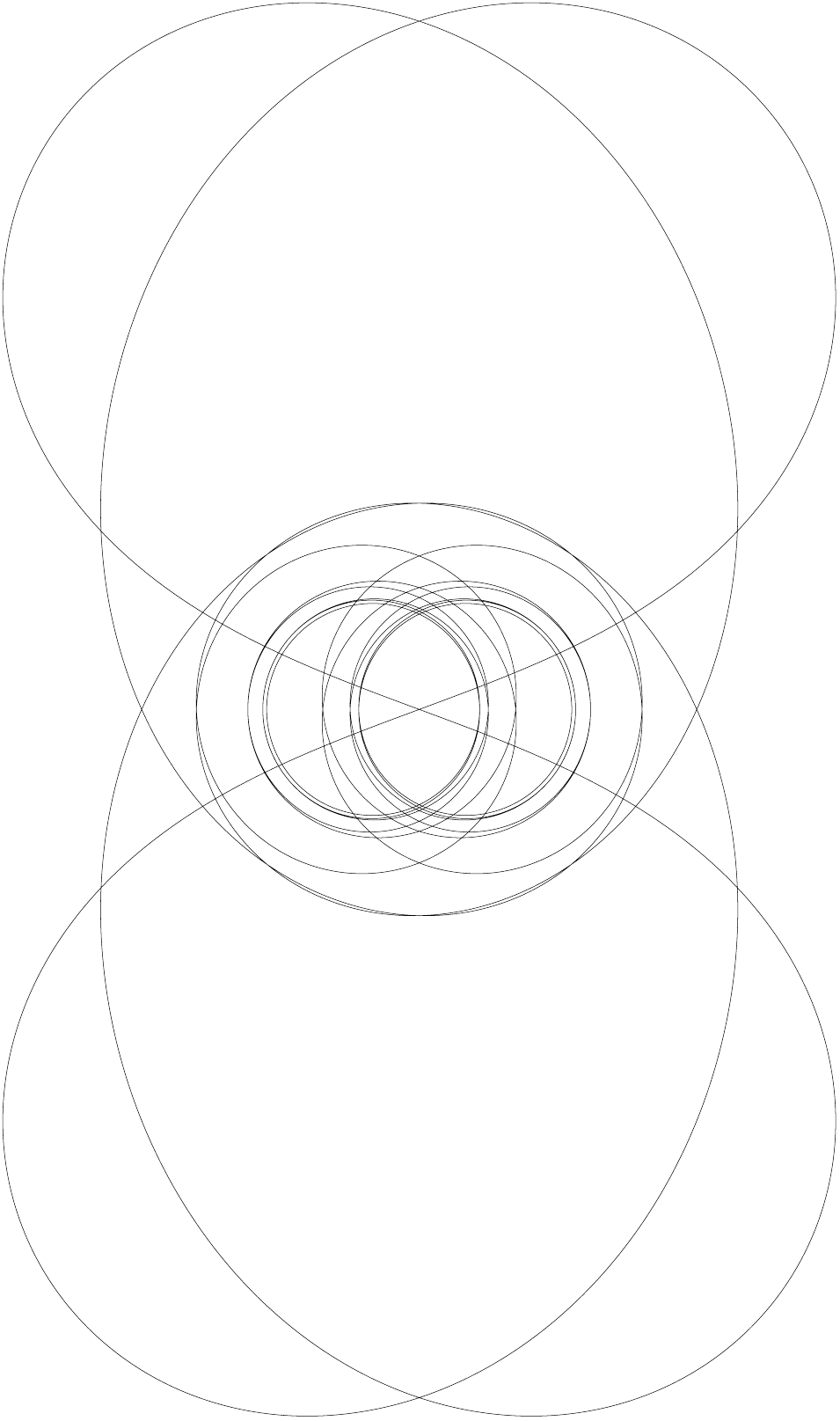}%
  \SelectedCriticalPlot{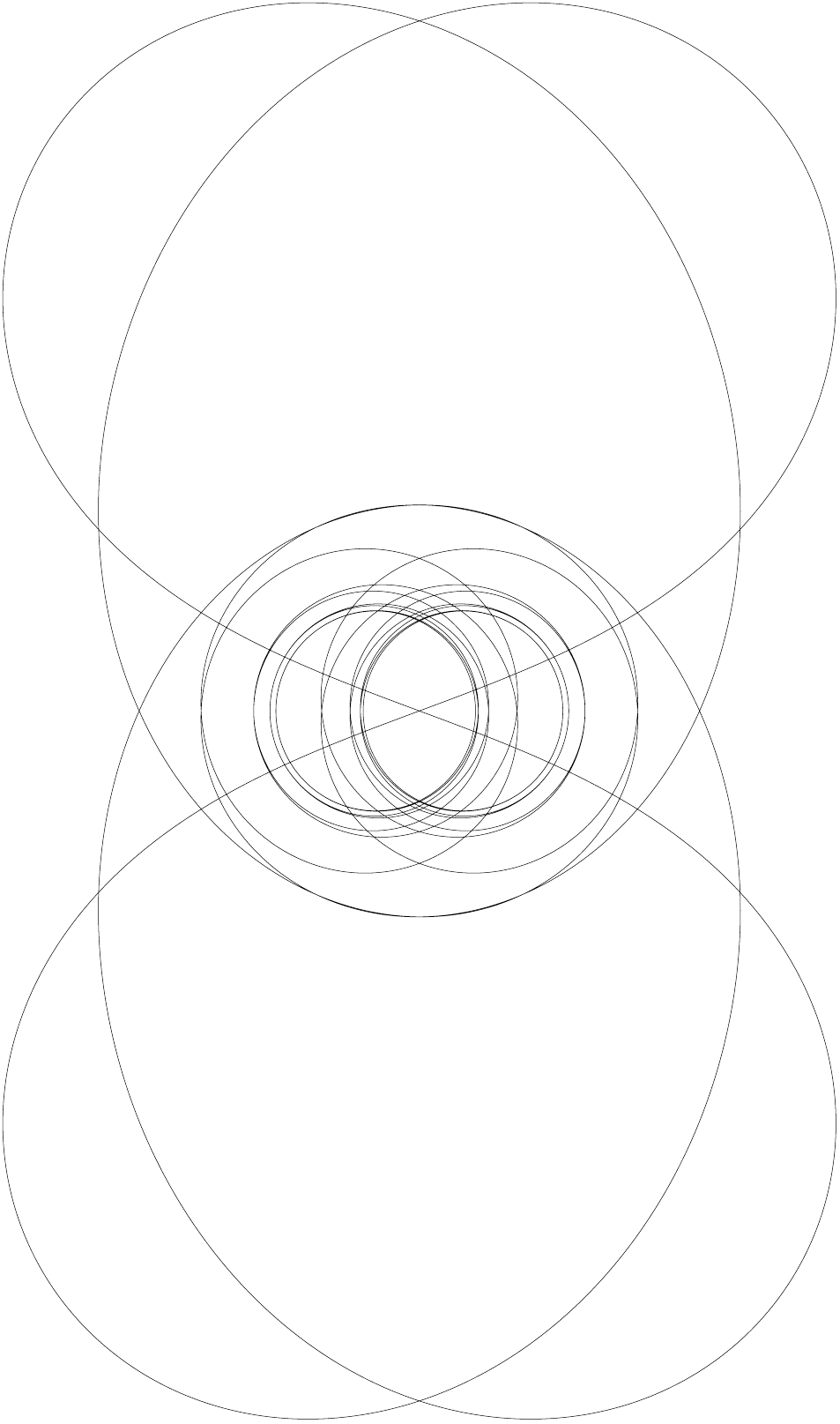}%
  \SelectedCriticalPlot{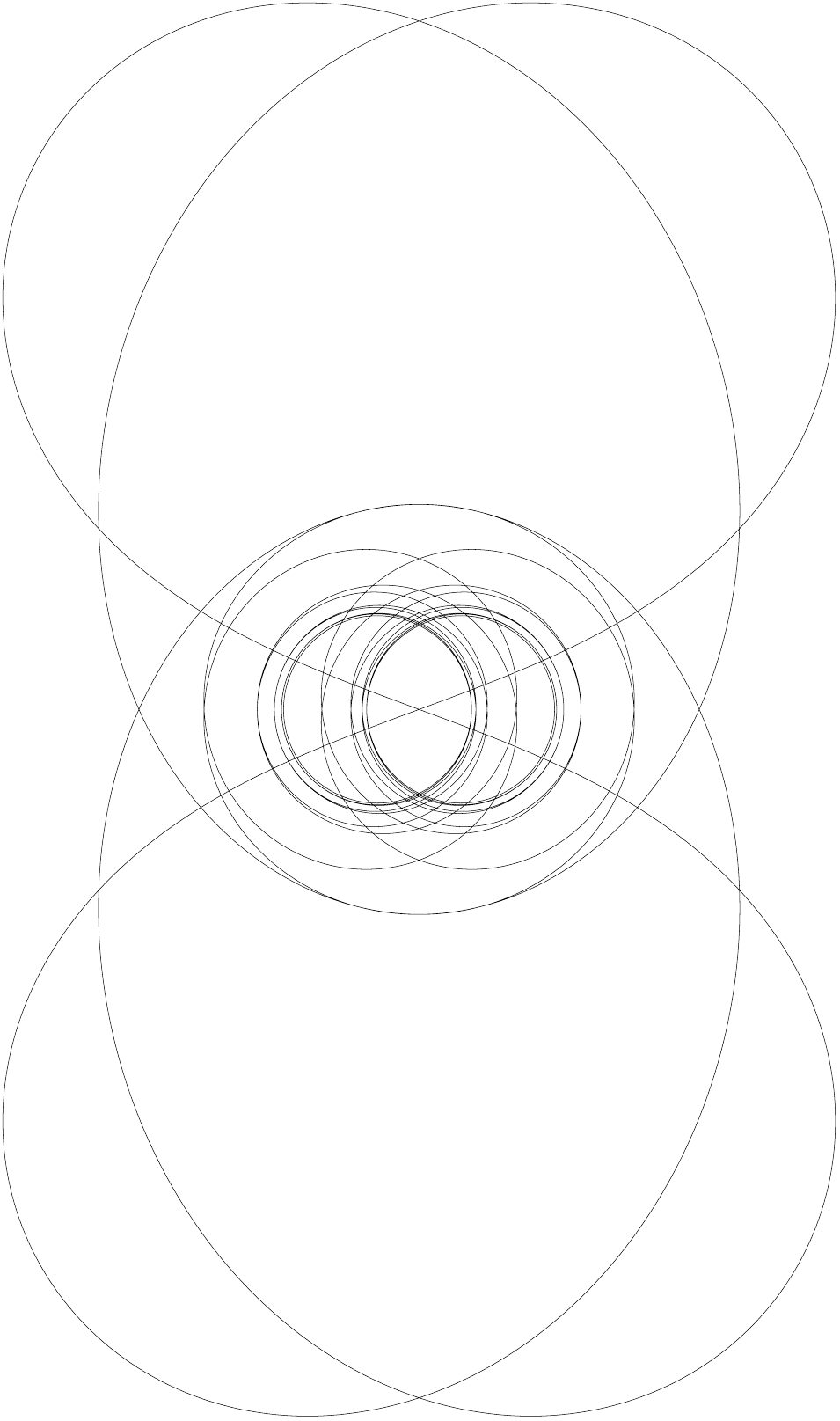}%
  \par\nointerlineskip%
  \SelectedCriticalPlot{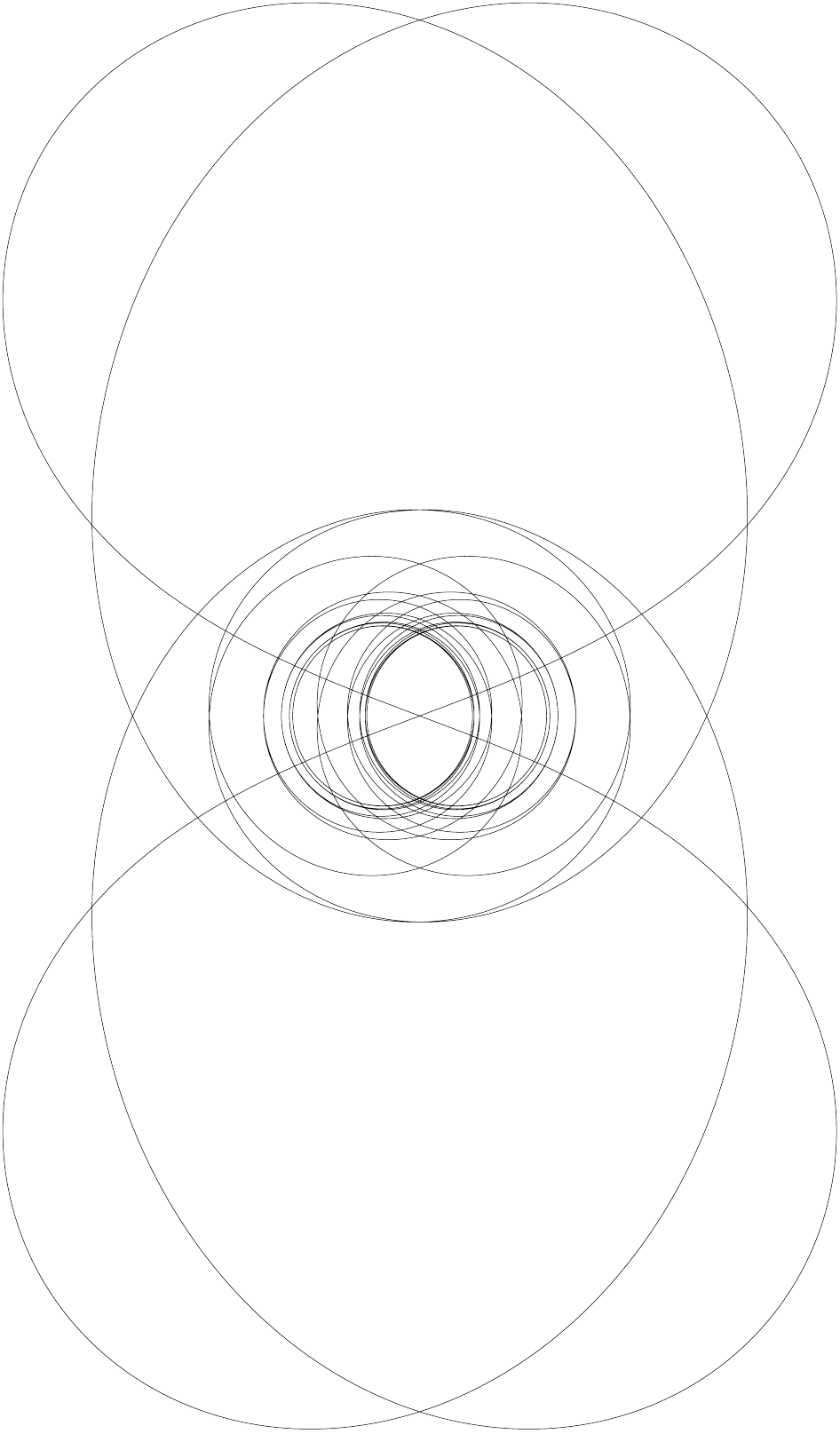}%
  \SelectedCriticalPlot{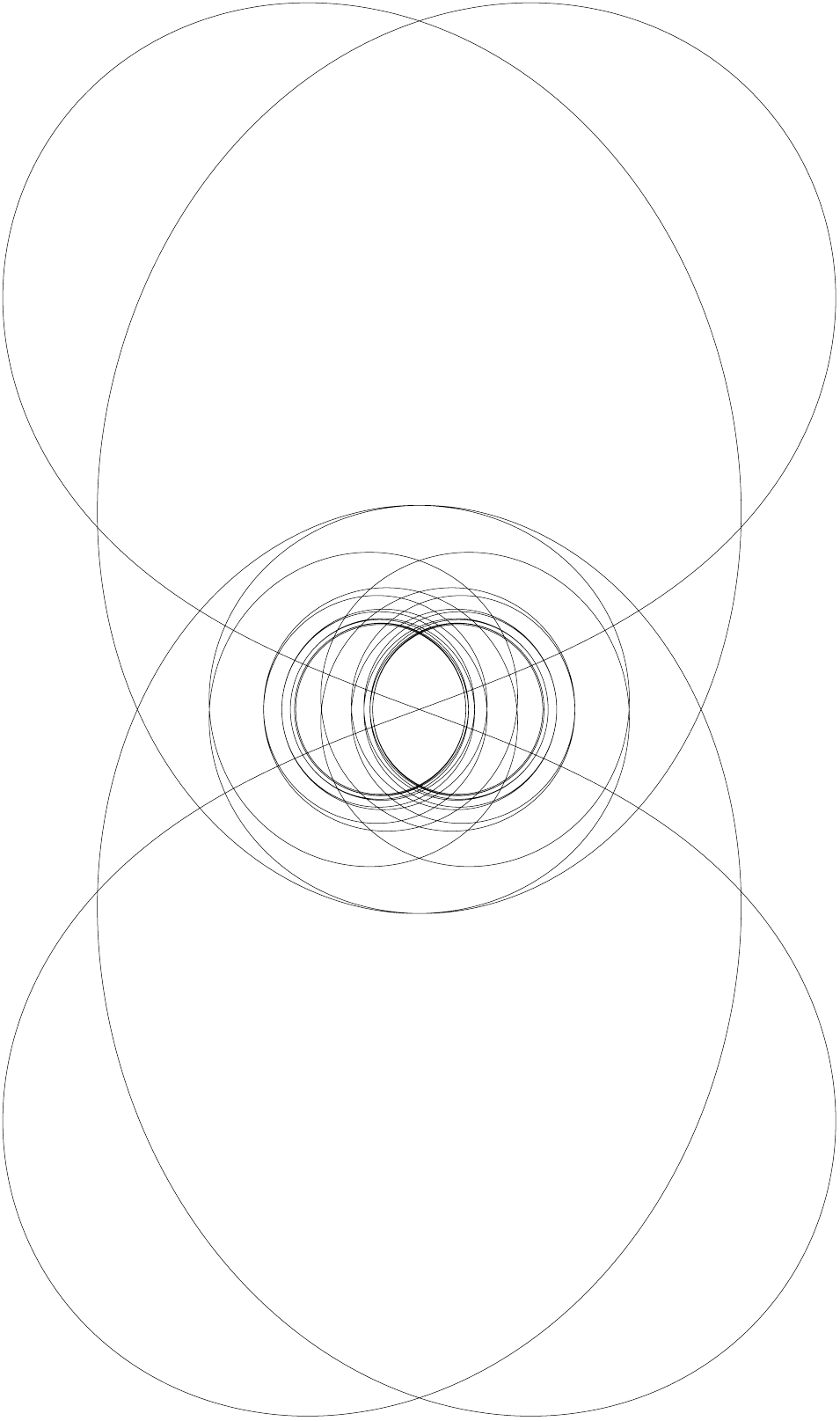}%
  \SelectedCriticalPlot{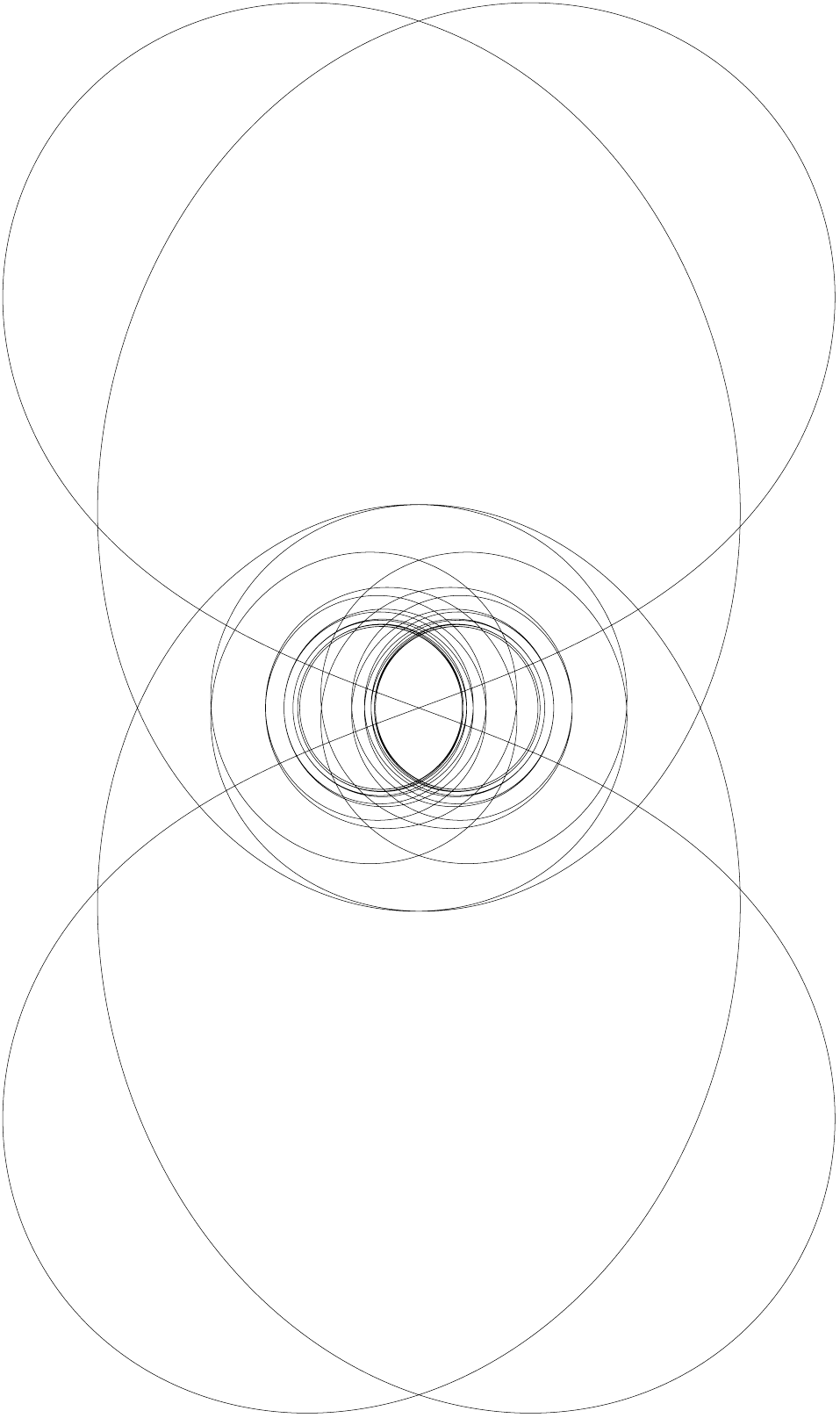}%
  \par\nointerlineskip%
  \SelectedCriticalPlot{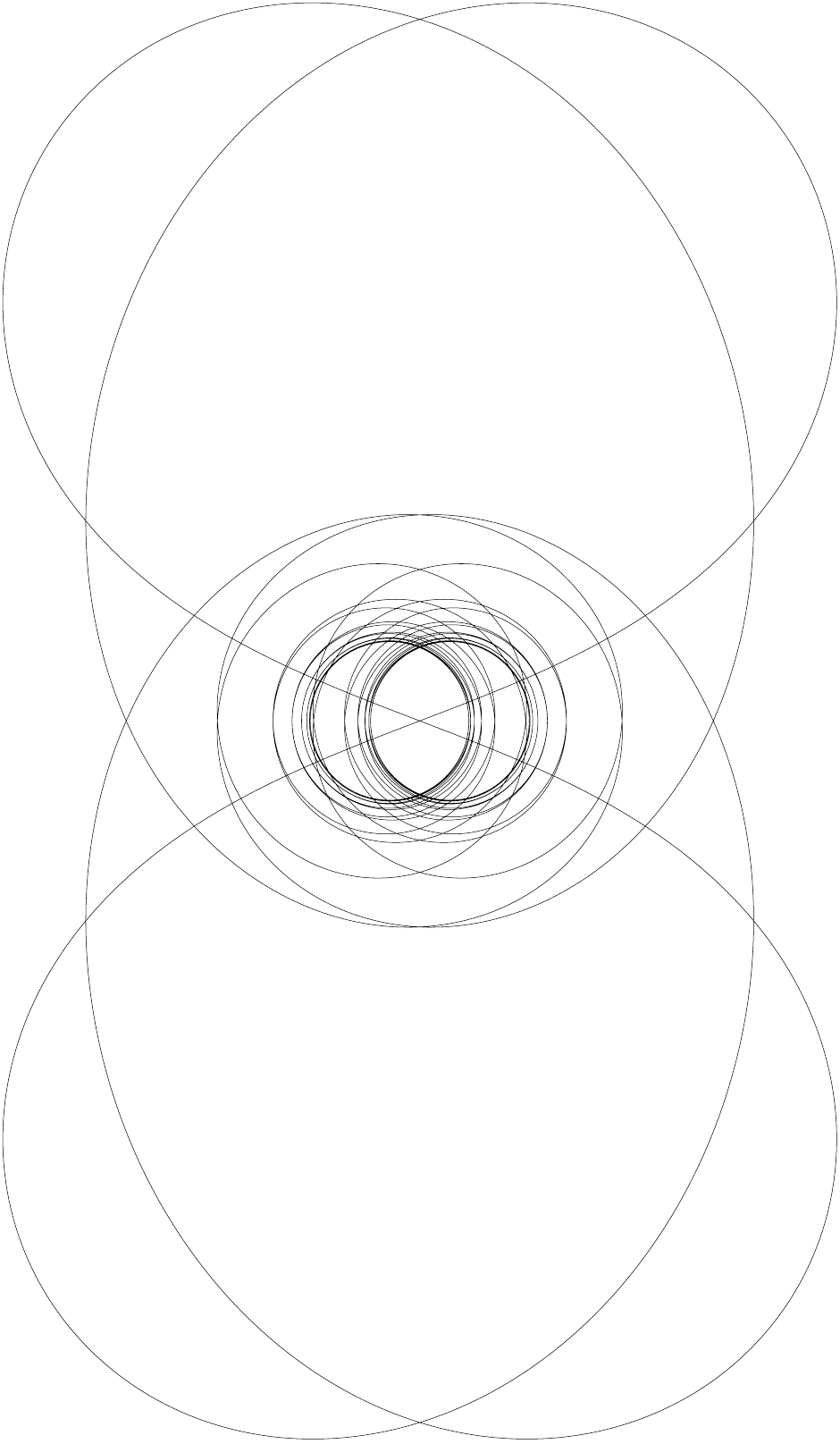}%
  \SelectedCriticalPlot{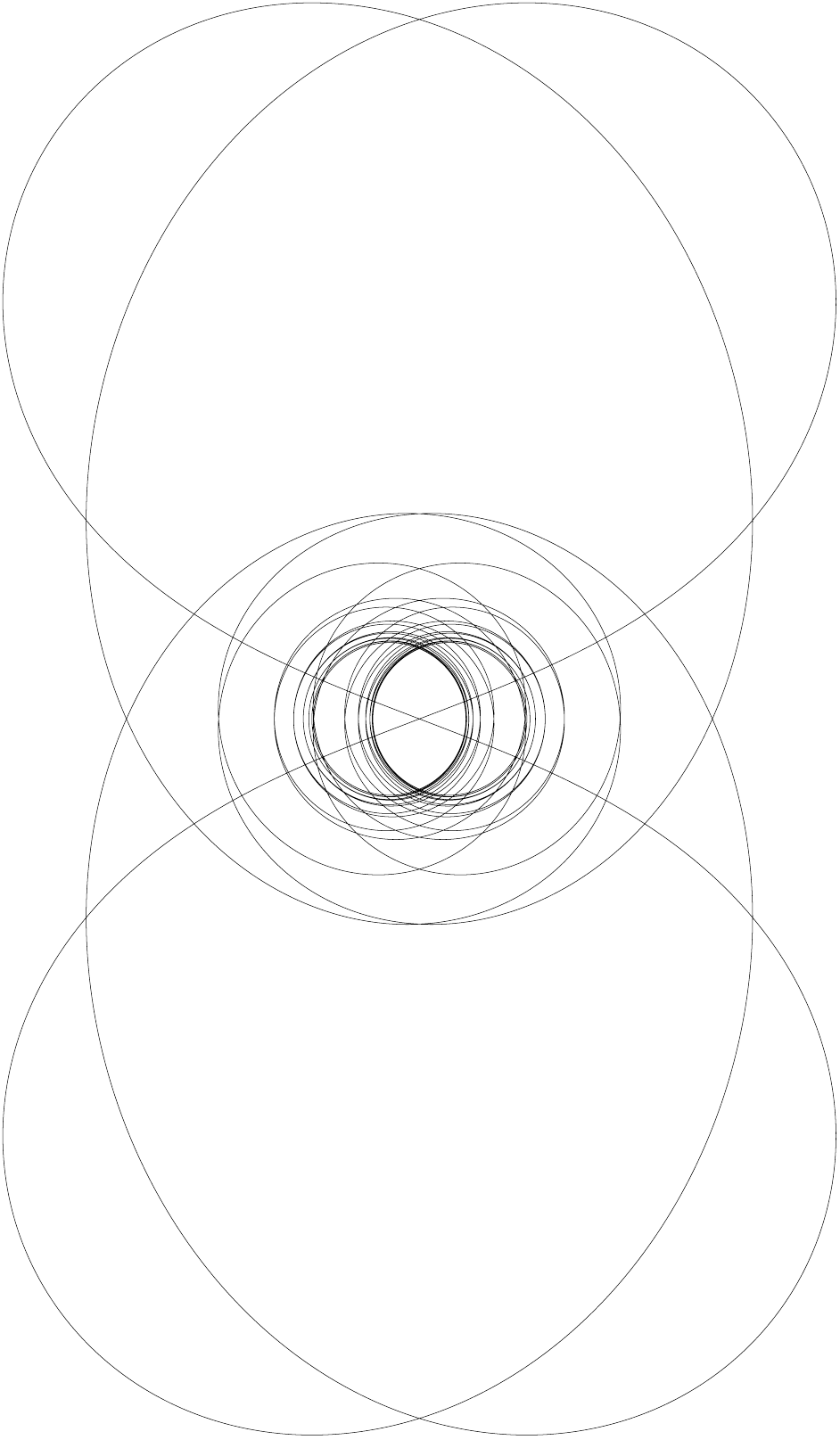}%
  \SelectedCriticalPlot{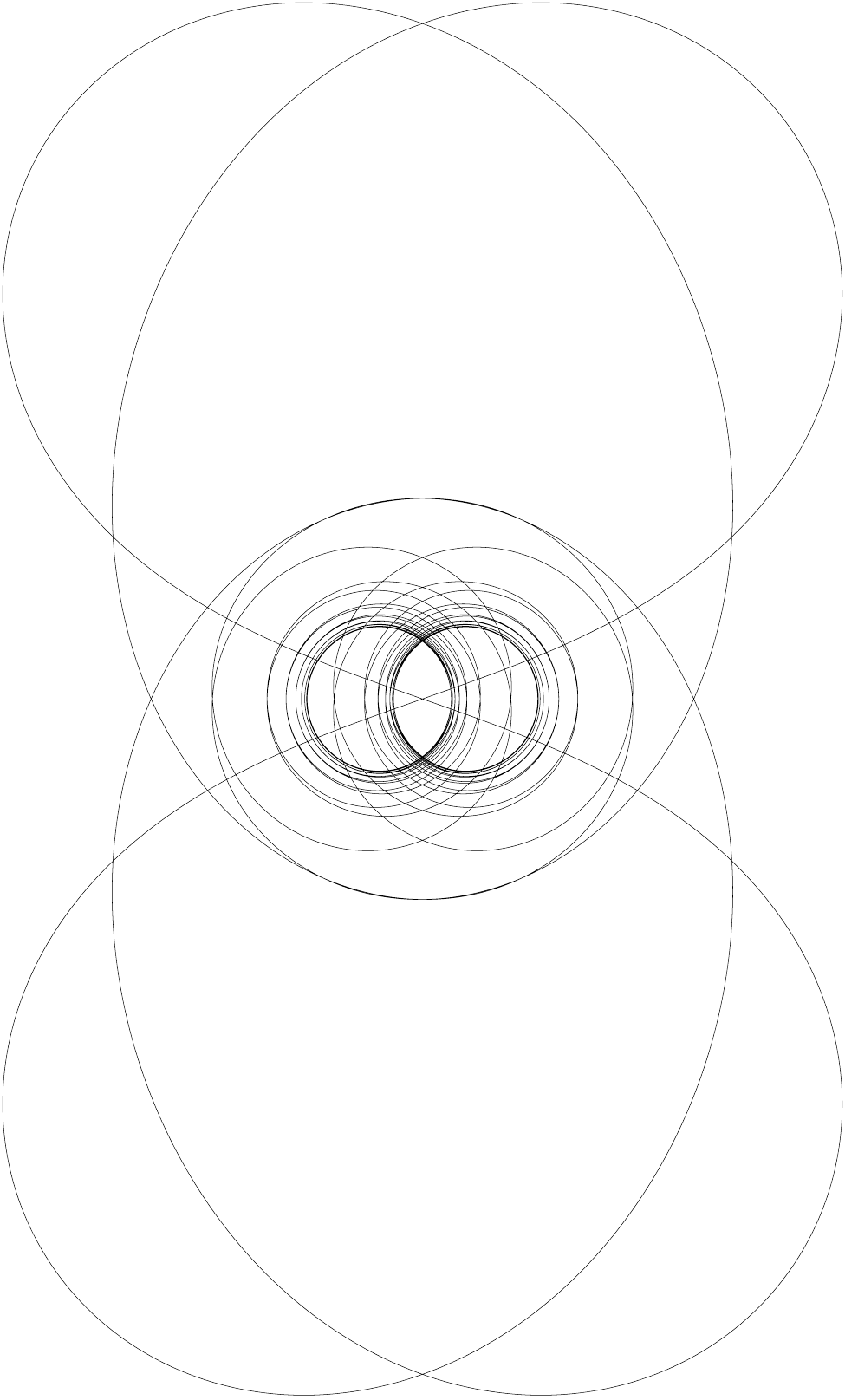}%
  \par\nointerlineskip%
  \SelectedCriticalPlot{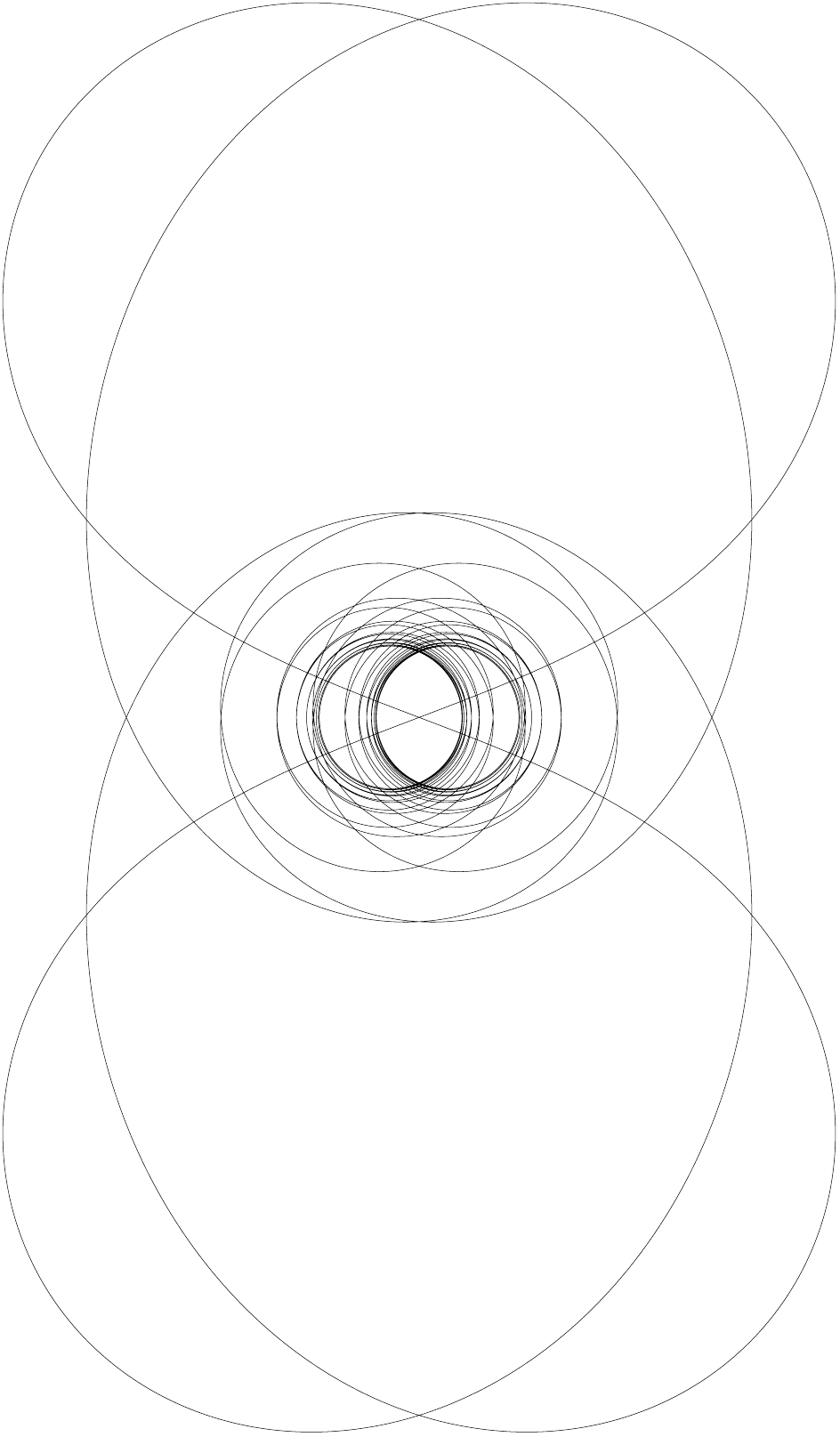}%
  \SelectedCriticalPlot{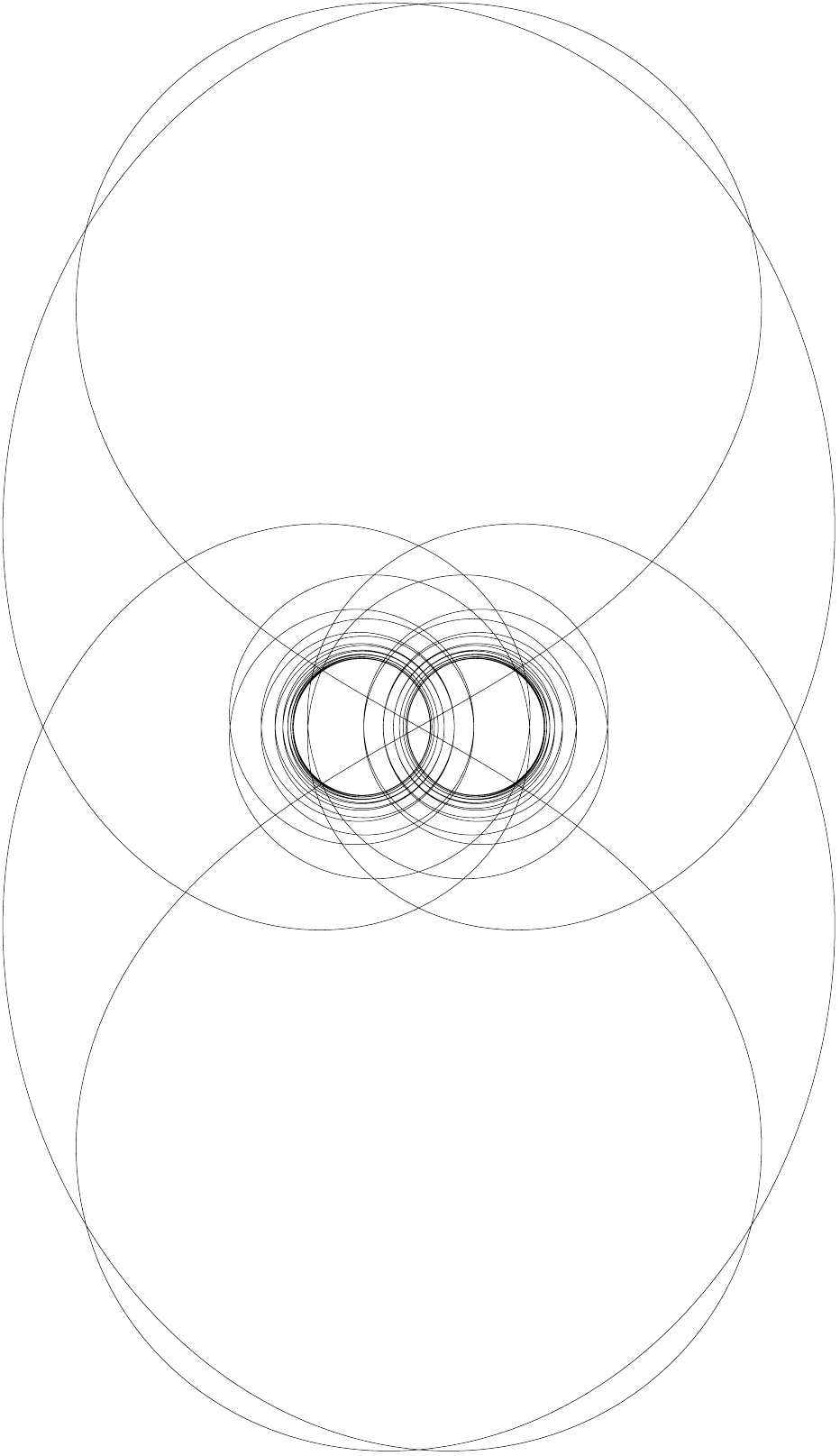}%
  \SelectedCriticalPlot{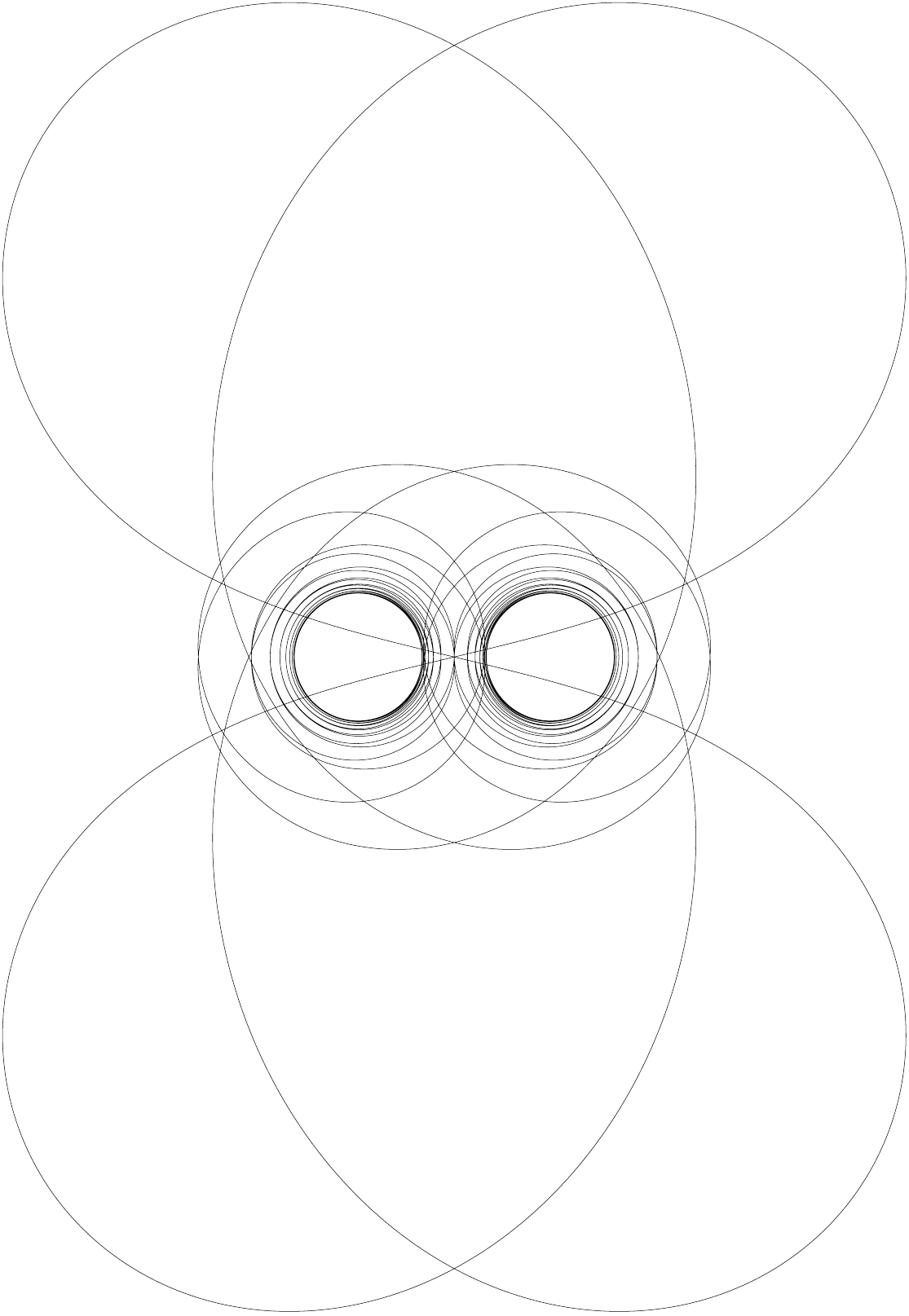}%
  \par\nointerlineskip%
  \SelectedCriticalPlot{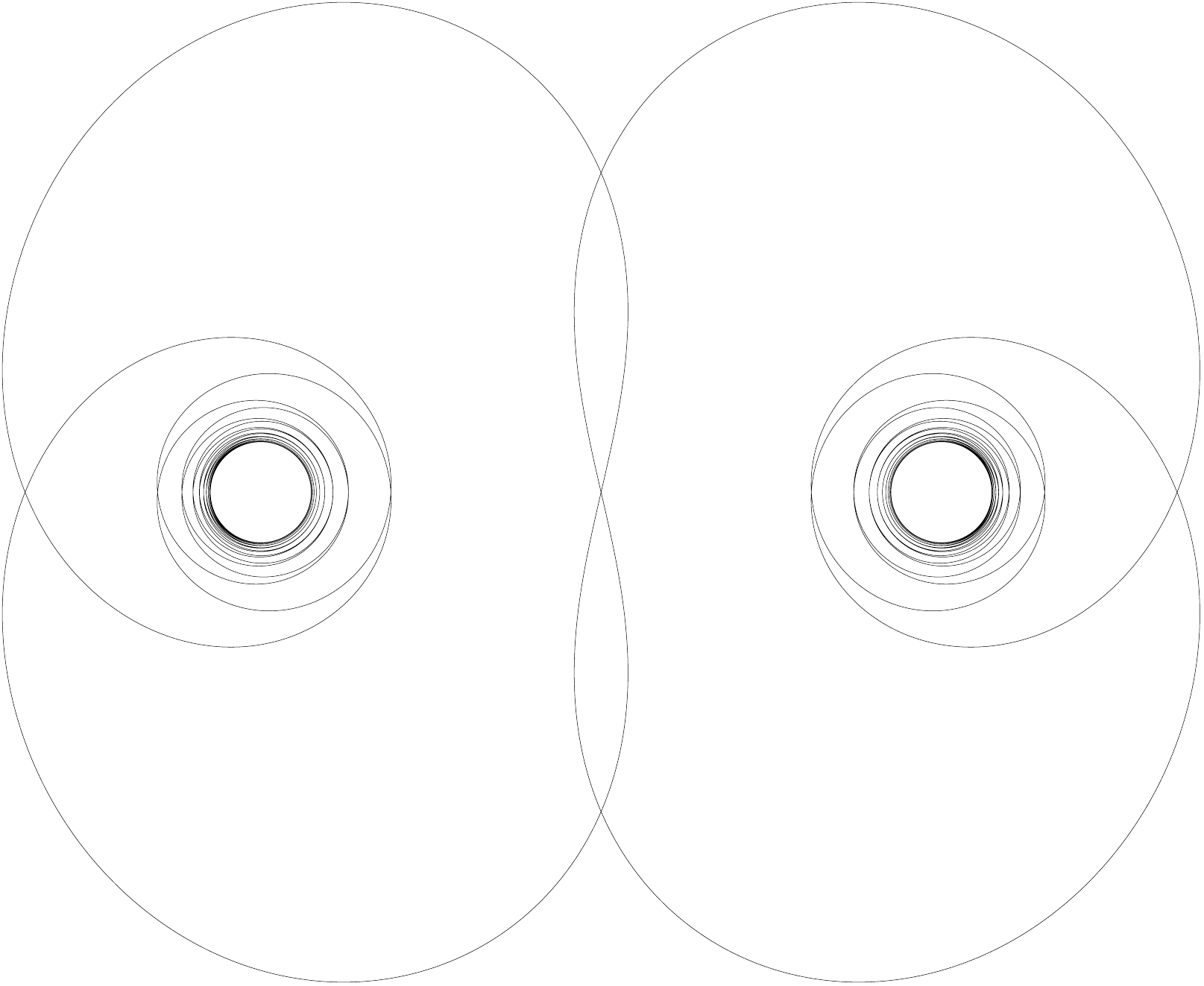}%
  \caption{Selected butterfly-type critical point candidates for \(J_1\).    The plots are ordered by their index in the numerical scan, and thus have increasing energy, from top-left to bottom-right.}
  \label{fig:selected-butterfly-type-candidates}
\end{figure}
\clearpage
\begin{longtable}{r l r r r r}
\caption{Diagnostics for the  critical points illustrated in Figures~\ref{fig:selected-first-pair}, \ref{fig:selected-lemniscate-type-candidates}, and~\ref{fig:selected-butterfly-type-candidates}. 
}\label{tab:selected-critical-point-data}\\
\toprule
$j$ & Type & $\Phi/\pi$ & quarter-arc angle $\Phi$ & total length $L[\Gamma]$ & energy $J_1[\Gamma]$\\
\midrule
\endfirsthead
\toprule
$j$ & Type & $\Phi/\pi$ & quarter-arc angle $\Phi$ & total length $L[\Gamma]$ & energy $J_1[\Gamma]$\\
\midrule
\endhead
\midrule
\multicolumn{6}{r}{\emph{Continued on next page}}\\
\endfoot
\bottomrule
\endlastfoot
\multicolumn{6}{l}{\textbf{Lemniscate-type}}\\
1 & Lemniscate-type & 0.760574 & 2.389413 & 9.063416 & 12.084554\\
3 & Lemniscate-type & 1.755809 & 5.516035 & 13.707291 & 18.276388\\
5 & Lemniscate-type & 2.754633 & 8.653936 & 17.153077 & 22.870769\\
7 & Lemniscate-type & 3.754014 & 11.793581 & 20.017515 & 26.690020\\
9 & Lemniscate-type & 4.753601 & 14.933879 & 22.521744 & 30.028992\\
11 & Lemniscate-type & 5.753298 & 18.074518 & 24.774702 & 33.032936\\
13 & Lemniscate-type & 6.753061 & 21.215366 & 26.839526 & 35.786035\\
15 & Lemniscate-type & 7.752869 & 24.356356 & 28.756666 & 38.342222\\
17 & Lemniscate-type & 8.752709 & 27.497446 & 30.553878 & 40.738505\\
19 & Lemniscate-type & 9.752573 & 30.638612 & 32.251185 & 43.001580\\
21 & Lemniscate-type & 10.752456 & 33.779836 & 33.863590 & 45.151454\\
23 & Lemniscate-type & 11.752353 & 36.921106 & 35.402683 & 47.203578\\
25 & Lemniscate-type & 12.752263 & 40.062415 & 36.877634 & 49.170179\\
27 & Lemniscate-type & 13.752180 & 43.203748 & 38.295849 & 51.061132\\
29 & Lemniscate-type & 14.752108 & 46.345115 & 39.663409 & 52.884546\\
31 & Lemniscate-type & 15.752043 & 49.486503 & 40.985382 & 54.647175\\
33 & Lemniscate-type & 16.751979 & 52.627895 & 42.266041 & 56.354722\\
35 & Lemniscate-type & 17.751908 & 55.769265 & 43.509035 & 58.012046\\
37 & Lemniscate-type & 18.751900 & 58.910830 & 44.717501 & 59.623335\\
39 & Lemniscate-type & 19.751997 & 62.052730 & 45.894172 & 61.192230\\
40 & Lemniscate-type & 20.751560 & 65.192948 & 47.041426 & 62.721902\\
42 & Lemniscate-type & 21.749793 & 68.328988 & 48.162125 & 64.216166\\
\addlinespace
\multicolumn{6}{l}{\textbf{Butterfly-type}}\\
2 & Butterfly-type & 1.442710 & 4.532409 & 17.152740 & 22.870320\\
4 & Butterfly-type & 2.398552 & 7.535272 & 22.823664 & 30.431552\\
6 & Butterfly-type & 3.391938 & 10.656087 & 26.850900 & 35.801199\\
8 & Butterfly-type & 4.389726 & 13.790729 & 30.049660 & 40.066213\\
10 & Butterfly-type & 5.388605 & 16.928803 & 32.773241 & 43.697655\\
12 & Butterfly-type & 6.387911 & 20.068214 & 35.183351 & 46.911134\\
14 & Butterfly-type & 7.387433 & 23.208307 & 37.367590 & 49.823453\\
16 & Butterfly-type & 8.387085 & 26.348804 & 39.379323 & 52.505764\\
18 & Butterfly-type & 9.386687 & 29.489148 & 41.253810 & 55.005080\\
20 & Butterfly-type & 10.387074 & 32.631955 & 43.015747 & 57.354329\\
22 & Butterfly-type & 11.385131 & 35.767443 & 44.683198 & 59.577598\\
24 & Butterfly-type & 12.387204 & 38.915548 & 46.269991 & 61.693321\\
26 & Butterfly-type & 13.387508 & 42.058096 & 47.786684 & 63.715579\\
28 & Butterfly-type & 14.383700 & 45.187726 & 49.241789 & 65.655719\\
30 & Butterfly-type & 15.384023 & 48.330333 & 50.642486 & 67.523315\\
32 & Butterfly-type & 16.392696 & 51.499173 & 51.992532 & 69.323376\\
34 & Butterfly-type & 17.384397 & 54.614693 & 53.301977 & 71.069302\\
36 & Butterfly-type & 18.336099 & 57.604554 & 54.479664 & 72.639552\\
38 & Butterfly-type & 19.425451 & 61.026854 & 55.737865 & 74.317153\\
41 & Butterfly-type & 21.072001 & 66.199642 & 54.811710 & 73.082280\\
43 & Butterfly-type & 22.162572 & 69.625773 & 57.588237 & 76.784316\\
\end{longtable}

\newpage

\section{Auxiliary estimates and symmetry calculus}\label{sec:aux}

This section collects the analytic inequalities, scaling calculations, and reflection/gluing lemmata
that will be used in the paper.

\subsection{Scaling of $J_\lambda$ and the virial identity}\label{subsec:aux-scaling}

The functional $J_\lambda$ is not scale invariant; the competition between the curvature-derivative term and length
selects a preferred scale.  We will use this both conceptually and quantitatively.

\begin{lemma}[Scaling law]\label{lem:scaling}
Let $\gamma$ be a unit-speed curve of length $L$, and let $\tilde\gamma=r\gamma$ be its dilation by $r>0$.
Then
\begin{equation}\label{eq:scaling-law}
L[\tilde\gamma]=r\,L[\gamma],
\qquad
\int_{\tilde\gamma} k_s^2\,ds = r^{-3}\int_\gamma k_s^2\,ds,
\qquad
J_\lambda[\tilde\gamma]=r^{-3}\Big(\frac12\int_\gamma k_s^2\,ds\Big)+\lambda\,r\,L[\gamma].
\end{equation}
\end{lemma}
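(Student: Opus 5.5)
The plan is to reparametrise the dilated curve by its own arclength and track how each geometric quantity transforms. The target is Lemma~\ref{lem:scaling}: for $\tilde\gamma=r\gamma$, the length scales by $r$, $\int k_s^2\,ds$ scales by $r^{-3}$, and the formula for $J_\lambda[\tilde\gamma]$ follows.

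\textbf{Step 1: arclength and length.} Let $\gamma:[0,L]\to\R^2$ be unit-speed. Then $r\gamma(s)$ has speed $r$, so $\tilde s:=rs$ is an arclength parameter for $\tilde\gamma$. We define $\tilde\gamma(\tilde s):=r\gamma(\tilde s/r)$ on $[0,rL]$, which already gives $L[\tilde\gamma]=rL[\gamma]$.

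\textbf{Step 2: tangent angle and curvature.} The unit tangent is $\tilde\gamma_{\tilde s}(\tilde s)=\gamma_s(\tilde s/r)$, so the tangent angle is $\tilde\theta(\tilde s)=\theta(\tilde s/r)$. Differentiating by the chain rule gives
\[
\tilde k(\tilde s)=r^{-1}k(\tilde s/r),\qquad \tilde k_{\tilde s}(\tilde s)=r^{-2}k_s(\tilde s/r).
\]

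\textbf{Step 3: curvature-derivative integral.} Substituting $\tilde s=rs$, so that $d\tilde s=r\,ds$, we get
\[
\int_0^{rL}\tilde k_{\tilde s}^2\,d\tilde s=\int_0^L r^{-4}k_s(s)^2\,r\,ds=r^{-3}\int_\gamma k_s^2\,ds .
\]

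\textbf{Step 4: assemble $J_\lambda$.} Inserting Steps 1 and 3 into the definition of $J_\lambda$ gives the third identity in \eqref{eq:scaling-law}.

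There is no real obstacle here. The only point needing care is to use the arclength of $\tilde\gamma$ rather than the original parameter, since that choice is what produces the exponent $-4+1=-3$.
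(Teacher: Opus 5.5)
Your proposal is correct and follows the same route as the paper: reparametrise by the arclength $\tilde s=rs$ of the dilated curve, note $\tilde k=r^{-1}k$ and $\tilde k_{\tilde s}=r^{-2}k_s$, and combine this with $d\tilde s=r\,ds$ to obtain the factor $r^{-3}$. Your write-up is just a more explicit version of the paper's one-line scaling argument, with the chain-rule steps spelled out through the tangent angle.
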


\begin{proof}
Under $x\mapsto r x$, arclength scales as $ds\mapsto r\,ds$.
Curvature scales as $k\mapsto r^{-1}k$, hence $k_s=dk/ds$ scales as
\[
k_s \mapsto \frac{d(r^{-1}k)}{r\,ds}=r^{-2}k_s.
\]
Therefore $k_s^2\,ds$ scales as $(r^{-4})(r\,ds)=r^{-3}(k_s^2\,ds)$, proving \eqref{eq:scaling-law}.
\end{proof}

\begin{corollary}[Virial identity at smooth critical points]\label{cor:virial}
If $\gamma$ is a smooth critical point of $J_\lambda$ with respect to ambient dilations then
\begin{equation}\label{eq:virial}
\frac32\int_\gamma k_s^2\,ds=\lambda\,L[\gamma].
\end{equation}
\end{corollary}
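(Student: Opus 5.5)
The plan is to apply criticality along the one-parameter family of dilations $r\mapsto r\gamma$ and read off the identity from Lemma~\ref{lem:scaling}. Write $E:=\frac12\int_\gamma k_s^2\,ds$ and $L:=L[\gamma]$. By \eqref{eq:scaling-law}, for $r>0$ we have the explicit formula
\[
f(r):=J_\lambda[r\gamma]=r^{-3}E+\lambda rL .
\]
This is a smooth function of $r$ on $(0,\infty)$ with $f(1)=J_\lambda[\gamma]$.

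The first step is to identify the hypothesis with the condition $f'(1)=0$. The dilation family $r\gamma$ is a smooth variation of $\gamma$ with variation field $V=\partial_r(r\gamma)|_{r=1}=\gamma$, the position vector. The hypothesis that $\gamma$ is critical with respect to ambient dilations therefore says exactly that $\frac{d}{dr}\big|_{r=1}J_\lambda[r\gamma]=\delta J_\lambda[\gamma](\gamma)=0$. For a closed curve this field is an admissible smooth variation field. For the non-closed (periodic-up-to-translation) waves one works per period cell, and the statement is read as vanishing of the derivative along the family.

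The second step is to compute the derivative:
\[
f'(1)=-3E+\lambda L=-\tfrac32\int_\gamma k_s^2\,ds+\lambda L[\gamma].
\]
Setting this equal to zero gives \eqref{eq:virial}.

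There is no real obstacle here. The only point needing care is to confirm that the scaling law applies to $\gamma$ itself, not merely to its unit-speed reparametrisation. This holds because both terms of $J_\lambda$ are geometric, that is, independent of parametrisation. It is also why the curvature term scales exactly as $r^{-3}$ with no boundary contributions: the closed curve, or a full period cell, has no endpoints that would contribute.
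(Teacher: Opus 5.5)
Your argument is correct and is exactly the paper's proof: write $J_\lambda[r\gamma]=r^{-3}E+\lambda rL$ using Lemma~\ref{lem:scaling}, differentiate at $r=1$, and set $-3E+\lambda L=0$. One caution on your side remarks: the scaling law is a change of variables valid on any arc, so endpoints play no role in it. Also, reading the hypothesis per cell does not make it hold for the circular waves: Table~\ref{tab:circular-waves} gives $\frac32\int_{\mathrm{cell}}k_s^2\,ds\approx1.82\neq3.05\approx L_{\mathrm{cell}}$ for $N=0$ at $\lambda=1$, because the drift $P_{N,\varepsilon}\neq0$ destroys the closure that the virial balance requires.
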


\begin{proof}
By \eqref{eq:scaling-law},
\[
J_\lambda[r\gamma]=r^{-3}\Big(\frac12\int_\gamma k_s^2\,ds\Big)+\lambda\,r\,L[\gamma].
\]
Differentiate at $r=1$ and set the derivative to $0$:
\[
0=-3\Big(\frac12\int_\gamma k_s^2\,ds\Big)+\lambda\,L[\gamma],
\]
which is exactly \eqref{eq:virial}.
\end{proof}

\subsection{A sharp turning-energy inequality}\label{subsec:aux-coercive}

A key estimate is that, once we impose a seam condition $k(L)=0$ at the end of an arc,
the turning $\Phi=\int_0^L k\,ds$ controls $\|k_s\|_{L^2}$ sharply.  This yields coercivity and
uniform length bounds for minimising sequences.

\begin{lemma}[Sharp turning controls $\|k_s\|_{L^2}$]\label{lem:turning-controls-ks}
Let $L>0$ and let $k\in H^1(0,L)$ satisfy the endpoint seam constraint $k(L)=0$.
Define the \emph{unwrapped turning}
\[
\Phi:=\int_0^L k(s)\,ds.
\]
Then
\begin{equation}\label{eq:turning-ineq}
\int_0^L k_s^2\,ds \;\ge\; \frac{3\Phi^2}{L^3}.
\end{equation}
Moreover, the constant $3$ is sharp, and equality holds in \eqref{eq:turning-ineq} if and only if
\begin{equation}\label{eq:equality-ks}
k_s(s)=-\frac{3\Phi}{L^3}\,s\quad\text{for a.e. }s\in(0,L),
\end{equation}
equivalently,
\begin{equation}\label{eq:equality-k}
k(s)=\frac{3\Phi}{2L^3}\,(L^2-s^2)\quad\text{for a.e. }s\in(0,L).
\end{equation}
\end{lemma}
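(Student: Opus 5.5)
Since $k(L)=0$ and $k\in H^1(0,L)$ is absolutely continuous, we can write $k(s)=-\int_s^L k_s(\sigma)\,d\sigma$. Integrating over $[0,L]$ and applying Fubini gives
\[
\Phi=\int_0^L k\,ds=-\int_0^L\!\int_s^L k_s(\sigma)\,d\sigma\,ds=-\int_0^L \sigma\,k_s(\sigma)\,d\sigma .
\]
Equivalently, one integrates by parts against the weight $s$: $\int_0^L k\,ds=[s k]_0^L-\int_0^L s k_s\,ds$, and the boundary term vanishes because $k(L)=0$ and the factor $s$ kills the contribution at $s=0$. This is the one place the seam constraint enters. It is exactly why no condition at $s=0$ is needed, and why the extremal is a one-sided parabola rather than a symmetric one.

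Next apply Cauchy--Schwarz:
\[
\Phi^2=\Big(\int_0^L s\,k_s\,ds\Big)^2\le \int_0^L s^2\,ds\int_0^L k_s^2\,ds=\frac{L^3}{3}\int_0^L k_s^2\,ds,
\]
which is \eqref{eq:turning-ineq}.

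For the equality case, recall that equality in Cauchy--Schwarz holds iff $k_s=\mu s$ a.e. for some constant $\mu\in\R$. When $\Phi=0$, equality forces $k_s=0$ a.e., so $k\equiv 0$, which is consistent with \eqref{eq:equality-ks}. In general, substituting $k_s=\mu s$ into the identity $\Phi=-\int_0^L s k_s\,ds=-\mu L^3/3$ gives $\mu=-3\Phi/L^3$, which is \eqref{eq:equality-ks}. Integrating from $s$ to $L$ with $k(L)=0$ then yields $k(s)=\frac{3\Phi}{2L^3}(L^2-s^2)$, which is \eqref{eq:equality-k}. Conversely, this $k$ lies in $H^1$, satisfies $k(L)=0$, has $\int_0^L k\,ds=\Phi$, and attains equality by direct computation. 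So equality is attained for every $\Phi$, and the constant $3$ is sharp.

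There is no real obstacle here. The only point needing care is to justify the integration by parts (or the Fubini step) for $H^1$ functions, which follows from absolute continuity of $k$, and to keep the sign of $\mu$ consistent in the equality case.
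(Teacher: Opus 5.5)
Your proof is correct and follows essentially the same route as the paper: you write $k(s)=-\int_s^L k_s$ using $k(L)=0$, apply Fubini to get $\Phi=-\int_0^L s\,k_s\,ds$, use Cauchy--Schwarz against the weight $s$, and identify the equality case $k_s=\mu s$ with $\mu=-3\Phi/L^3$. Your separate treatment of $\Phi=0$ and the explicit converse check are harmless additions to the paper's argument.
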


\begin{proof}
Since $k(L)=0$ and $k\in H^1(0,L)$, for a.e.\ $s\in(0,L)$,
\[
k(s)=k(s)-k(L)=-\int_s^L k_s(t)\,dt.
\]
Integrating in $s$ and using Fubini,
\[
\Phi=\int_0^L k(s)\,ds
=-\int_0^L\int_s^L k_s(t)\,dt\,ds
=-\int_0^L t\,k_s(t)\,dt.
\]
By Cauchy-Schwarz,
\[
|\Phi|
\le \Big(\int_0^L t^2\,dt\Big)^{1/2}\Big(\int_0^L k_s(t)^2\,dt\Big)^{1/2}
=\Big(\frac{L^3}{3}\Big)^{1/2}\|k_s\|_{L^2(0,L)}.
\]
Squaring and rearranging yields \eqref{eq:turning-ineq}.

Equality holds in Cauchy-Schwarz if and only if $k_s(t)=\alpha t$ a.e.\ for some constant $\alpha$.
Imposing $k(L)=0$ gives
\[
k(s)=-\int_s^L \alpha t\,dt=\frac{\alpha}{2}(s^2-L^2),
\]
and integrating this expression over $[0,L]$ yields $\Phi=-\alpha L^3/3$, hence
$\alpha=-3\Phi/L^3$. This gives \eqref{eq:equality-ks} and \eqref{eq:equality-k}.
Conversely, any $k$ of the form \eqref{eq:equality-k} achieves equality in \eqref{eq:turning-ineq}.
\end{proof}

Note that we use the terminology `unwrapped' to describe $\Phi$ as this angle is lifted to the universal cover $\R$ of $\S$ and not $\S$ itself.

\begin{figure}[t]
  \centering
  \includegraphics[clip=true,trim=1cm 10cm 1cm 10cm,width=0.38\linewidth]{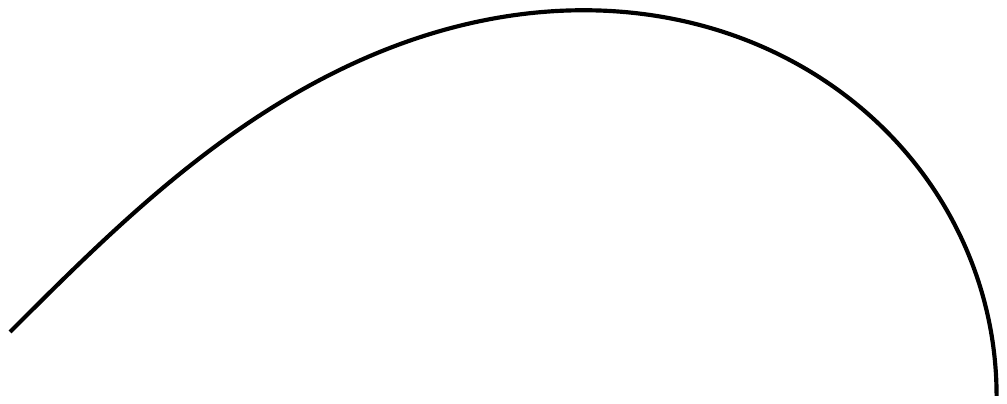}
  \caption{Illustration of the ``equality-case'' fundamental arc obtained by integrating the curvature profile
  \(
    k(s)=\frac{3\Phi}{2L^3}(L^2-s^2)
  \)
  on $s\in[0,L]$ (here plotted with representative parameters $L=1$, $\Phi=3\pi/4$, and initial tangent angle
  $\theta(0)=\pi/2$).  This parabolic curvature saturates the sharp turning inequality
  \(
    \int_0^L k_s^2\,ds \ge 3\Phi^2/L^3
  \)
  under the seam condition $k(L)=0$. Interestingly, it already appears close to the fundamental arc of the lemniscate in Figure \ref{fig:selected-first-pair}.}
  \label{fig:equality-arc}
\end{figure}

\begin{corollary}[Coercive lower bound for the arc energy]\label{cor:arc-lower-bound}
Let $\gamma:[0,L]\to\R^2$ be unit-speed with curvature $k\in H^1(0,L)$ satisfying $k(L)=0$ and turning $\Phi$.
Then
\begin{equation}\label{eq:J-lower-coercive}
J_\lambda[\gamma]
=\frac12\int_0^L k_s^2\,ds+\lambda L
\;\ge\;
\frac{3\Phi^2}{2L^3}+\lambda L.
\end{equation}
Moreover, if $\Phi\ne0$, then for fixed $(\lambda,\Phi)$ the right-hand side is minimised over $L>0$ at
\begin{equation}\label{eq:L-opt}
L_\ast=\Big(\frac{9\Phi^2}{2\lambda}\Big)^{1/4},
\end{equation}
and one obtains the explicit sharp bound
\begin{equation}\label{eq:arc-min-value}
J_\lambda[\gamma]
\;\ge\;
\min_{L>0}\Big(\frac{3\Phi^2}{2L^3}+\lambda L\Big)
=
\frac{4}{3}\Big(\frac{9}{2}\Big)^{\!1/4}\lambda^{3/4}|\Phi|^{1/2}.
\end{equation}
\end{corollary}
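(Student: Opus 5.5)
The corollary follows from Lemma~\ref{lem:turning-controls-ks} followed by a one-variable calculus minimisation. For the first inequality, write $J_\lambda[\gamma]=\frac12\int_0^L k_s^2\,ds+\lambda L$ for the unit-speed arc. Since $k\in H^1(0,L)$ and $k(L)=0$, the lemma gives $\int_0^L k_s^2\,ds\ge 3\Phi^2/L^3$. Halving this and adding $\lambda L$ yields \eqref{eq:J-lower-coercive}.

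For the optimisation, fix $\lambda>0$ and $\Phi\ne0$, and set
\[
f(L):=\frac{3\Phi^2}{2L^3}+\lambda L,\qquad L>0.
\]
We have $f(L)\to\infty$ both as $L\downarrow0$ and as $L\to\infty$, and $f$ is strictly convex because
\[
f''(L)=\frac{18\Phi^2}{L^5}>0.
\]
Hence $f$ has a unique minimiser, located at the unique critical point. Solving
\[
f'(L)=-\frac{9\Phi^2}{2L^4}+\lambda=0
\]
gives $L_\ast^4=9\Phi^2/(2\lambda)$, which is \eqref{eq:L-opt}.

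To evaluate the minimum, use the critical-point relation $\frac{3\Phi^2}{2L_\ast^3}=\frac{\lambda L_\ast}{3}$. Then
\[
f(L_\ast)=\frac{\lambda L_\ast}{3}+\lambda L_\ast=\frac43\lambda L_\ast=\frac43\lambda\Big(\frac{9}{2}\Big)^{1/4}\lambda^{-1/4}|\Phi|^{1/2}=\frac43\Big(\frac92\Big)^{1/4}\lambda^{3/4}|\Phi|^{1/2}.
\]
Since $J_\lambda[\gamma]\ge f(L)\ge\min_{L>0} f=f(L_\ast)$, where $L$ is the actual length of $\gamma$, this gives \eqref{eq:arc-min-value}.

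There is no real obstacle. The only care points are the bookkeeping $(\Phi^2)^{1/4}=|\Phi|^{1/2}$ and noting that the infimum over $L$ is actually attained because $f$ is coercive at both ends of $(0,\infty)$.
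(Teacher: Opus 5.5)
Your proposal is correct and follows essentially the same route as the paper: it applies Lemma~\ref{lem:turning-controls-ks} to get the first bound, then minimises the strictly convex function $f(L)=\frac{3\Phi^2}{2L^3}+\lambda L$ at its unique critical point. The only differences are cosmetic. You evaluate $f(L_\ast)$ through the critical-point relation $\frac{3\Phi^2}{2L_\ast^3}=\frac{\lambda L_\ast}{3}$ rather than by direct substitution, and you add the remark that $f$ is coercive at both ends of $(0,\infty)$. Both are valid.
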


\begin{proof}
The inequality \eqref{eq:J-lower-coercive} is immediate from Lemma~\ref{lem:turning-controls-ks}.
Assume $\Phi\ne0$.  To minimise $f(L):=\frac{3\Phi^2}{2L^3}+\lambda L$ over $L>0$, compute
\[
f'(L)= -\frac{9\Phi^2}{2L^4}+\lambda,\qquad
f''(L)=\frac{18\Phi^2}{L^5}>0.
\]
Hence $f$ is strictly convex and its unique minimiser solves $f'(L)=0$, i.e.
\(
L^4=\frac{9\Phi^2}{2\lambda}
\), 
which is \eqref{eq:L-opt}.  Substituting $L=L_\ast$ gives
\[
\frac{3\Phi^2}{2L_\ast^3}
=\frac{3\Phi^2}{2}\Big(\frac{9\Phi^2}{2\lambda}\Big)^{-3/4}
=\lambda^{3/4}|\Phi|^{1/2}\cdot \frac{3}{2}\Big(\frac{9}{2}\Big)^{-3/4}
=\lambda^{3/4}|\Phi|^{1/2}\cdot\frac{1}{3}\Big(\frac{9}{2}\Big)^{1/4},
\]
and
\[
\lambda L_\ast=\lambda\Big(\frac{9\Phi^2}{2\lambda}\Big)^{1/4}
=\lambda^{3/4}|\Phi|^{1/2}\cdot \Big(\frac{9}{2}\Big)^{1/4}.
\]
Therefore
\[
f(L_\ast)=\lambda^{3/4}|\Phi|^{1/2}\Big(\frac{1}{3}+\;1\Big)\Big(\frac{9}{2}\Big)^{1/4}
=\frac{4}{3}\Big(\frac{9}{2}\Big)^{1/4}\lambda^{3/4}|\Phi|^{1/2},
\]
which is \eqref{eq:arc-min-value}.
\end{proof}

\subsection{Rescaling to a fixed domain}\label{subsec:aux-rescaling}

When working with tangent angles, it is convenient to pass from $[0,L]$ to $[0,1]$.
This isolates the coercive $L^{-3}$ scaling of the curvature-derivative term.
The proof is the same scaling argument used previously in Lemma \ref{lem:scaling}, so we omit it.

\begin{lemma}[Angle rescaling formulas]\label{lem:angle-rescaling}
Let $L>0$ and let $\theta\in H^2(0,L)$.  Define $\phi\in H^2(0,1)$ by
\[
\phi(t):=\theta(Lt),\qquad t\in[0,1].
\]
Then, with $s=Lt$,
\begin{equation}\label{eq:rescaling-derivatives}
\theta_s(Lt)=\frac1L\,\phi'(t),\qquad
\theta_{ss}(Lt)=\frac1{L^2}\,\phi''(t),\qquad ds=L\,dt,
\end{equation}
and consequently
\begin{equation}\label{eq:rescaled-energy}
\int_0^L \theta_{ss}^2\,ds
=\frac{1}{L^3}\int_0^1 (\phi'')^2\,dt.
\end{equation}
Moreover, the endpoint height constraint
\[
y(L)=\int_0^L \sin\theta(s)\,ds
\]
rescales to
\begin{equation}\label{eq:rescaled-closure}
y(L)=L\int_0^1 \sin\phi(t)\,dt.
\end{equation}
\end{lemma}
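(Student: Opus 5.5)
The argument is a direct change of variables $s=Lt$ in each formula, in the same spirit as the proof of Lemma~\ref{lem:scaling}. First I would record the derivative identities \eqref{eq:rescaling-derivatives}. Since $\theta\in H^2(0,L)$, the composition $\phi(t)=\theta(Lt)$ with the affine map $t\mapsto Lt$ lies in $H^2(0,1)$. The chain rule, which holds for weak derivatives under affine changes of variable, gives $\phi'(t)=L\,\theta_s(Lt)$ and $\phi''(t)=L^2\,\theta_{ss}(Lt)$ for a.e.\ $t$. Solving these for $\theta_s$ and $\theta_{ss}$ gives the first two identities, and $ds=L\,dt$ is just the differential of the substitution.

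For the energy identity \eqref{eq:rescaled-energy}, I would substitute $s=Lt$ in $\int_0^L\theta_{ss}^2\,ds$:
\[
\int_0^L\theta_{ss}(s)^2\,ds=\int_0^1\theta_{ss}(Lt)^2\,L\,dt=\int_0^1 L^{-4}\phi''(t)^2\,L\,dt=\frac{1}{L^3}\int_0^1(\phi'')^2\,dt.
\]
This is the $L^{-3}$ factor already seen in Lemma~\ref{lem:scaling} and Lemma~\ref{lem:turning-controls-ks}. For the height constraint \eqref{eq:rescaled-closure}, the same substitution gives
\[
\int_0^L\sin\theta(s)\,ds=L\int_0^1\sin\theta(Lt)\,dt=L\int_0^1\sin\phi(t)\,dt.
\]
Here $\sin\theta$ is continuous, because $H^2\hookrightarrow C^1$ in one dimension, so the integral needs no further justification.

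No step is a genuine obstacle. The only point needing care is the weak-derivative chain rule for the affine reparametrisation. I would justify it by approximating $\theta$ in $H^2(0,L)$ by smooth functions, applying the classical chain rule to the approximants, and passing to the limit. The limit is legitimate because composition with an affine map is a bounded linear isomorphism $H^2(0,L)\to H^2(0,1)$, which can be checked on the smooth approximants.
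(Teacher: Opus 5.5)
Your proposal is correct and is essentially the argument the paper has in mind. The paper omits this proof as ``the same scaling argument used previously in Lemma~\ref{lem:scaling}'', namely the substitution $s=Lt$ with the chain rule, which is exactly what you carry out (with a bit more care about weak derivatives than the paper gives).
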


\subsection{First variation with a single geometric closure constraint}\label{subsec:aux-EL}

Later we will minimise $J_\lambda$ over an admissible class of arcs described by $(L,\theta)$ with fixed endpoint
angles and the scalar closure constraint $\int_0^L\sin\theta\,ds=0$.
In this subsection we compute the Euler-Lagrange equation in the $\theta$-variable and the natural boundary
condition at the free end $s=0$.

\begin{remark}[Angle variations are fixed-length variations]
For fixed \(L>0\), the tangent angle is used here as a coordinate on
unit-speed arcs of length \(L\).  Given \(\theta\in H^2(0,L)\), we reconstruct
\[
\gamma_\theta(s)=\gamma_\theta(0)+\int_0^s(\cos\theta(\sigma),\sin\theta(\sigma))\,d\sigma .
\]
Thus every nearby angle \(\theta+\varepsilon\eta\) reconstructs a unit-speed
arc on the same interval \([0,L]\).  In this angle-space problem the measure
\(ds\) is therefore fixed by construction.  The resulting Euler--Lagrange
equation is a fixed-length, inextensible first variation.  It is not, by
itself, the full geometric first variation of \(J_\lambda\); length-changing
variations are accounted for separately through the length variable \(\ell\)
and, a posteriori, through the Hamiltonian identity below.
\end{remark}

\begin{lemma}[Euler-Lagrange equation and natural boundary condition]\label{lem:EL-and-natural}
Fix $L>0$ and $\theta_0,\theta_1\in\R$.
Consider the  functional
\[
\mathcal{I}[\theta]
:=\frac12\int_0^L \theta_{ss}^2\,ds
\quad\text{subject to}\quad
\int_0^L \sin\theta\,ds=0,
\ \theta(0)=\theta_0,
\ \theta(L)=\theta_1, 
\ \theta_s(L)=0,   
\quad \theta\in H^2(0,L).
\]
If $\theta$ is a minimiser, then there exists a constant $a\in\R$ such that:
\begin{enumerate}[label=\textup{(\roman*)},leftmargin=2.0em]
\item $\theta$ satisfies the weak Euler-Lagrange equation
\begin{equation}\label{eq:EL-aux}
\theta_{ssss}+a\cos\theta=0\qquad\text{on }(0,L);
\end{equation}
\item the natural boundary condition
\begin{equation}\label{eq:natural-aux}
\theta_{ss}(0)=0
\end{equation}
holds.
\end{enumerate}
If, in addition, $\cos(\theta_0)=0$ (e.g.\ $\theta_0=\pi/2$), then evaluating \eqref{eq:EL-aux} at $s=0$ yields
\begin{equation}\label{eq:third-seam-aux}
\theta_{ssss}(0)=0.
\end{equation}
\end{lemma}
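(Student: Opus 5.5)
The proof is a Lagrange multiplier rule for a constrained minimisation in $H^2(0,L)$, followed by regularity and an integration by parts to read off the boundary terms. Let $\mathcal{H}:=\{\eta\in H^2(0,L):\eta(0)=\eta(L)=\eta_s(L)=0\}$. This is the tangent space to the affine set defined by the three boundary conditions; note that $\eta_s(0)$ is left free. The closure constraint is the $C^1$ functional $G[\theta]:=\int_0^L\sin\theta\,ds$, and its derivative in direction $\eta$ is $G'[\theta]\eta=\int_0^L\cos\theta\,\eta\,ds$. The energy $\mathcal{I}$ is also $C^1$ on $H^2$, with $\mathcal{I}'[\theta]\eta=\int_0^L\theta_{ss}\eta_{ss}\,ds$.

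\textbf{Step 1: multiplier.} If $G'[\theta]$ vanishes on $\mathcal{H}$, then $\cos\theta\equiv0$ on $(0,L)$, so $\theta$ is constant. That constant is then an odd multiple of $\pi/2$, and $\int\sin\theta\neq0$, contradicting the constraint. Hence $G'[\theta]\neq 0$ on $\mathcal{H}$, and the standard Lagrange multiplier theorem in Banach spaces gives $\mu\in\R$ with
\[
\int_0^L \theta_{ss}\eta_{ss}\,ds+\mu\int_0^L\cos\theta\,\eta\,ds=0\qquad\text{for all }\eta\in\mathcal{H}.
\]
Setting $a:=\mu$, this is the weak form of \eqref{eq:EL-aux}.

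\textbf{Step 2: regularity.} Test first with $\eta\in C_c^\infty(0,L)$. This shows $(\theta_{ss})_{ss}=-a\cos\theta$ in the sense of distributions. The right-hand side is continuous, so $\theta_{ss}\in C^2$. Bootstrapping then gives $\theta\in C^\infty([0,L])$ up to the boundary, and \eqref{eq:EL-aux} holds classically.

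\textbf{Step 3: natural boundary condition.} Take a general $\eta\in\mathcal{H}$ and integrate by parts twice, using the equation:
\[
0=\big[\theta_{ss}\eta_s-\theta_{sss}\eta\big]_0^L .
\]
At $s=L$ both $\eta$ and $\eta_s$ vanish, and at $s=0$ we have $\eta(0)=0$, so only $-\theta_{ss}(0)\eta_s(0)=0$ remains. Since $\eta_s(0)$ is arbitrary (take, for instance, $\eta(s)=s(L-s)^2$), we get \eqref{eq:natural-aux}.

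\textbf{Step 4: the case $\cos\theta_0=0$.} By Step 2 the equation holds up to $s=0$. Since $\theta(0)=\theta_0$ with $\cos\theta_0=0$, we get $\theta_{ssss}(0)=-a\cos\theta_0=0$, which is \eqref{eq:third-seam-aux}.

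The only genuinely delicate point is the nondegeneracy of the constraint in Step 1; the argument there disposes of it. Everything else is routine, so I expect no real obstacle.
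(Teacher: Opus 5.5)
Your proposal is correct and takes essentially the same route as the paper. Both arguments use a Lagrange multiplier on the tangent space $\{\eta(0)=\eta(L)=\eta_s(L)=0\}$, with the closure constraint shown nondegenerate because $\cos\theta\not\equiv 0$, then integrate by parts so that only $-\theta_{ss}(0)\eta_s(0)$ survives, bootstrap, and evaluate the equation at $s=0$. The only differences are minor: the paper checks the constraint qualification with the explicit test function $s^2(L-s)^2\cos\theta$ rather than your density argument, and you (a little more carefully) prove regularity before integrating by parts.
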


\begin{proof}
Let $\eta\in H^2(0,L)$ be an admissible variation preserving the endpoint/seam constraints to first order, i.e.
\[
\eta(0)=\eta(L)=0,\qquad \eta_s(L)=0.
\]
For $\varepsilon$ small set $\theta_\varepsilon:=\theta+\varepsilon\eta$.
By the Lagrange multiplier principle, there exists $a\in\R$ such that
\begin{equation}
\label{eqn:lamult}
\frac{d}{d\varepsilon}\Big|_{\varepsilon=0}
\left[
\frac12\int_0^L (\theta_\varepsilon)_{ss}^2\,ds
+a\int_0^L \sin(\theta_\varepsilon)\,ds
\right]=0
\quad\text{for all such }\eta.
\end{equation}
To justify the multiplier, define the constraint map $C:H^2(0,L)\to\R$ by
\[
C(\theta):=\int_0^{L}\sin\theta\,ds.
\]
Then $C$ is $C^1$ and
\[
DC(\theta)[\eta]=\int_0^{L}\cos(\theta)\,\eta\,ds.
\]
Since $C(\theta)=0$, we cannot have $\cos(\theta)\equiv 0$ on $[0,L]$ (otherwise $\theta\equiv \pi/2$ or $3\pi/2$ by
continuity, which would force $C(\theta)=\pm L\neq 0$).
Choose $\chi(s):=s^2(L-s)^2$ and set $\eta_0(s):=\chi(s)\cos(\theta(s))$. Then $\eta_0$ is admissible
($\eta_0(0)=\eta_0(L)=0$ and $\eta_{0,s}(L)=0$), and
\[
DC(\theta)[\eta_0]=\int_0^{L}\chi\,\cos^2(\theta)\,ds>0.
\]
Thus the constraint qualification holds at $\theta$, and the Lagrange multiplier principle gives $a\in\R$ such that
\eqref{eqn:lamult} holds for all admissible $\eta$.

Compute the first variation:
\begin{equation}\label{eq:first-var-aux}
\int_0^L \theta_{ss}\,\eta_{ss}\,ds + a\int_0^L \cos(\theta)\,\eta\,ds = 0.
\end{equation}
Integrate the first term by parts twice:
\[
\int_0^L \theta_{ss}\eta_{ss}\,ds
=\Big[\theta_{ss}\eta_s-\theta_{sss}\eta\Big]_0^L+\int_0^L \theta_{ssss}\eta\,ds.
\]
At $s=L$ we have $\eta(L)=0$ and $\eta_s(L)=0$, hence the boundary contribution at $L$ vanishes.
At $s=0$ we have $\eta(0)=0$ but $\eta_s(0)$ is unconstrained, so the boundary term reduces to
\[
\Big[\theta_{ss}\eta_s-\theta_{sss}\eta\Big]_0^L=-\theta_{ss}(0)\,\eta_s(0).
\]
Substituting into \eqref{eq:first-var-aux} yields
\[
-\theta_{ss}(0)\,\eta_s(0)+\int_0^L\big(\theta_{ssss}+a\cos\theta\big)\eta\,ds=0.
\]
Since $\eta_s(0)$ can be chosen arbitrarily, we must have $\theta_{ss}(0)=0$, proving \eqref{eq:natural-aux}.
With \eqref{eq:natural-aux} imposed, the remaining condition becomes
\[
\int_0^L\big(\theta_{ssss}+a\cos\theta\big)\eta\,ds=0
\quad\text{for all admissible }\eta,
\]
which is the weak form of \eqref{eq:EL-aux}.
Since $\theta\in H^2(0,L)\hookrightarrow C^1([0,L])$, the right-hand side
$-a\cos\theta$ is continuous.  The weak equation therefore gives $\theta\in C^4([0,L])$, and standard
one-dimensional bootstrapping gives $\theta\in C^\infty([0,L])$.  Thus, if $\cos(\theta_0)=0$, evaluating
\eqref{eq:EL-aux} at $s=0$ gives
$\theta_{ssss}(0)=-a\cos(\theta_0)=0$, proving \eqref{eq:third-seam-aux}.
\end{proof}

\subsection{Symmetries}

The second symmetry used to close the doubled arc is reflection across the $x$-axis, which is smooth at a seam
provided the tangent is vertical and the curvature jet has the correct parity.

\begin{lemma}[One-sided reflection principle at a vertical tangent]\label{lem:reflection-principle}
Let $h>0$, let $\theta_0\in\frac\pi2+\pi\Z$, and let $\theta\in C^4([0,h))$ solve
\[
\theta_{ssss}=c\cos\theta\qquad\text{on }(0,h)
\]
for some $c\in\R$.  Assume $\theta(0)=\theta_0$ and $\theta_{ss}(0)=0$.  Define
\[
\widehat\theta(s):=
\begin{cases}
\theta(s),&0\le s<h,\\
2\theta_0-\theta(-s),&-h<s\le0.
\end{cases}
\]
Then $\widehat\theta\in C^\infty((-h,h))$ and satisfies
$\widehat\theta_{ssss}=c\cos\widehat\theta$ throughout $(-h,h)$.  The analogous statement holds when the given
solution lies on the left of the seam.
\end{lemma}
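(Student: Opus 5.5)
The plan is to show that $\widehat\theta$ is $C^\infty$ across the seam by matching the full jets of the two halves at $s=0$, and that the reflected half solves the same ODE, so that uniqueness for the fourth-order initial value problem does the rest.

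First, I check that the reflected branch solves the equation on $(-h,0)$. For $s\in(-h,0)$ set $\psi(s):=2\theta_0-\theta(-s)$. Then $\psi_{ssss}(s)=-\theta_{ssss}(-s)=-c\cos\theta(-s)$. Since $\theta_0\in\frac\pi2+\pi\Z$, we have $2\theta_0\in\pi+2\pi\Z$. Hence $\cos\psi(s)=\cos(2\theta_0-\theta(-s))=-\cos\theta(-s)$, and so $\psi_{ssss}=c\cos\psi$ on $(-h,0)$.

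Second, I match the jets at the seam. One-sided bootstrapping of $\theta_{ssss}=c\cos\theta$ gives $\theta\in C^\infty([0,h))$ up to $s=0$, since the right-hand side is as smooth as $\theta$. By definition,
\[
\psi^{(m)}(0^-)=(-1)^{m+1}\theta^{(m)}(0^+)\quad(m\ge1),\qquad \psi(0^-)=2\theta_0-\theta_0=\theta_0 .
\]
So $\widehat\theta$ is continuous at $0$, and its odd-order derivatives automatically agree from both sides. The even-order derivatives agree exactly when $\theta^{(2j)}(0)=0$ for all $j\ge1$. The case $j=1$ is the hypothesis $\theta_{ss}(0)=0$.

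This even-derivative vanishing is the main point, and I would handle it directly rather than by induction on derivatives. Consider $\psi$ and $\theta$ as solutions of the same autonomous fourth-order ODE. At $0$ they share the data $(\theta_0,\ \theta_s(0),\ 0,\ \theta_{sss}(0))$: $\psi_s(0)=\theta_s(0)$, $\psi_{ss}(0)=-\theta_{ss}(0)=0$, and $\psi_{sss}(0)=\theta_{sss}(0)$. The vector field $c\cos$ is smooth and globally Lipschitz, so the initial value problem from this data has a unique smooth solution $\Theta$ on a neighbourhood of $0$. Then $\theta=\Theta$ on $[0,h)$ and $\psi=\Theta$ on $(-h,0]$, which means $\widehat\theta=\Theta$ there. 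Hence $\widehat\theta$ is $C^\infty$ near $0$, and globally on $(-h,h)$ by continuation, and it satisfies the equation throughout.

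Some care is needed in applying uniqueness from one-sided data. I would phrase it as follows: solve the initial value problem forward and backward from $s=0$. Forward uniqueness identifies the solution with $\theta$ on $[0,h)$, and backward uniqueness identifies it with $\psi$, since both are $C^4$ up to $0$ with the same $3$-jet. As a byproduct, all even derivatives $\theta^{(2j)}(0)$ vanish, since $\Theta(s)+\Theta(-s)-2\theta_0$ solves a linearised-type problem with zero data.

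The statement for a solution on the left of the seam follows by the identical argument, applied to $s\mapsto\theta(-s)$ after noting that the reflection interchanges the two sides.
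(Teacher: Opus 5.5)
Your proof is correct, but it reaches smoothness by a different route from the paper. The paper matches one-sided traces directly through order four. Orders one and three agree by parity, order two agrees by the hypothesis $\theta_{ss}(0)=0$, and order four agrees because the ODE gives $\theta_{ssss}(0)=c\cos\theta_0=0$. This yields $\widehat\theta\in C^4((-h,h))$, and bootstrapping the equation then gives $C^\infty$.

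You instead match only the Cauchy data, the $3$-jet $(\theta_0,\theta_s(0),0,\theta_{sss}(0))$. You then use forward and backward uniqueness for the globally Lipschitz first-order system to identify both halves with a single smooth solution $\Theta$. This avoids both the fourth-order trace check and the bootstrap. It also shows that $\Theta-\theta_0$ is odd about the seam, so all even-order derivatives vanish there. The paper's argument is purely a regularity argument and never needs uniqueness; yours needs only the standard observation that $\theta\in C^4([0,h))$ satisfies the integral form of the system from $s=0$. In both, the vertical-tangent hypothesis enters through $\cos(2\theta_0-\alpha)=-\cos\alpha$, equivalently $\cos\theta_0=0$.

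One small point: the ``byproduct'' needs no linearised-type argument. It follows at once from $\widehat\theta=\Theta$ being smooth together with $\psi^{(2j)}(0^-)=-\theta^{(2j)}(0^+)$.
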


\begin{proof}
The reflected traces of derivatives of orders one and three agree automatically, while the second-order traces agree
because $\theta_{ss}(0)=0$.  The ODE and $\cos\theta_0=0$ give $\theta_{ssss}(0)=0$, so the fourth-order traces agree as
well.  Hence $\widehat\theta\in C^4((-h,h))$.  Since $2\theta_0$ is an odd multiple of $\pi$,
\(
\cos(2\theta_0-\alpha)=-\cos\alpha
\),
and four differentiations of $2\theta_0-\theta(-s)$ give $-\theta_{ssss}(-s)$.  Thus the reflected function satisfies
the same ODE on the other side of the seam.  Bootstrapping the smooth ODE gives
$\widehat\theta\in C^\infty((-h,h))$.  Reflection of the independent variable gives the left-sided version.
\end{proof}

In the variational construction used later, point-reflection symmetry is imposed at the level of the tangent angle
on the {doubled} interval.  

Recall that a function $f:[0,2\ell]\to\R$ is \emph{even about $\ell$} if
\(
f(\ell+s)=f(\ell-s)
\) for a.e. $s\in(0,\ell)$.

\begin{lemma}\label{lem:midpoint-seam-from-evenness}
Let $\ell>0$ and $\theta\in H^2(0,2\ell)$ be even about $\ell$.
Then $\theta\in C^1([0,2\ell])$, $\theta_s\in H^1(0,2\ell)$, and
\(\theta_s(\ell)=0\). 
\end{lemma}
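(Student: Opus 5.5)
The lemma has three parts: continuity of $\theta$ and $\theta_s$, the regularity $\theta_s\in H^1$, and the vanishing of $\theta_s$ at the midpoint. I would get the first two from the one-dimensional Sobolev embedding and the third from the parity forced by evenness.

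\emph{Regularity.} Since $\theta\in H^2(0,2\ell)$, we have $\theta_s\in H^1(0,2\ell)$ directly from the definition of $H^2$. The one-dimensional embedding $H^1(0,2\ell)\hookrightarrow C^0([0,2\ell])$ then makes $\theta_s$ continuous, after passing to the continuous representative. Applying the same embedding to $\theta$ itself, $\theta\in C^1([0,2\ell])$ with classical derivative equal to the weak derivative $\theta_s$. From here on we work with this $C^1$ representative.

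\emph{Upgrading evenness.} The identity $\theta(\ell+s)=\theta(\ell-s)$ holds a priori only for a.e.\ $s\in(0,\ell)$. Both sides are continuous functions of $s$ and they agree on a set of full measure, which is dense. Hence the identity holds for every $s\in[0,\ell]$.

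\emph{Parity of the derivative.} Because $\theta$ is now a $C^1$ function and the identity holds pointwise, we may differentiate it classically in $s$:
\[
\theta_s(\ell+s)=-\theta_s(\ell-s)\qquad\text{for all } s\in[0,\ell].
\]
So $\theta_s$ is odd about $\ell$.

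\emph{Vanishing at the midpoint.} Setting $s=0$ gives $\theta_s(\ell)=-\theta_s(\ell)$, hence $\theta_s(\ell)=0$. Equivalently, one can use one-sided difference quotients at $\ell$: evenness makes the left and right difference quotients negatives of each other, while the $C^1$ property makes them equal, so their common limit is zero.

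The only point requiring care is the passage from the a.e.\ evenness relation to a pointwise one, which is what justifies differentiating and evaluating at $s=0$. This is handled by continuity of the $C^1$ representative, so I do not expect a genuine obstacle.
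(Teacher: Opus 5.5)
Your proof is correct and follows essentially the same route as the paper: use the Sobolev embedding to get $\theta\in C^1([0,2\ell])$ with $\theta_s$ continuous, upgrade the evenness to a pointwise identity, differentiate to obtain $\theta_s(\ell+s)=-\theta_s(\ell-s)$, and conclude $\theta_s(\ell)=0$. The only difference is that the paper differentiates for a.e.\ $s$ and lets $s\downarrow0$ using continuity of $\theta_s$, whereas you evaluate the classical one-sided derivative directly at $s=0$, which is an equivalent way to finish.
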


\begin{proof}
Since $\theta\in H^2(0,2\ell)$ in one dimension, the Sobolev embedding gives $\theta\in C^1([0,2\ell])$.
The evenness identity therefore holds pointwise.  Differentiating it for a.e.\ $s\in(0,\ell)$ gives
\[
\theta_s(\ell+s)=-\theta_s(\ell-s).
\]
Since $\theta_s\in H^1(0,2\ell)\hookrightarrow C^0([0,2\ell])$, letting $s\downarrow0$ yields
$\theta_s(\ell)=-\theta_s(\ell)$, hence $\theta_s(\ell)=0$.
\end{proof}

The above oddness and evenness properties imply appropriate 
higher seam jet conditions needed for smooth point-reflection doubling.

\begin{lemma}[Even tangent angle implies point symmetry of the curve]\label{lem:even-theta-point-symmetry}
Let $\ell>0$ and $\theta\in H^2(0,2\ell)$ be even about $\ell$.
Define the unit-speed curve $\gamma:[0,2\ell]\to\R^2$ by 
\eqref{eq:defgamma}.
Then $\gamma$ is point-symmetric about $B:=\gamma(\ell)$:
\begin{equation}\label{eq:point-symmetry-gamma}
\gamma(\ell+s)=2B-\gamma(\ell-s)\qquad\text{for all }s\in[0,\ell].
\end{equation}
In particular, if $y(\ell)=0$ then also $y(2\ell)=0$.
\end{lemma}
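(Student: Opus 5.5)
We prove \eqref{eq:point-symmetry-gamma} by a direct change of variables in the reconstruction formula \eqref{eq:defgamma}. Since $\theta\in H^2(0,2\ell)\hookrightarrow C^1([0,2\ell])$, the evenness identity $\theta(\ell+\sigma)=\theta(\ell-\sigma)$ holds for every $\sigma\in[0,\ell]$, not just almost every one. Consequently the unit tangent $T(\sigma)=(\cos\theta(\sigma),\sin\theta(\sigma))$ satisfies
\[
T(\ell+\sigma)=T(\ell-\sigma)\qquad\text{for all }\sigma\in[0,\ell].
\]

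Fix $s\in[0,\ell]$. From \eqref{eq:defgamma} and the substitution $\sigma\mapsto\ell+\sigma$,
\[
\gamma(\ell+s)-B=\int_\ell^{\ell+s}T(\tau)\,d\tau=\int_0^{s}T(\ell+\sigma)\,d\sigma .
\]
Similarly, with $\tau=\ell-\sigma$,
\[
B-\gamma(\ell-s)=\int_{\ell-s}^{\ell}T(\tau)\,d\tau=\int_0^{s}T(\ell-\sigma)\,d\sigma .
\]
The tangent identity shows that the two right-hand sides coincide. Therefore
\[
\gamma(\ell+s)-B=B-\gamma(\ell-s),
\]
which is exactly \eqref{eq:point-symmetry-gamma}.

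For the final claim, take $s=\ell$ in \eqref{eq:point-symmetry-gamma}. This gives $\gamma(2\ell)=2\gamma(\ell)-\gamma(0)=2B$, because $\gamma(0)=0$ by \eqref{eq:defgamma}. Taking second components, $y(2\ell)=2y(\ell)=0$ whenever $y(\ell)=0$.

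There is no genuine obstacle here. The only point needing care is that $\theta$ is merely $H^2$. The continuous embedding handles this: it upgrades the a.e. evenness to a pointwise identity, and it makes $T$ continuous, so the integrals and substitutions above are classical Riemann integrals.
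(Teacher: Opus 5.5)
Your proof is correct and follows the paper's argument essentially verbatim: evenness of $\theta$ gives evenness of the tangent $T$ about $\ell$, and the two changes of variables identify $\gamma(\ell+s)-B$ with $B-\gamma(\ell-s)$. Two steps the paper leaves implicit are made explicit in your version: the use of $\gamma(0)=0$ at $s=\ell$ for the final claim, and the observation that continuity upgrades the a.e.\ evenness to a pointwise identity.
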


\begin{proof}
Let $T=(\cos\theta,\sin\theta)=\gamma_s$.
Evenness of $\theta$ about $\ell$ implies evenness of $T$ about $\ell$:
\[
T(\ell+s)=T(\ell-s)\quad\text{for a.e. }s\in(0,\ell).
\]
For $s\in[0,\ell]$ we compute
\[
\gamma(\ell+s)-\gamma(\ell)=\int_\ell^{\ell+s}T(\sigma)\,d\sigma
=\int_0^{s}T(\ell+u)\,du
=\int_0^s T(\ell-u)\,du
= \int_{\ell-s}^{\ell}T(\sigma)\,d\sigma
=-(\gamma(\ell-s)-\gamma(\ell)),
\]
which is \eqref{eq:point-symmetry-gamma}.  The final statement follows by taking $y$-components.
\end{proof}

\subsection{Energy additivity under symmetry actions}\label{subsec:aux-additivity}

We first note the following elementary fact.

\begin{lemma}[Signed curvature under arclength reversal]\label{lem:curvature-reversal}
Let $\gamma:[0,L]\to\R^2$ be a $C^2$ unit-speed immersion. 
Define the arclength-reversed curve $\tilde\gamma:[0,L]\to\R^2$ by
\(
\tilde\gamma(s):=\gamma(L-s)
\).
Then $\tilde\gamma$ is unit-speed and $C^2$, and its signed curvature $\tilde k$
satisfies
\(
\tilde k(s)=-k(L-s)
\)
and (wherever $k_s$ exists)
$\tilde k_s(s)=k_s(L-s)$ 
for all $s\in[0,L]$.
\end{lemma}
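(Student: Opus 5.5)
The argument is a direct computation from the definitions, using the chain rule on the reparametrisation $s\mapsto L-s$.

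\textbf{Unit speed and regularity.} Set $\tilde\gamma(s)=\gamma(L-s)$. Since $s\mapsto L-s$ is an affine diffeomorphism of $[0,L]$, $\tilde\gamma$ is $C^2$. Its velocity is $\tilde\gamma_s(s)=-\gamma_s(L-s)$, so $|\tilde\gamma_s|=1$ and $\tilde\gamma$ is unit-speed. Writing $T=(\cos\theta,\sin\theta)$ for the tangent of $\gamma$, the reversed tangent is $\tilde T(s)=-T(L-s)$. A continuous lift of its angle is therefore
\[
\tilde\theta(s)=\theta(L-s)+\pi .
\]

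\textbf{Curvature.} Differentiating the lift gives
\[
\tilde k(s)=\tilde\theta_s(s)=-\theta_s(L-s)=-k(L-s).
\]
As a cross-check independent of the angle lift, use $k=\det(\gamma_s,\gamma_{ss})$. Here $\tilde\gamma_{ss}(s)=\gamma_{ss}(L-s)$, so
\[
\det(\tilde\gamma_s,\tilde\gamma_{ss})=\det(-\gamma_s,\gamma_{ss})(L-s)=-k(L-s),
\]
which is the same formula.

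\textbf{Derivative of curvature.} At any $s$ where $k_s(L-s)$ exists, the chain rule applied to $\tilde k(s)=-k(L-s)$ gives
\[
\tilde k_s(s)=-(-1)\,k_s(L-s)=k_s(L-s).
\]
The two sign changes cancel. Consequently $\tilde k_s^2\,ds$ is carried to $k_s^2\,ds$ by the reversal, which is the point of the lemma for the later additivity arguments.

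No step here is a real obstacle. The only care needed is with the signs and with using a continuous lift of $\tilde\theta$; checking via the determinant formula removes any ambiguity in the lift.
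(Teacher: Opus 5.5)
Your proof is correct: the chain rule on $s\mapsto L-s$ gives $\tilde\gamma_s(s)=-\gamma_s(L-s)$ and $\tilde\gamma_{ss}(s)=\gamma_{ss}(L-s)$, hence $\tilde k(s)=-k(L-s)$ and $\tilde k_s(s)=k_s(L-s)$. The determinant cross-check removes any dependence on the choice of angle lift. The paper states this lemma as an elementary fact and gives no proof, so your direct computation is the routine verification it leaves implicit.
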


We will build closed curves from one fundamental arc by applying isometries and parameter reversals.
The following bookkeeping lemma reduces the energy of the closed curve to the energy of one arc.

\begin{lemma}[Additivity of $J_\lambda$ under gluing congruent pieces]\label{lem:additivity}
Let $\gamma:[0,L]\to\R^2$ be unit-speed with $k\in H^1$.
Suppose $\Gamma$ is obtained by concatenating $m$ copies of $\gamma$, each copy transformed by a rigid motion of $\R^2$
and possibly a parameter reversal, and glued so that $\Gamma$ is a unit-speed immersion with curvature in $H^1$.
Then
\begin{equation}\label{eq:additivity}
J_\lambda[\Gamma]=m\,J_\lambda[\gamma].
\end{equation}
\end{lemma}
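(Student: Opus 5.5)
The plan is to reduce the identity to the two invariance facts for a single piece and then sum over the pieces. Write $\Gamma$ as a concatenation $\Gamma=\gamma_1\ast\cdots\ast\gamma_m$ on consecutive parameter intervals $[(i-1)L,iL]$, $i=1,\dots,m$, where each piece has the form $\gamma_i(s)=R_i\gamma(\sigma_i(s))+b_i$. Here $R_i\in O(2)$, $b_i\in\R^2$, and $\sigma_i$ is either $s\mapsto s-(i-1)L$ or $s\mapsto iL-s$. Because $\Gamma$ is unit-speed, its total length is $mL$. This gives the length part of \eqref{eq:additivity} at once: $\lambda L[\Gamma]=m\lambda L$.

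For the bending term, I would first check that on each open subinterval the curvature of $\Gamma$ satisfies $k_\Gamma(s)=\pm k(\sigma_i(s))$. The sign flips under an orientation-reversing $R_i$, and it flips again under a parameter reversal, by Lemma~\ref{lem:curvature-reversal}. A rotation or translation preserves the signed curvature, while a reflection changes only its sign. Differentiating in $s$, and using $|\sigma_i'|=1$, gives $|\partial_s k_\Gamma(s)|=|k_s(\sigma_i(s))|$ a.e. on the $i$-th interval. Lemma~\ref{lem:curvature-reversal} already records this for the reversal case, and the reflection case only changes the sign. Changing variables $u=\sigma_i(s)$ then yields
\[
\int_{(i-1)L}^{iL}(\partial_s k_\Gamma)^2\,ds=\int_0^L k_s^2\,du .
\]

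The only point that needs care is that the integral of $(\partial_sk_\Gamma)^2$ over the whole parameter domain really is the sum of the piecewise integrals. This is why the hypothesis $k_\Gamma\in H^1$ is imposed. Since $k_\Gamma\in H^1$, its weak derivative is an $L^2$ function on the full interval. On each open subinterval this weak derivative coincides a.e. with the piecewise derivative computed above. The finitely many junction points have measure zero, so no singular contribution can be concentrated there. Summing over $i$ then gives $\frac12\int_\Gamma k_s^2\,ds=m\cdot\frac12\int_\gamma k_s^2\,ds$. Adding the length term yields \eqref{eq:additivity}. For a closed $\Gamma$ the same argument works on $\R/(mL)\Z$.

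I do not expect a genuine obstacle. The step that needs the most attention is the sign bookkeeping of the curvature under reflections combined with reversals, and that step is harmless because only $k_s^2$ enters the energy.
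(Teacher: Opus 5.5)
Your proposal is correct and follows the paper's argument: each copy, obtained by an isometry of $\R^2$ possibly combined with a parameter reversal, contributes the same length and the same $\int k_s^2\,ds$ as $\gamma$, and summing over the $m$ pieces gives the identity. Your additional remark, that the global $H^1$ hypothesis rules out any singular contribution at the junction points, makes explicit a step the paper leaves implicit.
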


\begin{proof}
Every Euclidean isometry preserves arclength; an orientation-reversing isometry changes the sign of signed curvature
and of its arclength derivative, but still preserves $k_s^2\,ds$.  Parameter reversal
$s\mapsto L-s$ sends $k\mapsto -k$ and $k_s\mapsto k_s$ but leaves $k_s^2\,ds$ invariant (see Lemma \ref{lem:curvature-reversal}).
Therefore each congruent copy contributes exactly the same amount to $\int k_s^2\,ds$ and to the length.
Since $\Gamma$ consists of $m$ such copies \eqref{eq:additivity} follows.
\end{proof}

\subsection{Energetic separation of the closed family}\label{subsec:aux-distinctness}

Combining the coercive arc bound with the additivity lemma yields a quantitative energy estimate.  Since $J_\lambda$
is invariant under Euclidean congruences and reparametrisation, divergence of these energies will later allow us to
extract infinitely many pairwise non-congruent closed critical points without assuming uniqueness of a fundamental-arc
decomposition.

\begin{proposition}[Energetic lower bound by angle]\label{prop:closed-lb}
Let $\Gamma$ be a symmetric null-turning closed curve obtained from a unit-speed fundamental arc
$\gamma:[0,L]\to\R^2$ with curvature $k\in H^1(0,L)$.
Let $\Phi:=\int_\gamma k\,ds$ be the unwrapped turning of $\gamma$.
Then
\begin{equation}\label{eq:closed-lb}
J_\lambda[\Gamma]
\;\ge\;
\frac{16}{3}\Big(\frac{9}{2}\Big)^{\!1/4}\lambda^{3/4}|\Phi|^{1/2}.
\end{equation}
\end{proposition}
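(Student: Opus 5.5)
The plan is to combine the additivity lemma with the coercive arc bound of Corollary~\ref{cor:arc-lower-bound}. By definition, a symmetric null-turning closed curve is built by doubling a fundamental arc $\gamma:[0,L]\to\R^2$ about its midpoint (the point reflection of Lemma~\ref{lem:even-theta-point-symmetry}) and then reflecting the doubled arc across the $x$-axis. Thus $\Gamma$ is a concatenation of four congruent copies of $\gamma$. Each copy is transformed by a rigid motion (point reflection, axis reflection, or their composition), possibly combined with a parameter reversal. Lemma~\ref{lem:additivity} then gives $J_\lambda[\Gamma]=4J_\lambda[\gamma]$, provided $\Gamma$ has curvature in $H^1$; this regularity is part of the standing hypotheses on symmetric null-turning curves built from $H^2$ tangent angles.

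Next we need the seam condition $k(L)=0$ at one end of $\gamma$, so that Lemma~\ref{lem:turning-controls-ks} applies. We orient the fundamental arc so that the midpoint of the doubled arc is its endpoint. The tangent angle of the doubled arc is even about that midpoint, so Lemma~\ref{lem:midpoint-seam-from-evenness} gives $\theta_s=0$ there, that is, $k=0$ at that end. If the natural parametrisation puts this zero at $s=0$ instead, we apply Lemma~\ref{lem:curvature-reversal}. Reversal sends $k(s)$ to $-k(L-s)$, so the zero moves to $s=L$. It changes the unwrapped turning only by a sign, $\Phi\mapsto-\Phi$, and leaves both $\int k_s^2\,ds$ and $L$ unchanged. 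Since the bound depends only on $|\Phi|$, nothing is lost.

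Corollary~\ref{cor:arc-lower-bound} now yields
\[
J_\lambda[\gamma]\ge \tfrac{4}{3}\big(\tfrac92\big)^{1/4}\lambda^{3/4}|\Phi|^{1/2}
\]
when $\Phi\neq0$. When $\Phi=0$ the claimed bound is trivial, since $J_\lambda\ge0$. Multiplying by $4$ gives exactly the constant $\tfrac{16}{3}(9/2)^{1/4}$ in \eqref{eq:closed-lb}.

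The main obstacle is bookkeeping rather than analysis. One must confirm that the four pieces are genuinely congruent copies of the same arc $\gamma$ and that the seam $k=0$ falls at an endpoint of $\gamma$. The alternative would be the vertical-tangent end, where only $\theta_{ss}=0$ is known, and that is not enough for the turning inequality. If the definition of the fundamental arc is ambiguous, the fallback is to apply the argument directly to each quarter-arc between $\Gamma(0)$ and the midpoint $B$. Each such quarter has turning $\pm\Phi$ by the computation in Theorem~\ref{thm:main-infinite}(iii), and has $k=0$ at its $B$-end by Lemma~\ref{lem:midpoint-seam-from-evenness} together with the reflections. Summing the four corollary bounds gives the same result without invoking additivity.
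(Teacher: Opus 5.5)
Your proposal is correct and follows the paper's own proof: Lemma~\ref{lem:additivity} gives $J_\lambda[\Gamma]=4J_\lambda[\gamma]$, and the sharp arc bound of Corollary~\ref{cor:arc-lower-bound} then yields the constant $\frac{16}{3}(9/2)^{1/4}$. Your extra bookkeeping fills in hypotheses that the paper's two-line proof leaves implicit. In particular, you obtain the seam condition $k=0$ at the midpoint end from evenness via Lemma~\ref{lem:midpoint-seam-from-evenness}, handle orientation by reversal, and treat $\Phi=0$ separately.
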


\begin{proof}
The construction uses $4$ congruent copies of the fundamental arc, hence by Lemma~\ref{lem:additivity},
\(
J_\lambda[\Gamma]=4J_\lambda[\gamma]
\).
Applying the arc lower bound \eqref{eq:arc-min-value} from Corollary~\ref{cor:arc-lower-bound} gives \eqref{eq:closed-lb}.
\end{proof}

\section{The direct method on doubled intervals}\label{sec:doubled-direct-method}

In this section we build point-reflection symmetry {into} the variational problem by working on the doubled
interval $[0,2\ell]$ and imposing evenness of the tangent angle about the midpoint $\ell$.
For a single prescribed lifted turning $\Phi$, the midpoint angle is fixed; the direct method therefore gives a
piecewise-smooth minimiser that may carry a midpoint balance defect.  The missing condition is
$(\theta)_{sss}(\ell^-)=0$.  The turning-window argument in Section~\ref{sec:infinite-family} selects minimisers for
which this condition holds, and only then does the doubled arc become smooth through the midpoint.

\subsection{Admissible class and the doubled functional}\label{subsec:doubled-admissible}

Fix $\lambda>0$ and a prescribed unwrapped turning $\Phi\in\R$.
We consider tangent angles $\theta$ on the doubled interval $[0,2\ell]$ satisfying:
\begin{itemize}[leftmargin=2.2em]
\item \textbf{endpoint and midpoint angles}
\[
\theta(0)=\frac{\pi}{2},\qquad \theta(\ell)=\frac{\pi}{2}+\Phi,\qquad \theta(2\ell)=\frac{\pi}{2};
\]
\item \textbf{point-reflection symmetry} (evenness about $\ell$)
\[
\theta(\ell+s)=\theta(\ell-s)\quad\text{for a.e. }s\in(0,\ell);
\]
\item \textbf{midpoint-on-axis closure constraint}
\[
\int_0^\ell \sin\theta(s)\,ds=0.
\]
\end{itemize}
Given $(\ell,\theta)$ we reconstruct the unit-speed curve $\gamma:[0,2\ell]\to\R^2$ by \eqref{eq:defgamma}.
Then $\gamma(0)=(0,0)$ and the closure constraint makes the midpoint $\gamma(\ell)$ lie on the $x$-axis.
By Lemma~\ref{lem:even-theta-point-symmetry}, $\gamma$ is point-symmetric about $\gamma(\ell)$, hence also
$\gamma(2\ell)\in\{y=0\}$.

We work in the $(\ell,\theta)$ variables and define the doubled admissible set $\mathcal{A}^{(2)}(\Phi)$ as in \eqref{eq:Adouble}, the energy 
$J_\lambda^{(2)}(\ell,\theta)$ as in \eqref{eq:Jdouble},  
and the turning-window class 
$\mathcal{A}^{(2)}(I)$, where $I$ is a compact interval, as in \eqref{eq:Adouble-window}.
Note that $(\ell,\theta)\in\mathcal{A}^{(2)}(I)$ satisfies all conditions in \eqref{eq:Adouble} except that the midpoint turning is allowed to vary subject to
\[
\theta(\ell)-\frac\pi2\in I.
\]
This small change is essential: when the selected turning lies in $\operatorname{int}I$, first variations may change
$\theta(\ell)$ and therefore enforce the natural midpoint transversality condition.

\subsection{Non-emptiness}\label{subsec:doubled-nonempty}

\begin{lemma}[Non-emptiness of the doubled admissible class]\label{lem:Adouble-nonempty}
For every $\Phi\in\R$ there exists at least one pair $(\ell,\theta)\in\mathcal{A}^{(2)}(\Phi)$ with $\theta\in C^\infty([0,2\ell])$.
\end{lemma}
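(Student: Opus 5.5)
The plan is to build $\theta$ explicitly on the half-interval $[0,\ell]$ out of constant plateaus joined by smooth monotone transitions, and then extend it evenly about $\ell$. The length of one of the plateaus is the free parameter used to enforce the closure constraint $\int_0^\ell\sin\theta\,ds=0$. Evenness then gives $\theta(2\ell)=\theta(0)=\pi/2$ for free. If $\theta$ is constant on a neighbourhood of $s=\ell$, the even reflection is automatically $C^\infty$, since all one-sided derivatives vanish there.

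\textbf{Step 1: transition profiles.} Fix a smooth step $\psi\in C^\infty(\R)$ with $\psi\equiv0$ on $(-\infty,0]$ and $\psi\equiv1$ on $[1,\infty)$. For angles $\alpha,\beta$, the segment $\sigma\mapsto\alpha+(\beta-\alpha)\psi(\sigma)$ on $[0,1]$ is smooth and flat to all orders at both ends. Hence concatenating such segments with constant plateaus produces a $C^\infty$ function.

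\textbf{Step 2: the half-profile.} Take parameters $A,B\ge0$ and define $\theta$ on $[0,\ell]$, with $\ell:=A+B+3$, as the concatenation of five pieces:
\begin{itemize}
\item a plateau at $\pi/2$ of length $A+\tfrac12$;
\item a transition from $\pi/2$ to $-\pi/2$ of length $1$;
\item a plateau at $-\pi/2$ of length $B$;
\item a transition from $-\pi/2$ to $\pi/2+\Phi$ of length $1$;
\item a final plateau at $\pi/2+\Phi$ of length $\tfrac12$.
\end{itemize}
Then $\theta(0)=\pi/2$ and $\theta(\ell)=\pi/2+\Phi$, and $\theta$ is constant near both $0$ and $\ell$.

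The closure integral is affine in the two plateau lengths:
\[
\int_0^\ell\sin\theta\,ds=A-B+C_\Phi .
\]
Here $C_\Phi$ collects the contributions of the two transitions, the initial half-plateau and the final half-plateau, and is independent of $A$ and $B$. We now choose $A=\max(0,B-C_\Phi)$ and $B=\max(0,C_\Phi)$, which makes the integral vanish with $A,B\ge0$.

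\textbf{Step 3: even extension.} Set $\theta(\ell+s):=\theta(\ell-s)$ for $s\in[0,\ell]$. Because $\theta$ is constant on $[\ell-\tfrac12,\ell]$, the extended function is constant on $[\ell-\tfrac12,\ell+\tfrac12]$. Therefore $\theta\in C^\infty([0,2\ell])\subset H^2(0,2\ell)$. Evenness about $\ell$ holds by construction, and $\theta(2\ell)=\pi/2$, so $(\ell,\theta)\in\mathcal{A}^{(2)}(\Phi)$.

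There is no genuine obstacle in this argument. The only point needing care is smoothness at the midpoint seam: an even reflection of a generic smooth function is only $C^1$ there, because the odd-order derivatives need not vanish at $\ell$. Ending the half-profile with a flat plateau at $\pi/2+\Phi$ removes that issue.
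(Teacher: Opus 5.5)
Your construction is correct and is essentially the paper's: plateaus at $\pi/2$ and at a downward angle ($-\pi/2$ for you, $3\pi/2$ in the paper) are joined by transitions flat to all orders, the two plateau lengths are tuned so that the affine closure integral vanishes, and the result is reflected evenly about $\ell$. The only difference is cosmetic: you append a final plateau at $\pi/2+\Phi$ to make the midpoint seam smooth, whereas the paper lets the last transition end exactly at $s=\ell$ and uses its flatness there. (One small aside in your closing remark is off: a generic even reflection is only Lipschitz at $\ell$, not $C^1$, though this plays no role in your argument.)
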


\begin{proof}
Choose smooth transition profiles $\tau_+,\tau_\Phi\in C^\infty([0,1])$, flat to all orders at both endpoints, such
that
\[
\tau_+(0)=\frac\pi2,\quad \tau_+(1)=\frac{3\pi}{2},\qquad
\tau_\Phi(0)=\frac{3\pi}{2},\quad \tau_\Phi(1)=\frac\pi2+\Phi.
\]
Put
\[
C_\Phi:=\int_0^1\sin\tau_+(s)\,ds+\int_0^1\sin\tau_\Phi(s)\,ds,\qquad
U:=1+\max\{0,-C_\Phi\},\qquad D:=1+\max\{0,C_\Phi\}.
\]
Then $U,D>0$ and $U-D+C_\Phi=0$.  Set $\ell:=U+D+2$ and concatenate, on $[0,\ell]$, a plateau of
angle $\pi/2$ and length $U$, the transition $\tau_+$, a plateau of angle $3\pi/2$ and length $D$, and the transition
$\tau_\Phi$.  Flatness at the transition endpoints makes the resulting half-angle, denoted $\theta_\ast$, smooth and
constant to all orders at $s=\ell$.  Moreover,
\[
\int_0^\ell\sin\theta_\ast(s)\,ds=U-D+C_\Phi=0.
\]
Now define $\theta$ on $[0,2\ell]$ by even reflection about $\ell$:
\[
\theta(s):=
\begin{cases}
\theta_\ast(s), & 0\le s\le \ell,\\
\theta_\ast(2\ell-s), & \ell\le s\le 2\ell.
\end{cases}
\]
Then $\theta\in C^\infty([0,2\ell])$, $\theta$ is even about $\ell$, and it satisfies the endpoint/midpoint angle conditions.
Moreover, the closure constraint holds by construction. Hence $(\ell,\theta)\in\mathcal{A}^{(2)}(\Phi)$.
\end{proof}

\subsection{Coercivity and length bounds}\label{subsec:doubled-coercive}

\begin{lemma}[Turning controls $\|k_s\|_{L^2}$ on the doubled class]\label{lem:doubled-coercive}
Let $(\ell,\theta)\in\mathcal{A}^{(2)}(\Phi)$.
Then $k=\theta_s\in H^1(0,2\ell)$ and $k(\ell)=0$. Moreover,
\begin{equation}\label{eq:doubled-turning-bound}
\int_0^{2\ell} k_s^2\,ds \;\ge\; \frac{6\Phi^2}{\ell^3},
\end{equation}
and consequently
\begin{equation}\label{eq:doubled-J-lower}
J_\lambda^{(2)}(\ell,\theta)\;\ge\;\frac{3\Phi^2}{\ell^3}+2\lambda\,\ell.
\end{equation}
In particular, any minimising sequence for $J_\lambda^{(2)}$ in $\mathcal{A}^{(2)}(\Phi)$ has $\ell$ bounded above and bounded away from $0$.
\end{lemma}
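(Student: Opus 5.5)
The plan is to reduce to the sharp single-arc inequality of Lemma~\ref{lem:turning-controls-ks}, applied on each half of the doubled interval. First, since $\theta\in H^2(0,2\ell)$, we have $k=\theta_s\in H^1(0,2\ell)$. Lemma~\ref{lem:midpoint-seam-from-evenness} then gives $k(\ell)=0$, which is the seam condition the sharp inequality needs.

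Next, on the first half $[0,\ell]$, the function $k\in H^1(0,\ell)$ satisfies $k(\ell)=0$ and has unwrapped turning
\[
\int_0^\ell k\,ds=\theta(\ell)-\theta(0)=\Phi .
\]
Lemma~\ref{lem:turning-controls-ks} with $L=\ell$ yields $\int_0^\ell k_s^2\,ds\ge 3\Phi^2/\ell^3$. For the second half, evenness gives $\theta_s(\ell+s)=-\theta_s(\ell-s)$ and hence $\theta_{ss}(\ell+s)=\theta_{ss}(\ell-s)$ a.e. The substitution $s\mapsto 2\ell-s$ therefore shows
\[
\int_\ell^{2\ell}k_s^2\,ds=\int_0^\ell k_s^2\,ds .
\]
Alternatively, one can apply the lemma to the reversed half, which starts at the seam with $k(\ell)=0$ and has turning $-\Phi$; the reversal bookkeeping is as in Lemma~\ref{lem:curvature-reversal}. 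Adding the two halves gives \eqref{eq:doubled-turning-bound}. Inserting this into \eqref{eq:Jdouble} gives \eqref{eq:doubled-J-lower}.

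For the length bounds, fix a minimising sequence $(\ell_m,\theta_m)$. By Lemma~\ref{lem:Adouble-nonempty} the admissible class is nonempty, so the infimum is some $M<\infty$, and we may assume $J^{(2)}_\lambda(\ell_m,\theta_m)\le M+1$ for all $m$. Since $\lambda>0$, \eqref{eq:doubled-J-lower} gives $2\lambda\ell_m\le M+1$, which bounds $\ell_m$ above.

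The lower bound is the only delicate point. When $\Phi\neq0$, the inequality $3\Phi^2/\ell_m^3\le M+1$ gives $\ell_m\ge (3\Phi^2/(M+1))^{1/3}>0$. When $\Phi=0$, the turning bound gives nothing, so a separate argument is needed. There I would use the closure constraint together with the boundary data: $\theta(0)=\pi/2$, $\theta_s(\ell)=0$, and $\int_0^\ell\sin\theta\,ds=0$. If $\ell$ were small with bounded $\int_0^\ell \theta_{ss}^2$, the Sobolev bound $\|\theta-\pi/2\|_{L^\infty(0,\ell)}\lesssim \ell^{3/2}\|\theta_{ss}\|_{L^2}+\ell|\theta_s(\ell)|$ (using $\theta_s(\ell)=0$) would keep $\sin\theta$ near $1$, contradicting the vanishing of the integral.

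I expect this $\Phi=0$ case to be the main obstacle. I would handle it by the Taylor-type estimate $|\theta_s(s)|\le \ell^{1/2}\|\theta_{ss}\|_{L^2}$, which after one integration gives $|\theta-\pi/2|\le \ell^{3/2}\|\theta_{ss}\|_{L^2}$. The theorem only concerns windows that avoid $\Phi=0$, so if this case causes trouble I would simply restrict the statement to $\Phi\ne0$.
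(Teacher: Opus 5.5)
Your proposal is correct and follows the paper's proof essentially step for step: $k(\ell)=0$ from evenness, the sharp turning inequality on the first half doubled via evenness of $k_s$, the upper bound from $2\lambda\ell\le J^{(2)}_\lambda$, and, for $\Phi=0$, the observation that closure forces $\|\theta-\pi/2\|_{L^\infty(0,\ell)}\ge\pi/2$. The boundary data $\theta(0)=\pi/2$, $\theta_s(\ell)=0$ then give $\|\theta-\pi/2\|_{L^\infty(0,\ell)}\lesssim\ell^{3/2}\|\theta_{ss}\|_{L^2(0,\ell)}$; the paper does this after rescaling to $[0,1]$, with the sharp constant $1/\sqrt3$ from the representation $u(t)=-\int_0^1\min\{t,r\}u''(r)\,dr$. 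Your $\Phi=0$ argument already yields $\int_0^\ell\theta_{ss}^2\,ds\ge\pi^2/(4\ell^3)$, so the fallback of restricting to $\Phi\neq0$ is unnecessary (and would not prove the lemma as stated).
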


\begin{proof}
Since $\theta\in H^2(0,2\ell)$, we have $k=\theta_s\in H^1(0,2\ell)$.
By Lemma~\ref{lem:midpoint-seam-from-evenness}, $\theta_s(\ell)=0$, hence $k(\ell)=0$.

Restrict to the first half $[0,\ell]$.
On $[0,\ell]$, $k\in H^1(0,\ell)$ with $k(\ell)=0$, and the turning on the half-interval is
\[
\int_0^\ell k\,ds=\theta(\ell)-\theta(0)=\Phi.
\]
Applying Lemma~\ref{lem:turning-controls-ks} on $[0,\ell]$ gives
\[
\int_0^\ell k_s^2\,ds \ge \frac{3\Phi^2}{\ell^3}.
\]
Since $\theta$ is even about $\ell$, $k_s=\theta_{ss}$ is also even about $\ell$, hence $k_s^2$ is even and
\[
\int_0^{2\ell}k_s^2\,ds = 2\int_0^\ell k_s^2\,ds \ge 2\cdot\frac{3\Phi^2}{\ell^3}=\frac{6\Phi^2}{\ell^3},
\]
which is \eqref{eq:doubled-turning-bound}.  Multiplying by $\frac12$ and adding $2\lambda\ell$ yields \eqref{eq:doubled-J-lower}.

If $\Phi\ne0$, the right-hand side $\frac{3\Phi^2}{\ell^3}+2\lambda\ell$ blows up as $\ell\downarrow0$.  For
$\Phi=0$, rescale the first half by $\phi(t)=\theta(\ell t)$ and put $u=\phi-\pi/2$.  Then
$u(0)=0$, $u'(1)=0$, and the closure constraint is $\int_0^1\cos u\,dt=0$.  Hence
$\|u\|_{L^\infty}\ge\pi/2$; otherwise $\cos u$ would be strictly positive.  On the other hand,
\[
u(t)=-\int_0^1\min\{t,r\}\,u''(r)\,dr,
\qquad
\|u\|_{L^\infty}\le\frac1{\sqrt3}\|u''\|_{L^2(0,1)}.
\]
Thus $\int_0^1(\phi'')^2\,dt\ge3\pi^2/4$, and evenness gives
$\int_0^{2\ell}k_s^2\,ds\ge(3\pi^2/2)\ell^{-3}$.  This again rules out $\ell\downarrow0$ along a minimising
sequence.  Finally, $2\lambda\ell\le J_\lambda^{(2)}$ controls $\ell$ from above, so in every case
$0<\ell_-\le\ell\le\ell_+$.
\end{proof}

\subsection{Compactness on a fixed domain}\label{subsec:doubled-compactness}

Introduce the rescaling to $t\in[0,2]$ by $s=\ell t$ and set
\(
\phi(t):=\theta(\ell t)\),
\( t\in[0,2]\).
Then (exactly as in Lemma~\ref{lem:angle-rescaling}) one has
\begin{equation}\label{eq:doubled-rescale-energy}
\int_0^{2\ell}\theta_{ss}^2\,ds
=\frac{1}{\ell^3}\int_0^2 (\phi'')^2\,dt.
\end{equation}
Evenness about $\ell$ becomes the fixed-domain symmetry
\begin{equation}\label{eq:phi-even-about-1}
\phi(1+t)=\phi(1-t)\quad\text{for a.e. }t\in(0,1),
\end{equation}
and the closure constraint becomes
\begin{equation}\label{eq:doubled-closure-fixed}
\int_0^1 \sin\phi(t)\,dt=0.
\end{equation}

\begin{lemma}[Compactness and constraint stability]\label{lem:doubled-compact}
Let $\{(\ell_j,\theta_j)\}\subset\mathcal{A}^{(2)}(\Phi)$ be a minimising sequence for $J_\lambda^{(2)}$.
Then after passing to a subsequence,
\[
\ell_j\to \ell\in[\ell_-,\ell_+],\qquad
\phi_j\rightharpoonup \phi\ \text{weakly in }H^2(0,2),\qquad
\phi_j\to \phi\ \text{strongly in }C^1([0,2]),
\]
where $\phi_j(t):=\theta_j(\ell_j t)$.
Moreover, the limit pair $(\ell,\theta)$ with $\theta(s):=\phi(s/\ell)$ lies in $\mathcal{A}^{(2)}(\Phi)$.
\end{lemma}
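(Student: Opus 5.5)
The argument rests on the uniform bounds of Lemma~\ref{lem:doubled-coercive} and reduces everything to a fixed domain. Along a minimising sequence the energies are bounded; the paper's standing convention is that these are bounded by $J^{(2)}_\lambda$ of the competitor from Lemma~\ref{lem:Adouble-nonempty}, plus one. Lemma~\ref{lem:doubled-coercive} then gives $\ell_-\le\ell_j\le\ell_+$, and Bolzano--Weierstrass extracts a subsequence with $\ell_j\to\ell\in[\ell_-,\ell_+]$. By \eqref{eq:doubled-rescale-energy},
\[
\int_0^2(\phi_j'')^2\,dt=\ell_j^3\int_0^{2\ell_j}(\theta_j)_{ss}^2\,ds\le \ell_+^3\cdot 2\sup_j J^{(2)}_\lambda(\ell_j,\theta_j),
\]
so $\phi_j''$ is bounded in $L^2(0,2)$.

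To control the full $H^2$ norm, I use the boundary data. Since $\phi_j(0)=\pi/2$ and $\phi_j'(1)=0$ (the latter by Lemma~\ref{lem:midpoint-seam-from-evenness}), we have $\phi_j'(t)=-\int_t^1\phi_j''$, hence $|\phi_j'|\le\sqrt2\,\|\phi_j''\|_{L^2}$. Integrating again from $0$ bounds $\phi_j$ uniformly. The sequence $\phi_j$ is therefore bounded in $H^2(0,2)$. Weak compactness gives a further subsequence with $\phi_j\rightharpoonup\phi$ in $H^2$, and the compact embedding $H^2(0,2)\hookrightarrow C^1([0,2])$ (Rellich/Arzel\`a--Ascoli in one dimension) upgrades this to strong $C^1$ convergence.

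Next I check that the constraints pass to the limit. The pointwise conditions $\phi(0)=\pi/2$, $\phi(1)=\pi/2+\Phi$ and $\phi(2)=\pi/2$ survive uniform convergence. Evenness \eqref{eq:phi-even-about-1} holds pointwise for every $\phi_j$, because these are continuous functions, and so it holds for $\phi$. The closure constraint \eqref{eq:doubled-closure-fixed} passes to the limit by uniform convergence of $\sin\phi_j\to\sin\phi$ on $[0,1]$. Finally I set $\theta(s):=\phi(s/\ell)$ on $[0,2\ell]$. This function lies in $H^2(0,2\ell)$ because $\ell>0$, and it inherits all the conditions: evenness about $\ell$, the endpoint and midpoint values, and $\int_0^\ell\sin\theta\,ds=\ell\int_0^1\sin\phi\,dt=0$ by \eqref{eq:rescaled-closure}. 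Hence $(\ell,\theta)\in\mathcal A^{(2)}(\Phi)$.

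None of these steps is a serious obstacle. The one point needing care is that the $H^2$ bound comes only from $\phi''$, so the lower-order terms must be controlled by the boundary normalisations ($\phi(0)$ fixed and $\phi'(1)=0$) rather than by the energy. I would also point out, for later lower semicontinuity, that
\[
\ell^{-3}\int_0^2(\phi'')^2\le\liminf_j \ell_j^{-3}\int_0^2(\phi_j'')^2,
\]
which follows from weak lower semicontinuity of the norm together with $\ell_j\to\ell>0$.
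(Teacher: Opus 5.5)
Your proof is correct and follows the paper's argument. You take the length bounds from Lemma~\ref{lem:doubled-coercive}, get a uniform $H^2(0,2)$ bound from the rescaled energy, extract weak/compact-embedding limits, and pass the pointwise, evenness and closure constraints to the limit. You are slightly more explicit than the paper in deriving the lower-order $H^2$ control from $\phi_j(0)=\pi/2$ and $\phi_j'(1)=0$, and your pointwise treatment of evenness is an equally valid substitute for the paper's weak-closedness argument.
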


\begin{proof}
By Lemma~\ref{lem:doubled-coercive}, $\ell_j\in[\ell_-,\ell_+]$.
Boundedness of $J_\lambda^{(2)}(\ell_j,\theta_j)$ and \eqref{eq:doubled-rescale-energy} yield a uniform bound on
$\{\phi_j\}$ in $H^2(0,2)$.
Hence (by reflexivity) $\phi_j\rightharpoonup\phi$ weakly in $H^2(0,2)$ along a subsequence.
By the compact embedding $H^2(0,2)\hookrightarrow C^1([0,2])$, we also have $\phi_j\to\phi$ in $C^1([0,2])$ along a further subsequence.
We also assume $\ell_j\to\ell\in[\ell_-,\ell_+]$.

The boundary conditions pass to the limit by $C^0$ convergence:
\[
\phi(0)=\frac{\pi}{2},\qquad \phi(1)=\frac{\pi}{2}+\Phi,\qquad \phi(2)=\frac{\pi}{2}.
\]
The symmetry \eqref{eq:phi-even-about-1} is a closed linear condition in $H^2(0,2)$ (it is invariance under the bounded linear reflection
operator $t\mapsto 2-t$ restricted to $[0,2]$ around $t=1$), hence it passes to the weak limit: $\phi$ is even about $1$.

Finally, \eqref{eq:doubled-closure-fixed} passes to the limit because $\phi_j\to\phi$ uniformly:
\[
0=\int_0^1\sin\phi_j\,dt\to \int_0^1\sin\phi\,dt.
\]
Defining $\theta(s):=\phi(s/\ell)$ on $[0,2\ell]$ gives $(\ell,\theta)\in\mathcal{A}^{(2)}(\Phi)$.
\end{proof}

\subsection{Existence, Euler-Lagrange equation, and the midpoint balance defect}\label{subsec:doubled-existence-smoothness}

\begin{theorem}[Existence of a minimiser on the doubled interval]\label{thm:doubled-existence}
The minimisation problem
\[
\inf_{(\ell,\theta)\in\mathcal{A}^{(2)}(\Phi)} J_\lambda^{(2)}(\ell,\theta)
\]
admits a minimiser $(\ell_\ast,\theta_\ast)\in\mathcal{A}^{(2)}(\Phi)$.
\end{theorem}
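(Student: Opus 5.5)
The plan is the standard direct method, assembled from the preceding lemmas.

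\emph{Setting up the minimising sequence.} By Lemma~\ref{lem:Adouble-nonempty} the class $\mathcal{A}^{(2)}(\Phi)$ is nonempty and contains a smooth element. Hence
\[
m:=\inf_{\mathcal{A}^{(2)}(\Phi)}J^{(2)}_\lambda
\]
is finite, and $m\ge0$ trivially. Take a minimising sequence $(\ell_j,\theta_j)$. By Lemma~\ref{lem:doubled-coercive}, after discarding finitely many terms, $\ell_j\in[\ell_-,\ell_+]$ with $\ell_->0$.

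\emph{Compactness.} Lemma~\ref{lem:doubled-compact} then gives a subsequence along which:
\begin{itemize}
\item $\ell_j\to\ell_\ast\in[\ell_-,\ell_+]$;
\item $\phi_j:=\theta_j(\ell_j\,\cdot)\rightharpoonup\phi$ weakly in $H^2(0,2)$ and strongly in $C^1([0,2])$;
\item the pair $(\ell_\ast,\theta_\ast)$ with $\theta_\ast(s):=\phi(s/\ell_\ast)$ lies in $\mathcal{A}^{(2)}(\Phi)$.
\end{itemize}
So admissibility of the limit is already settled, and only lower semicontinuity of the energy remains.

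\emph{Lower semicontinuity.} Using the rescaled form \eqref{eq:doubled-rescale-energy},
\[
J^{(2)}_\lambda(\ell_j,\theta_j)=\frac{1}{2\ell_j^3}\int_0^2(\phi_j'')^2\,dt+2\lambda\ell_j .
\]
The $L^2$ norm is weakly lower semicontinuous, and $\phi_j''\rightharpoonup\phi''$ weakly in $L^2(0,2)$. This gives
\[
\liminf_j\int_0^2(\phi_j'')^2\,dt\ge\int_0^2(\phi'')^2\,dt .
\]
Since $\ell_j\to\ell_\ast>0$, the prefactor $\ell_j^{-3}$ converges to $\ell_\ast^{-3}$, and the length term converges. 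Hence
\[
J^{(2)}_\lambda(\ell_\ast,\theta_\ast)\le\liminf_j J^{(2)}_\lambda(\ell_j,\theta_j)=m,
\]
so $(\ell_\ast,\theta_\ast)$ is a minimiser.

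\emph{Main obstacle.} The only delicate point is that the domain $[0,2\ell_j]$ varies with $j$. This is handled by the rescaling to $[0,2]$ together with the uniform lower bound $\ell_->0$. That lower bound is what keeps the factor $\ell_j^{-3}$ bounded and prevents degeneration, and it is exactly what Lemma~\ref{lem:doubled-coercive} supplies, including the case $\Phi=0$.
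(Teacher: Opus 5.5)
Your proposal is correct and takes essentially the same approach as the paper's proof. Both take a minimising sequence, invoke Lemma~\ref{lem:doubled-compact} for compactness and admissibility of the limit, and conclude by weak lower semicontinuity of the rescaled bending term (using $\ell_j^{-3}\to\ell_\ast^{-3}$) together with convergence of the length term $2\lambda\ell_j$.
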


\begin{proof}
Let $(\ell_j,\theta_j)$ be a minimising sequence.
By Lemma~\ref{lem:doubled-compact}, after extracting a subsequence we obtain a limit
$(\ell,\theta)\in\mathcal{A}^{(2)}(\Phi)$.

Lower semicontinuity follows as in the standard direct method:
using \eqref{eq:doubled-rescale-energy} and weak lower semicontinuity of the $L^2$ norm,
\[
\liminf_{j\to\infty}\frac12\int_0^{2\ell_j}\theta_{j,ss}^2\,ds
=
\liminf_{j\to\infty}\frac12\cdot\frac1{\ell_j^3}\int_0^2(\phi_j'')^2\,dt
\ \ge\
\frac12\cdot\frac1{\ell^3}\int_0^2(\phi'')^2\,dt
=
\frac12\int_0^{2\ell}\theta_{ss}^2\,ds.
\]
Also $2\lambda\ell_j\to 2\lambda\ell$.
Thus $J_\lambda^{(2)}(\ell,\theta)\le \liminf_j J_\lambda^{(2)}(\ell_j,\theta_j)$, so $(\ell,\theta)$ attains the infimum.
Rename $(\ell_\ast,\theta_\ast):=(\ell,\theta)$.
\end{proof}

\begin{theorem}[Euler-Lagrange equation and the midpoint balance defect]\label{thm:doubled-EL}
Let $(\ell_\ast,\theta_\ast)$ be a minimiser from Theorem~\ref{thm:doubled-existence} for a fixed prescribed turning $\Phi$.
Then there exists $a\in\R$ such that $\theta_\ast$ satisfies
\begin{equation}\label{eq:doubled-EL-eq}
(\theta_\ast)_{ssss}+a\cos(\theta_\ast)=0
\end{equation}
on each open half-interval $(0,\ell_\ast)$ and $(\ell_\ast,2\ell_\ast)$.  Moreover,
\begin{equation}\label{eq:doubled-natural-ends}
(\theta_\ast)_{ss}(0)=0,
\qquad
(\theta_\ast)_{ss}(2\ell_\ast)=0.
\end{equation}
The minimiser is $C^\infty$ on each closed half-interval up to the midpoint from the appropriate side.  Evenness gives
\[
(\theta_\ast)_s(\ell_\ast)=0,
\qquad
(\theta_\ast)_{sss}(\ell_\ast^+)=- (\theta_\ast)_{sss}(\ell_\ast^-),
\]
whenever the one-sided traces are taken.  The number
\begin{equation}\label{eq:midpoint-Q-fixed}
Q_\Phi:=(\theta_\ast)_{sss}(\ell_\ast^-)
\end{equation}
is the midpoint balance defect.  Distributionally on $(0,2\ell_\ast)$,
\[
(\theta_\ast)_{ssss}+a\cos\theta_\ast=-2Q_\Phi\,\delta_{\ell_\ast},
\]
so $-2Q_\Phi$ is the actual Dirac-mass coefficient.  If $Q_\Phi=0$, then the two one-sided solutions glue smoothly
across $s=\ell_\ast$.
\end{theorem}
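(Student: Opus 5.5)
We reduce to the half-interval and apply Lemma~\ref{lem:EL-and-natural}, with one extra care point: the midpoint derivative condition is a consequence of evenness, not a constraint we may add.

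\textbf{Reduction.} Since $(\ell_\ast,\theta_\ast)$ minimises over $\mathcal{A}^{(2)}(\Phi)$, fix $\ell=\ell_\ast$ and vary only $\theta$. Evenness about $\ell_\ast$ identifies $\theta_\ast$ with its restriction $\vartheta:=\theta_\ast|_{[0,\ell_\ast]}$, and
\[
J^{(2)}_\lambda(\ell_\ast,\theta)=\int_0^{\ell_\ast}\vartheta_{ss}^2\,ds+2\lambda\ell_\ast .
\]
Admissible $\theta$ correspond exactly to $\vartheta\in H^2(0,\ell_\ast)$ with
\[
\vartheta(0)=\tfrac\pi2,\qquad \vartheta(\ell_\ast)=\tfrac\pi2+\Phi,\qquad \vartheta_s(\ell_\ast)=0,\qquad \int_0^{\ell_\ast}\sin\vartheta\,ds=0.
\]
Here $\vartheta_s(\ell_\ast)=0$ is necessary by Lemma~\ref{lem:midpoint-seam-from-evenness}. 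It is also sufficient: the even extension of an $H^2$ function with vanishing derivative at the seam is again $H^2$, because first derivatives match at the seam, so no second-derivative jump appears. Hence $\vartheta$ minimises the functional $\mathcal I$ of Lemma~\ref{lem:EL-and-natural}, with $L=\ell_\ast$, $\theta_0=\pi/2$ and $\theta_1=\pi/2+\Phi$.

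\textbf{Half-interval equation.} Lemma~\ref{lem:EL-and-natural} then gives a multiplier $a\in\R$ with $\vartheta_{ssss}+a\cos\vartheta=0$ on $(0,\ell_\ast)$. It also gives $\vartheta_{ss}(0)=0$ and smoothness of $\vartheta$ up to and including $s=\ell_\ast$, by the bootstrap in that proof. That proof uses test functions with $\eta_s(L)=0$, so no condition on $\vartheta_{sss}(\ell_\ast)$ is produced; this is exactly why a defect can survive.

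\textbf{Second half and parity.} Define $\theta_\ast(s)=\vartheta(2\ell_\ast-s)$ on $[\ell_\ast,2\ell_\ast]$. Four derivatives of this reflection give an even function, and $\cos$ is unchanged, so the same equation with the same $a$ holds on $(\ell_\ast,2\ell_\ast)$. The second derivative is unchanged under reflection, which yields $(\theta_\ast)_{ss}(2\ell_\ast)=\vartheta_{ss}(0)=0$. Smoothness on each closed half follows. The parity rules under $s\mapsto 2\ell_\ast-s$ give $(\theta_\ast)_s(\ell_\ast)=0$, which is continuous across the seam. They give $(\theta_\ast)_{ss}$ continuous across the seam. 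They give $(\theta_\ast)_{sss}(\ell_\ast^+)=-(\theta_\ast)_{sss}(\ell_\ast^-)$.

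\textbf{Distributional identity.} Pair with $\psi\in C_c^\infty(0,2\ell_\ast)$ and integrate by parts on each half separately. The boundary terms at $\ell_\ast$ involving $\theta_\ast$, $\theta_{\ast,s}$ and $\theta_{\ast,ss}$ cancel because these quantities are continuous there. Only the third-derivative jump survives:
\[
\big[(\theta_\ast)_{sss}\big]_{\ell_\ast^-}^{\ell_\ast^+}=-Q_\Phi-Q_\Phi=-2Q_\Phi .
\]
This gives $(\theta_\ast)_{ssss}+a\cos\theta_\ast=-2Q_\Phi\delta_{\ell_\ast}$.

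\textbf{Smooth gluing when $Q_\Phi=0$.} The derivatives of orders $0$ through $3$ then match at $\ell_\ast$. Both halves solve the same smooth fourth-order ODE, so uniqueness for the initial value problem at $\ell_\ast$ shows they are one solution, smooth across the seam. Equivalently, the distributional equation now holds without the Dirac term, and one bootstraps.

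\textbf{Main obstacle.} The only delicate step is the reduction: identifying the even-admissible class precisely with the half-interval class carrying the constraint $\vartheta_s(\ell_\ast)=0$. Lemma~\ref{lem:EL-and-natural} must be applied with exactly that constraint, so that no spurious natural condition at the midpoint is claimed. The constraint qualification for the multiplier is already verified in that lemma.
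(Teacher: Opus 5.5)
Your proposal is correct and follows essentially the paper's route. The paper tests the first variation against even functions on $[0,2\ell_\ast]$ vanishing at $0,\ell_\ast,2\ell_\ast$, which are exactly the even reflections of the half-interval variations of Lemma~\ref{lem:EL-and-natural} (evenness forces $\eta_s(\ell_\ast)=0$); it then reads off the end conditions, the parity of the one-sided traces, the $-2Q_\Phi$ jump and the ODE gluing just as you do, and your half-interval reduction (including the check that the even extension is $H^2$) and your integration-by-parts derivation of the Dirac coefficient only spell out steps the paper states briefly.
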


\begin{proof}
The first variation for a fixed value of $\Phi$ is tested against even functions
\[
\mathcal V_{\ell_\ast}^{\mathrm{ev}}:=\Big\{\eta\in H^2(0,2\ell_\ast):
\eta(0)=\eta(\ell_\ast)=\eta(2\ell_\ast)=0,\ \eta\text{ even about }\ell_\ast\Big\}.
\]
The condition $\eta(\ell_\ast)=0$ is exactly the fixed-midpoint-angle constraint.  As in Lemma~\ref{lem:EL-and-natural}, the closure constraint is regular because $\cos\theta_\ast$ cannot vanish identically on $[0,\ell_\ast]$.  Hence there is $a\in\R$ such that
\begin{equation}\label{eq:doubled-firstvar}
\int_0^{2\ell_\ast}(\theta_\ast)_{ss}\eta_{ss}\,ds
+
a\int_0^{\ell_\ast}\cos(\theta_\ast)\eta\,ds=0
\qquad\text{for all }\eta\in\mathcal V_{\ell_\ast}^{\mathrm{ev}}.
\end{equation}
Since the integrand is even, its integral over $[0,2\ell_\ast]$ is twice its integral over
$[0,\ell_\ast]$.  After replacing $a/2$ by $a$, we use the same letter for the coefficient in the corresponding
full-interval equation.

Using test functions supported in $(0,\ell_\ast)$ and then reflecting them evenly gives the weak equation on the first
half; symmetry gives the same equation on the second half.  Standard one-dimensional bootstrapping then gives
smoothness on each open half-interval and up to the midpoint from either side.

Integrating by parts on the two half-intervals, the fixed midpoint value $\eta(\ell_\ast)=0$ removes the possible
$\theta_{sss}$ boundary term at the midpoint.  At the outer endpoints the boundary contribution is
\[
(\theta_\ast)_{ss}(2\ell_\ast)\eta_s(2\ell_\ast)-(\theta_\ast)_{ss}(0)\eta_s(0).
\]
Because $\eta$ is even about $\ell_\ast$, $\eta_s(2\ell_\ast)=-\eta_s(0)$; because $\theta_\ast$ is even, the one-sided endpoint traces of $\theta_{ss}$ agree.  Since $\eta_s(0)$ is arbitrary, \eqref{eq:doubled-natural-ends} follows.

Finally, evenness gives $(\theta_\ast)_s(\ell_\ast)=0$ and the one-sided parity
$(\theta_\ast)_{sss}(\ell_\ast^+)=-(\theta_\ast)_{sss}(\ell_\ast^-)$.  Thus the jump in the third derivative is
$-2Q_\Phi$, which proves the displayed distributional identity.  If $Q_\Phi=0$, then the one-sided Cauchy data through
order three match at $\ell_\ast$; uniqueness and bootstrapping for the analytic ODE give smooth gluing across the midpoint.
\end{proof}

\subsection{Closing by $x$-axis reflection and energy bookkeeping}\label{subsec:doubled-close}

Let $(\ell_\ast,\theta_\ast)$ be as above and let $\gamma_\ast$ be the reconstructed curve \eqref{eq:defgamma}.
By Lemma~\ref{lem:even-theta-point-symmetry}, $\gamma_\ast$ is point-symmetric about $B_\ast:=\gamma_\ast(\ell_\ast)$.
Since $\int_0^{\ell_\ast}\sin\theta_\ast\,ds=0$, we have $B_\ast\in\{y=0\}$ and hence also $\gamma_\ast(2\ell_\ast)\in\{y=0\}$.

Define the closed curve $\Gamma_\ast:[0,4\ell_\ast]\to\R^2$ by reflecting the doubled arc across the $x$-axis:
\begin{equation}\label{eq:doubled-xaxis-close}
\Gamma_\ast(s):=
\begin{cases}
\gamma_\ast(s), & 0\le s\le 2\ell_\ast,\\[2pt]
(x_\ast(4\ell_\ast-s),-y_\ast(4\ell_\ast-s)), & 2\ell_\ast\le s\le 4\ell_\ast,
\end{cases}
\end{equation}
where $\gamma_\ast=(x_\ast,y_\ast)$.

\begin{proposition}[Smooth $x$-axis closure for balanced doubled arcs]\label{prop:doubled-xaxis-smooth}
Assume that the doubled angle $\theta_\ast$ is balanced at the midpoint, i.e.
\[
(\theta_\ast)_{sss}(\ell_\ast^-)=0.
\]
Then the curve $\Gamma_\ast$ defined by \eqref{eq:doubled-xaxis-close} is a $C^\infty$ closed unit-speed immersion.
\end{proposition}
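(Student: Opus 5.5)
The plan is to first establish smoothness of $\theta_\ast$ on all of $[0,2\ell_\ast]$, and then to reflect across the $x$-axis using Lemma~\ref{lem:reflection-principle} at the two seams $s=0$ and $s=2\ell_\ast$.

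\textbf{Step 1: smoothness of the doubled angle.} By Theorem~\ref{thm:doubled-EL}, $\theta_\ast$ solves $\theta_{ssss}=c\cos\theta$ with $c=-a$ on each half-interval, and is smooth up to $s=\ell_\ast$ from either side. Evenness gives matching traces of orders $0$ and $2$, and also $\theta_s(\ell_\ast)=0$. The hypothesis $Q_\Phi=0$ together with the parity relation $(\theta_\ast)_{sss}(\ell_\ast^+)=-(\theta_\ast)_{sss}(\ell_\ast^-)$ makes the third-order traces agree. So the two one-sided solutions have the same Cauchy data through order three. By ODE uniqueness they coincide with the single solution through the midpoint, hence $\theta_\ast\in C^\infty([0,2\ell_\ast])$ solves the ODE on the whole interval. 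This is the last clause of Theorem~\ref{thm:doubled-EL}.

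\textbf{Step 2: the reflected curve in angle form.} I would compute the tangent angle of the second half of $\Gamma_\ast$. For $s\in[2\ell_\ast,4\ell_\ast]$, differentiating $(x_\ast(4\ell_\ast-s),-y_\ast(4\ell_\ast-s))$ gives the tangent
\[
(-\cos\theta_\ast(4\ell_\ast-s),\ \sin\theta_\ast(4\ell_\ast-s)),
\]
which has angle $\pi-\theta_\ast(4\ell_\ast-s)$. Writing $u=s-2\ell_\ast$, the angle is $2\theta_0-\theta_\ast(2\ell_\ast-u)$ with $\theta_0=\pi/2$. This is exactly the one-sided reflection of Lemma~\ref{lem:reflection-principle} about the seam $s=2\ell_\ast$, where $\theta_\ast(2\ell_\ast)=\pi/2$.

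\textbf{Step 3: smoothness at both seams.} At $s=2\ell_\ast$ we have $\theta_\ast(2\ell_\ast)=\pi/2$ and $(\theta_\ast)_{ss}(2\ell_\ast)=0$ by \eqref{eq:doubled-natural-ends}. The left-sided version of Lemma~\ref{lem:reflection-principle} then shows the glued angle is $C^\infty$ there and satisfies the same ODE. For closure, note that $\Gamma_\ast(4\ell_\ast)=(x_\ast(0),-y_\ast(0))=(0,0)=\Gamma_\ast(0)$. Continuity at $2\ell_\ast$ holds because $y_\ast(2\ell_\ast)=0$, by Lemma~\ref{lem:even-theta-point-symmetry} and the closure constraint.

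At the seam $s=4\ell_\ast\equiv0$, I would compare the jets directly. The angle of the second half near $s=4\ell_\ast$ is $\pi-\theta_\ast(4\ell_\ast-s)$. Its value $\pi/2$ agrees with $\theta_\ast(0)$ modulo $2\pi$, which is what matters for the tangent. Its odd-order derivatives at the seam equal those of $\theta_\ast$ at $0$. Its even-order derivatives are negated, and they must vanish. Order $2$ vanishes by $(\theta_\ast)_{ss}(0)=0$, and order $4$ vanishes by $\cos(\pi/2)=0$ in the ODE. Higher orders are handled by the same uniqueness argument as in Lemma~\ref{lem:reflection-principle} at the seam $s=0$. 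Equivalently, the angle $2\cdot\tfrac{\pi}{2}-\theta_\ast(-u)$ is the reflection about $0$, which that lemma covers directly.

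\textbf{Main obstacle.} The step needing most care is the bookkeeping at the closing seam $4\ell_\ast\sim0$. There one must check that the reflected angle is the Lemma~\ref{lem:reflection-principle} reflection of $\theta_\ast$ near $0$, read in the periodic parameter and up to a lift shift by a multiple of $2\pi$ that does not affect the tangent. Once this is verified, $\Gamma_\ast$ is unit speed with a $C^\infty$ periodic tangent. Since it is closed, it is a $C^\infty$ closed unit-speed immersion.
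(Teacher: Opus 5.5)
Your proposal is correct and follows the same route as the paper: smoothness through the midpoint comes from the balance condition plus evenness (the last clause of Theorem~\ref{thm:doubled-EL}), continuity and closure come from $\gamma_\ast(0)=(0,0)$ and $y_\ast(2\ell_\ast)=0$, and smoothness at both vertical seams comes from $(\theta_\ast)_{ss}(0)=(\theta_\ast)_{ss}(2\ell_\ast)=0$ via Lemma~\ref{lem:reflection-principle}. Your explicit identification of the reflected tangent angle as $\pi-\theta_\ast(4\ell_\ast-s)$ spells out the jet bookkeeping that the paper leaves implicit. Note that this angle returns exactly to $\pi/2$ at $s=4\ell_\ast$, not merely modulo $2\pi$.
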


\begin{proof}
By Theorem~\ref{thm:doubled-EL}, the balance condition removes the midpoint Dirac mass, so the reconstructed doubled arc is smooth through $s=\ell_\ast$.  Continuity and closure follow from $\gamma_\ast(0)=(0,0)$ and $\gamma_\ast(2\ell_\ast)\in\{y=0\}$.  Unit speed is preserved because reflection and parameter reversal preserve $|d\gamma/ds|$.

At the $x$-axis reflection seams $s=2\ell_\ast$ and $s=0\sim4\ell_\ast$, $C^1$ matching follows from the endpoint angle conditions $\theta_\ast(0)=\theta_\ast(2\ell_\ast)=\pi/2$.  For higher smoothness, Theorem~\ref{thm:doubled-EL} gives the endpoint natural conditions $(\theta_\ast)_{ss}(0)=(\theta_\ast)_{ss}(2\ell_\ast)=0$ and the angle ODE near each endpoint.  Lemma~\ref{lem:reflection-principle} therefore applies at both vertical seams and gives precisely the jet compatibility needed for the reflection across the $x$-axis.  Hence \eqref{eq:doubled-xaxis-close} is $C^\infty$.
\end{proof}

\begin{lemma}[Energy additivity]\label{lem:doubled-energy-additivity}
Let $\gamma_\ast$ and $\Gamma_\ast$ be as above.
Then
\begin{equation}\label{eq:doubled-energy-identity}
J_\lambda[\Gamma_\ast]=2\,J_\lambda^{(2)}(\ell_\ast,\theta_\ast)=4\,J_\lambda[\gamma_\ast|_{[0,\ell_\ast]}].
\end{equation}
\end{lemma}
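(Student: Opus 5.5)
The plan is to apply Lemma~\ref{lem:additivity} twice: once to the reflection across the $x$-axis, and once to the point reflection about $B_\ast$. For the first equality, the two halves of $\Gamma_\ast$ in \eqref{eq:doubled-xaxis-close} are handled separately. On $[0,2\ell_\ast]$, $\Gamma_\ast$ coincides with $\gamma_\ast$. On $[2\ell_\ast,4\ell_\ast]$ it is the composition of the orientation-reversing isometry $(x,y)\mapsto(x,-y)$ with the parameter reversal $s\mapsto 4\ell_\ast-s$. Both operations preserve arclength and, by Lemma~\ref{lem:curvature-reversal} and the remark on isometries in the proof of Lemma~\ref{lem:additivity}, they map $k_s\mapsto\pm k_s$, so $k_s^2\,ds$ is invariant. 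The second half therefore contributes exactly $\tfrac12\int_0^{2\ell_\ast}(\theta_\ast)_{ss}^2\,ds+2\lambda\ell_\ast=J^{(2)}_\lambda(\ell_\ast,\theta_\ast)$. This uses the identity $J^{(2)}_\lambda(\ell,\theta)=J_\lambda[\gamma_{\ell,\theta}]$, which holds because $\theta_s=k$ and the length is $2\ell$. Adding the two halves gives $J_\lambda[\Gamma_\ast]=2J^{(2)}_\lambda(\ell_\ast,\theta_\ast)$.

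One point needs care. The energy is additive over the pieces provided $\int k_s^2$ does not charge the seam points $s=0,2\ell_\ast$. This holds because $k\in H^1$ on each closed piece, and, even without the smoothness from Proposition~\ref{prop:doubled-xaxis-smooth}, the curvature is continuous across the seams: $\theta_\ast\in C^1$ on each piece and reflection preserves the vertical tangent there. The integral $\int k_s^2\,ds$ is a sum of integrals over the pieces once $k$ is $H^1$ on each piece and continuous at the joins, since a finite set of points has measure zero. So the identity also holds for unbalanced minimisers, which is worth stating, since the lemma does not assume $Q_\Phi=0$.

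For the second equality, we use evenness of $\theta_\ast$ about $\ell_\ast$. This gives $(\theta_\ast)_{ss}(\ell_\ast+s)=(\theta_\ast)_{ss}(\ell_\ast-s)$ for a.e.\ $s$, so $(\theta_\ast)_{ss}^2$ is even and
\[
\tfrac12\int_0^{2\ell_\ast}(\theta_\ast)_{ss}^2\,ds=\int_0^{\ell_\ast}(\theta_\ast)_{ss}^2\,ds .
\]
The same computation was already used in Lemma~\ref{lem:doubled-coercive}. Hence $J^{(2)}_\lambda(\ell_\ast,\theta_\ast)=2\big(\tfrac12\int_0^{\ell_\ast}k_s^2\,ds+\lambda\ell_\ast\big)=2J_\lambda[\gamma_\ast|_{[0,\ell_\ast]}]$. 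Geometrically, the second quarter is the point reflection of the first, as in \eqref{eq:point-symmetry-gamma}; this is a rotation by $\pi$ combined with parameter reversal, so Lemma~\ref{lem:additivity} applies directly with $m=4$. The only real obstacle is the seam bookkeeping described above. At the midpoint, $k_s$ may have a jump when $Q_\Phi\neq0$, but $k_s$ itself stays in $L^2$ because $\theta_\ast\in H^2(0,2\ell_\ast)$ by admissibility. So nothing singular arises there, and the Dirac mass of Theorem~\ref{thm:doubled-EL} only appears in $k_{ss}$, which the energy does not see.
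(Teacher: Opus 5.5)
Your proposal is correct and follows essentially the paper's route: two applications of Lemma~\ref{lem:additivity}, first to the $x$-axis reflection with parameter reversal and then to the point symmetry (which you also verify directly from the evenness of $(\theta_\ast)_{ss}^2$), and your seam check makes explicit the $H^1$ hypothesis that the paper leaves implicit. One harmless slip: at the midpoint, $k_s=\theta_{ss}$ stays continuous even when $Q_\Phi\neq0$, since the jump is in $k_{ss}=\theta_{sss}$ and the Dirac mass sits in $k_{sss}=\theta_{ssss}$; this does not affect your conclusion, because the energy only needs $k_s\in L^2$.
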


\begin{proof}
The closed curve $\Gamma_\ast$ consists of two congruent copies of $\gamma_\ast$ (the second obtained by reflection and parameter reversal),
so Lemma~\ref{lem:additivity} with $m=2$ gives $J_\lambda[\Gamma_\ast]=2J_\lambda[\gamma_\ast]=2J_\lambda^{(2)}(\ell_\ast,\theta_\ast)$.
Moreover, $\gamma_\ast$ itself consists of two congruent copies of its first half by point symmetry, so again by Lemma~\ref{lem:additivity},
$J_\lambda[\gamma_\ast]=2J_\lambda[\gamma_\ast|_{[0,\ell_\ast]}]$, yielding \eqref{eq:doubled-energy-identity}.
\end{proof}

\section{Turning-window selection and proof of Theorem \ref{thm:main-infinite}}\label{sec:infinite-family}

The fixed-turning direct method alone does not make the midpoint angle free.  This section repairs the construction by minimising over compact lifted-turning windows.  The proof has four parts: a fixed-domain value function, a boundary-layer estimate that creates infinitely many good windows, the midpoint transversality condition for interior window minimisers, and the geometric upgrade from the reduced angle equation to the full Euler-Lagrange equation for $J_\lambda$.

\subsection{The fixed-domain shape value and the length variable}\label{subsec:tw-shape-value}

For a lifted midpoint turning $\Phi$, define
\begin{equation}\label{eq:tw-BPhi}
\mathcal{B}(\Phi):=
\left\{
\phi\in H^2(0,1):
\phi(0)=\frac\pi2,
\phi(1)=\frac\pi2+\Phi,
\phi'(1)=0,
\int_0^1\sin\phi(t)\,dt=0
\right\}.
\end{equation}
Define the fixed-domain shape value
\begin{equation}\label{eq:tw-APhi}
A(\Phi):=\inf_{\phi\in\mathcal{B}(\Phi)}\int_0^1(\phi''(t))^2\,dt.
\end{equation}

\begin{lemma}[Existence and continuity of the shape value]\label{lem:tw-A-continuity}
For every $\Phi\in\R$, the class $\mathcal{B}(\Phi)$ is nonempty and the infimum defining $A(\Phi)$ is attained.  Moreover $A:\R\to\R$ is continuous.
\end{lemma}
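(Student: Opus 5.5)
Nonemptiness of $\mathcal{B}(\Phi)$ follows from Lemma~\ref{lem:Adouble-nonempty}. The smooth pair $(\ell,\theta)\in\mathcal{A}^{(2)}(\Phi)$ built there, restricted to $[0,\ell]$ and rescaled by $\phi(t):=\theta(\ell t)$, satisfies $\phi(0)=\pi/2$, $\phi(1)=\pi/2+\Phi$, and $\int_0^1\sin\phi\,dt=\ell^{-1}\int_0^\ell\sin\theta\,ds=0$. The transition profiles are flat at the midpoint, so $\phi'(1)=0$, and $\phi\in\mathcal{B}(\Phi)$. For attainment I would run the direct method on the fixed domain $[0,1]$. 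Take a minimising sequence $\phi_j$, write $u_j:=\phi_j-\pi/2$, and use $u_j(0)=0$, $u_j'(1)=0$ to get
\[
u_j(t)=-\int_0^1\min\{t,r\}\,u_j''(r)\,dr.
\]
This bounds $u_j$ in $H^2(0,1)$ by $\|u_j''\|_{L^2}$, exactly as in the $\Phi=0$ case of Lemma~\ref{lem:doubled-coercive}. Extract a subsequence converging weakly in $H^2$ and strongly in $C^1([0,1])$. All four constraints, including $\phi'(1)=0$ and the closure integral, pass to the limit under $C^1$ convergence, and weak lower semicontinuity of $\|\phi''\|_{L^2}^2$ gives a minimiser.

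For continuity I would prove upper and lower semicontinuity of $A$ separately at a fixed $\Phi_0$. \emph{Lower semicontinuity:} take $\Phi_j\to\Phi_0$ and minimisers $\phi_j\in\mathcal{B}(\Phi_j)$ along a subsequence realising $\liminf A(\Phi_j)$. If $\liminf A(\Phi_j)=\infty$ there is nothing to prove; otherwise the $H^2$ bound above holds uniformly in $j$. The limit $\phi$ then lies in $\mathcal{B}(\Phi_0)$ because the endpoint value $\pi/2+\Phi_j\to\pi/2+\Phi_0$, and $A(\Phi_0)\le\int(\phi'')^2\le\liminf A(\Phi_j)$.

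\emph{Upper semicontinuity:} this is the main step. Given a minimiser $\phi_0\in\mathcal{B}(\Phi_0)$, I need competitors in $\mathcal{B}(\Phi)$ for $\Phi$ near $\Phi_0$ whose energy is close to $A(\Phi_0)$. The nonlinear closure constraint must be restored, so I would use a two-parameter correction
\[
\phi=\phi_0+(\Phi-\Phi_0)\,\psi+\sigma\,\eta,
\]
with fixed smooth $\psi,\eta$. Here $\psi$ satisfies $\psi(0)=0$, $\psi(1)=1$, $\psi'(1)=0$ (for example $\psi(t)=2t-t^2$), and $\eta$ satisfies $\eta(0)=\eta(1)=\eta'(1)=0$ with
\[
\int_0^1\cos\phi_0\,\eta\,dt\neq0 .
\]
Such an $\eta$ exists by the constraint qualification argument of Lemma~\ref{lem:EL-and-natural}: $\cos\phi_0\not\equiv0$ by the closure constraint, and one can take $\eta=t^2(1-t)^2\cos\phi_0$, mollified if smoothness is wanted. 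Define $F(\Phi,\sigma):=\int_0^1\sin\bigl(\phi_0+(\Phi-\Phi_0)\psi+\sigma\eta\bigr)dt$. It is $C^1$ with $F(\Phi_0,0)=0$ and $\partial_\sigma F(\Phi_0,0)\neq0$. The implicit function theorem then gives a continuous $\sigma(\Phi)$ with $\sigma(\Phi_0)=0$ and $F(\Phi,\sigma(\Phi))=0$. The resulting $\phi\in\mathcal{B}(\Phi)$ converges to $\phi_0$ in $H^2$, so $\limsup_{\Phi\to\Phi_0}A(\Phi)\le A(\Phi_0)$. Combining the two semicontinuity statements gives continuity, and also finiteness of $A$ everywhere.
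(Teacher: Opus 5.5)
Your proposal is correct and follows essentially the same route as the paper: nonemptiness from the plateau construction, the direct method using the bound coming from $u(0)=0$ and $u'(1)=0$, upper semicontinuity via an endpoint-correcting profile plus an implicit-function-theorem closure correction, and lower semicontinuity via compactness of minimisers. The differences are cosmetic. You use $2t-t^2$ where the paper uses $H(t)=\tfrac32t-\tfrac12t^3$, and $t^2(1-t)^2\cos\phi_0$ where the paper uses a compactly supported bump. You also prove lower semicontinuity without first invoking the upper bound.
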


\begin{proof}
Nonemptiness follows by taking the first half of the explicit plateau construction in
Lemma~\ref{lem:Adouble-nonempty} and rescaling it to $[0,1]$.

For existence, let $\phi_m\in\mathcal{B}(\Phi)$ be a minimising sequence.  Since $\phi_m'(1)=0$ and $\phi_m(0)$ is fixed, a one-dimensional Poincare inequality gives a uniform $H^2(0,1)$ bound from the bound on $\|\phi_m''\|_{L^2}$.  After passing to a subsequence, $\phi_m\rightharpoonup\phi$ in $H^2$ and $\phi_m\to\phi$ in $C^1([0,1])$.  The endpoint conditions and the nonlinear closure constraint pass to the limit, and weak lower semicontinuity gives attainment of $A(\Phi)$.

We prove continuity.  Fix $\Phi_0$ and let $\Phi_m\to\Phi_0$.  Let $\phi_0$ be a minimiser for $A(\Phi_0)$ and set
\[
H(t):=\frac32t-\frac12t^3.
\]
Then $H(0)=0$, $H(1)=1$, and $H'(1)=0$.  The angles
\[
\phi_m^0(t):=\phi_0(t)+(\Phi_m-\Phi_0)H(t)
\]
have the correct endpoint data for $\Phi_m$, but may fail the closure constraint by $O(|\Phi_m-\Phi_0|)$.  Since $\cos\phi_0$ is not identically zero, choose $\psi\in C_c^\infty(0,1)$ with
\[
\int_0^1\cos\phi_0\,\psi\,dt\ne0.
\]
Define
\[
G(\delta,a):=\int_0^1\sin\bigl(\phi_0+\delta H+a\psi\bigr)\,dt.
\]
Then $G(0,0)=0$ and
$\partial_aG(0,0)=\int_0^1\cos\phi_0\,\psi\,dt\ne0$.  The implicit function theorem gives $a_m\to0$ such that
$G(\Phi_m-\Phi_0,a_m)=0$, equivalently $\phi_m^0+a_m\psi\in\mathcal{B}(\Phi_m)$.  Hence
\[
\limsup_{m\to\infty}A(\Phi_m)\le A(\Phi_0).
\]
Conversely, take minimisers $\phi_m\in\mathcal{B}(\Phi_m)$.  The upper semicontinuity just proved gives a uniform
bound on $\|\phi_m''\|_{L^2}$, while $\phi_m'(1)=0$ and the fixed value $\phi_m(0)=\pi/2$ give a uniform
$H^2(0,1)$ bound.  After passing to a subsequence,
\[
\phi_m\rightharpoonup\phi\quad\text{in }H^2(0,1),
\qquad
\phi_m\to\phi\quad\text{in }C^1([0,1]).
\]
Since $\Phi_m\to\Phi_0$, the endpoint conditions pass to the limit, and uniform convergence gives
\[
\int_0^1\sin\phi\,dt=\lim_{m\to\infty}\int_0^1\sin\phi_m\,dt=0.
\]
Thus $\phi\in\mathcal{B}(\Phi_0)$, and weak lower semicontinuity yields
\[
A(\Phi_0)\le\liminf_{m\to\infty}A(\Phi_m).
\]
The two inequalities prove continuity.
\end{proof}

For fixed $\phi\in\mathcal{B}(\Phi)$ and $\ell>0$, the doubled length-penalised energy is
\[
J_\lambda^{(2)}(\ell,\phi)=\ell^{-3}\int_0^1(\phi'')^2\,dt+2\lambda\ell.
\]
After minimising over shapes and then over $\ell$, define the fixed-turning value function
\begin{equation}\label{eq:tw-Mlambda}
M_\lambda(\Phi):=
\min_{\ell>0}\left(\frac{A(\Phi)}{\ell^3}+2\lambda\ell\right)
=\frac83\left(\frac32\right)^{1/4}\lambda^{3/4}A(\Phi)^{1/4},
\qquad
\ell_\lambda(\Phi)=\left(\frac{3A(\Phi)}{2\lambda}\right)^{1/4}.
\end{equation}
Thus $M_\lambda$ is continuous, and strict comparisons for $M_\lambda$ are equivalent to strict comparisons for $A$.

\subsection{Boundary-layer asymptotics and good windows}\label{subsec:tw-boundary-layer}

The sharp turning inequality identifies the unconstrained equality profile
\begin{equation}\label{eq:tw-H}
H(t):=\frac32t-\frac12t^3.
\end{equation}
If
\[
\phi(t)=\frac\pi2+\Phi H(t)+h(t),
\qquad
h\in X:=\{h\in H^2(0,1):h(0)=h(1)=h'(1)=0\},
\]
then the closure condition is
\begin{equation}\label{eq:tw-closure-cos}
\int_0^1\cos(\Phi H(t)+h(t))\,dt=0,
\end{equation}
and the bending energy splits exactly:
\begin{equation}\label{eq:tw-energy-splitting}
\int_0^1(\phi'')^2\,dt
=3\Phi^2+\int_0^1(h'')^2\,dt.
\end{equation}
Indeed, $H''(t)=-3t$, $\int_0^1(H'')^2dt=3$, and the cross term vanishes by integration by parts using $h(0)=h(1)=h'(1)=0$.

The equality profile's closure defect is controlled by the stationary endpoint $t=1$.
\begin{lemma}[Endpoint stationary phase]\label{lem:tw-stationary-phase}
As $\Phi\to+\infty$,
\begin{equation}\label{eq:tw-stationary-phase}
\int_0^1\cos(\Phi H(t))\,dt
=
\sqrt{\frac{\pi}{6\Phi}}\cos\left(\Phi-\frac\pi4\right)+o(\Phi^{-1/2}).
\end{equation}
\end{lemma}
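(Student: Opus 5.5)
This is a classical stationary-phase computation with the only stationary point at the endpoint $t=1$. Since $H'(t)=\tfrac32(1-t^2)$ vanishes on $[0,1]$ only at $t=1$, where $H(1)=1$ and $H''(1)=-3\neq0$, I will split the integral with a smooth cutoff. Fix $\chi\in C^\infty([0,1])$ with $\chi\equiv1$ on $[1-\eta,1]$ and $\chi\equiv0$ on $[0,1-2\eta]$, and write
\[
\int_0^1\cos(\Phi H)\,dt=\operatorname{Re}\int_0^1 e^{i\Phi H}\,dt=\operatorname{Re}\Big(\int_0^1(1-\chi)e^{i\Phi H}\,dt+\int_0^1\chi\,e^{i\Phi H}\,dt\Big).
\]

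\textbf{The non-stationary part.} On the support of $1-\chi$ we have $H'\ge c_\eta>0$. One integration by parts, writing $e^{i\Phi H}=(i\Phi H')^{-1}\partial_t e^{i\Phi H}$, bounds this term by $O(\Phi^{-1})$. The boundary term at $t=0$ is $O(1/\Phi)$ since $H'(0)=3/2$, and the term at $t=1-2\eta$ vanishes because of the cutoff. Since $O(\Phi^{-1})=o(\Phi^{-1/2})$, this part contributes only to the error.

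\textbf{The endpoint part.} Set $u=1-t$, so that
\[
H(1-u)=1-\tfrac32u^2+\tfrac12u^3=:1-g(u).
\]
Here $g(u)=\tfrac32u^2(1-u/3)$ is positive and increasing on $(0,2\eta]$. Introduce the Morse variable $v=\sqrt{g(u)}=\sqrt{3/2}\,u\,(1-u/3)^{1/2}$. This is a smooth diffeomorphism near $0$ with $dv/du|_{0}=\sqrt{3/2}$. The integral becomes
\[
e^{i\Phi}\int_0^{\infty}\tilde\chi(v)\,e^{-i\Phi v^2}\,dv,
\]
with $\tilde\chi$ smooth, compactly supported and $\tilde\chi(0)=\sqrt{2/3}$. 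Writing $\tilde\chi(v)=\tilde\chi(0)\rho(v)+v\,r(v)$, where $\rho$ is a fixed cutoff equal to $1$ near $0$, the leading term uses the half-line Fresnel integral
\[
\int_0^\infty e^{-i\Phi v^2}\,dv=\tfrac12\sqrt{\pi/\Phi}\,e^{-i\pi/4},
\]
and the cutoff $\rho$ changes this by only $O(\Phi^{-1})$ (integrate by parts away from $0$). The remainder $v\,r(v)e^{-i\Phi v^2}$ is an exact derivative times $r$, so one integration by parts gives $O(\Phi^{-1})$. Hence the endpoint part equals
\[
e^{i\Phi}\sqrt{2/3}\cdot\tfrac12\sqrt{\pi/\Phi}\,e^{-i\pi/4}+O(\Phi^{-1})=\sqrt{\tfrac{\pi}{6\Phi}}\,e^{i(\Phi-\pi/4)}+O(\Phi^{-1}).
\]

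Taking real parts gives \eqref{eq:tw-stationary-phase}, in fact with error $O(\Phi^{-1})$. The main point of care is the change of variables and the Fresnel normalisation, in particular the constant $\sqrt{2/3}\cdot\tfrac12=1/\sqrt6$ and the phase $-\pi/4$. Everything else is the routine van der Corput-type integration by parts.
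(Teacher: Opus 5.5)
Your proof is correct, but it takes a different route from the paper. You use the standard stationary-phase toolkit:
\begin{itemize}
\item a smooth cutoff that isolates the non-stationary region;
\item an exact Morse change of variables $v=\sqrt{g(u)}$, which makes the phase exactly quadratic near $t=1$;
\item the splitting $\tilde\chi=\tilde\chi(0)\rho+v\,r$.
\end{itemize}
This gives the sharper remainder $O(\Phi^{-1})$. The paper uses neither cutoffs nor a Morse variable. After $x=1-t$ it rescales directly to the boundary-layer variable $y=\sqrt{\Phi}\,x$ and keeps the cubic correction $\tfrac{i}{2}\Phi^{-1/2}y^3$ in the phase. It then passes to the limit on $[0,R]$ by uniform convergence of the integrand. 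The tail $[R,\sqrt{\Phi}]$ is controlled by one integration by parts, using that the derivative of the phase is bounded below by a multiple of $y$, and finally $R\to\infty$. This yields only $o(\Phi^{-1/2})$.

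Your approach gains a quantitative rate. For instance, it would make $D_n=O(\Phi_n^{-1})$ explicit in Lemma~\ref{lem:tw-upper-good}, although only $o(\Phi_n^{-1/2})$ is needed there. The paper's approach gains robustness. The same blow-up-plus-tail argument is reused almost verbatim in Proposition~\ref{prop:tw-boundary-layer-stability}, where the phase is perturbed by $h_m\in H^2$ with only $\int_0^1(h_m'')^2\,dt=o(\Phi_m^{3/2})$. There the phase is not $C^2$ and its critical-point structure near $t=1$ is not controlled, so no Morse variable is available and your method would not transfer directly.

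One small slip in your non-stationary part: the right-hand boundary term vanishes because $1-\chi\equiv0$ on $[1-\eta,1]$. It is not killed at $t=1-2\eta$, where in fact $1-\chi=1$. The conclusion is unaffected.
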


\begin{proof}
Put $x=1-t$.  Since
\[
H(1-x)=1-\frac32x^2+\frac12x^3,
\]
the change of variables $y=\sqrt{\Phi}\,x$ gives
\[
\int_0^1 e^{i\Phi H(t)}\,dt
=
\Phi^{-1/2}e^{i\Phi}\int_0^{\sqrt\Phi}\exp\left(-\frac{3i}{2}y^2+\frac{i}{2}\Phi^{-1/2}y^3\right)dy.
\]
On bounded $y$-intervals the integrand converges uniformly to $e^{-3iy^2/2}$; on the tail, integration by parts using the nonvanishing derivative of the phase gives a uniform $O(R^{-1})$ bound after first cutting at $y=R$.  Therefore
\[
\int_0^1 e^{i\Phi H(t)}\,dt
=
\Phi^{-1/2}e^{i\Phi}\int_0^\infty e^{-3iy^2/2}\,dy+o(\Phi^{-1/2}).
\]
The Fresnel integral is
\[
\int_0^\infty e^{-3iy^2/2}\,dy=e^{-i\pi/4}\sqrt{\frac\pi6},
\]
and taking real parts gives \eqref{eq:tw-stationary-phase}.
\end{proof}

The next estimate says that any correction with energy $o(\Phi^{3/2})$ is invisible in the natural boundary layer of width $\Phi^{-1/2}$.
\begin{proposition}[Boundary-layer stability]\label{prop:tw-boundary-layer-stability}
Let $\Phi_m\to+\infty$, write $\Phi_m\equiv\rho_m\pmod{2\pi}$ with $\rho_m\to\rho\in[0,2\pi)$, and let $h_m\in X$ satisfy
\[
\int_0^1(h_m'')^2\,dt=o(\Phi_m^{3/2}).
\]
Then
\begin{equation}\label{eq:tw-boundary-layer-stability}
\sqrt{\Phi_m}\int_0^1\cos(\Phi_m H(t)+h_m(t))\,dt
\longrightarrow
\sqrt{\frac\pi6}\cos\left(\rho-\frac\pi4\right).
\end{equation}
\end{proposition}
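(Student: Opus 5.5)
We split the integral into an outer region away from $t=1$ and a boundary layer of width $R\Phi_m^{-1/2}$ near $t=1$. In the layer we show that $h_m$ is uniformly small, so it drops out of the phase. In the outer region we show the contribution is $o(\Phi_m^{-1/2})$ for any fixed large $R$, using the nonvanishing derivative of the phase $\Phi_m H + h_m$ there. Combining this with the argument of Lemma~\ref{lem:tw-stationary-phase} (and $\cos(\Phi_m-\pi/4)=\cos(\rho_m-\pi/4)\to\cos(\rho-\pi/4)$) gives \eqref{eq:tw-boundary-layer-stability}.

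\textbf{Step 1 (pointwise control of $h_m$ near $t=1$).} Set $\varepsilon_m^2:=\Phi_m^{-3/2}\int_0^1(h_m'')^2\,dt\to0$. Since $h_m(1)=h_m'(1)=0$, write $x=1-t$. Cauchy--Schwarz gives
\[
|h_m'(t)|\le x^{1/2}\|h_m''\|_{L^2},\qquad |h_m(t)|\le \tfrac{2}{3}x^{3/2}\|h_m''\|_{L^2}.
\]
For $x\le R\Phi_m^{-1/2}$ this yields
\[
|h_m|\le C R^{3/2}\Phi_m^{-3/4}\,\varepsilon_m\Phi_m^{3/4}=CR^{3/2}\varepsilon_m\to0 .
\]
On the layer we substitute $y=\sqrt{\Phi_m}\,x$, exactly as in the proof of Lemma~\ref{lem:tw-stationary-phase}. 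The integrand $\exp\bigl(i(\Phi_mH+h_m)\bigr)$ becomes $e^{i\Phi_m}\exp\bigl(-\tfrac{3i}2y^2+O(\Phi_m^{-1/2}R^3)+O(R^{3/2}\varepsilon_m)\bigr)$. Hence
\[
\sqrt{\Phi_m}\int_{\text{layer}} e^{i(\Phi_m H+h_m)}\,dt=e^{i\Phi_m}\int_0^R e^{-3iy^2/2}\,dy+o(1)
\]
for each fixed $R$.

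\textbf{Step 2 (outer region, the main obstacle).} On $x\ge R\Phi_m^{-1/2}$ the unperturbed phase has derivative $\Phi_mH'(t)=\tfrac32\Phi_m x(2-x)\ge \tfrac32\Phi_m x$. The perturbation satisfies $|h_m'|\le x^{1/2}\varepsilon_m\Phi_m^{3/4}$. Their ratio is
\[
\frac{|h_m'|}{\Phi_m H'}\le C\varepsilon_m\Phi_m^{-1/4}x^{-1/2}\le C\varepsilon_m R^{-1/2},
\]
so the total phase derivative $g:=\Phi_mH'+h_m'$ satisfies $g\ge \Phi_m x$ for large $m$. We integrate by parts once, writing $e^{i(\cdot)}=\frac{1}{ig}\frac{d}{dt}e^{i(\cdot)}$. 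The boundary terms are bounded by $C/(\Phi_m\cdot R\Phi_m^{-1/2})=C/(R\sqrt{\Phi_m})$; the term at $t=0$ is $O(\Phi_m^{-1})$ since $H'(0)=3/2$. The remaining term is $\int |g'|/g^2$, and it is the delicate one because $h_m''$ is only in $L^2$. Bound it by
\[
\int\frac{\Phi_m|H''|}{\Phi_m^2x^2}+\int\frac{|h_m''|}{\Phi_m^2x^2}.
\]
The first piece is $\le C\Phi_m^{-1}\log(\sqrt{\Phi_m}/R)+\dots$; more carefully, $|H''|=3(1-x)\le 3$ gives $\le \frac{3}{\Phi_m}\int_{R\Phi_m^{-1/2}}^1x^{-2}\,dx\le \frac{3}{R\sqrt{\Phi_m}}$. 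For the second piece use Cauchy--Schwarz:
\[
\Phi_m^{-2}\|h_m''\|_{L^2}\Bigl(\int_{R\Phi_m^{-1/2}}^1 x^{-4}\,dx\Bigr)^{1/2}\le \Phi_m^{-2}\,\varepsilon_m\Phi_m^{3/4}\,C R^{-3/2}\Phi_m^{3/4}=C\varepsilon_mR^{-3/2}\Phi_m^{-1/2}.
\]
All outer contributions are therefore $\le C(R^{-1}+\varepsilon_m)\Phi_m^{-1/2}$. If this bookkeeping is short by a logarithm, the fallback is to split the outer region further dyadically in $x$.

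\textbf{Step 3 (conclusion).} Combining Steps 1 and 2 gives
\[
\limsup_m\Bigl|\sqrt{\Phi_m}\int_0^1 e^{i(\Phi_mH+h_m)}\,dt-e^{i\Phi_m}\int_0^\infty e^{-3iy^2/2}\,dy\Bigr|\le CR^{-1}
\]
for every $R$. Here we used that the Fresnel tail $\int_R^\infty$ is $O(R^{-1})$. Letting $R\to\infty$, taking real parts, and using the Fresnel value from Lemma~\ref{lem:tw-stationary-phase} together with $\rho_m\to\rho$ proves \eqref{eq:tw-boundary-layer-stability}.
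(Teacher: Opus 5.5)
Your proof is correct and follows essentially the same route as the paper: you control $h_m$ on the boundary layer of width $R\Phi_m^{-1/2}$ via $h_m(1)=h_m'(1)=0$ and Cauchy--Schwarz, then integrate by parts once on the outer region using a lower bound on the phase derivative, and you handle the $h_m''$ contribution by Cauchy--Schwarz against $x^{-4}$. The only difference is that the paper does all of this after rescaling, with $y=\sqrt{\Phi_m}(1-t)$ and $g_m(y)=h_m(1-y/\sqrt{\Phi_m})$; your corrected bound $3/(R\sqrt{\Phi_m})$ for the $H''$ term is right, so no logarithm appears and the dyadic fallback is unnecessary.
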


\begin{proof}
Set
\[
g_m(y):=h_m\left(1-\frac{y}{\sqrt{\Phi_m}}\right),
\qquad 0\le y\le\sqrt{\Phi_m}.
\]
Since $h_m(1)=h_m'(1)=0$,
\[
g_m(0)=g_m'(0)=0,
\qquad
\int_0^{\sqrt{\Phi_m}}|g_m''(y)|^2\,dy
=
\Phi_m^{-3/2}\int_0^1|h_m''(t)|^2\,dt\to0.
\]
Thus $g_m\to0$ in $C^1_{\mathrm{loc}}([0,\infty))$.  The rescaled phase is
\[
\Psi_m(y):=\Phi_m H\left(1-\frac{y}{\sqrt{\Phi_m}}\right)+g_m(y)
=
\Phi_m-\frac32y^2+\frac12\Phi_m^{-1/2}y^3+g_m(y).
\]
On every fixed interval $[0,R]$, $\Psi_m(y)\to \rho-3y^2/2$ modulo $2\pi$, uniformly.  Hence the corresponding integrals converge on $[0,R]$.

For the tail, put $M_m:=\sqrt{\Phi_m}$ and
$\varepsilon_m:=\|g_m''\|_{L^2(0,M_m)}\to0$.  Then
\[
\Psi_m'(y)=-3y+\frac{3y^2}{2\sqrt{\Phi_m}}+g_m'(y),
\qquad |g_m'(y)|\le y^{1/2}\varepsilon_m.
\]
For fixed $R\ge1$ and all large $m$, $|\Psi_m'(y)|\ge c y$ on $R\le y\le M_m$.  Since
\[
\Psi_m''(y)=-3+\frac{3y}{\sqrt{\Phi_m}}+g_m''(y),
\]
integration by parts gives the explicit estimate
\[
\begin{aligned}
\left|\int_R^{M_m}e^{i\Psi_m(y)}\,dy\right|
&\le \frac{C}{R}+C\int_R^{M_m}\frac{1+|g_m''(y)|}{y^2}\,dy\\
&\le \frac{C}{R}
+C\varepsilon_m\left(\int_R^\infty y^{-4}\,dy\right)^{1/2}\\
&\le \frac{C}{R}+C\varepsilon_mR^{-3/2}.
\end{aligned}
\]
Consequently
$\limsup_{m\to\infty}|\int_R^{M_m}e^{i\Psi_m(y)}\,dy|\le C/R$.
Letting first $m\to\infty$ and then $R\to\infty$ yields
\[
\int_0^{\sqrt{\Phi_m}}\cos\Psi_m(y)\,dy
\to
\int_0^\infty\cos\left(\rho-\frac32y^2\right)dy
=\sqrt{\frac\pi6}\cos\left(\rho-\frac\pi4\right),
\]
which is \eqref{eq:tw-boundary-layer-stability} after undoing the change of variables.
\end{proof}

\begin{lemma}[Upper bound at the good phases]\label{lem:tw-upper-good}
Let $\rho\in\{3\pi/4,7\pi/4\}$ and $\Phi_n:=2\pi n+\rho$.  Then
\begin{equation}\label{eq:tw-upper-good}
A(\Phi_n)\le 3\Phi_n^2+o(\Phi_n^{3/2})
\qquad\text{as }n\to\infty.
\end{equation}
\end{lemma}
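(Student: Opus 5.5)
We build an explicit competitor in $\mathcal{B}(\Phi_n)$ of the form $\phi_n=\frac\pi2+\Phi_nH+h_n$ with $h_n\in X$, chosen so that the closure constraint \eqref{eq:tw-closure-cos} holds exactly and $\int_0^1(h_n'')^2\,dt=o(\Phi_n^{3/2})$. The energy splitting \eqref{eq:tw-energy-splitting} then gives $A(\Phi_n)\le 3\Phi_n^2+\int_0^1(h_n'')^2\,dt$, which is \eqref{eq:tw-upper-good}. The key observation is that at $\rho\in\{3\pi/4,7\pi/4\}$ we have $\cos(\rho-\pi/4)=0$. Hence, by Lemma~\ref{lem:tw-stationary-phase}, the closure defect of the pure profile satisfies
\[
D_n:=\int_0^1\cos(\Phi_nH)\,dt=o(\Phi_n^{-1/2}),
\]
so only a small correction is needed.

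To cancel $D_n$ we place the correction away from the boundary layer at $t=1$, where the phase $\Phi_nH$ is nonstationary. We fix a bump $\psi\in C_c^\infty(0,\tfrac12)$ and set $h_n=a\,\psi$. Then we need to solve
\[
G_n(a):=\int_0^1\cos(\Phi_nH+a\psi)\,dt=0 .
\]
Since $H'\ge c>0$ on $\operatorname{supp}\psi$, the phase has no stationary points there, and every relevant integral is exponentially or at least polynomially small. A sign-definite correction is more robust, so we choose instead $a$ of size $O(1)$ acting through a phase shift on a fixed subinterval $[\alpha,\beta]\subset(0,\tfrac12)$. Concretely, we take $\psi\equiv1$ on $[\alpha,\beta]$, smooth outside, and compactly supported. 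The contribution of that plateau is
\[
\int_\alpha^\beta\cos(\Phi_nH+a)\,dt=\frac{\sin(\Phi_nH+a)}{\Phi_nH'}\Big|_\alpha^\beta+O(\Phi_n^{-2}),
\]
which is of order $\Phi_n^{-1}$ and oscillates in $a$. By itself this is too small to absorb a defect that is only known to be $o(\Phi_n^{-1/2})$.

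The better choice is therefore a correction in the boundary layer itself. We set $h_n(t)=\varepsilon_n\,g\big(\sqrt{\Phi_n}(1-t)\big)$ with $g\in C_c^\infty(0,\infty)$ fixed, so that $h_n\in X$. Its energy is
\[
\int_0^1(h_n'')^2\,dt=\varepsilon_n^2\,\Phi_n^{3/2}\|g''\|_{L^2}^2,
\]
which is $o(\Phi_n^{3/2})$ provided $\varepsilon_n\to0$. Write $F(\rho,\varepsilon)$ for the rescaled limit functional
\[
F(\rho,\varepsilon):=\int_0^\infty\cos\!\big(\rho-\tfrac32y^2+\varepsilon g(y)\big)\,dy .
\]
Its derivative is
\[
\partial_\varepsilon F(\rho,0)=-\int_0^\infty g(y)\sin\!\big(\rho-\tfrac32 y^2\big)\,dy,
\]
which is nonzero for a suitable choice of $g$. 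The argument of Proposition~\ref{prop:tw-boundary-layer-stability}, made uniform for $|\varepsilon|\le\varepsilon_0$ and for $\varepsilon$-derivatives, gives
\[
\sqrt{\Phi_n}\,G_n(\varepsilon)=F(\rho_n,\varepsilon)+o(1)
\]
in $C^1([-\varepsilon_0,\varepsilon_0])$. Here $\rho_n=\rho$ exactly and $F(\rho,0)=0$. The main technical step is to check this $C^1$ convergence uniformly in $\varepsilon$. The tail estimate is unchanged because $g$ has compact support, and on $[0,R]$ the convergence is uniform together with $\partial_\varepsilon$. Granting it, we have $\sqrt{\Phi_n}G_n(\varepsilon)\to F(\rho,\varepsilon)$ in $C^1$, where $F(\rho,0)=0$ and $\partial_\varepsilon F(\rho,0)\ne0$. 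A quantitative inverse function argument (or the intermediate value theorem, since $F(\rho,\cdot)$ changes sign across $0$) then yields zeros $\varepsilon_n\to0$ of $G_n$.

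For these zeros, $\phi_n\in\mathcal{B}(\Phi_n)$: the endpoint conditions hold since $h_n(0)=h_n(1)=h_n'(1)=0$ (for large $n$ the support of $h_n$ avoids $t=0$). The energy satisfies $3\Phi_n^2+O(\varepsilon_n^2\Phi_n^{3/2})=3\Phi_n^2+o(\Phi_n^{3/2})$, which proves the lemma.
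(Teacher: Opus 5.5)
Your proposal is correct and follows the paper's construction: a boundary-layer correction $a\,\psi(\sqrt{\Phi_n}(1-t))$ whose amplitude tends to zero, closure enforced by an intermediate-value argument, and the bound $a^2\Phi_n^{3/2}\|\psi''\|_{L^2}^2=o(\Phi_n^{3/2})$ inserted into the exact energy splitting. The only difference is how the zero is located, and it is minor: the paper Taylor-expands $F_n(a)$ to second order with explicit bounds on $F_n'(0)$ and $F_n''$ and a tailored $b_n$, whereas you pass to the limiting Fresnel functional via convergence uniform in $\varepsilon$ (pointwise $C^0$ convergence at $\pm\eta$ already suffices); your abandoned interior-bump attempt can simply be deleted.
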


\begin{proof}
By Lemma~\ref{lem:tw-stationary-phase}, the uncorrected closure defect
\[
D_n:=\int_0^1\cos(\Phi_nH(t))\,dt
\]
satisfies $D_n=o(\Phi_n^{-1/2})$ at the two phases $\rho=3\pi/4,7\pi/4$.
Choose $\psi\in C_c^\infty([0,\infty))$ with $\psi(0)=\psi'(0)=0$ and
\[
L_\rho:=-\int_0^\infty\sin\left(\rho-\frac32y^2\right)\psi(y)\,dy\ne0.
\]
For $a\in\R$ define the boundary-layer perturbation
\[
h_{n,a}(t):=a\,\psi(\sqrt{\Phi_n}(1-t)).
\]
For all large $n$, $h_{n,a}\in X$.  Let
\[
F_n(a):=\int_0^1\cos(\Phi_nH(t)+h_{n,a}(t))\,dt.
\]
Then $F_n(0)=D_n$ and
\[
F_n'(0)=\Phi_n^{-1/2}L_\rho+O(\Phi_n^{-1}).
\]
Moreover,
\[
F_n''(a)=-\int_0^1
\cos\bigl(\Phi_nH(t)+h_{n,a}(t)\bigr)
\psi\bigl(\sqrt{\Phi_n}(1-t)\bigr)^2\,dt,
\]
and hence, uniformly in $a$,
\[
|F_n''(a)|\le\Phi_n^{-1/2}\int_0^\infty\psi(y)^2\,dy.
\]
Choose $b_n\downarrow0$ with $|D_n|/(b_n\Phi_n^{-1/2})\to0$.  Taylor's formula gives
\[
F_n(\pm b_n)=D_n\pm b_n\Phi_n^{-1/2}
\bigl(L_\rho+O(\Phi_n^{-1/2})\bigr)+O(b_n^2\Phi_n^{-1/2}).
\]
Because $L_\rho\ne0$, the two values have opposite signs for all large $n$.  Thus there is $a_n$ with
$|a_n|\le b_n$ and $F_n(a_n)=0$.  Finally, the perturbation has the exact energy
\[
\int_0^1(h_{n,a}'')^2\,dt
=a^2\Phi_n^{3/2}\int_0^\infty(\psi''(y))^2\,dy.
\]
By the exact splitting \eqref{eq:tw-energy-splitting},
\[
A(\Phi_n)\le 3\Phi_n^2+
\int_0^1(h_{n,a_n}'')^2\,dt
\le 3\Phi_n^2+C b_n^2\Phi_n^{3/2}
=3\Phi_n^2+o(\Phi_n^{3/2}).
\]
\end{proof}

\begin{lemma}[Lower bound away from the good phases]\label{lem:tw-lower-away}
Let $K\subset[0,2\pi)$ be compact and assume
\[
\cos\left(\rho-\frac\pi4\right)\ne0
\qquad\text{for every }\rho\in K.
\]
Then there exist $c_K>0$ and $N_K\in\N$ such that, whenever $\Phi\ge N_K$ and $\Phi\pmod{2\pi}\in K$,
\begin{equation}\label{eq:tw-lower-away}
A(\Phi)\ge 3\Phi^2+c_K\Phi^{3/2}.
\end{equation}
\end{lemma}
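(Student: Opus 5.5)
We argue by contradiction combined with the boundary-layer stability of Proposition~\ref{prop:tw-boundary-layer-stability}. Suppose the conclusion fails. Then for every $m\in\N$ there is $\Phi_m\ge m$ with $\Phi_m\pmod{2\pi}\in K$ and
\[
A(\Phi_m)<3\Phi_m^2+\tfrac1m\Phi_m^{3/2}.
\]
By compactness of $K$ we pass to a subsequence with $\rho_m:=\Phi_m\pmod{2\pi}\to\rho\in K$, where (after identifying $K$ with a subset of $[0,2\pi)$; if $\rho_m$ accumulates at $2\pi$ we note $\cos(\rho-\pi/4)$ is $2\pi$-periodic, so nothing changes) we still have $\cos(\rho-\pi/4)\ne0$.

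Next we use Lemma~\ref{lem:tw-A-continuity} to pick a minimiser $\phi_m\in\mathcal{B}(\Phi_m)$, so that $\int_0^1(\phi_m'')^2\,dt=A(\Phi_m)$. We decompose it as
\[
\phi_m=\tfrac\pi2+\Phi_mH+h_m .
\]
Here $h_m\in X$, because $\phi_m$ and $\frac\pi2+\Phi_mH$ share the data at $t=0$ and $t=1$ together with $\phi'(1)=0$. The exact splitting \eqref{eq:tw-energy-splitting} then gives
\[
\int_0^1(h_m'')^2\,dt=A(\Phi_m)-3\Phi_m^2<\tfrac1m\Phi_m^{3/2}=o(\Phi_m^{3/2}).
\]
This is precisely the smallness hypothesis of Proposition~\ref{prop:tw-boundary-layer-stability}.

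That proposition yields
\[
\sqrt{\Phi_m}\int_0^1\cos(\Phi_mH+h_m)\,dt\to\sqrt{\pi/6}\,\cos(\rho-\pi/4)\neq0 .
\]
On the other hand, the closure constraint in $\mathcal{B}(\Phi_m)$, rewritten as \eqref{eq:tw-closure-cos}, says the integral vanishes identically for every $m$. This is a contradiction. Hence there exist $c_K>0$ and $N_K$ such that $A(\Phi)\ge3\Phi^2+c_K\Phi^{3/2}$ whenever $\Phi\ge N_K$ and $\Phi\pmod{2\pi}\in K$.

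The only delicate point is the bookkeeping of the contradiction sequence: the failure must be phrased uniformly, namely that for each $c=1/m$ and each threshold $N=m$ there is a bad $\Phi$. This produces a single sequence with $\Phi_m\to\infty$ whose $h_m$-energy is $o(\Phi_m^{3/2})$. Once that is set up, the rest is a direct application of the already proved stability proposition, since the splitting identity makes the cross term vanish exactly and no further estimate is needed.
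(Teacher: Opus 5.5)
Your proof is correct and follows the paper's argument essentially step for step. The paper likewise extracts a contradiction sequence with $A(\Phi_m)<3\Phi_m^2+\tfrac1m\Phi_m^{3/2}$, takes minimisers and uses the exact splitting to get $\int_0^1(h_m'')^2\,dt=o(\Phi_m^{3/2})$, and then sets the boundary-layer stability limit $\sqrt{\pi/6}\cos(\rho-\pi/4)\neq0$ against the closure constraint. Your aside about accumulation at $2\pi$ is unnecessary: a compact subset of $[0,2\pi)$ has maximum strictly less than $2\pi$, so $\rho_m\to\rho\in K$ directly.
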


\begin{proof}
If the conclusion failed, then for every $m\in\N$ there would exist $\Phi_m\ge m$, with
$\rho_m:=\Phi_m\pmod{2\pi}\in K$, such that
\[
A(\Phi_m)<3\Phi_m^2+\frac1m\Phi_m^{3/2}.
\]
Let $\phi_m$ be a minimiser for $A(\Phi_m)$ and set
$h_m:=\phi_m-\pi/2-\Phi_mH$.  Then $h_m\in X$, it satisfies the exact closure condition
\eqref{eq:tw-closure-cos}, and the splitting \eqref{eq:tw-energy-splitting} gives
\[
\int_0^1(h_m'')^2\,dt=A(\Phi_m)-3\Phi_m^2
<\frac1m\Phi_m^{3/2}=o(\Phi_m^{3/2}).
\]
After passing to a subsequence, $\rho_m\to\rho\in K$.  Proposition~\ref{prop:tw-boundary-layer-stability} gives
\[
0=\sqrt{\Phi_m}\int_0^1\cos(\Phi_mH+h_m)\,dt
\to
\sqrt{\frac\pi6}\cos\left(\rho-\frac\pi4\right)\ne0,
\]
a contradiction.
\end{proof}

\begin{theorem}[Infinitely many good windows]\label{thm:tw-good-windows}
Let $\rho_1=3\pi/4$ and $\rho_2=7\pi/4$.  There exists $\delta>0$ such that, for every sufficiently large $n$ and each $j\in\{1,2\}$, the window
\[
I_{n,j}:=[2\pi n+\rho_j-\delta,\,2\pi n+\rho_j+\delta]
\]
is good for $A$ and therefore also good for $M_\lambda$:
\begin{equation}\label{eq:tw-good-window}
\min_{\Phi\in I_{n,j}}A(\Phi)
<
\min\{A(2\pi n+\rho_j-\delta),A(2\pi n+\rho_j+\delta)\}.
\end{equation}
\end{theorem}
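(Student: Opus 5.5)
The plan is to compare the upper bound at the centre of the window (Lemma~\ref{lem:tw-upper-good}) with the lower bound at the window endpoints (Lemma~\ref{lem:tw-lower-away}).

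\textbf{Choice of $\delta$.} Pick $\delta\in(0,\pi/4)$, say $\delta=\pi/8$. The zeros of $\cos(\rho-\pi/4)$ on the circle are exactly $3\pi/4$ and $7\pi/4$. Hence the set
\[
K:=\{\rho_1\pm\delta,\ \rho_2\pm\delta\}\pmod{2\pi}
\]
is a finite, hence compact, subset of $[0,2\pi)$ on which $\cos(\rho-\pi/4)\neq0$.

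\textbf{Endpoint lower bound.} Lemma~\ref{lem:tw-lower-away} supplies $c_K>0$ and $N_K$. For every $n$ with $2\pi n+\rho_j-\delta\ge N_K$, both endpoints $\Phi^\pm_{n,j}:=2\pi n+\rho_j\pm\delta$ have residue in $K$. Therefore
\[
A(\Phi^\pm_{n,j})\ge 3(\Phi^\pm_{n,j})^2+c_K(\Phi^\pm_{n,j})^{3/2}.
\]

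\textbf{Centre upper bound.} Lemma~\ref{lem:tw-upper-good} gives, at the centre $\Phi_n:=2\pi n+\rho_j$,
\[
A(\Phi_n)\le 3\Phi_n^2+o(\Phi_n^{3/2}).
\]

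\textbf{Comparing the two, and the main obstacle.} We cannot compare leading terms directly, because $3(\Phi_n-\delta)^2=3\Phi_n^2-6\delta\Phi_n+3\delta^2$. The lower endpoint therefore has a leading term smaller than at the centre by order $\Phi_n$, which dominates the $\Phi_n^{3/2}$ gain only if the $c_K$ term is large enough. In fact $c_K\Phi^{3/2}\gg 6\delta\Phi$ for large $\Phi$, so the comparison still works:
\[
A(\Phi_n-\delta)-A(\Phi_n)\ge -6\delta\Phi_n+3\delta^2+c_K(\Phi_n-\delta)^{3/2}-o(\Phi_n^{3/2})\ge \tfrac{c_K}{2}\Phi_n^{3/2}>0
\]
for all large $n$. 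The upper endpoint is easier, since $3(\Phi_n+\delta)^2\ge3\Phi_n^2$.

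\textbf{Conclusion.} Taking $N_0$ large, the continuous function $A$ (Lemma~\ref{lem:tw-A-continuity}) attains its minimum over the compact window $I_{n,j}$. That minimum is at most $A(\Phi_n)$, which is strictly smaller than both endpoint values, so \eqref{eq:tw-good-window} holds.

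\textbf{Transfer to $M_\lambda$.} The formula \eqref{eq:tw-Mlambda} shows $M_\lambda$ is a strictly increasing function of $A$. Hence the strict inequality for $A$ carries over, and the window is also good for $M_\lambda$.

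The key point is keeping the order bookkeeping right: $O(\Phi)$ for the endpoint shift, against $c_K\Phi^{3/2}$ and $o(\Phi^{3/2})$. The only nontrivial input is that $K$ avoids the Fresnel zeros, which holds because $\delta<\pi/4$ is fixed independently of $n$.
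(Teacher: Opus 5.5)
Your proposal is correct and follows essentially the same route as the paper: choose $\delta$ so the endpoint phases $\rho_j\pm\delta$ avoid the zeros of $\cos(\rho-\pi/4)$, apply Lemma~\ref{lem:tw-upper-good} at the centre and Lemma~\ref{lem:tw-lower-away} at the endpoints, note that $c_K\Phi^{3/2}$ beats both the $O(\Phi)$ shift in $3\Phi^2$ and the $o(\Phi^{3/2})$ centre error, and transfer to $M_\lambda$ using the monotonicity of \eqref{eq:tw-Mlambda}. Your explicit choice $\delta=\pi/8$ and the finite set $K$ just make the paper's choice of $\delta$ concrete; your aside that the $O(\Phi_n)$ deficit could dominate ``only if $c_K$ is large enough'' is misleading, since for large $n$ it never does, but the inequality you actually write is correct.
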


\begin{proof}
Choose $\delta>0$ so small that the endpoint phases $\rho_j\pm\delta$ are all separated from the zero set of $\cos(\rho-\pi/4)$.  At the centre $\Phi_{n,j}:=2\pi n+\rho_j$, Lemma~\ref{lem:tw-upper-good} gives
\[
A(\Phi_{n,j})\le3\Phi_{n,j}^2+o(\Phi_{n,j}^{3/2}).
\]
At either endpoint $\Phi_{n,j}^{\pm}:=2\pi n+\rho_j\pm\delta$, Lemma~\ref{lem:tw-lower-away} gives
\[
A(\Phi_{n,j}^{\pm})
\ge 3(\Phi_{n,j}^{\pm})^2+c\Phi_{n,j}^{3/2}
\]
for all large $n$, with $c>0$ independent of $n$.  Since
\[
3(\Phi_{n,j}^{\pm})^2=3\Phi_{n,j}^2+O(\Phi_{n,j}),
\]
the positive $c\Phi_{n,j}^{3/2}$ term dominates the $O(\Phi_{n,j})$ change in the quadratic term and the centre error.  Thus the centre value is strictly below both endpoint values for all large $n$.  The equivalence for $M_\lambda$ follows from \eqref{eq:tw-Mlambda}.
\end{proof}

\subsection{Window minimisers and midpoint transversality}\label{subsec:tw-window-transversality}

\begin{proposition}[Existence of window minimisers]\label{prop:tw-window-existence}
Let $I\subset(0,\infty)$ be compact.  Then
\[
\inf_{(\ell,\theta)\in\mathcal{A}^{(2)}(I)}J_\lambda^{(2)}(\ell,\theta)
\]
is attained, and the minimum equals $\min_{\Phi\in I}M_\lambda(\Phi)$.
\end{proposition}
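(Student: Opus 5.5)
The plan is to reduce the window problem to the one-variable function $M_\lambda$ on the compact interval $I$, using the fixed-domain shape value $A$ from Lemma~\ref{lem:tw-A-continuity}. First we establish the identity
\[
\inf_{\mathcal{A}^{(2)}(I)}J_\lambda^{(2)}=\inf_{\Phi\in I}M_\lambda(\Phi).
\]
Take any $(\ell,\theta)\in\mathcal{A}^{(2)}(\Phi)$ with $\Phi\in I$, and rescale by $\phi(t):=\theta(\ell t)$ on $[0,2]$. Evenness about $t=1$ and Lemma~\ref{lem:midpoint-seam-from-evenness} give $\phi'(1)=0$. Together with the endpoint data and the closure constraint \eqref{eq:doubled-closure-fixed}, this shows that $\phi|_{[0,1]}\in\mathcal{B}(\Phi)$. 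Combining evenness with \eqref{eq:doubled-rescale-energy}, we get
\[
J_\lambda^{(2)}(\ell,\theta)=\ell^{-3}\int_0^1(\phi'')^2\,dt\cdot\tfrac{2}{2}\cdot 2\cdot\tfrac12+2\lambda\ell=\ell^{-3}\int_0^1(\phi'')^2+2\lambda\ell .
\]
I will double-check the constants here: the factor $\tfrac12$ and the doubling cancel, which matches the formula preceding \eqref{eq:tw-Mlambda}. This yields $J_\lambda^{(2)}(\ell,\theta)\ge A(\Phi)\ell^{-3}+2\lambda\ell\ge M_\lambda(\Phi)$, and hence $\ge\min_I M_\lambda$.

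For the reverse inequality, $M_\lambda$ is continuous by Lemma~\ref{lem:tw-A-continuity} and \eqref{eq:tw-Mlambda}, so it attains its minimum on the compact set $I$ at some $\Phi^\dagger$. Lemma~\ref{lem:tw-A-continuity} also provides a minimiser $\phi^\dagger\in\mathcal{B}(\Phi^\dagger)$ for $A(\Phi^\dagger)$. We extend it evenly to $[0,2]$ by $\phi(1+t)=\phi^\dagger(1-t)$. Because $(\phi^\dagger)'(1)=0$, the extension lies in $H^2(0,2)$: the first derivatives match at $t=1$, so there is no jump in $\phi'$ and the second derivative is merely $L^2$ on each side. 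We then set $\ell:=\ell_\lambda(\Phi^\dagger)>0$ and $\theta(s):=\phi(s/\ell)$. To confirm $\ell>0$ we need $A(\Phi^\dagger)>0$. This holds because $A(\Phi)\ge3\Phi^2>0$ by the splitting \eqref{eq:tw-energy-splitting} whenever $\Phi\neq0$, and $I\subset(0,\infty)$.

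It remains to check that $(\ell,\theta)\in\mathcal{A}^{(2)}(\Phi^\dagger)\subset\mathcal{A}^{(2)}(I)$. The endpoint and midpoint angles are correct, $\theta$ is even about $\ell$, and the closure constraint rescales via \eqref{eq:rescaled-closure}. By the computation in the first step,
\[
J_\lambda^{(2)}(\ell,\theta)=A(\Phi^\dagger)\ell^{-3}+2\lambda\ell=M_\lambda(\Phi^\dagger)=\min_I M_\lambda.
\]
So the infimum is attained and equals $\min_I M_\lambda$.

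The step most likely to cause trouble is the bookkeeping of the energy identity, namely the factor $\tfrac12$ against the doubling. A second point needing care is that the even extension genuinely lands in $H^2$ and reproduces $\phi'(1)=0$ consistently; this was settled above. As an alternative, one could run the direct method on the window class directly, as in Lemma~\ref{lem:doubled-compact}, using that $\theta(\ell)-\pi/2\in I$ is a closed condition under $C^0$ convergence. The value-function route is cleaner, however, because it simultaneously identifies the minimum value.
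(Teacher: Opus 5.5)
Your proof is correct and follows essentially the same route as the paper: minimise the continuous function $M_\lambda$ over $I$, realise the minimum by evenly extending a shape minimiser from $\mathcal{B}(\Phi^\dagger)$ at length $\ell_\lambda(\Phi^\dagger)$, and bound every admissible pair from below via $\theta(\ell\,\cdot)|_{[0,1]}\in\mathcal{B}(\Phi)$. You also check two points the paper leaves implicit: that $A(\Phi^\dagger)\ge 3(\Phi^\dagger)^2>0$, so the optimal length exists, and that the even extension lies in $H^2$ because $(\phi^\dagger)'(1)=0$. The garbled constant display in your first step should simply read $\frac12\cdot 2\ell^{-3}\int_0^1(\phi'')^2\,dt$.
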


\begin{proof}
By Lemma~\ref{lem:tw-A-continuity} and \eqref{eq:tw-Mlambda}, $M_\lambda$ is continuous, so it attains its minimum at some $\Phi_\ast\in I$.  Choose a minimiser $\phi_\ast\in\mathcal{B}(\Phi_\ast)$ for $A(\Phi_\ast)$ and set
\[
\ell_\ast=\left(\frac{3A(\Phi_\ast)}{2\lambda}\right)^{1/4}.
\]
Define $\theta_\ast(s)=\phi_\ast(s/\ell_\ast)$ on $[0,\ell_\ast]$ and extend evenly to $[0,2\ell_\ast]$.  Then $(\ell_\ast,\theta_\ast)\in\mathcal{A}^{(2)}(I)$ and its energy is $M_\lambda(\Phi_\ast)$.
Conversely, let $(\ell,\theta)\in\mathcal A^{(2)}(I)$, set
\[
\Phi:=\theta(\ell)-\frac\pi2\in I,
\qquad
\phi(t):=\theta(\ell t),\quad 0\le t\le1.
\]
Then $\phi\in\mathcal B(\Phi)$, and therefore
\[
\begin{aligned}
J_\lambda^{(2)}(\ell,\theta)
&=\ell^{-3}\int_0^1(\phi'')^2\,dt+2\lambda\ell\\
&\ge \ell^{-3}A(\Phi)+2\lambda\ell\\
&\ge M_\lambda(\Phi)
\ge \min_{\Psi\in I}M_\lambda(\Psi).
\end{aligned}
\]
\end{proof}

\begin{proposition}[Interior window minimisers are balanced]\label{prop:tw-interior-balanced}
Let $(\ell,\theta)$ minimise $J^{(2)}_\lambda$ over $\mathcal{A}^{(2)}(I)$, and suppose
\[
\Phi(\theta):=\theta(\ell)-\frac\pi2\in\operatorname{int}I.
\]
Then there exists $a\in\R$ such that $\theta$ solves \eqref{eq:thetaequiv}
on the two open half-intervals, satisfies $\theta_{ss}(0)=\theta_{ss}(2\ell)=0$, and obeys the midpoint transversality condition
\begin{equation}\label{eq:tw-midpoint-transversality}
\theta_{sss}(\ell^-)=0.
\end{equation}
Consequently the even midpoint reflection is smooth across $s=\ell$.
\end{proposition}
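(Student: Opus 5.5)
The plan is to reuse the fixed-turning analysis of Theorem~\ref{thm:doubled-EL} and then add one further family of admissible variations, namely variations that move the midpoint value $\theta(\ell)$. Since $(\ell,\theta)$ minimises over $\mathcal A^{(2)}(I)$, it also minimises over the smaller class $\mathcal A^{(2)}(\Phi(\theta))$. Theorem~\ref{thm:doubled-EL} therefore applies directly and supplies the following:
\begin{itemize}
\item the multiplier $a$ and the equation $\theta_{ssss}+a\cos\theta=0$ on each open half-interval;
\item the natural conditions $\theta_{ss}(0)=\theta_{ss}(2\ell)=0$;
\item smoothness up to the midpoint from each side;
\item the parity relation $\theta_{sss}(\ell^+)=-\theta_{sss}(\ell^-)$.
\end{itemize}
The sign convention $\theta_{ssss}+a\cos\theta=0$ versus $\theta_{ssss}=c\cos\theta$ is immaterial; it is \eqref{eq:thetaequiv} with $\alpha=a$. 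The only new content is $Q:=\theta_{sss}(\ell^-)=0$.

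To obtain it, I enlarge the test space. Fix $\ell$ and consider even variations $\eta\in H^2(0,2\ell)$ with $\eta(0)=\eta(2\ell)=0$ and $\eta(\ell)$ free. An explicit choice is $\eta(s)=H(s/\ell)$ on $[0,\ell]$, extended evenly, with $H(t)=\tfrac32t-\tfrac12t^3$. The closure constraint must still hold, so I correct with a compactly supported $\psi$, exactly as in the implicit-function argument of Lemma~\ref{lem:tw-A-continuity}. Here $\psi\in C_c^\infty(0,\ell)$ is chosen with $\int_0^\ell\cos\theta\,\psi\neq0$ and extended evenly. For $|\varepsilon|$ small, the implicit function theorem gives $\sigma(\varepsilon)$ with $\theta+\varepsilon\eta+\sigma(\varepsilon)\psi$ admissible. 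Because $\Phi(\theta)\in\operatorname{int}I$, the perturbed midpoint turning $\Phi+\varepsilon\eta(\ell)$ stays in $I$, so both signs of $\varepsilon$ are allowed. This is exactly where interiority is used. Minimality then yields the two-sided first-order condition
\[
\int_0^{2\ell}\theta_{ss}\eta_{ss}\,ds+a\int_0^{2\ell}\cos\theta\,\eta\,ds=0 .
\]
The multiplier here is the same $a$. The reason is that the correction term enters through $\sigma'(0)$, and the $\psi$-direction already satisfies the Euler–Lagrange identity with multiplier $a$, which reduces the equation to this form (in the paper's normalisation, after the halving convention).

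Next I integrate by parts on each half-interval separately, using the interior equation, the outer natural conditions, $\eta(0)=\eta(2\ell)=0$, and $\theta_{ss}$, $\eta_s$ evaluated at the midpoint. The midpoint terms are $[\theta_{ss}\eta_s-\theta_{sss}\eta]$ taken at $\ell^-$ minus at $\ell^+$. Evenness gives $\eta_s(\ell^\pm)=\pm\eta_s(\ell^\mp)$-type cancellations; indeed $\theta_{ss}$ and $\eta_s$ are continuous with $\eta_s(\ell)=0$ by Lemma~\ref{lem:midpoint-seam-from-evenness}. The surviving term is
\[
-\theta_{sss}(\ell^-)\eta(\ell)+\theta_{sss}(\ell^+)\eta(\ell)=-2Q\,\eta(\ell).
\]
This is consistent with the Dirac-mass identity in Theorem~\ref{thm:doubled-EL}. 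Since $\eta(\ell)=1\neq0$, I conclude $Q=0$. Smooth gluing across $s=\ell$ then follows from the last sentence of Theorem~\ref{thm:doubled-EL}: the Cauchy data through order three match, since $\theta_s(\ell)=0$, $\theta_{ss}$ is continuous, and $\theta_{sss}(\ell^\pm)=0$, and ODE uniqueness applies.

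I expect the main obstacle to be the bookkeeping of the constraint correction. The multiplier must be shown to be the same constant when the midpoint is freed, and the closure constraint must be checked to remain regular in the enlarged space. I would handle this by invoking the Lagrange multiplier theorem once, on the full space of even $\eta$ vanishing at $0$ and $2\ell$, with the single constraint $C(\theta)=\int_0^\ell\sin\theta$. The linear constraint $\eta(\ell)\in$ (interior) is then not active, and the constraint qualification is the same $\chi\cos\theta$ argument as in Lemma~\ref{lem:EL-and-natural}. Restricting to $\eta(\ell)=0$ recovers the multiplier of Theorem~\ref{thm:doubled-EL}. The length variable $\ell$ is held fixed throughout and plays no role here.
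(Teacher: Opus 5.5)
Your proposal is correct and follows essentially the paper's argument. You free the midpoint value in the test space (even $H^2$ variations on $[0,2\ell]$, which is the paper's half-domain space $\{\eta(0)=0,\ \eta'(1)=0\}$ after even extension), obtain a single Lagrange multiplier from the regular closure constraint, and read off $\theta_{sss}(\ell^-)=0$ from the midpoint term $-2Q\,\eta(\ell)$ left after integrating by parts. The only cosmetic difference is that the paper derives the interior equation and the outer natural conditions in that same pass, whereas you import them from Theorem~\ref{thm:doubled-EL} and identify the multiplier by restricting to $\eta(\ell)=0$.
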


\begin{proof}
Because $\Phi(\theta)$ lies in the interior of $I$, small admissible variations may change $\theta(\ell)$.  In half-domain variables this means that, for the corresponding minimising shape $\phi$, admissible variations $\eta$ satisfy
\[
\eta(0)=0,
\qquad
\eta'(1)=0,
\]
but no condition is imposed on $\eta(1)$.  Set
\[
\mathcal V:=\{\eta\in H^2(0,1):\eta(0)=0,\ \eta'(1)=0\}.
\]
The same constraint qualification used in Lemma~\ref{lem:tw-A-continuity} gives a multiplier $a\in\R$ such that
\[
2\int_0^1\phi''\eta''\,dt+a\int_0^1\cos\phi\,\eta\,dt=0
\]
for every $\eta\in\mathcal V$.  Integrating by parts twice yields the boundary term
\[
2[\phi''\eta'-\phi'''\eta]_0^1.
\]
The term at $t=1$ involving $\eta'(1)$ vanishes, while $\eta(1)$ is arbitrary, so
\[
\phi'''(1)=0.
\]
Likewise, $\eta'(0)$ is arbitrary and $\eta(0)=0$, so $\phi''(0)=0$.  Variations compactly supported in $(0,1)$
give the Euler--Lagrange equation.
Since $\theta_{sss}(\ell^-)=\ell^{-3}\phi'''(1)$, this is \eqref{eq:tw-midpoint-transversality}.  The remaining Euler-Lagrange equation and the endpoint natural condition are obtained by the same integration-by-parts argument with variations supported away from, or with free derivative at, the outer endpoint.

Evenness gives $\theta_s(\ell)=0$ and $\theta_{sss}(\ell^+)=-\theta_{sss}(\ell^-)$.  Hence \eqref{eq:tw-midpoint-transversality} makes the one-sided third derivatives agree and vanish.  The one-sided Cauchy data through order three match, so the ODE bootstraps the reflected angle to $C^\infty$ through the midpoint.
\end{proof}

\subsection{The geometric Euler-Lagrange equation}\label{subsec:tw-geometric-upgrade}

Let
\begin{equation}\label{eq:E-k-def}
\mathcal{E}[k]:=k_{ssss}+k^2k_{ss}-\frac12kk_s^2.
\end{equation}
For a smooth closed unit-speed curve, the normal first variation of $J_\lambda$ is
\begin{equation}\label{eq:J-first-var-geometric}
\delta J_\lambda[fN]
=-\int_\Gamma\big(\mathcal{E}[k]+\lambda k\big)f\,ds.
\end{equation}
Indeed, with the convention $T_s=kN$, one has $\delta ds=-kf\,ds$, $\delta k=f_{ss}+k^2f$, and $\delta\partial_s=kf\partial_s$; integrating by parts on the closed curve gives \eqref{eq:J-first-var-geometric}.  Tangential variations are reparametrisations.

\begin{proposition}[Hamiltonian upgrade]\label{prop:tw-hamiltonian-upgrade}
Let $\theta\in C^\infty([0,2\ell])$ be even about $\ell$, satisfy
\[
\theta(0)=\theta(2\ell)=\frac\pi2,
\qquad
\int_0^\ell\sin\theta\,ds=0,
\qquad
\theta_{ss}(0)=\theta_{ss}(2\ell)=0,
\]
and solve
\[
\theta_{ssss}=c\cos\theta.
\]
Set $k=\theta_s$, $p=k_s=\theta_{ss}$, and $q=k_{ss}=\theta_{sss}$.  Then
\[
\Lambda:=-kq+\frac12p^2+c\sin\theta
\]
is constant and $\mathcal{E}[k]=-\Lambda k$.  If, in addition, $(\ell,\theta)$ is stationary in the length variable for $J^{(2)}_\lambda$, then $\Lambda=\lambda$ and hence
\begin{equation}\label{eq:tw-full-EL}
\mathcal{E}[k]+\lambda k=0.
\end{equation}
\end{proposition}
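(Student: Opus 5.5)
The plan is a direct computation in three steps: check that $\Lambda$ is a first integral, rewrite $\mathcal{E}[k]$ using the ODE, and then identify $\Lambda$ with $\lambda$ by averaging $\Lambda$ over $[0,2\ell]$ and comparing with the virial-type identity that length stationarity produces.

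\emph{Step 1 (conservation).} Since $\theta$ is smooth on $[0,2\ell]$ and solves $\theta_{ssss}=c\cos\theta$, we have $q_s=c\cos\theta$, $p_s=q$, $k_s=p$ and $\theta_s=k$. Differentiating gives
\[
\Lambda_s=-pq-kc\cos\theta+pq+ck\cos\theta=0,
\]
so $\Lambda$ is constant.

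\emph{Step 2 (the geometric operator).} Differentiating the ODE once gives $k_{ssss}=\theta_{sssss}=(c\cos\theta)_s=-ck\sin\theta$. Substituting this and $k_{ss}=q$ into \eqref{eq:E-k-def} yields
\[
\mathcal{E}[k]=-ck\sin\theta+k^2q-\tfrac12kp^2=-k\Big(-kq+\tfrac12p^2+c\sin\theta\Big)=-\Lambda k .
\]
This step needs nothing beyond smoothness.

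\emph{Step 3 (identifying $\Lambda=\lambda$).} This is the only step that uses the length variable, and I expect it to be the main point, though still routine.

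First I interpret stationarity in $\ell$ with the rescaled shape $\phi(t)=\theta(\ell t)$ held fixed. This is legitimate because the endpoint and midpoint angle conditions, evenness, and the closure constraint $\int_0^1\sin\phi\,dt=0$ are all invariant under this dilation. By \eqref{eq:doubled-rescale-energy},
\[
J^{(2)}_\lambda=\ell^{-3}E_0+2\lambda\ell,\qquad E_0:=\tfrac12\int_0^2(\phi'')^2\,dt,
\]
and setting the $\ell$-derivative to zero gives $3\cdot\tfrac12\int_0^{2\ell}\theta_{ss}^2\,ds=2\lambda\ell$. This is the virial identity of Corollary~\ref{cor:virial} in doubled form.

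Next I integrate the constant $\Lambda$ over $[0,2\ell]$ and evaluate each of its three terms.
\begin{itemize}
\item For the $c\sin\theta$ term, evenness of $\theta$ about $\ell$ and the closure constraint give $\int_0^{2\ell}\sin\theta\,ds=2\int_0^\ell\sin\theta\,ds=0$.
\item For the $-kq$ term, integration by parts gives $-\int_0^{2\ell}kq\,ds=-[kp]_0^{2\ell}+\int_0^{2\ell}p^2\,ds$. The boundary term vanishes by the seam conditions $p(0)=p(2\ell)=0$, i.e.\ $\theta_{ss}(0)=\theta_{ss}(2\ell)=0$.
\item The $\tfrac12 p^2$ term is kept as it is.
\end{itemize}
Hence
\[
2\ell\Lambda=\tfrac32\int_0^{2\ell}p^2\,ds=2\lambda\ell,
\]
so $\Lambda=\lambda$. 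Combined with Step 2 this gives \eqref{eq:tw-full-EL}.

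As a sanity check, evaluating $\Lambda$ at $s=0$ (where $p=0$ and $\sin\theta=1$) gives $c=k(0)q(0)+\Lambda$, which matches \eqref{eq:c-def-cwN}. The only care needed is to confirm that the dilation is an admissible variation within $\mathcal{A}^{(2)}$, so that ``stationary in the length variable'' really yields the virial relation. If instead one varies $\ell$ in the original variables, the same identity follows after composing with the rescaling.
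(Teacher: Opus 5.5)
Your proposal is correct and follows the paper's proof step for step. It uses the same first-integral computation, the same substitution $k_{ssss}=-ck\sin\theta$, and the same identification $2\ell\Lambda=\tfrac32\int_0^{2\ell}p^2\,ds=2\lambda\ell$ via evenness, closure, $p(0)=p(2\ell)=0$ and fixed-shape length stationarity. The only difference is cosmetic: you write the bending term as $\tfrac12\int_0^2(\phi'')^2\,dt$ on the doubled domain, whereas the paper uses the half-domain $A=\int_0^1(\phi'')^2\,dt$, and these are equal by evenness.
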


\begin{proof}
Since $q_s=\theta_{ssss}=c\cos\theta$ and $k=\theta_s$,
\[
\Lambda_s=-(pq+kq_s)+pp_s+c\cos\theta\,\theta_s=0.
\]
Furthermore,
\[
k_{sss}=q_s=c\cos\theta,
\qquad
k_{ssss}=-ck\sin\theta,
\]
and therefore
\[
\mathcal{E}[k]
=-ck\sin\theta+k^2q-\frac12kp^2
=-k\left(-kq+\frac12p^2+c\sin\theta\right)
=-\Lambda k.
\]

It remains to identify $\Lambda$.  Integrating over the doubled interval and using evenness together with the closure
condition gives
\[
2\ell\Lambda
=
\int_0^{2\ell}\left(-kq+\frac12p^2+c\sin\theta\right)ds
=
\int_0^{2\ell}p^2\,ds+\frac12\int_0^{2\ell}p^2\,ds,
\]
because
\[
\int_0^{2\ell}-kq\,ds
=-[kp]_0^{2\ell}+\int_0^{2\ell}p^2\,ds
=
\int_0^{2\ell}p^2\,ds
\]
and $p(0)=p(2\ell)=0$.  To spell out length stationarity, keep the half-domain shape
$\phi(t):=\theta(\ell t)$ fixed and put $A:=\int_0^1(\phi'')^2\,dt$.  This rescaling preserves the endpoint and
closure constraints, while evenness gives
\[
J_\lambda^{(2)}(\ell,\phi)=\ell^{-3}A+2\lambda\ell,
\qquad
\int_0^{2\ell}p^2\,ds=2\ell^{-3}A.
\]
Stationarity in the length variable therefore means
\[
0=\frac{d}{d\ell}\bigl(\ell^{-3}A+2\lambda\ell\bigr)
=-3\ell^{-4}A+2\lambda.
\]
Multiplying by $\ell$ yields
\[
2\lambda\ell=3\ell^{-3}A=\frac32\int_0^{2\ell}p^2\,ds.
\]
Thus $2\ell\Lambda=2\lambda\ell$, so $\Lambda=\lambda$.
\end{proof}

\subsection{Proof of Theorem \ref{thm:main-infinite}}\label{subsec:tw-proof-main}

\begin{proof}[Proof of Theorem~\ref{thm:main-infinite}]
Fix $\lambda>0$.  By Theorem~\ref{thm:tw-good-windows}, choose $\delta>0$ and $N_0$ such that every window
\[
I_{n,j}=[2\pi n+\rho_j-\delta,2\pi n+\rho_j+\delta],
\qquad n\ge N_0,
\quad j\in\{1,2\},
\]
is good for $M_\lambda$.  Proposition~\ref{prop:tw-window-existence} gives a minimiser $(\ell_{n,j},\theta_{n,j})$ over $\mathcal{A}^{(2)}(I_{n,j})$.  Since the window is good, the selected turning
\[
\Phi_{n,j}^\dagger=\theta_{n,j}(\ell_{n,j})-\frac\pi2
\]
lies in the interior of $I_{n,j}$.  Proposition~\ref{prop:tw-interior-balanced} then gives
\[
(\theta_{n,j})_{sss}(\ell_{n,j}^-)=0,
\]
so the doubled arc is smooth through the midpoint.  Proposition~\ref{prop:doubled-xaxis-smooth} gives a smooth closed unit-speed immersion after $x$-axis reflection.

The same proposition and Lemma~\ref{lem:even-theta-point-symmetry} give point-reflection and $x$-axis symmetries.  The quarter-arc turning identities follow directly from the endpoint angles and from the reflection rule, with $\Phi_{n,j}^{\dagger}$ in place of the formerly prescribed fixed value.

Because $(\ell_{n,j},\theta_{n,j})$ minimises over the open length variable $\ell>0$, it is stationary under the
fixed-shape length variation used in Proposition~\ref{prop:tw-hamiltonian-upgrade}.  That proposition therefore gives
$\mathcal{E}[k]+\lambda k=0$ on the smooth doubled arc.  The $x$-axis reflection preserves this equation, so the closed
curve satisfies it everywhere.  The first variation formula \eqref{eq:J-first-var-geometric} then shows that
$\Gamma_{n,j}$ is a critical point of $J_\lambda$ among smooth closed immersions.

The lower bound \eqref{eq:main-lemniscate-energy-lower} is exactly Proposition~\ref{prop:closed-lb} applied to the fundamental quarter arc with turning $\Phi_{n,j}^{\dagger}$.  Since
\[
\Phi_{n,j}^{\dagger}\in[2\pi n+\rho_j-\delta,2\pi n+\rho_j+\delta],
\]
we have $\Phi_{n,j}^{\dagger}\to+\infty$ as $n\to\infty$, and hence the energies tend to infinity.

Finally, fix either $j=1$ or $j=2$.  Since $J_\lambda[\Gamma_{n,j}]\to\infty$, we may choose indices
$n_1<n_2<\cdots$ recursively so that the corresponding energies are strictly increasing.  Congruent curves have the
same value of $J_\lambda$, so $\{\Gamma_{n_r,j}\}_{r\ge1}$ is pairwise non-congruent.  This proves that the collection
contains infinitely many geometrically distinct closed critical points.
\end{proof}

\section{Proof of Theorem \ref{thm:circular-waves-infinite}}\label{sec:circular-waves}

This section illustrates, in a setting where global closure is {not} imposed, what becomes of the degenerate
$\lambda=0$ family of circular stationary solutions under a perturbative shooting construction.
We construct countably many one-parameter families of {circular waves}:
smooth stationary immersions $\Gamma:\R\to\R^2$ for the length-penalised ideal functional whose curvature jet and
tangent are periodic, but whose position drifts by a nonzero translation per period.  Consequently these waves are not
closed.  Since such an immersion has infinite total length, $J_\lambda[\Gamma]$ itself is infinite; here
\emph{stationary} means that the geometric Euler--Lagrange equation holds, equivalently that the first variation
vanishes for compactly supported variations.  Each family bifurcates from a {multiply-covered semicircle} at
$\lambda=0$.

\subsection{Stationary ODE and multiply-covered semicircle base solutions}\label{subsec:cw-ode-base}

We use the curvature-jet shooting system
\[
\frac{d}{ds}
\begin{pmatrix}k\\p\\q\\\theta\end{pmatrix}
=
\begin{pmatrix}p\\q\\c\cos\theta\\k\end{pmatrix},
\qquad
(k,p,q,\theta)(0)=\left(a,0,b,\frac\pi2\right),
\qquad
c=ab+\lambda.
\]
Thus $\theta$ satisfies $\theta_{ssss}=c\cos\theta$.
We also reconstruct position via
\begin{equation}\label{eq:xy-cwN}
x_s=\cos\theta,\qquad y_s=\sin\theta,\qquad (x(0),y(0))=(0,0),
\end{equation}
so that $\gamma=(x,y)$ is the reconstructed unit-speed curve.

\subsection{Endpoint data and the winding index $N$}

Fix $\ell>0$ and an integer $N\in\N_0$.
We seek solutions of \eqref{eq:shooting-cwN} on $[0,\ell]$ satisfying the endpoint conditions
\begin{equation}\label{eq:endpoint-data-cwN}
\theta(\ell)=\frac{\pi}{2}+(2N+1)\pi,
\qquad
p(\ell)=k_s(\ell)=0.
\end{equation}
The angle condition imposes lifted (unwrapped) turning $(2N+1)\pi$ on $[0,\ell]$ while keeping the terminal tangent
downward (mod $2\pi$).  The seam condition $p(\ell)=0$ ensures smooth reflection of the curvature profile at $s=\ell$.

Note that the ODE itself forces the third curvature derivative to vanish at such seam endpoints:
since $k_{sss}=q_s=c\cos\theta$, we have
\begin{equation}\label{eq:ksss-vanish-cwN}
k_{sss}(0)=c\cos\theta(0)=0,\qquad k_{sss}(\ell)=c\cos\theta(\ell)=0,
\end{equation}
because $\cos\theta(0)=0$ and $\cos\theta(\ell)=0$ by \eqref{eq:endpoint-data-cwN}.  Equivalently,
$\theta_{ssss}(0)=\theta_{ssss}(\ell)=0$.

\subsection{A square shooting map and an IFT family for each $N$}\label{subsec:cw-ift}

To obtain a one-parameter family (and to avoid the constant-curvature degeneracy), we prescribe the even jet value
\begin{equation}\label{eq:qell-eps-cwN}
q(\ell)=\varepsilon
\end{equation}
as a small branch parameter $\varepsilon$.

\begin{definition}[Multiply-covered semicircle shooting map]\label{def:Phi-cwN}
Fix $\ell>0$ and $N\in\N_0$.  For $(a,b,\lambda)\in\R^3$ let $(k,p,q,\theta)$ denote the solution of
\eqref{eq:shooting-cwN} on $[0,\ell]$.  Define
\begin{equation}\label{eq:Phi-cwN}
\Phi_N:\R^3\times\R\to\R^3,\qquad
\Phi_N(a,b,\lambda;\varepsilon):=
\begin{pmatrix}
p(\ell;a,b,\lambda)\\[2pt]
\theta(\ell;a,b,\lambda)-\big(\frac{\pi}{2}+(2N+1)\pi\big)\\[2pt]
q(\ell;a,b,\lambda)-\varepsilon
\end{pmatrix}.
\end{equation}
\end{definition}

\begin{lemma}[Base solution at $\lambda=0$]\label{lem:semicircle-base-N}
Let
\begin{equation}\label{eq:a-star-N}
a_{N,\ast}:=\frac{(2N+1)\pi}{\ell}.
\end{equation}
Then $\Phi_N(a_{N,\ast},0,0;0)=0$, with explicit solution
\[
k\equiv a_{N,\ast},\qquad p\equiv 0,\qquad q\equiv 0,\qquad \theta(s)=\frac{\pi}{2}+a_{N,\ast} s.
\]
The corresponding curve is an arc of the circle of radius $1/a_{N,\ast}$ that traverses $(2N+1)$ half-turns, i.e.\ a
semicircle plus $N$ full circles.
\end{lemma}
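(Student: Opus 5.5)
The plan is a direct substitution into the shooting system \eqref{eq:shooting-cwN}, followed by the three components of \eqref{eq:Phi-cwN}. We use the parameters $(a,b,\lambda;\varepsilon)=(a_{N,\ast},0,0;0)$. The forcing constant is then $c=ab+\lambda=a_{N,\ast}\cdot 0+0=0$, so the system decouples into $k_s=p$, $p_s=q$, $q_s=0$, $\theta_s=k$, with initial data $(a_{N,\ast},0,0,\pi/2)$.

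The candidate $k\equiv a_{N,\ast}$, $p\equiv q\equiv0$, $\theta(s)=\pi/2+a_{N,\ast}s$ satisfies each equation and each initial condition. In particular $q_s=0=c\cos\theta$ holds because $c=0$, independently of $\theta$. The right-hand side of the system is smooth (globally Lipschitz in $(k,p,q,\theta)$ for fixed $c$), so Picard--Lindel\"of uniqueness identifies this candidate as the solution on $[0,\ell]$.

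Next we evaluate $\Phi_N$ at $s=\ell$. We have $p(\ell)=0$ and $q(\ell)-\varepsilon=0-0=0$. Using \eqref{eq:a-star-N},
\[
\theta(\ell)=\frac\pi2+a_{N,\ast}\ell=\frac\pi2+(2N+1)\pi,
\]
so the second component also vanishes. Hence $\Phi_N(a_{N,\ast},0,0;0)=0$.

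For the geometric description, reconstruct the curve via \eqref{eq:xy-cwN}. Since $\theta$ is affine with slope $a_{N,\ast}>0$, integration gives
\[
x(s)=\frac{1}{a_{N,\ast}}\bigl(\cos(\tfrac\pi2+a_{N,\ast}s)\bigr)+\frac{0}{1},
\]
more precisely $x(s)=\bigl(\cos(\pi/2+a_{N,\ast}s)-0\bigr)/a_{N,\ast}$ up to the constant fixed by $x(0)=0$, and similarly for $y$. This traces the circle of radius $1/a_{N,\ast}$ centred at $(-1/a_{N,\ast},0)$. The total turning $a_{N,\ast}\ell=(2N+1)\pi$ amounts to $2N+1$ half-turns, that is, $N$ full circles plus a semicircle.

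No step here is a genuine obstacle. The only point needing care is the uniqueness appeal, which is standard for smooth autonomous ODEs.
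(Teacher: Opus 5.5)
Your argument is correct and is essentially the paper's. The paper integrates $q_s=0$, $p_s=q$, $k_s=p$, $\theta_s=k$ successively from the initial data, which produces the solution directly and makes your separate Picard--Lindel\"of uniqueness appeal unnecessary, and then evaluates $\theta(\ell)=\frac{\pi}{2}+(2N+1)\pi$.

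One slip in your geometric paragraph: the antiderivative of $\cos(\frac{\pi}{2}+a_{N,\ast}s)$ is $\sin(\frac{\pi}{2}+a_{N,\ast}s)/a_{N,\ast}$, not the cosine expression you wrote. The correct reconstruction is $x(s)=(\cos(a_{N,\ast}s)-1)/a_{N,\ast}$, $y(s)=\sin(a_{N,\ast}s)/a_{N,\ast}$, and this does give the circle of radius $1/a_{N,\ast}$ centred at $(-1/a_{N,\ast},0)$ that you state.
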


\begin{proof}
At $(a,b,\lambda)=(a_{N,\ast},0,0)$ we have $c=ab+\lambda=0$, so $q_s=0$ and with $q(0)=0$ we get $q\equiv 0$.
Then $p_s=q=0$ and $p(0)=0$ give $p\equiv 0$, hence $k_s=p=0$ and $k(0)=a_{N,\ast}$ give $k\equiv a_{N,\ast}$.
Finally $\theta_s=k$ gives $\theta(s)=\pi/2+a_{N,\ast} s$, so
\[
\theta(\ell)=\frac{\pi}{2}+a_{N,\ast}\ell=\frac{\pi}{2}+(2N+1)\pi,
\]
and thus $\Phi_N(a_{N,\ast},0,0;0)=0$.
\end{proof}

\begin{lemma}[Nondegenerate linearisation]\label{lem:jacobian-cwN}
At the base point $(a_{N,\ast},0,0;0)$ the Jacobian $D_{(a,b,\lambda)}\Phi_N$ is invertible. In fact,
\begin{equation}\label{eq:det-jacobian-cwN}
\det D_{(a,b,\lambda)}\Phi_N(a_{N,\ast},0,0;0)=\frac{\ell^3}{(2N+1)\pi}\neq 0.
\end{equation}
\end{lemma}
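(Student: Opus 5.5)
The plan is a direct computation: differentiate the solution of \eqref{eq:shooting-cwN} with respect to $(a,b,\lambda)$ at the base point of Lemma~\ref{lem:semicircle-base-N}, evaluate at $s=\ell$, and take the resulting $3\times3$ determinant. Smooth dependence on parameters for the smooth autonomous ODE justifies the differentiation. The columns of $D_{(a,b,\lambda)}\Phi_N$ are then the values at $s=\ell$ of solutions of the variational (linearised) system.

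\textbf{Step 1: the linearised system.} Write $a_\ast:=a_{N,\ast}$. At the base point $c=0$, $k\equiv a_\ast$ and $\theta_0(s)=\pi/2+a_\ast s$. The perturbation $\delta c$ of $c=ab+\lambda$ is $\delta c=a_\ast\,\delta b+\delta\lambda$, because $b=0$ there. Since $c=0$, the term $-c\sin\theta\,\delta\theta$ drops out of the linearisation of $q_s=c\cos\theta$. The linearised system is therefore triangular:
\[
\delta k_s=\delta p,\quad \delta p_s=\delta q,\quad \delta q_s=\delta c\,\cos\theta_0=-\delta c\,\sin(a_\ast s),\quad \delta\theta_s=\delta k,
\]
with initial data $(\delta k,\delta p,\delta q,\delta\theta)(0)=(\delta a,0,\delta b,0)$.

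\textbf{Step 2: explicit integration.} Integrating successively gives
\[
\delta q(s)=\delta b+\delta c\,\frac{\cos(a_\ast s)-1}{a_\ast},
\]
and then $\delta p$, $\delta k$ and $\delta\theta$ by further quadratures. At $s=\ell$ we have $a_\ast\ell=(2N+1)\pi$, so $\cos(a_\ast\ell)=-1$ and $\sin(a_\ast\ell)=0$. This yields
\[
\delta p(\ell)=\ell\,\delta b-\frac{\ell}{a_\ast}\delta c,\qquad
\delta q(\ell)=\delta b-\frac{2}{a_\ast}\delta c,
\]
\[
\delta\theta(\ell)=\ell\,\delta a+\frac{\ell^3}{6}\delta b+\Big(\frac{\ell}{a_\ast^3}-\frac{\ell^3}{6a_\ast}\Big)\delta c.
\]

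\textbf{Step 3: the determinant.} Substituting $\delta c=a_\ast\delta b+\delta\lambda$, the rows of the Jacobian (ordered $p,\theta,q$; columns $a,b,\lambda$) become
\[
\begin{pmatrix}0&0&-\ell/a_\ast\\ \ell&\ell/a_\ast^2&\ell/a_\ast^3-\ell^3/(6a_\ast)\\ 0&-1&-2/a_\ast\end{pmatrix}.
\]
In the $p$-row the $\delta b$ contributions cancel exactly, so that row depends only on $\delta\lambda$. Expanding along the first column gives
\[
\det=-\ell\Big(0-\frac{\ell}{a_\ast}\Big)=\frac{\ell^2}{a_\ast}=\frac{\ell^3}{(2N+1)\pi},
\]
which is \eqref{eq:det-jacobian-cwN}.

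The only step that needs care is the bookkeeping in Step 2, in particular the three successive integrations producing $\delta\theta(\ell)$. However, the first column is $(0,\ell,0)^T$, so only the $2\times 2$ minor built from the $p$- and $q$-rows enters the determinant. The messy $\lambda$-entry of the $\theta$-row is irrelevant, which makes the computation robust.
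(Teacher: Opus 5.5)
Your proposal is correct and follows essentially the same route as the paper. Both arguments linearise at the constant-curvature base solution, where $c=0$ makes the variational system triangular, and integrate it explicitly using $\cos(a_\ast\ell)=-1$ and $\sin(a_\ast\ell)=0$. Both then use the first column $(0,\ell,0)^T$ to reduce the determinant to the minor from the $p$- and $q$-rows, giving $\ell^2/a_\ast$. The only differences are cosmetic: you handle a general perturbation through $\delta c=a_\ast\delta b+\delta\lambda$ and also compute the unneeded $\theta$-row (correctly), whereas the paper differentiates in each parameter separately and leaves the irrelevant entries unevaluated.
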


\begin{proof}
Fix $\ell>0$ and $N\in\N_0$, and write
\[
a_\ast:=a_{N,\ast}=\frac{(2N+1)\pi}{\ell}.
\]
For $(a,b,\lambda)$ near $(a_\ast,0,0)$ let $(k,p,q,\theta)(\cdot;a,b,\lambda)$ denote the solution of
\eqref{eq:shooting-cwN} on $[0,\ell]$, and recall that
\[
\Phi_N(a,b,\lambda;0)=
\begin{pmatrix}
p(\ell;a,b,\lambda)\\[2pt]
\theta(\ell;a,b,\lambda)-\big(\frac{\pi}{2}+(2N+1)\pi\big)\\[2pt]
q(\ell;a,b,\lambda)
\end{pmatrix}.
\]
Since the right-hand side of \eqref{eq:shooting-cwN} is smooth in $(k,p,q,\theta)$ and in the parameters $(a,b,\lambda)$
(through $c=ab+\lambda$) and the interval length is fixed, standard ODE theory gives that
$(a,b,\lambda)\mapsto (p(\ell),\theta(\ell),q(\ell))$ is $C^\infty$ near $(a_\ast,0,0)$.  Hence the Jacobian
$D_{(a,b,\lambda)}\Phi_N$ exists and can be computed by differentiating the ODE.

At the base point $(a_\ast,0,0)$ we have $c_\ast=ab+\lambda=0$, so the base solution is explicitly
\[
k_\ast\equiv a_\ast,\qquad p_\ast\equiv 0,\qquad q_\ast\equiv 0,\qquad
\theta_\ast(s)=\frac{\pi}{2}+a_\ast s,
\]
and therefore
\[
\cos\theta_\ast(s)=\cos\Big(\frac{\pi}{2}+a_\ast s\Big)=-\sin(a_\ast s).
\]

For $\mu\in\{a,b,\lambda\}$ set
\[
k_\mu:=\partial_\mu k\big|_\ast,\quad p_\mu:=\partial_\mu p\big|_\ast,\quad
q_\mu:=\partial_\mu q\big|_\ast,\quad \theta_\mu:=\partial_\mu\theta\big|_\ast.
\]
Differentiating \eqref{eq:shooting-cwN} at the base solution and using $c_\ast=0$ gives the decoupled variational system
\[
(k_\mu)_s=p_\mu,\qquad (p_\mu)_s=q_\mu,\qquad (q_\mu)_s=c_\mu\,\cos\theta_\ast,\qquad (\theta_\mu)_s=k_\mu,
\]
with initial data
\[
k_\mu(0)=\partial_\mu a,\qquad p_\mu(0)=0,\qquad q_\mu(0)=\partial_\mu b,\qquad \theta_\mu(0)=0.
\]
Here $c_\mu=\partial_\mu(ab+\lambda)|_\ast$ is
\[
c_a=b_\ast=0,\qquad c_b=a_\ast,\qquad c_\lambda=1.
\]

\medskip
\noindent\textbf{(i) The $a$-derivatives.}
Since $c_a=0$ and $q_a(0)=0$, we have $(q_a)_s=0$ and thus $q_a\equiv 0$, hence $p_a\equiv 0$ and $k_a\equiv 1$.
Therefore $\theta_a(s)=s$ and in particular
\begin{equation}\label{eq:partials-a}
\partial_a p(\ell)\big|_\ast=0,\qquad \partial_a q(\ell)\big|_\ast=0,\qquad
\partial_a\theta(\ell)\big|_\ast=\ell.
\end{equation}

\medskip
\noindent\textbf{(ii) The $b$-derivatives.}
Here $c_b=a_\ast$ and $q_b(0)=1$, so
\[
(q_b)_s=a_\ast\cos\theta_\ast=-a_\ast\sin(a_\ast s).
\]
Integrating gives
\[
q_b(s)=1+\int_0^s -a_\ast\sin(a_\ast\tau)\,d\tau
=1+\big(\cos(a_\ast s)-1\big)=\cos(a_\ast s).
\]
Hence
\[
p_b(s)=\int_0^s q_b(\tau)\,d\tau=\int_0^s \cos(a_\ast\tau)\,d\tau=\frac{1}{a_\ast}\sin(a_\ast s).
\]
Using $a_\ast\ell=(2N+1)\pi$ gives $\sin(a_\ast\ell)=0$ and $\cos(a_\ast\ell)=-1$, so
\begin{equation}\label{eq:partials-b}
\partial_b p(\ell)\big|_\ast=0,\qquad \partial_b q(\ell)\big|_\ast=\cos(a_\ast\ell)=-1.
\end{equation}

\medskip
\noindent\textbf{(iii) The $\lambda$-derivative needed for the determinant.}
Here $c_\lambda=1$ and $q_\lambda(0)=0$, so
\[
(q_\lambda)_s=\cos\theta_\ast=-\sin(a_\ast s),
\qquad
q_\lambda(s)=\int_0^s -\sin(a_\ast\tau)\,d\tau=\frac{\cos(a_\ast s)-1}{a_\ast}.
\]
Then
\[
p_\lambda(\ell)=\int_0^\ell q_\lambda(s)\,ds
=\int_0^\ell \frac{\cos(a_\ast s)-1}{a_\ast}\,ds
=\frac{1}{a_\ast^2}\sin(a_\ast\ell)-\frac{\ell}{a_\ast}
=-\frac{\ell}{a_\ast},
\]
since $\sin(a_\ast\ell)=0$. Thus
\begin{equation}\label{eq:partials-lambda}
\partial_\lambda p(\ell)\big|_\ast=-\frac{\ell}{a_\ast}=-\frac{\ell^2}{(2N+1)\pi}.
\end{equation}

\medskip
Collecting \eqref{eq:partials-a}, \eqref{eq:partials-b}, \eqref{eq:partials-lambda}, the Jacobian matrix
$D_{(a,b,\lambda)}\Phi_N(a_\ast,0,0;0)$ has the form
\[
D_{(a,b,\lambda)}\Phi_N\big|_\ast
=
\begin{pmatrix}
\partial_a p(\ell) & \partial_b p(\ell) & \partial_\lambda p(\ell)\\
\partial_a \theta(\ell) & \partial_b \theta(\ell) & \partial_\lambda \theta(\ell)\\
\partial_a q(\ell) & \partial_b q(\ell) & \partial_\lambda q(\ell)
\end{pmatrix}_{\!\!*}
=
\begin{pmatrix}
0 & 0 & -\ell/a_\ast\\
\ell & * & *\\
0 & -1 & *
\end{pmatrix},
\]
where the starred entries are not needed for the determinant.
Expanding along the first row,
\[
\det D_{(a,b,\lambda)}\Phi_N\big|_\ast
=
\Big(-\frac{\ell}{a_\ast}\Big)\det
\begin{pmatrix}
\ell & *\\
0 & -1
\end{pmatrix}
=
\Big(-\frac{\ell}{a_\ast}\Big)\cdot(-\ell)
=
\frac{\ell^2}{a_\ast}
=
\frac{\ell^3}{(2N+1)\pi}\neq 0.
\]
This proves \eqref{eq:det-jacobian-cwN}, and in particular the Jacobian is invertible at the base point.
\end{proof}

\begin{theorem}[IFT family of semicircle-like stationary arcs for each $N$]\label{thm:IFT-cwN}
Fix $\ell>0$ and $N\in\N_0$, and set $a_{N,\ast}=(2N+1)\pi/\ell$.
There exist $\varepsilon_{0,N}>0$ and unique $C^\infty$ functions
\[
\varepsilon\longmapsto\big(a_N(\varepsilon),b_N(\varepsilon),\lambda_N(\varepsilon)\big)
\]
defined for $|\varepsilon|<\varepsilon_{0,N}$, with
\[
(a_N(0),b_N(0),\lambda_N(0))=(a_{N,\ast},0,0),
\]
such that the solution of \eqref{eq:shooting-cwN} satisfies
\begin{equation}\label{eq:IFT-endpoint-cwN}
p(\ell)=0,\qquad
\theta(\ell)=\frac{\pi}{2}+(2N+1)\pi,\qquad
q(\ell)=\varepsilon.
\end{equation}
For every sufficiently small $\varepsilon\neq 0$, the corresponding curvature profile is non-constant.
\end{theorem}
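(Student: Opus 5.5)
The plan is to apply the implicit function theorem directly to the shooting map $\Phi_N$ of Definition~\ref{def:Phi-cwN} at the base point $(a_{N,\ast},0,0;0)$.

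First, $\Phi_N$ is $C^\infty$ jointly in $(a,b,\lambda,\varepsilon)$ near the base point. The vector field in \eqref{eq:shooting-cwN} is smooth in the state and in the parameters through $c=ab+\lambda$. The integration interval $[0,\ell]$ is fixed, and solutions exist on it for parameters near the base point by continuous dependence on data. The $\varepsilon$-dependence enters only affinely in the third component.

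Second, Lemma~\ref{lem:semicircle-base-N} gives $\Phi_N(a_{N,\ast},0,0;0)=0$. Lemma~\ref{lem:jacobian-cwN} gives invertibility of $D_{(a,b,\lambda)}\Phi_N$ there. The smooth implicit function theorem then yields $\varepsilon_{0,N}>0$ and a unique $C^\infty$ curve $\varepsilon\mapsto(a_N(\varepsilon),b_N(\varepsilon),\lambda_N(\varepsilon))$ through the base point with $\Phi_N\equiv0$ along it. Uniqueness is understood in the neighbourhood of the base point supplied by the theorem. Vanishing of $\Phi_N$ is exactly \eqref{eq:IFT-endpoint-cwN}.

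Third, and this is the only point needing an argument beyond citation, the curvature must be non-constant for $\varepsilon\neq0$. Suppose $k$ were constant on $[0,\ell]$. Then $p=k_s\equiv0$, so $q=p_s\equiv0$, which gives $q(\ell)=0$. This contradicts $q(\ell)=\varepsilon\ne0$. Hence every solution on the branch with $\varepsilon\neq0$ has non-constant curvature profile.

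As a consistency check, I would also differentiate the identity $\Phi_N(a_N(\varepsilon),b_N(\varepsilon),\lambda_N(\varepsilon);\varepsilon)=0$ at $\varepsilon=0$. Using the Jacobian entries computed in Lemma~\ref{lem:jacobian-cwN}, the first row gives $-(\ell/a_\ast)\lambda_N'(0)=0$. The third row then reads $-b_N'(0)+\partial_\lambda q(\ell)\lambda_N'(0)=1$, which gives $b_N'(0)=-1\neq0$. This confirms the branch is genuinely nontrivial, although it is not needed for the statement.

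I expect no real obstacle. The main care is in checking that the solution exists on all of $[0,\ell]$ uniformly near the base point, which holds since the system is globally Lipschitz in $(k,p,q,\theta)$ for fixed $c$ (indeed, it is linear apart from the bounded $\cos\theta$ term). With that in place, $\Phi_N$ is well defined and smooth on a full neighbourhood and the implicit function theorem applies verbatim.
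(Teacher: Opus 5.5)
Your proposal is correct and follows the paper's proof: the paper also applies the implicit function theorem to $\Phi_N$ at $(a_{N,\ast},0,0;0)$, citing Lemmas~\ref{lem:semicircle-base-N} and~\ref{lem:jacobian-cwN}, and deduces non-constancy of $k$ from $q(\ell)=k_{ss}(\ell)=\varepsilon\neq0$. Your additional remarks are accurate refinements rather than a different route: the global Lipschitz bound giving existence on $[0,\ell]$, the local nature of the uniqueness, and the values $\lambda_N'(0)=0$, $b_N'(0)=-1$, which agree with Step~1 of Lemma~\ref{lem:lambda-quadratic-cwN}.
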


\begin{proof}
Apply the implicit function theorem to $\Phi_N$ in \eqref{eq:Phi-cwN} at the base point
$(a_{N,\ast},0,0;0)$ using Lemma~\ref{lem:semicircle-base-N} and Lemma~\ref{lem:jacobian-cwN}.
If $\varepsilon\neq 0$ then $q(\ell)=k_{ss}(\ell)=\varepsilon\neq 0$; hence $q$ cannot vanish identically and $k$
cannot be constant.
\end{proof}

\subsection{Quadratic emergence of $\lambda$ along each branch}\label{subsec:cw-lambda}

The branch parameter $\varepsilon=q(\ell)$ measures deviation from the constant-curvature multiply-covered semicircle.
As in the $N=0$ case, $\lambda$ becomes strictly positive at second order; only the constant $a_{N,\ast}$ changes.

\begin{lemma}[Quadratic emergence of $\lambda$]\label{lem:lambda-quadratic-cwN}
Along the IFT branch of Theorem~\ref{thm:IFT-cwN},
\begin{equation}\label{eq:lambda-exp-cwN}
\lambda_N(\varepsilon)=\frac{5}{4a_{N,\ast}^{2}}\,\varepsilon^2+o(\varepsilon^2)
\qquad\text{as }\varepsilon\to 0.
\end{equation}
In particular, $\lambda_N(\varepsilon)>0$ for all sufficiently small $\varepsilon\neq 0$.
\end{lemma}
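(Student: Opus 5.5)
The plan is to expand the implicit-function-theorem branch of Theorem~\ref{thm:IFT-cwN} to second order in $\varepsilon$. I would use the conserved Hamiltonian from Proposition~\ref{prop:tw-hamiltonian-upgrade} to cut down the second-order bookkeeping. Since $(a_N,b_N,\lambda_N)$ is $C^\infty$, I write
\[
a=a_\ast+a_1\varepsilon+a_2\varepsilon^2+o(\varepsilon^2),\qquad b=b_1\varepsilon+b_2\varepsilon^2+o(\varepsilon^2),\qquad \lambda=\lambda_1\varepsilon+\lambda_2\varepsilon^2+o(\varepsilon^2),
\]
with $a_\ast=a_{N,\ast}$. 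The goal is $\lambda_1=0$ and $\lambda_2=5/(4a_\ast^2)$.

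\textbf{First order.} Differentiating $\Phi_N(a,b,\lambda;\varepsilon)=0$ at $\varepsilon=0$ gives
\[
D_{(a,b,\lambda)}\Phi_N\big|_\ast\,(a_1,b_1,\lambda_1)^T=(0,0,1)^T.
\]
The first row of the Jacobian computed in Lemma~\ref{lem:jacobian-cwN} is $(0,0,-\ell/a_\ast)$, so $\lambda_1=0$. The third row then gives $-b_1+\partial_\lambda q(\ell)\lambda_1=1$, hence $b_1=-1$. The second row gives $a_1=-\partial_b\theta(\ell)/\ell$. This is computable from the explicit first-order profiles already found, namely $q_b=\cos(a_\ast s)$ and $p_b=\sin(a_\ast s)/a_\ast$, by integrating twice more. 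Consequently the first-order variation of the jet along the branch is
\[
(k_1,p_1,q_1,\theta_1)=a_1(k_a,p_a,q_a,\theta_a)-(k_b,p_b,q_b,\theta_b),
\]
which is completely explicit in trigonometric functions of $a_\ast s$. In particular $q_1=-\cos(a_\ast s)$, $p_1=-\sin(a_\ast s)/a_\ast$, and $k_1=a_1+(\cos(a_\ast s)-1)/a_\ast^2$.

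\textbf{An exact identity for $\lambda$.} The quantity $\Lambda=-kq+\tfrac12p^2+c\sin\theta$ is conserved along \eqref{eq:shooting-cwN}; the computation $\Lambda_s=0$ in Proposition~\ref{prop:tw-hamiltonian-upgrade} uses only the ODE.
\begin{itemize}
\item At $s=0$ we have $p=0$, $\sin\theta=1$ and $c=ab+\lambda$, so $\Lambda=-ab+c=\lambda$.
\item At $s=\ell$ we have $p=0$, $q=\varepsilon$ and $\sin\theta(\ell)=\sin(\tfrac{\pi}{2}+(2N+1)\pi)=-1$, so $\Lambda=-k(\ell)\varepsilon-c$.
\end{itemize}
Equating the two values gives the exact relation
\[
2\lambda=-\varepsilon\,k(\ell)-ab
\]
along the branch. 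Substituting the expansions, the $O(\varepsilon)$ terms cancel because $b_1=-1$. This leaves
\[
2\lambda_2=-k_1(\ell)+a_1-a_\ast b_2.
\]
The quantity $k_1(\ell)$ is known from the first-order step, so everything reduces to the single second-order coefficient $b_2$.

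\textbf{Second order: the main obstacle.} To determine $b_2$, I would take the $\varepsilon^2$ coefficient of the third component $q(\ell)=\varepsilon$ (equivalently, of $p(\ell)=0$, whichever is simpler). This requires the second-order variation of the ODE. Since $c=a_\ast b_1\varepsilon+O(\varepsilon^2)=-a_\ast\varepsilon+O(\varepsilon^2)$, the forcing at order $\varepsilon^2$ is
\[
c_2\cos\theta_\ast-c_1\sin\theta_\ast\,\theta_1,\qquad c_1=-a_\ast,\quad c_2=a_1b_1+a_\ast b_2+\lambda_2 .
\]
Both $\theta_1$ and $\cos\theta_\ast=-\sin(a_\ast s)$ are explicit, so the resulting integrals are elementary trigonometric and polynomial-times-trigonometric integrals over $[0,\ell]$. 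All boundary terms are simplified by $\sin(a_\ast\ell)=0$ and $\cos(a_\ast\ell)=-1$. This yields one linear equation in $b_2$ and $\lambda_2$ (with $a_2$ entering only through its known Jacobian column). Combined with the Hamiltonian relation above, it determines $\lambda_2$, which I expect to equal $5/(4a_\ast^2)>0$.

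The main risk is an arithmetic slip in the quadratic term $c_1\sin\theta_\ast\,\theta_1$. As a safeguard, I would recompute $\lambda_2$ using the $p(\ell)$ row as well and check that the two routes agree. Positivity of $\lambda_N(\varepsilon)$ for small $\varepsilon\neq0$ then follows immediately from $\lambda_1=0$ and $\lambda_2>0$.
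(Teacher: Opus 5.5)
The strategy, a second-order Taylor expansion of the IFT branch, is the paper's, but the shortcut you build on does not close as written. Your relation $2\lambda=-\varepsilon k(\ell)-ab$ is correct, but it is not independent of the third row. Since $\Lambda(\ell)=\Lambda(0)=\lambda$ holds for \emph{every} solution of the shooting system, imposing $p(\ell)=0$ and $\sin\theta(\ell)=-1$ makes your relation equivalent to $k(\ell)\bigl(q(\ell)-\varepsilon\bigr)=0$, which is just the condition $q(\ell)=\varepsilon$. Concretely, $(q_2)_s=-c_2\sin(a_\ast s)+\sin(2a_\ast s)/(2a_\ast^2)$ gives $q_2(\ell)=b_2-2c_2/a_\ast$, with $c_2=a_\ast b_2-a_1+\lambda_2$. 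So the $\varepsilon^2$ coefficient of $q(\ell)=\varepsilon$ is $a_\ast b_2=2/a_\ast^2-2\lambda_2$. Since $-k_1(\ell)+a_1=2/a_\ast^2$, this is exactly your relation $2\lambda_2=2/a_\ast^2-a_\ast b_2$, and it holds whatever $a_1$ you use. If you take the third component, the two equations collapse to $0=0$ and $\lambda_2$ is undetermined. The rows are not interchangeable ``whichever is simpler'', and the cross-check between the two routes is vacuous.

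Only the $p(\ell)$ row carries new information, and it makes the Hamiltonian unnecessary for $\lambda_2$. The first row of the Jacobian is $(0,0,-\ell/a_\ast)$, so $a_2$ and $b_2$ drop out at order $\varepsilon^2$. Indeed $p_2(\ell)=\ell\bigl(b_2-c_2/a_\ast+1/(4a_\ast^3)\bigr)=\ell\bigl((a_1-\lambda_2)/a_\ast+1/(4a_\ast^3)\bigr)$, hence $\lambda_2=a_1+\frac{1}{4a_\ast^2}$. The sign of $a_1$ is therefore decisive, and the one you wrote is wrong. Your own decomposition gives $\theta_1(\ell)=a_1\ell-\theta_b(\ell)$, which must vanish, and $\theta_b(\ell)=\ell/a_\ast^2$. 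So $a_1=+\partial_b\theta(\ell)/\ell=1/a_\ast^2$, giving $\theta_1=\sin(a_\ast s)/a_\ast^3$ and $\lambda_2=5/(4a_\ast^2)$. With $a_1=-\partial_b\theta(\ell)/\ell$ you would get $\lambda_2=-3/(4a_\ast^2)<0$; the secular term this puts into $\theta_1$ contributes nothing to $p_2(\ell)$. Your safeguard would not catch this, because the $q$-row/Hamiltonian pair is an identity for any $a_1$. With both corrections your computation reduces to the paper's. The paper solves the $q$ and $p$ rows for $\ddot c=-1/(2a_\ast^2)$ and $\ddot b=-1/a_\ast^3$, then sets $\ddot\lambda=\ddot c-a_\ast\ddot b-2\dot a\dot b=5/(2a_\ast^2)$. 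The combination $\ddot c-a_\ast\ddot b$ there in fact comes from the $p$ row alone.
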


\begin{proof}
Write
\[
a_\ast:=a_{N,\ast}=\frac{(2N+1)\pi}{\ell},
\]
and let $\varepsilon\mapsto\big(a_N(\varepsilon),b_N(\varepsilon),\lambda_N(\varepsilon)\big)$ be the IFT branch from
Theorem~\ref{thm:IFT-cwN}.  For each $\varepsilon$, let $(k,p,q,\theta)(\cdot;\varepsilon)$ denote the corresponding
solution of \eqref{eq:shooting-cwN} on $[0,\ell]$, and set
\[
c(\varepsilon):=a_N(\varepsilon)b_N(\varepsilon)+\lambda_N(\varepsilon).
\]
By standard smooth dependence of ODE solutions on parameters, the three endpoint functions
$p(\ell)$, $\theta(\ell)$, and $q(\ell)$ depend $C^\infty$-smoothly on $(a,b,\lambda)$ near $(a_\ast,0,0)$.
Hence the IFT branch is $C^\infty$ in $\varepsilon$ near $0$, and we may compute the Taylor expansion of
$\lambda_N(\varepsilon)$ up to second order.

At $\varepsilon=0$ we are at the base solution from Lemma~\ref{lem:semicircle-base-N}:
\[
k_\ast\equiv a_\ast,\qquad p_\ast\equiv 0,\qquad q_\ast\equiv 0,\qquad \theta_\ast(s)=\frac{\pi}{2}+a_\ast s,
\qquad c(0)=0.
\]
Denote $\dot f:=\partial_\varepsilon f|_{\varepsilon=0}$ and $\ddot f:=\partial_\varepsilon^2 f|_{\varepsilon=0}$.

\medskip
\noindent\textbf{Step 1: first derivatives and vanishing of $\dot\lambda_N(0)$.}
Differentiate \eqref{eq:shooting-cwN} along the branch at $\varepsilon=0$.  Since $c(0)=0$, we obtain the linear system
\[
\dot k_s=\dot p,\qquad \dot p_s=\dot q,\qquad \dot q_s=\dot c\,\cos\theta_\ast,\qquad \dot\theta_s=\dot k,
\]
with initial data $\dot k(0)=\dot a$, $\dot p(0)=0$, $\dot q(0)=\dot b$, $\dot\theta(0)=0$, where
$\dot c=a_\ast\dot b+\dot\lambda$ (because $c=ab+\lambda$ and $b(0)=0$).
Differentiating the endpoint conditions \eqref{eq:IFT-endpoint-cwN} gives
\[
\dot p(\ell)=0,\qquad \dot\theta(\ell)=0,\qquad \dot q(\ell)=1.
\]
Now $\cos\theta_\ast(s)=\cos(\frac{\pi}{2}+a_\ast s)=-\sin(a_\ast s)$, hence integrating $\dot q_s=\dot c\cos\theta_\ast$
yields
\[
\dot q(s)=\dot b+\frac{\dot c}{a_\ast}\big(\cos(a_\ast s)-1\big).
\]
Using $\cos(a_\ast\ell)=\cos((2N{+}1)\pi)=-1$, the condition $\dot q(\ell)=1$ becomes
\[
1=\dot b-\frac{2\dot c}{a_\ast}.
\]
Next, integrating $\dot p_s=\dot q$ and using $\sin(a_\ast\ell)=0$ gives
\[
\dot p(\ell)=\ell\Big(\dot b-\frac{\dot c}{a_\ast}\Big)=0,
\]
hence $\dot b=\dot c/a_\ast$.  Substituting into the $\dot q(\ell)=1$ relation yields $\dot c=-a_\ast$ and therefore
$\dot b=-1$.

Finally, integrating $\dot k_s=\dot p$ and $\dot\theta_s=\dot k$ gives
\[
\dot\theta(s)=\dot a\,s+\frac{\sin(a_\ast s)}{a_\ast^3}-\frac{s}{a_\ast^2},
\]
so $\dot\theta(\ell)=0$ implies $\dot a=1/a_\ast^2$.  Since $\dot c=a_\ast\dot b+\dot\lambda$ and we have
$\dot c=-a_\ast$, $\dot b=-1$, it follows that $\dot\lambda=0$.

\medskip
\noindent\textbf{Step 2: second derivative of $\lambda$ at $0$.}
Differentiate the system a second time at $\varepsilon=0$.
Because $c(0)=0$, differentiating $q_s=c\cos\theta$ gives
\[
\ddot q_s=\ddot c\,\cos\theta_\ast -2\dot c\,\sin\theta_\ast\,\dot\theta.
\]
Here $\sin\theta_\ast(s)=\sin(\frac{\pi}{2}+a_\ast s)=\cos(a_\ast s)$, and from Step~1 we have $\dot c=-a_\ast$ and
$\dot\theta(s)=\sin(a_\ast s)/a_\ast^3$.  Therefore
\[
\ddot q_s=-\ddot c\,\sin(a_\ast s)+\frac{1}{a_\ast^2}\sin(2a_\ast s).
\]
The initial data are $\ddot q(0)=\ddot b$ and $\ddot p(0)=0$, and differentiating the endpoint conditions again gives
\[
\ddot q(\ell)=0,\qquad \ddot p(\ell)=0.
\]
Integrating the displayed $\ddot q_s$ equation and using
$\sin(a_\ast\ell)=0$, $\cos(a_\ast\ell)=-1$, $\sin(2a_\ast\ell)=0$, $\cos(2a_\ast\ell)=1$ yields
\[
\ddot q(\ell)=\ddot b-\frac{2\ddot c}{a_\ast}=0,
\qquad
\ddot p(\ell)=\ell\Big(\ddot b-\frac{\ddot c}{a_\ast}+\frac{1}{2a_\ast^3}\Big)=0.
\]
Solving these two equations gives
\[
\ddot c=-\frac{1}{2a_\ast^2},\qquad \ddot b=-\frac{1}{a_\ast^3}.
\]

Finally, since $c(\varepsilon)=a_N(\varepsilon)b_N(\varepsilon)+\lambda_N(\varepsilon)$ and $b_N(0)=0$, we have
\[
\ddot c
=\big(ab+\lambda\big)^{\!\ddot{}}\!(0)
= a_\ast\,\ddot b+\ddot\lambda +2\dot a\,\dot b.
\]
Using $\dot a=1/a_\ast^2$ and $\dot b=-1$ from Step~1, together with the values of $\ddot c$ and $\ddot b$ just found,
we obtain
\[
\ddot\lambda
=\ddot c-a_\ast\ddot b-2\dot a\,\dot b
=
-\frac{1}{2a_\ast^2}+\frac{1}{a_\ast^2}+\frac{2}{a_\ast^2}
=\frac{5}{2a_\ast^2}.
\]
Since $\lambda_N(0)=0$ and $\dot\lambda_N(0)=0$, Taylor's theorem yields
\[
\lambda_N(\varepsilon)=\frac12\,\ddot\lambda\,\varepsilon^2+o(\varepsilon^2)
=\frac{5}{4a_\ast^2}\,\varepsilon^2+o(\varepsilon^2),
\]
which is \eqref{eq:lambda-exp-cwN}.  The coefficient $\frac{5}{4a_\ast^2}$ is positive, hence
$\lambda_N(\varepsilon)>0$ for all sufficiently small $\varepsilon\neq 0$.
\end{proof}
\subsection{Repeated reflection and the circular wave extension}\label{subsec:cw-reflection}

Fix $N\in\N_0$ and $\varepsilon$ with $0<|\varepsilon|<\varepsilon_{0,N}$.
Let $(k,p,q,\theta)$ be the corresponding solution on $[0,\ell]$ from Theorem~\ref{thm:IFT-cwN}, and set
\[
c_N(\varepsilon):=a_N(\varepsilon)b_N(\varepsilon)+\lambda_N(\varepsilon).
\]

\paragraph{Seam data.}
By construction,
\[
\theta(0)=\frac{\pi}{2},\qquad \theta(\ell)=\frac{\pi}{2}+(2N+1)\pi,\qquad p(0)=0,\qquad p(\ell)=0.
\]
As noted in \eqref{eq:ksss-vanish-cwN}, the ODE gives $q_s(0)=q_s(\ell)=0$ automatically because $\cos\theta=0$ at both
endpoints.  This is the curvature-jet compatibility required to reflect smoothly.

\begin{lemma}[Reflective extension on one period]\label{lem:period-extension-cwN}
Define $\theta$ on $[0,2\ell]$ by
\begin{equation}\label{eq:theta-reflect-period-cwN}
\theta^{(2)}(s):=
\begin{cases}
\theta(s), & 0\le s\le \ell,\\[2pt]
\big(\pi+2(2N+1)\pi\big)-\theta(2\ell-s), & \ell\le s\le 2\ell.
\end{cases}
\end{equation}
Then $\theta^{(2)}\in C^\infty([0,2\ell])$ and satisfies the same ODE
\begin{equation}\label{eq:theta-ODE-cwN}
\big(\theta^{(2)}\big)_{ssss}=c_N(\varepsilon)\cos\big(\theta^{(2)}\big)
\qquad\text{on }(0,2\ell).
\end{equation}
Moreover,
\begin{equation}\label{eq:theta-2ell-cwN}
\theta^{(2)}(2\ell)=\theta^{(2)}(0)+2(2N+1)\pi.
\end{equation}
\end{lemma}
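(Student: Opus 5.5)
The plan is to apply the one-sided reflection principle, Lemma~\ref{lem:reflection-principle}, at the seam $s=\ell$. We take $\theta_0:=\theta(\ell)=\frac\pi2+(2N+1)\pi\in\frac\pi2+\pi\Z$, and we use the left-sided version of that lemma, since the given solution lives on $[0,\ell]$.

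First we set up the hypotheses. By Theorem~\ref{thm:IFT-cwN}, the angle $\theta$ is a $C^\infty$ solution of $\theta_{ssss}=c_N(\varepsilon)\cos\theta$ on $[0,\ell]$, by smooth dependence for the autonomous system \eqref{eq:shooting-cwN}. The seam conditions at $s=\ell$ are $\theta(\ell)=\theta_0$ and $\theta_{ss}(\ell)=p(\ell)=0$.

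Next we identify the extension with the lemma's reflection. After translating the variable $s\mapsto s-\ell$, the reflected function $2\theta_0-\theta(2\ell-s)$ is exactly the lemma's $\widehat\theta$, because $2\theta_0=\pi+2(2N+1)\pi$. This matches the second line of \eqref{eq:theta-reflect-period-cwN}.

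We then verify the jet matching at $s=\ell$ directly, as in the proof of Lemma~\ref{lem:reflection-principle}. For $s>\ell$ we have
\[
\partial_s^m\bigl(2\theta_0-\theta(2\ell-s)\bigr)=(-1)^{m+1}\theta^{(m)}(2\ell-s).
\]
From this, the traces match automatically for the odd orders $m=1,3$. They match for $m=0$ by the choice of $\theta_0$. They match for $m=2$ because $p(\ell)=0$. They match for $m=4$ because \eqref{eq:ksss-vanish-cwN} gives $\theta_{ssss}(\ell)=c\cos\theta_0=0$.

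For the ODE on the reflected half, we use the identity $\cos(2\theta_0-\alpha)=\cos(2\theta_0)\cos\alpha=-\cos\alpha$, which holds since $2\theta_0$ is an odd multiple of $\pi$ and so $\sin(2\theta_0)=0$. Hence
\[
\partial_s^4\theta^{(2)}=-\theta_{ssss}(2\ell-s)=-c\cos\theta(2\ell-s)=c\cos\theta^{(2)}(s).
\]
Thus $\theta^{(2)}\in C^4$ near $\ell$ and solves \eqref{eq:theta-ODE-cwN} on both sides of the seam. Uniqueness for the ODE, or bootstrapping, then upgrades this to $C^\infty([0,2\ell])$. Away from the seam, smoothness is inherited from the two halves.

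Finally, \eqref{eq:theta-2ell-cwN} follows by direct evaluation:
\[
\theta^{(2)}(2\ell)=\pi+2(2N+1)\pi-\theta(0)=\frac\pi2+2(2N+1)\pi=\theta^{(2)}(0)+2(2N+1)\pi.
\]

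The only delicate point is the parity bookkeeping at the seam, namely the sign convention of the reflection and the fourth-order trace. That trace requires the automatic vanishing $\cos\theta(\ell)=0$, and there is no genuine obstacle beyond this.
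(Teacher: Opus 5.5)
Your proposal is correct and matches the paper's proof: you note that $\pi+2(2N+1)\pi=2\theta(\ell)$, that $\theta(\ell)\in\frac\pi2+\pi\Z$ and $\theta_{ss}(\ell)=p(\ell)=0$, apply the left-sided form of Lemma~\ref{lem:reflection-principle}, verify the reflected ODE via $\cos(2\theta_0-\alpha)=-\cos\alpha$, and evaluate at $s=2\ell$. Your explicit jet-matching at the seam re-derives what that lemma already provides, so it is redundant but harmless.
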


\begin{proof}
On each open half-interval $(0,\ell)$ and $(\ell,2\ell)$, $\theta^{(2)}$ is a smooth composition, hence smooth there.
For $s\in(\ell,2\ell)$ write $u=2\ell-s$.  Then
\[
\big(\theta^{(2)}\big)_{ssss}(s)=-\theta_{ssss}(u)
=-c_N(\varepsilon)\cos(\theta(u)).
\]
Since $\cos(\alpha+2m\pi)=\cos\alpha$ and $\cos(\pi-\alpha)=-\cos\alpha$, the choice of constant
\[
C_N:=\pi+2(2N+1)\pi
\]
ensures $\cos(C_N-\alpha)=-\cos\alpha$ (because $C_N\equiv \pi\ (\mathrm{mod}\ 2\pi)$), hence
\[
-c_N(\varepsilon)\cos(\theta(u))
=
c_N(\varepsilon)\cos(C_N-\theta(u))
=
c_N(\varepsilon)\cos(\theta^{(2)}(s)),
\]
which gives \eqref{eq:theta-ODE-cwN} on $(\ell,2\ell)$.  On $(0,\ell)$ it holds by assumption.

It remains to check smooth matching at $s=\ell$.
Let
\[
\theta(\ell)=\frac{\pi}{2}+(2N+1)\pi = \frac{3\pi}{2}+2N\pi,
\]
so $\theta(\ell)\in\pi/2+\pi\Z$, and $\theta_{ss}(\ell)=p(\ell)=0$.  Moreover,
$C_N=2\theta(\ell)$, so the second line of \eqref{eq:theta-reflect-period-cwN} is exactly the one-sided reflected
extension across $s=\ell$.  Applying Lemma~\ref{lem:reflection-principle} in the left-sided form gives
$\theta^{(2)}\in C^\infty([0,2\ell])$ and the ODE holds through the seam.

Finally,
\[
\theta^{(2)}(2\ell)=C_N-\theta(0)=\big(\pi+2(2N+1)\pi\big)-\frac{\pi}{2}
=\frac{\pi}{2}+2(2N+1)\pi,
\]
which is \eqref{eq:theta-2ell-cwN}.
\end{proof}

\begin{definition}[Circular wave extension]\label{def:circular-wave-N}
Let $\theta^{(2)}$ be as in Lemma~\ref{lem:period-extension-cwN}.
Extend $\theta^{(2)}$ to a smooth function $\Theta:\R\to\R$ by
\begin{equation}\label{eq:theta-global-cwN}
\Theta(s+2m\ell):=\theta^{(2)}(s)+2m(2N+1)\pi,
\qquad s\in[0,2\ell],\ m\in\Z.
\end{equation}
Define $\Gamma_{N,\varepsilon}:\R\to\R^2$ by integrating the tangent:
\begin{equation}\label{eq:Gamma-cwN}
\Gamma_{N,\varepsilon}(s):=\int_0^s (\cos\Theta(\sigma),\sin\Theta(\sigma))\,d\sigma.
\end{equation}
We call $\Gamma_{N,\varepsilon}$ the \emph{circular wave of index $N$} generated by the fundamental arc.
\end{definition}

\begin{proposition}[Periodicity up to translation and stationarity]\label{prop:cw-periodic-stationary-N}
For each $N\in\N_0$ and each sufficiently small $\varepsilon\neq 0$, the circular wave $\Gamma_{N,\varepsilon}$ is a
$C^\infty$ unit-speed immersion on $\R$ and satisfies the stationary equation for $J_{\lambda_N(\varepsilon)}$
(equivalently, its curvature jet satisfies \eqref{eq:shooting-cwN} with $c=c_N(\varepsilon)$).
Moreover, the tangent is $2\ell$-periodic and the curve is periodic up to a translation:
\begin{equation}\label{eq:Gamma-translation-cwN}
\Gamma_{N,\varepsilon}(s+2\ell)=\Gamma_{N,\varepsilon}(s)+P_{N,\varepsilon},
\qquad
P_{N,\varepsilon}:=\Gamma_{N,\varepsilon}(2\ell)-\Gamma_{N,\varepsilon}(0)
=\int_0^{2\ell}(\cos\Theta,\sin\Theta)\,ds.
\end{equation}
\end{proposition}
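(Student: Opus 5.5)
The plan has three parts: global smoothness and the ODE, the translation identity, and stationarity for $J_{\lambda_N(\varepsilon)}$. Throughout, write $M:=2(2N+1)\pi$.

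\textbf{Smoothness of $\Theta$.} Lemma~\ref{lem:period-extension-cwN} already gives $\theta^{(2)}\in C^\infty([0,2\ell])$ solving $\theta_{ssss}=c_N(\varepsilon)\cos\theta$. By \eqref{eq:theta-global-cwN}, each translated copy differs from $\theta^{(2)}$ by the constant $mM$. Since $\cos$ is $2\pi$-periodic and $M\in2\pi\Z$, every copy solves the same ODE on its open cell.

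It remains to show the copies glue smoothly at the seams $s=2m\ell$. By \eqref{eq:theta-2ell-cwN} the angle values agree. For higher derivatives, I would not match jets by hand. Instead, apply Lemma~\ref{lem:reflection-principle} at the seam $s=2\ell$. There $\theta^{(2)}(2\ell)=\tfrac\pi2+M\in\tfrac\pi2+\pi\Z$. The second derivative at this seam equals $-\theta_{ss}(0)=-p(0)=0$.

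The reflection of the left solution across $s=2\ell$ is therefore the unique smooth continuation of the ODE. It coincides with the odd continuation $2\theta^{(2)}(2\ell)-\theta^{(2)}(4\ell-s)$. One still has to identify this with $\theta^{(2)}(s-2\ell)+M$ on $[2\ell,4\ell]$. Since $\theta^{(2)}$ itself is built by reflection, this follows from uniqueness for the ODE once the Cauchy data of order $\le 3$ agree at $s=2\ell$. Those data are $\theta$, $k$, $p$, $q$, and they agree by the parity relations $k(2\ell)=k(0)$, $p(2\ell)=-p(0)=0$, $q(2\ell)=q(0)$ inherited from the reflection in the previous lemma. Checking these parities, and then invoking ODE uniqueness, is the main step. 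Induction in $m$ (and the same argument for negative $m$) then gives $\Theta\in C^\infty(\R)$.

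\textbf{Immersion and translation identity.} By construction $\Gamma_{N,\varepsilon}'=(\cos\Theta,\sin\Theta)$ is a unit vector, so $\Gamma_{N,\varepsilon}$ is a smooth unit-speed immersion. From \eqref{eq:theta-global-cwN}, $\Theta(s+2\ell)=\Theta(s)+M$, so the tangent is $2\ell$-periodic. Integrating the tangent from $s$ to $s+2\ell$ and shifting the integration variable then yields \eqref{eq:Gamma-translation-cwN}.

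\textbf{Stationarity for $J_{\lambda_N(\varepsilon)}$.} Follow Proposition~\ref{prop:tw-hamiltonian-upgrade}. Since $\Theta$ solves $\Theta_{ssss}=c\cos\Theta$ on all of $\R$ with $c=c_N(\varepsilon)$, the quantity $\Lambda=-kq+\tfrac12p^2+c\sin\Theta$ is constant and $\mathcal E[k]=-\Lambda k$. Evaluating $\Lambda$ at $s=0$, where $k=a$, $p=0$, $q=b$ and $\sin\Theta=1$, gives
\[
\Lambda=-ab+c=\lambda_N(\varepsilon)
\]
by \eqref{eq:c-def-cwN}. Hence $\mathcal E[k]+\lambda_N(\varepsilon)k=0$ everywhere. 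The normal first-variation formula \eqref{eq:J-first-var-geometric} holds for compactly supported $f$ on the noncompact curve, since no boundary terms arise. So $\Gamma_{N,\varepsilon}$ is stationary in the stated sense.
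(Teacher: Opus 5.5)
Your proposal is correct and follows the same route as the paper. The angle is extended by the $2\pi$-shift rule; the ODE persists because $\cos\Theta$ is unchanged; and the translation identity comes from integrating the $2\ell$-periodic tangent. You also supply two details that the paper's proof leaves implicit. First, you check that the order-$\le 3$ Cauchy data $(\theta,k,p,q)$ match at the seam $s=2\ell$, and hence at all seams $s=2m\ell$; the paper simply asserts this ``by construction''. Second, you evaluate the conserved quantity $\Lambda(0)=-ab+c=\lambda_N(\varepsilon)$ from Proposition~\ref{prop:tw-hamiltonian-upgrade}, which is what actually turns the shooting ODE into $\mathcal E[k]+\lambda_N(\varepsilon)k=0$.
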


\begin{proof}
By construction, $\Theta\in C^\infty(\R)$ and $\Theta_s=k$ is $2\ell$-periodic, hence $(\cos\Theta,\sin\Theta)$ is
$2\ell$-periodic. Therefore $\Gamma_{N,\varepsilon}$ defined by \eqref{eq:Gamma-cwN} is $C^\infty$ and unit-speed.
Equation \eqref{eq:theta-ODE-cwN} holds on $[0,2\ell]$, and the extension rule
\eqref{eq:theta-global-cwN} shifts $\Theta$ by an integer multiple of $2\pi$, so $\cos\Theta$ is unchanged and the ODE
continues to hold on all of $\R$.  Thus $(k,p,q,\theta)$ satisfy \eqref{eq:shooting-cwN} globally, i.e.\
$\Gamma_{N,\varepsilon}$ is stationary for $J_{\lambda_N(\varepsilon)}$.

Finally, periodicity of the tangent implies \eqref{eq:Gamma-translation-cwN} by integration.
\end{proof}

\subsection{Non-closure near the circle}\label{subsec:cw-nonclosure}

As in the $N=0$ case, the wave closes (i.e.\ becomes a closed curve) if and only if $P_{N,\varepsilon}=0$.
For $\varepsilon=0$ the base solution is a (multiply covered) circle and $P_{N,0}=0$.
For $\varepsilon\neq 0$ small, the period vector is nonzero, so the wave is not closed.
The same first-order drift mechanism applies; only $a_{N,\ast}$ changes.

\begin{lemma}[First-order drift of the midpoint height]\label{lem:y-drift-cwN}
Let $(a_N(\varepsilon),b_N(\varepsilon),\lambda_N(\varepsilon))$ be the IFT branch of Theorem~\ref{thm:IFT-cwN}, and let
$(x(\cdot;\varepsilon),y(\cdot;\varepsilon))$ be the solution of \eqref{eq:xy-cwN} driven by the corresponding angle
$\theta(\cdot;\varepsilon)$ on $[0,\ell]$.
Then
\begin{equation}\label{eq:yell-linear-cwN}
y(\ell;\varepsilon)
=
-\frac{\ell}{2a_{N,\ast}^3}\,\varepsilon+O(\varepsilon^2)
=
-\frac{\ell^4}{2(2N+1)^3\pi^3}\,\varepsilon+O(\varepsilon^2)
\qquad\text{as }\varepsilon\to 0.
\end{equation}
In particular, $y(\ell;\varepsilon)\neq 0$ for all sufficiently small $\varepsilon\neq 0$.
\end{lemma}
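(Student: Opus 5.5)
The plan is to differentiate the height $y(\ell;\varepsilon)=\int_0^\ell\sin\theta(s;\varepsilon)\,ds$ once in $\varepsilon$ at $\varepsilon=0$, reusing the first-order branch data from Step~1 of the proof of Lemma~\ref{lem:lambda-quadratic-cwN}, and then apply Taylor's theorem. The statement is essentially a first-order perturbation computation, so no new analytic input is needed beyond smooth dependence on parameters.

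\textbf{Smoothness.} By Theorem~\ref{thm:IFT-cwN} the map $\varepsilon\mapsto(a_N,b_N,\lambda_N)(\varepsilon)$ is $C^\infty$. Smooth dependence of ODE solutions on parameters over the fixed interval $[0,\ell]$ then makes $\varepsilon\mapsto\theta(\cdot;\varepsilon)$ smooth into $C^0([0,\ell])$. Hence $\varepsilon\mapsto y(\ell;\varepsilon)$ is $C^\infty$, and it suffices to compute $y(\ell;0)$ and $\dot y(\ell):=\partial_\varepsilon y(\ell;\varepsilon)|_{\varepsilon=0}$. The remainder is then automatically $O(\varepsilon^2)$.

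\textbf{Zeroth order.} At $\varepsilon=0$ the base solution of Lemma~\ref{lem:semicircle-base-N} gives $\sin\theta_\ast(s)=\cos(a_\ast s)$ with $a_\ast=a_{N,\ast}$. Therefore
\[
y(\ell;0)=\frac{\sin(a_\ast\ell)}{a_\ast}=0,
\]
since $a_\ast\ell=(2N+1)\pi$.

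\textbf{First order.} Differentiating under the integral gives
\[
\dot y(\ell)=\int_0^\ell\cos\theta_\ast\,\dot\theta\,ds=-\int_0^\ell\sin(a_\ast s)\,\dot\theta(s)\,ds.
\]
Step~1 of Lemma~\ref{lem:lambda-quadratic-cwN} already provides
\[
\dot\theta(s)=\dot a\,s+\frac{\sin(a_\ast s)}{a_\ast^3}-\frac{s}{a_\ast^2},\qquad \dot a=\frac1{a_\ast^2}.
\]
The linear terms cancel, leaving $\dot\theta(s)=\sin(a_\ast s)/a_\ast^3$. Consequently
\[
\dot y(\ell)=-\frac1{a_\ast^3}\int_0^\ell\sin^2(a_\ast s)\,ds=-\frac{\ell}{2a_\ast^3}.
\]
Here $\int_0^\ell\sin^2(a_\ast s)\,ds=\ell/2$, because $a_\ast\ell$ is an integer multiple of $\pi$, so the term $\sin(2a_\ast\ell)$ vanishes. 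Substituting $a_\ast=(2N+1)\pi/\ell$ gives the second form of \eqref{eq:yell-linear-cwN}.

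\textbf{Conclusion and main risk.} Since the linear coefficient $-\ell/(2a_\ast^3)$ is nonzero, $y(\ell;\varepsilon)\neq0$ for all small $\varepsilon\neq0$. The only step needing care is the correctness of the Step~1 values ($\dot c=-a_\ast$, $\dot b=-1$, $\dot a=1/a_\ast^2$) that feed $\dot\theta$. I would re-verify these by checking that the resulting $\dot\theta$ satisfies the endpoint conditions $\dot\theta(0)=\dot\theta(\ell)=0$ and that $\dot\theta_{sss}=\dot q=-\cos(a_\ast s)$, consistent with $\dot q(0)=\dot b=-1$ and $\dot q(\ell)=1$.
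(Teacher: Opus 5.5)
Your proposal is correct and matches the paper's proof: you differentiate $y(\ell;\varepsilon)=\int_0^\ell\sin\theta\,ds$ along the branch, insert $\dot\theta(s)=\sin(a_{N,\ast}s)/a_{N,\ast}^3$ from Step~1 of Lemma~\ref{lem:lambda-quadratic-cwN}, and use $\int_0^\ell\sin^2(a_{N,\ast}s)\,ds=\ell/2$. The consistency check you propose also goes through: $\dot q(s)=-\cos(a_{N,\ast}s)$ and $\dot\theta(0)=\dot\theta(\ell)=0$.
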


\begin{proof}
At $\varepsilon=0$ we have $\theta_\ast(s)=\pi/2+a_{N,\ast}s$, hence
\[
y_\ast(\ell)=\int_0^\ell \sin\theta_\ast\,ds=\int_0^\ell \cos(a_{N,\ast}s)\,ds=\frac{\sin(a_{N,\ast}\ell)}{a_{N,\ast}}=0,
\]
since $a_{N,\ast}\ell=(2N+1)\pi$.
Differentiate $y(\ell;\varepsilon)=\int_0^\ell \sin(\theta(s;\varepsilon))\,ds$ along the IFT branch:
\[
\frac{d}{d\varepsilon}\Big|_{\varepsilon=0} y(\ell;\varepsilon)
=\int_0^\ell \cos(\theta_\ast(s))\,\dot\theta(s)\,ds,
\qquad \dot\theta:=\partial_\varepsilon\theta|_{\varepsilon=0}.
\]
From the first-variation computation in Step~1 of Lemma~\ref{lem:lambda-quadratic-cwN} one has
$\dot\theta(s)=\sin(a_{N,\ast}s)/a_{N,\ast}^3$ and $\cos(\theta_\ast(s))=-\sin(a_{N,\ast}s)$, hence
\[
\frac{d}{d\varepsilon}\Big|_{\varepsilon=0} y(\ell;\varepsilon)
=
-\frac{1}{a_{N,\ast}^3}\int_0^\ell \sin^2(a_{N,\ast}s)\,ds.
\]
Using $a_{N,\ast}\ell=(2N+1)\pi$ gives
\[
\int_0^\ell \sin^2(a_{N,\ast}s)\,ds
=\frac{1}{a_{N,\ast}}\int_0^{(2N+1)\pi}\sin^2u\,du
=\frac{1}{a_{N,\ast}}\cdot\frac{(2N+1)\pi}{2}
=\frac{\ell}{2},
\]
and therefore
\[
\frac{d}{d\varepsilon}\Big|_{\varepsilon=0} y(\ell;\varepsilon)
=
-\frac{\ell}{2a_{N,\ast}^3}.
\]
Taylor expansion yields \eqref{eq:yell-linear-cwN}.
\end{proof}

\begin{corollary}[Circular waves are not closed near the circle]\label{cor:not-closed-cwN}
For each $N\in\N_0$ and all sufficiently small $\varepsilon\neq 0$, the circular wave $\Gamma_{N,\varepsilon}$ is not
closed.
\end{corollary}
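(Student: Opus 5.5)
We show that the period vector $P_{N,\varepsilon}$ of Proposition~\ref{prop:cw-periodic-stationary-N} is nonzero, by computing its components from the half-period data and then invoking Lemma~\ref{lem:y-drift-cwN}.

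\textbf{Step 1: split the period integral at the seam.} By \eqref{eq:Gamma-translation-cwN},
\[
P_{N,\varepsilon}=\int_0^{2\ell}(\cos\theta^{(2)},\sin\theta^{(2)})\,ds .
\]
We split this integral at $s=\ell$. On $[\ell,2\ell]$ we substitute $u=2\ell-s$ and use \eqref{eq:theta-reflect-period-cwN}, namely $\theta^{(2)}(s)=C_N-\theta(u)$ with $C_N\equiv\pi \pmod{2\pi}$. This gives
\[
\cos\theta^{(2)}(s)=-\cos\theta(u),\qquad \sin\theta^{(2)}(s)=\sin\theta(u).
\]

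\textbf{Step 2: compute the components.} The $x$-contributions of the two halves cancel, so
\[
P_{N,\varepsilon}=\bigl(0,\;2y(\ell;\varepsilon)\bigr).
\]
Geometrically, the second half is the reflection of the first half across the vertical line through $\Gamma(\ell)$. The $x$-drift is zero and the whole period vector is twice the midpoint height.

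\textbf{Step 3: conclude non-closure.} Lemma~\ref{lem:y-drift-cwN} gives $y(\ell;\varepsilon)\neq0$ for all sufficiently small $\varepsilon\neq0$, hence $P_{N,\varepsilon}\neq0$. Iterating \eqref{eq:Gamma-translation-cwN} yields
\[
\Gamma_{N,\varepsilon}(s+2m\ell)=\Gamma_{N,\varepsilon}(s)+mP_{N,\varepsilon}.
\]
Therefore $|\Gamma_{N,\varepsilon}(s)|\to\infty$ along $s=2m\ell$ as $m\to\infty$, so $\Gamma_{N,\varepsilon}$ is unbounded.

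It remains to exclude closure with some period other than $2\ell$. If $\Gamma_{N,\varepsilon}(s+T)=\Gamma_{N,\varepsilon}(s)$ for all $s$ and some $T>0$, the image would be the continuous image of a compact interval and hence bounded. This contradicts unboundedness, so no such $T$ exists and the wave is not closed.

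\textbf{Main obstacle.} The only real point is the sign bookkeeping in Step 1. One must verify that $C_N$ is an odd multiple of $\pi$, so that cosine flips sign while sine is preserved, and that no $x$-drift survives. This follows directly from $C_N=2\theta(\ell)$ with $\theta(\ell)\in\pi/2+\pi\mathbb Z$.
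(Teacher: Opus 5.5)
Your proof is correct and follows the paper's route: $\sin\Theta$ is even about $\ell$, so the second component of $P_{N,\varepsilon}$ is $2y(\ell;\varepsilon)$, which is nonzero by Lemma~\ref{lem:y-drift-cwN}; your extras, that $P_x=0$ because $\cos\Theta$ is odd about $\ell$ and that $P_{N,\varepsilon}\neq0$ forces unboundedness and hence non-closure, are valid refinements of points the paper leaves implicit. One slip in your geometric aside: the second half is the mirror image of the first across the \emph{horizontal} line through $\Gamma_{N,\varepsilon}(\ell)$ (the normal line at the vertical tangent), not the vertical one, and this is exactly why the $x$-drift vanishes and $P_{N,\varepsilon}=(0,2y(\ell;\varepsilon))$, as your computation correctly shows.
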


\begin{proof}
On $[0,2\ell]$, the reflection rule \eqref{eq:theta-reflect-period-cwN} implies $\sin\Theta$ is even about $\ell$, hence
\[
P_{N,\varepsilon}
=
\int_0^{2\ell}(\cos\Theta,\sin\Theta)\,ds
=
\Big(\int_0^{2\ell}\cos\Theta\,ds,\ 2\int_0^\ell \sin\Theta\,ds\Big)
=
\big(P_x,\ 2y(\ell;\varepsilon)\big).
\]
If $y(\ell;\varepsilon)\neq 0$ then $P_{N,\varepsilon}\neq 0$ and $\Gamma_{N,\varepsilon}$ cannot be closed.
By Lemma~\ref{lem:y-drift-cwN}, $y(\ell;\varepsilon)\neq 0$ for all sufficiently small $\varepsilon\neq 0$.
\end{proof}

\subsection{An intrinsic turning-rate invariant and geometric distinctness}\label{subsec:cw-distinctness}

The interval $[0,2\ell]$ is a convenient shooting cell, but it need not be a primitive tangent period.  We therefore
use an invariant that is independent of which multiple of a period is chosen.

\begin{definition}[Mean turning rate]\label{def:mean-turning-rate-cw}
Let $\Gamma:\R\to\R^2$ be a unit-speed immersion with nonconstant periodic tangent, and let $\Theta$ be a continuous
lifted tangent angle.  Define
\begin{equation}\label{eq:mean-turning-rate-cw}
\Omega(\Gamma):=\lim_{R\to\infty}\frac{|\Theta(R)-\Theta(0)|}{R}.
\end{equation}
\end{definition}

\begin{lemma}[Well-definedness, congruence invariance, and scaling]\label{lem:mean-turning-rate-invariant}
If $P>0$ is any tangent period and $\Theta(s+P)-\Theta(s)=2\pi d$ with $d\in\Z$, then
\[
\Omega(\Gamma)=\frac{2\pi|d|}{P}.
\]
Consequently $\Omega$ is independent of the lift, base point, and choice of tangent period, and is invariant under
Euclidean congruences and unit-speed reparametrisations $s\mapsto\pm s+s_0$.  For the unit-speed dilation
\[
\Gamma^\rho(s):=\rho\,\Gamma(s/\rho),\qquad \rho>0,
\]
one has $\Omega(\Gamma^\rho)=\rho^{-1}\Omega(\Gamma)$.
\end{lemma}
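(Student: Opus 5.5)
The first step is to decompose $R$ into whole periods plus a remainder. Fix a tangent period $P>0$ and a continuous lift $\Theta$. Since $(\cos\Theta,\sin\Theta)$ is $P$-periodic, the function $s\mapsto\Theta(s+P)-\Theta(s)$ is continuous with values in $2\pi\Z$. It is therefore a constant $2\pi d$. For $R>0$ write $R=mP+r$ with $m\in\N_0$ and $0\le r<P$. Then
\[
\Theta(R)-\Theta(0)=2\pi d\,m+\bigl(\Theta(r)-\Theta(0)\bigr),
\]
and the last term is bounded by $\max_{[0,P]}|\Theta-\Theta(0)|$. Dividing by $R$ and using $m/R\to1/P$ gives $|\Theta(R)-\Theta(0)|/R\to2\pi|d|/P$. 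So the limit in \eqref{eq:mean-turning-rate-cw} exists and equals $2\pi|d|/P$.

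\textbf{Independence of the choices.} Any two lifts differ by a constant in $2\pi\Z$, so the increment $\Theta(R)-\Theta(0)$ is unchanged. A change of base point $s_0$ only adds a bounded term, by the same remainder estimate, so the limit is unaffected. For two tangent periods $P,P'$, the numbers $2\pi|d|/P$ and $2\pi|d'|/P'$ are each equal to the same limit, so they agree. Alternatively, $P'':=P'P\cdot(\text{integers})$ is a common multiple in the commensurable case, but the limit argument avoids needing commensurability at all.

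\textbf{Invariance.} A rigid motion or reflection $A$ changes the lift to $\pm\Theta+\text{const}$, which leaves $|\Theta(R)-\Theta(0)|$ unchanged. The reparametrisation $s\mapsto s+s_0$ is a base-point change. For $s\mapsto -s+s_0$, the new lift is $\Theta(-s+s_0)+\pi$. Its increment over $[0,R]$ has absolute value $|\Theta(s_0-R)-\Theta(s_0)|$. Since the period relation $\Theta(s-P)=\Theta(s)-2\pi d$ holds for negative arguments too, the same decomposition gives the limit $2\pi|d|/P$.

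\textbf{Scaling.} The dilation $\Gamma^\rho(s)=\rho\Gamma(s/\rho)$ has tangent $(\cos\Theta(s/\rho),\sin\Theta(s/\rho))$. It is unit speed with lift $\Theta^\rho(s)=\Theta(s/\rho)$ and tangent period $\rho P$, with the same increment $2\pi d$. Hence $\Omega(\Gamma^\rho)=2\pi|d|/(\rho P)=\rho^{-1}\Omega(\Gamma)$.

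The only slightly delicate point is the first step: the increment over a period must be a constant multiple of $2\pi$. This follows from continuity and integrality, and everything else is bounded-remainder bookkeeping.
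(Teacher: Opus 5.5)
Your proposal is correct and follows essentially the same argument as the paper: $\Theta(s+P)-\Theta(s)$ is constant by continuity and integrality, $R=mP+r$ leaves a bounded remainder, and the lift, base point, congruence and dilation cases (lift $\Theta(s/\rho)$, period $\rho P$) are handled by the same bounded-term bookkeeping. Your treatment of the reversal $s\mapsto -s+s_0$ is slightly more explicit than the paper's one-line remark: you note that the new lift is $\Theta(s_0-s)+\pi$ and that the period relation is needed for negative arguments.
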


\begin{proof}
The difference $s\mapsto\Theta(s+P)-\Theta(s)$ is continuous and takes values in $2\pi\Z$, so it is the constant
$2\pi d$.  Writing $R=nP+r$ with $0\le r<P$ gives
\[
\Theta(R)-\Theta(0)=2\pi nd+\Theta(r)-\Theta(0).
\]
The last term is bounded, and division by $R$ proves the formula for $\Omega$.  This formula also proves independence
of the chosen period.  Changing the lift or base point changes only bounded terms.  Euclidean isometries and
unit-speed reversal change a lifted angle by a constant and possibly a sign, so the absolute asymptotic rate is
unchanged.  Finally, a lifted angle for $\Gamma^\rho$ is $\Theta(s/\rho)$ up to a constant, which gives the scaling law.
\end{proof}

\begin{proposition}[Distinctness for different $N$]\label{prop:cw-distinctness-N}
For each $N\in\N_0$ and sufficiently small $\varepsilon\neq0$,
\[
\Omega(\Gamma_{N,\varepsilon})=\frac{(2N+1)\pi}{\ell}.
\]
In particular, if $N\neq M$, then $\Gamma_{N,\varepsilon}$ and $\Gamma_{M,\varepsilon'}$ are not congruent for any
sufficiently small nonzero $\varepsilon,\varepsilon'$.
\end{proposition}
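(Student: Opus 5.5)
The plan is to compute $\Omega$ from the shooting cell using Lemma~\ref{lem:mean-turning-rate-invariant}, and then argue by contradiction for distinctness.

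\textbf{Computing $\Omega$.} By Proposition~\ref{prop:cw-periodic-stationary-N} the tangent of $\Gamma_{N,\varepsilon}$ is $2\ell$-periodic. The extension rule \eqref{eq:theta-global-cwN}, together with \eqref{eq:theta-2ell-cwN}, gives
\[
\Theta(s+2\ell)-\Theta(s)=2(2N+1)\pi
\]
for all $s$. Indeed, on $[0,2\ell]$ this is the definition of $\Theta$ on the next cell, and the difference is a continuous function with values in $2\pi\Z$, hence constant. So $P=2\ell$ is a tangent period with $d=2N+1$.

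To apply the lemma we must check that the tangent is nonconstant. This holds because for $\varepsilon\neq0$ Theorem~\ref{thm:IFT-cwN} gives a nonconstant curvature profile. Even at $\varepsilon=0$ the lift increases by $(2N+1)\pi\ne0$ over $[0,\ell]$, so $\Theta$ is not constant. Lemma~\ref{lem:mean-turning-rate-invariant} then yields
\[
\Omega(\Gamma_{N,\varepsilon})=\frac{2\pi(2N+1)}{2\ell}=\frac{(2N+1)\pi}{\ell}.
\]
The point to stress is that $2\ell$ need not be a primitive period. The lemma's formula is independent of the chosen period, so the non-primitivity causes no difficulty.

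\textbf{Distinctness.} Suppose $\Gamma_{N,\varepsilon}$ and $\Gamma_{M,\varepsilon'}$ are congruent: there is a Euclidean isometry $E$ and a unit-speed reparametrisation $s\mapsto\pm s+s_0$ carrying one curve onto the other. Both curves are unit-speed, so congruence of the images as parametrised immersions can only involve such reparametrisations. By the invariance part of Lemma~\ref{lem:mean-turning-rate-invariant}, the two curves then have equal $\Omega$. That gives $(2N+1)\pi/\ell=(2M+1)\pi/\ell$, hence $N=M$, a contradiction.

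\textbf{Main obstacle.} The only delicate point is what "congruent" means for non-closed immersions of $\R$. If congruence is understood only as equality of image sets up to an isometry, one must first recover a unit-speed parametrisation from the image. I would handle this by restricting to the parametrised notion the paper uses, namely isometry composed with unit-speed reparametrisation, which is exactly the class covered by the lemma. Fixed $\ell$ across branches is essential here, since $\Omega$ scales like $\rho^{-1}$ under dilation. That is why part (iv) of Theorem~\ref{thm:circular-waves-infinite} needs a separate argument after rescaling to a common $\lambda_\ast$.
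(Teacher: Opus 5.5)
Your proposal is correct and follows the paper's proof. You take the shooting cell $2\ell$ as a (possibly non-primitive) tangent period with $\Theta(s+2\ell)-\Theta(s)=2(2N+1)\pi$, apply Lemma~\ref{lem:mean-turning-rate-invariant} to get $\Omega=(2N+1)\pi/\ell$, and conclude non-congruence from the invariance of $\Omega$ under isometries and unit-speed reparametrisations, with your extra checks (nonconstant tangent, $\ell$ fixed across branches) being sound refinements of that same argument.
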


\begin{proof}
The tangent has period $2\ell$, and \eqref{eq:theta-global-cwN} gives
$\Theta(s+2\ell)-\Theta(s)=2(2N+1)\pi$.  Lemma~\ref{lem:mean-turning-rate-invariant} therefore yields the displayed
formula.  Distinct indices give distinct congruence invariants.
\end{proof}

\subsection{Scaling to prescribe $\lambda>0$}\label{subsec:cw-scaling}

The IFT branch yields $\lambda_N(\varepsilon)>0$ for $\varepsilon\neq 0$ small (Lemma~\ref{lem:lambda-quadratic-cwN}).
Using the dilation invariance of the stationary equation, we may normalise the parameter.

\begin{remark}[Scaling to fix $\lambda$]\label{rem:scale-cwN}
If $\Gamma$ is stationary for parameter $\lambda>0$ and we use the unit-speed dilation
$\widetilde\Gamma(s)=\rho\Gamma(s/\rho)$, then the stationary
parameter rescales as $\widetilde\lambda=\lambda/\rho^4$.
Thus, for any prescribed $\lambda_0>0$ and any sufficiently small $\varepsilon\neq 0$, choosing
$\rho=(\lambda_N(\varepsilon)/\lambda_0)^{1/4}$ produces a geometrically similar circular wave that is stationary for
$J_{\lambda_0}$.
As $\varepsilon\to 0$, one has $\lambda_N(\varepsilon)\to 0$ and the waves converge (after appropriate scaling) to the
corresponding $(2N{+}1)$-half-turn circular base solution.
\end{remark}

\begin{corollary}[Infinitely many circular waves at every prescribed parameter]\label{cor:cw-fixed-lambda-infinite}
For every $\lambda_\ast>0$, there are infinitely many pairwise non-congruent, nonclosed circular waves stationary for
$J_{\lambda_\ast}$.
\end{corollary}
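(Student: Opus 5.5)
The plan is to pick, for each $N\in\N_0$, one small nonzero parameter $\varepsilon_N$ on the $N$-branch of Theorem~\ref{thm:IFT-cwN}, and then dilate the resulting wave so that its stationary parameter becomes $\lambda_\ast$. The invariant $\Omega$ of Definition~\ref{def:mean-turning-rate-cw} will then be used to separate the rescaled waves.

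\textbf{Choosing one wave per branch.} Fix $\lambda_\ast>0$. For each $N$, choose $\varepsilon_N$ with $0<|\varepsilon_N|<\varepsilon_{0,N}$, small enough that three earlier results apply:
\begin{itemize}
\item Lemma~\ref{lem:lambda-quadratic-cwN} gives $\lambda_N(\varepsilon_N)>0$;
\item Corollary~\ref{cor:not-closed-cwN} shows that $\Gamma_{N,\varepsilon_N}$ is not closed;
\item Proposition~\ref{prop:cw-periodic-stationary-N} shows that $\Gamma_{N,\varepsilon_N}$ is a smooth stationary immersion for $J_{\lambda_N(\varepsilon_N)}$.
\end{itemize}

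\textbf{Dilating to $\lambda_\ast$.} Set $\rho_N:=(\lambda_N(\varepsilon_N)/\lambda_\ast)^{1/4}$ and $\widetilde\Gamma_N(s):=\rho_N\Gamma_{N,\varepsilon_N}(s/\rho_N)$. By Remark~\ref{rem:scale-cwN}, $\widetilde\Gamma_N$ is stationary for $J_{\lambda_\ast}$. I would check this scaling directly from the geometric equation $\mathcal E[k]+\lambda k=0$. Under the dilation, $k\mapsto\rho^{-1}k$ and each arclength derivative contributes another factor $\rho^{-1}$. So every term of $\mathcal E[k]$ scales by $\rho^{-5}$, while $\lambda k$ scales by $\rho^{-1}$, which forces $\widetilde\lambda=\lambda\rho^{-4}$. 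Dilation also preserves non-closedness, since the translation vector becomes $\rho_N P_{N,\varepsilon_N}\neq0$.

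\textbf{Distinctness.} This is the main obstacle. The rescaling factors $\rho_N$ depend on $N$, so the raw invariant $\Omega(\widetilde\Gamma_N)=\rho_N^{-1}(2N+1)\pi/\ell$ from Lemma~\ref{lem:mean-turning-rate-invariant} and Proposition~\ref{prop:cw-distinctness-N} need not be distinct across $N$. I would resolve this by exploiting the remaining freedom in $\varepsilon_N$. Since $\lambda_N(\varepsilon)\sim\frac{5}{4}a_{N,\ast}^{-2}\varepsilon^2$, the map $\varepsilon\mapsto\lambda_N(\varepsilon)$ is continuous and tends to $0$. So I would choose the $\varepsilon_N$ inductively so that the values $\Omega(\widetilde\Gamma_N)=\lambda_\ast^{1/4}\lambda_N(\varepsilon_N)^{-1/4}(2N+1)\pi/\ell$ are strictly increasing in $N$. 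This is possible because $\Omega\to\infty$ as $\varepsilon_N\to0$: at step $N$, simply shrink $\varepsilon_N$ until $\Omega(\widetilde\Gamma_N)$ exceeds all previously chosen values.

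Since $\Omega$ is a congruence invariant, the waves $\widetilde\Gamma_N$ are then pairwise non-congruent, which gives infinitely many. As a backup, one could instead use the dimensionless invariant $\Omega\cdot$(primitive period length), but that would require identifying primitive periods. The monotone-selection argument avoids this, so I would use it.
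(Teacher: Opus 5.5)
Your proposal is correct and follows essentially the same route as the paper. Like the paper, you take one small $\varepsilon_N$ per branch, dilate by $\rho_N=(\lambda_N(\varepsilon_N)/\lambda_\ast)^{1/4}$, and choose the $\varepsilon_N$ recursively so that $\Omega(\widetilde\Gamma_N)=\frac{(2N+1)\pi}{\ell}\bigl(\lambda_\ast/\lambda_N(\varepsilon_N)\bigr)^{1/4}$ is strictly increasing, which works because $\lambda_N(\varepsilon)\to0$. Your direct check of the $\rho^{-4}$ scaling from $\mathcal{E}[k]+\lambda k=0$, and your explicit requirement that $\lambda_N(\varepsilon_N)>0$ so that $\rho_N$ is defined, are minor details the paper leaves implicit.
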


\begin{proof}
For each $N$, choose $0<|\varepsilon_N|<\varepsilon_{0,N}$ and set
\[
\rho_N:=\left(\frac{\lambda_N(\varepsilon_N)}{\lambda_\ast}\right)^{1/4},
\qquad
\widehat\Gamma_N(s):=\rho_N\Gamma_{N,\varepsilon_N}(s/\rho_N).
\]
The scaled wave is stationary for $J_{\lambda_\ast}$ and remains nonclosed.  By
Lemma~\ref{lem:mean-turning-rate-invariant},
\[
\Omega(\widehat\Gamma_N)
=\frac{(2N+1)\pi}{\ell}
\left(\frac{\lambda_\ast}{\lambda_N(\varepsilon_N)}\right)^{1/4}.
\]
For each fixed $N$, this quantity tends to $+\infty$ as $\varepsilon_N\to0$, because
$\lambda_N(\varepsilon_N)\to0$.  We may therefore choose the parameters recursively so that the displayed mean
turning rates are strictly increasing.  The resulting waves are pairwise non-congruent.
\end{proof}

Let us now prove Theorem~\ref{thm:circular-waves-infinite}.

\begin{proof}
Items \textup{(i)} and the existence of the IFT branch follow from Theorem~\ref{thm:IFT-cwN} and
Lemma~\ref{lem:lambda-quadratic-cwN}.
The reflective extension and stationarity, as well as periodicity up to translation, follow from
Lemmas~\ref{lem:period-extension-cwN} and Proposition~\ref{prop:cw-periodic-stationary-N}.
Non-closure ($P_{N,\varepsilon}\neq 0$) follows from Corollary~\ref{cor:not-closed-cwN}.
Geometric distinctness across $N$ is Proposition~\ref{prop:cw-distinctness-N}, based on the intrinsic mean turning rate.
The fixed-$\lambda_\ast$ conclusion is Corollary~\ref{cor:cw-fixed-lambda-infinite}.
\end{proof}

\appendix

\section{Length-penalised ideal curves and super-lemniscates}
\label{app:lpif}

Here we give the detail alluded to earlier in Remark \ref{rem:lpif}.
Define
\[
\mathcal E[k]:= k_{ssss} + k^2k_{ss} - \frac12kk_s^2.
\]
The length-penalised ideal critical curves satisfy $\mathcal E[k] = -\lambda k$, whereas the curves identified in \cite[Theorem 1.1]{MW26} satisfy 
$k_{ss} + ck^3 = 0$, for some $c\in\R$.

\begin{lemma}\label{lem:overlap_with_E_equation}
Let \(I\subset\R\) be an interval, and \(k:I\to\R\) be a nontrivial solution of
\begin{equation}\label{eq:stationary_cubic_curvature_appendix}
k_{ss}+c\,k^3=0.
\end{equation}
Then
\begin{equation}\label{eq:E_formula_appendix}
\mathcal E[k]
=
-\Big(6c+\frac12\Big)A\,k+\frac{3c(8c-1)}{4}k^5,
\qquad
A:=k_s^2+\frac c2\,k^4.
\end{equation}
In particular, \(\mathcal E[k]=-\lambda k\) for some constant \(\lambda\in\R\) if and only if
either
\[
c=0
\qquad\text{or}\qquad
c=\frac18.
\]

More precisely:
\begin{enumerate}
\item if \(c=0\), then \(k(s)=as+b\) and
\[
\mathcal E[k]=-\frac{a^2}{2}\,k,
\]
so \(\lambda=\frac{a^2}{2}\ge0\);

\item if \(c=\frac18\), then
\[
\mathcal E[k]=-\frac54 A\,k,
\]
so \(\lambda=\frac54 A\). Moreover, for every nontrivial real solution one has \(A>0\), and hence
\[
\lambda>0.
\]
\end{enumerate}

For the explicit family
\[
k(s)=\frac{\alpha}{\sqrt c}\text{cn}(\alpha s+\beta;\tfrac12),
\]
one has
\[
A=\frac{\alpha^4}{2c},
\]
and therefore, when \(c=\frac18\),
\[
\lambda=5\alpha^4.
\]
\end{lemma}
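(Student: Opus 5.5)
The plan is a direct computation: differentiate \eqref{eq:stationary_cubic_curvature_appendix} twice and substitute into \(\mathcal E[k]\), then use the first integral \(A\) to remove \(k_s^2\). From \(k_{ss}=-ck^3\) we get
\[
k_{sss}=-3ck^2k_s,
\]
and differentiating once more and substituting \(k_{ss}=-ck^3\) again gives
\[
k_{ssss}=-6ck\,k_s^2-3ck^2k_{ss}=-6ck\,k_s^2+3c^2k^5 .
\]
Inserting this and \(k^2k_{ss}=-ck^5\) into \(\mathcal E[k]\) gives
\[
\mathcal E[k]=-\Big(6c+\tfrac12\Big)k\,k_s^2+(3c^2-c)k^5 .
\]
Next I check that \(A=k_s^2+\frac c2k^4\) is constant along solutions, since \(A_s=2k_s(k_{ss}+ck^3)=0\). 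I then substitute \(k_s^2=A-\frac c2k^4\). The \(k^5\) coefficient becomes
\[
\Big(6c+\tfrac12\Big)\tfrac c2+3c^2-c=6c^2-\tfrac34c=\tfrac{3c(8c-1)}{4},
\]
which is \eqref{eq:E_formula_appendix}.

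For the characterisation, suppose \(\mathcal E[k]=-\lambda k\) with \(\lambda\) constant. Then
\[
\big((6c+\tfrac12)A-\lambda\big)k=\tfrac{3c(8c-1)}4k^5
\]
on \(I\), with a constant bracket. If \(c\neq0\), a nontrivial solution is not constant, because a constant solution forces \(k^3=0\). Hence \(k\) takes infinitely many nonzero values, and the identity as a polynomial relation in \(k\) forces \(c(8c-1)=0\). Conversely, when \(c=0\) or \(c=\frac18\) the \(k^5\) term vanishes and \(\lambda=(6c+\frac12)A\) works. For \(c=0\) we have \(k=as+b\) and \(A=a^2\), so \(\lambda=a^2/2\). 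For \(c=\frac18\) we get \(\lambda=\frac54A\), and
\[
A=k_s^2+\tfrac1{16}k^4\ge0 .
\]
If \(A=0\), then \(k=k_s=0\) at some point, and ODE uniqueness gives \(k\equiv0\), contradicting nontriviality. So \(A>0\) and \(\lambda>0\).

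For the explicit family I use the Jacobi identities with modulus parameter \(m=\frac12\):
\[
\tfrac{d}{du}\mathrm{cn}\,u=-\mathrm{sn}\,u\,\mathrm{dn}\,u,\qquad
\mathrm{sn}^2u\,\mathrm{dn}^2u=(1-\mathrm{cn}^2u)\big(\tfrac12+\tfrac12\mathrm{cn}^2u\big)=\tfrac12\big(1-\mathrm{cn}^4u\big).
\]
These give \(k_s^2=\frac{\alpha^4}{2c}(1-\mathrm{cn}^4)\) and \(\frac c2k^4=\frac{\alpha^4}{2c}\mathrm{cn}^4\), so \(A=\frac{\alpha^4}{2c}\). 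At \(c=\frac18\) this is \(\lambda=\frac54\cdot4\alpha^4=5\alpha^4\).

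The only mildly delicate step is the ``only if'' direction, which needs \(k\) nonconstant. If one wants the family to be explicitly a solution, the same identity \(\mathrm{cn}''=\mathrm{cn}(1-2m)-2m\,\mathrm{cn}^3=-\mathrm{cn}^3\) at \(m=\frac12\) confirms it.
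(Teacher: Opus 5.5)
Your proposal is correct and follows the paper's proof essentially step for step: differentiate twice, substitute the first integral $A$, use that a nontrivial solution with $c\neq0$ is nonconstant to force the $k^5$ coefficient $\frac{3c(8c-1)}{4}$ to vanish, and show $A>0$ at $c=\frac18$ (though since $A=k_s^2+\frac1{16}k^4$ holds at every point, $A=0$ gives $k\equiv0$ directly, with no appeal to ODE uniqueness). The only cosmetic difference is that you compute $A$ for the $\mathrm{cn}$ family pointwise via $\mathrm{sn}^2\mathrm{dn}^2=\frac12(1-\mathrm{cn}^4)$, whereas the paper evaluates the first integral at a point where $\mathrm{cn}=1$; also note the general identity is $\mathrm{cn}''=(2m-1)\mathrm{cn}-2m\,\mathrm{cn}^3$, a sign slip that is harmless at $m=\frac12$.
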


\begin{proof}
Multiplying \eqref{eq:stationary_cubic_curvature_appendix} by \(2k_s\) and integrating gives the
first integral
\[
k_s^2+\frac c2\,k^4=A
\]
for some constant \(A\in\R\).

Next, differentiating \eqref{eq:stationary_cubic_curvature_appendix} twice yields
\[
k_{sss}=-3c\,k^2k_s,
\]
and hence
\[
k_{ssss}=-6c\,k\,k_s^2-3c\,k^2k_{ss}.
\]
Using \(k_{ss}=-c\,k^3\), we find
\[
k_{ssss}=-6c\,k\,k_s^2+3c^2k^5.
\]
Therefore
\begin{align*}
\mathcal E[k]
&=
\big(-6c\,k\,k_s^2+3c^2k^5\big)+k^2(-c\,k^3)-\frac12 k\,k_s^2
\\
&=
-\Big(6c+\frac12\Big)k\,k_s^2+(3c^2-c)k^5.
\end{align*}
Substituting the first integral \(k_s^2=A-\frac c2 k^4\) gives
\begin{align*}
\mathcal E[k]
&=
-\Big(6c+\frac12\Big)k\Big(A-\frac c2 k^4\Big)+(3c^2-c)k^5
\\
&=
-\Big(6c+\frac12\Big)A\,k+\frac{3c(8c-1)}{4}k^5,
\end{align*}
which proves \eqref{eq:E_formula_appendix}.

Now suppose \(\mathcal E[k]=-\lambda k\) for some constant \(\lambda\). If \(c=0\), then
\(k_{ss}=0\), so \(k(s)=as+b\), and \eqref{eq:E_formula_appendix} reduces to
\[
\mathcal E[k]=-\frac{a^2}{2}\,k.
\]
This gives the first case.

Assume next that \(c\neq0\). Then \(k\) cannot be constant unless \(k\equiv0\), which is excluded.
Also, the zeros of a nontrivial solution are isolated, so there is an open subinterval on which
\(k\neq0\) and \(k\) is nonconstant. On that interval, dividing
\eqref{eq:E_formula_appendix} by \(k\) gives
\[
-\lambda
=
-\Big(6c+\frac12\Big)A+\frac{3c(8c-1)}{4}k^4.
\]
Since the left-hand side is constant and \(k\) is nonconstant, the coefficient of \(k^4\) must
vanish. Hence
\[
\frac{3c(8c-1)}{4}=0.
\]
Because \(c\neq0\), this implies \(c=\frac18\).

When \(c=\frac18\), \eqref{eq:E_formula_appendix} becomes
\[
\mathcal E[k]=-\frac54 A\,k.
\]
If \(A=0\), then
\[
k_s^2+\frac1{16}k^4=0,
\]
which forces \(k\equiv0\), contradicting the nontriviality assumption. Thus \(A>0\), and so
\(\lambda=\frac54 A>0\).

Finally, for
\[
k(s)=\frac{\alpha}{\sqrt c}\text{cn}(\alpha s+\beta;\tfrac12),
\]
we may evaluate the first integral at a point where \(\text{cn}(\alpha s+\beta;\tfrac12)=1\) and
\(k_s=0\), obtaining
\[
A=\frac c2\Big(\frac{\alpha}{\sqrt c}\Big)^4=\frac{\alpha^4}{2c}.
\]
For \(c=\frac18\), this gives \(A=4\alpha^4\), and hence
\[
\lambda=\frac54 A=5\alpha^4.
\]
This completes the proof.
\end{proof}

\begin{figure}[t]
    \centering
    \includegraphics[width=0.32\textwidth]{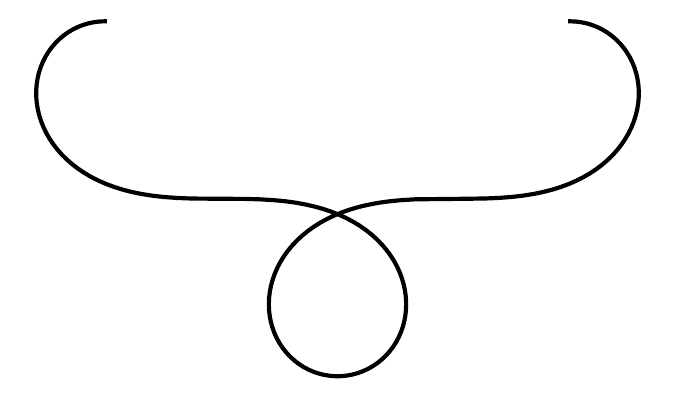}
    \caption{The open curve obtained by integrating the tangent field
    \(\gamma_s=(\cos\theta,\sin\theta)\) associated with the explicit solution of
    \(k_{ss}+\frac18 k^3=0\), namely
    \(
        k(s)=2\sqrt2\,\text{cn}\!\left(s;\tfrac12\right), 
        \theta(s)=4\arcsin\!\left(\frac{1}{\sqrt2}\text{sn}\!\left(s;\tfrac12\right)\right),
    \)
    over one \(4K(\tfrac12)\)-interval.}
    \label{fig:awesome_curve}
\end{figure}

None of the super-lemniscates identified in \cite[Theorem 1.1]{MW26} satisfy \(\mathcal E[k]=-\lambda k\) with \(\lambda>0\).
This is because, due to closure, they exist only for
\(c=0\) or \(c=c_j=\frac{2}{(4j-1)^2}\), \(j\ge1\).
By Lemma \ref{lem:overlap_with_E_equation}, if the same curvature also satisfies
\(\mathcal E[k]=-\lambda k\), then necessarily \(c=0\) or \(c=\frac18\).
Since
\(
\frac18\neq \frac{2}{(4j-1)^2}
\)
for every $j\in\N$, the case \(c=\frac18\) is impossible for closed stationary curves.

Therefore \(c=0\). 
However then \(\gamma\) is then an \(\omega\)-circle, so
\(k\) is constant. Hence \(
\mathcal E[k]\equiv0,
\)
implying \(-\lambda k=0\). Since \(k\not\equiv0\) for a closed immersed circle, we conclude \(\lambda=0\), which is again impossible.

\begin{remark}
Lemma \ref{lem:overlap_with_E_equation} shows that there is nevertheless a genuine overlap in the
\emph{open-curve} setting: every nontrivial local solution of
\[
k_{ss}+\frac18 k^3=0
\]
also satisfies
\[
\mathcal E[k]=-\lambda k
\]
for some \(\lambda>0\). In particular, the explicit Jacobi-elliptic family
\[
k(s)=2\sqrt2\,\alpha\,\text{cn}(\alpha s+\beta;\tfrac12)
\]
solves \(\mathcal E[k]=-5\alpha^4 k\).
We present a picture in Figure \ref{fig:awesome_curve}.
\end{remark}

\section{A principle of symmetric criticality}
\label{sec:symcrit}

For comparison, we record the following version of Palais' principle of symmetric criticality \cite{P79}.

\begin{lemma}[Finite-group averaging / symmetric criticality for finite groups]\label{lem:finite-group-averaging}
Let $J$ be a $C^{1}$ geometric functional on smooth closed immersions
$\Gamma:\R/L\Z\to\R^{2}$ which is invariant under
\emph{(i)} rigid motions of $\R^{2}$ and \emph{(ii)} reparametrisations of $\R/L\Z$
(in particular, under arclength-preserving translations and reversals of the parameter).
Let $G$ be a finite group of ``symmetries'' of a given immersion $\Gamma$, in the following sense:

\smallskip
\noindent For each $g\in G$ we are given
\begin{itemize}[leftmargin=2.0em,itemsep=2pt,topsep=2pt]
\item a rigid motion $\mathsf{A}_{g}:\R^{2}\to\R^{2}$, and
\item an arclength isometry $\sigma_{g}:\R/L\Z\to\R/L\Z$ (translation or reversal),
\end{itemize}
such that
\begin{equation}\label{eq:G-invariance-curve}
\Gamma \;=\; \mathsf{A}_{g}\circ \Gamma \circ \sigma_{g}
\qquad\text{for every }g\in G.
\end{equation}
(Equivalently, $\Gamma$ is a fixed point of the induced action $g\cdot\Gamma:=\mathsf{A}_{g}\circ\Gamma\circ\sigma_{g}$.)

Define the induced action of $G$ on smooth variation vector fields $V$ along $\Gamma$ by
\begin{equation}\label{eq:G-action-variation}
(g\cdot V)(s):=D\mathsf{A}_{g}\,V(\sigma_{g}(s)),
\qquad s\in\R/L\Z,
\end{equation}
where $D\mathsf{A}_{g}$ is the constant orthogonal matrix associated to the rigid motion $\mathsf{A}_{g}$
(translations do not affect $D\mathsf{A}_{g}$).

Assume that $\Gamma$ is stationary for $J$ with respect to all \emph{$G$-invariant} variations, i.e.
\begin{equation}\label{eq:stationary-on-fixed}
\delta J[\Gamma](W)=0
\qquad\text{for every smooth field }W\text{ along }\Gamma\text{ satisfying }g\cdot W=W\ \forall g\in G.
\end{equation}
Then $\Gamma$ is stationary for $J$ with respect to \emph{arbitrary} smooth variations:
\[
\delta J[\Gamma](V)=0\qquad\text{for every smooth field }V\text{ along }\Gamma.
\]
\end{lemma}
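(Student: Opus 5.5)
The plan is the standard averaging argument. Given an arbitrary smooth field $V$ along $\Gamma$, we form the $G$-average
\[
\overline V:=\frac{1}{|G|}\sum_{g\in G} g\cdot V .
\]
We then show three things: that $\overline V$ is $G$-invariant, that $\delta J[\Gamma](g\cdot V)=\delta J[\Gamma](V)$ for every $g$, and that, by linearity of the first variation, $\delta J[\Gamma](V)=\delta J[\Gamma](\overline V)=0$ by \eqref{eq:stationary-on-fixed}.

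\textbf{Step 1: the action is a group action.} We check that \eqref{eq:G-action-variation} satisfies $h\cdot(g\cdot V)=(hg)\cdot V$, possibly up to replacing $g$ by $g^{-1}$ depending on conventions. For this we need $\mathsf A_{hg}=\mathsf A_h\circ\mathsf A_g$ and $\sigma_{hg}=\sigma_g\circ\sigma_h$ to be compatible with \eqref{eq:G-invariance-curve}. If the data are only given as a set of pairs satisfying \eqref{eq:G-invariance-curve}, we instead note the following. The induced maps $V\mapsto g\cdot V$ form a finite set closed under composition: composing two symmetries of $\Gamma$ again fixes $\Gamma$. This set is therefore a finite group of linear maps, which is all the averaging requires. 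Invariance of $\overline V$ then follows by reindexing the sum.

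\textbf{Step 2: equivariance of the first variation.} Let $\Gamma_\varepsilon=\Gamma+\varepsilon V$. By rigid-motion and reparametrisation invariance,
\[
J[\mathsf A_g\circ\Gamma_\varepsilon\circ\sigma_g]=J[\Gamma_\varepsilon].
\]
On the other hand, since $\mathsf A_g$ is affine with linear part $D\mathsf A_g$,
\[
\mathsf A_g\circ(\Gamma+\varepsilon V)\circ\sigma_g=\mathsf A_g\circ\Gamma\circ\sigma_g+\varepsilon\, D\mathsf A_g V\circ\sigma_g=\Gamma+\varepsilon\, g\cdot V,
\]
where the last equality uses \eqref{eq:G-invariance-curve}. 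Differentiating at $\varepsilon=0$ gives $\delta J[\Gamma](g\cdot V)=\delta J[\Gamma](V)$. This uses only that $J$ is $C^1$ along such smooth one-parameter families.

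\textbf{Step 3: conclusion.} Linearity of $W\mapsto\delta J[\Gamma](W)$ gives
\[
\delta J[\Gamma](\overline V)=\frac1{|G|}\sum_{g\in G}\delta J[\Gamma](g\cdot V)=\delta J[\Gamma](V),
\]
and the left-hand side vanishes by \eqref{eq:stationary-on-fixed}.

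The main obstacle is only bookkeeping in Step 1: making the action well defined and genuinely a group action when $\sigma_g$ includes reversals. A reversal changes orientation but is still an admissible reparametrisation for a geometric functional. We would handle this by defining the action via the identity in Step 2 and checking closure under composition directly.
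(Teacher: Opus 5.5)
Your proposal is correct and is essentially the paper's own argument: you obtain the equivariance $\delta J[\Gamma](g\cdot V)=\delta J[\Gamma](V)$ by differentiating $J[\mathsf A_g\circ\Gamma_\varepsilon\circ\sigma_g]=J[\Gamma_\varepsilon]$, and then conclude by $G$-averaging and linearity of the first variation. The differences are cosmetic. You use the explicit variation $\Gamma+\varepsilon V$ rather than an arbitrary one, and you state the conventions $\mathsf A_{hg}=\mathsf A_h\circ\mathsf A_g$, $\sigma_{hg}=\sigma_g\circ\sigma_h$ that the paper uses tacitly when it writes $h\cdot(g\cdot V)=(hg)\cdot V$. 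Your fallback for an arbitrary set of pairs would have to assume closure under composition, since ``composites fix $\Gamma$'' does not imply it, but that case is not needed.
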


\begin{proof}
\textbf{Step 1: invariance of the first variation under the group action.}
Fix $g\in G$ and a smooth variation field $V$ along $\Gamma$.
Let $\Gamma_{\varepsilon}$ be any smooth variation with $\Gamma_{0}=\Gamma$ and
$\partial_{\varepsilon}\Gamma_{\varepsilon}|_{\varepsilon=0}=V$.
Consider the $g$-transformed family
\[
\widetilde\Gamma_{\varepsilon}
:=\mathsf{A}_{g}\circ \Gamma_{\varepsilon}\circ \sigma_{g}.
\]
By invariance of $J$ under rigid motions and reparametrisations,
\[
J[\widetilde\Gamma_{\varepsilon}] = J[\Gamma_{\varepsilon}]
\qquad\text{for all sufficiently small }\varepsilon.
\]
Differentiating both sides of the above and evaluating at $\varepsilon=0$ yields
\begin{equation*}
\delta J[\widetilde\Gamma_{0}]\!\left(\partial_{\varepsilon}\widetilde\Gamma_{\varepsilon}\big|_{\varepsilon=0}\right)
=
\delta J[\Gamma]\!\left(\partial_{\varepsilon}\Gamma_{\varepsilon}\big|_{\varepsilon=0}\right)
\end{equation*}
and then by definition of $\partial_\varepsilon\Gamma_\varepsilon|_{\varepsilon=0}$ we find
\begin{equation}
\label{eq:firstvar-invariance}
\delta J[\widetilde\Gamma_{0}]\!\left(\partial_{\varepsilon}\widetilde\Gamma_{\varepsilon}\big|_{\varepsilon=0}\right)
= \delta J[\Gamma](V).
\end{equation}
Using \eqref{eq:G-invariance-curve}, we have $\widetilde\Gamma_{0}=\Gamma$.
Moreover, differentiating $\widetilde\Gamma_{\varepsilon}=\mathsf{A}_{g}\circ\Gamma_{\varepsilon}\circ\sigma_{g}$
at $\varepsilon=0$ gives
\[
\partial_{\varepsilon}\widetilde\Gamma_{\varepsilon}\big|_{\varepsilon=0}(s)
=
D\mathsf{A}_{g}\,\partial_{\varepsilon}\Gamma_{\varepsilon}\big|_{\varepsilon=0}(\sigma_{g}(s))
=
D\mathsf{A}_{g}\,V(\sigma_{g}(s))
=
(g\cdot V)(s),
\]
i.e. $\partial_{\varepsilon}\widetilde\Gamma_{\varepsilon}|_{0}=g\cdot V$ in the sense of \eqref{eq:G-action-variation}.
Substituting into \eqref{eq:firstvar-invariance} yields the key identity
\begin{equation}\label{eq:firstvar-invariant-under-G}
\delta J[\Gamma](g\cdot V)=\delta J[\Gamma](V)
\qquad\text{for all }g\in G\text{ and all }V.
\end{equation}

\medskip
\textbf{Step 2: average an arbitrary variation to a symmetric one without changing the first variation.}
Given an arbitrary smooth field $V$ along $\Gamma$, define its $G$-average by
\begin{equation}\label{eq:G-average}
\overline V
:=\frac{1}{|G|}\sum_{g\in G} g\cdot V.
\end{equation}
By construction and the group property, $\overline V$ is $G$-invariant:
for any $h\in G$,
\[
h\cdot\overline V
=\frac{1}{|G|}\sum_{g\in G} h\cdot(g\cdot V)
=\frac{1}{|G|}\sum_{g\in G} (hg)\cdot V
=\frac{1}{|G|}\sum_{g\in G} g\cdot V
=\overline V.
\]
Since the first variation is linear in the variation field and \eqref{eq:firstvar-invariant-under-G} holds, we have
\[
\delta J[\Gamma](\overline V)
=
\frac{1}{|G|}\sum_{g\in G}\delta J[\Gamma](g\cdot V)
=
\frac{1}{|G|}\sum_{g\in G}\delta J[\Gamma](V)
=
\delta J[\Gamma](V).
\]

\medskip
\textbf{Step 3: conclude.}
Because $\overline V$ is $G$-invariant, assumption \eqref{eq:stationary-on-fixed} gives
$\delta J[\Gamma](\overline V)=0$, hence $\delta J[\Gamma](V)=0$ for arbitrary $V$.
\end{proof}

\begin{remark}[The symmetry group for $\Gamma_n$]\label{rem:symmetry-group-Gn}
Parameterise $\Gamma_n$ by arclength on $\R/(4\ell_n)\Z$ and set $B_n:=\Gamma_n(\ell_n)\in\{y=0\}$.
 The construction gives two involutive symmetries:
 \begin{itemize} \item point reflection about $B_n$ with parameter reversal,
 \[
 \mathsf{A}_{P}(x):=2B_n-x,\qquad \sigma_{P}(s):=2\ell_n-s \ \ (\mathrm{mod}\ 4\ell_n),
 \]
 \item reflection across the $x$-axis with parameter reversal,
 \[
 \mathsf{A}_{R}(x_1,x_2):=(x_1,-x_2),\qquad \sigma_{R}(s):=4\ell_n-s \ \ (\mathrm{mod}\ 4\ell_n).
 \]
 \end{itemize}
 These generate a finite group $G_n$ (of order at most $4$) satisfying \eqref{eq:G-invariance-curve}.
 \end{remark}

\section{Numerical generation of figures}\label{sec:numerical-visualisation}

This appendix records the numerical procedure used to generate the figures in the paper. 
The figures are purely illustrative and not used in any proof.

\subsection*{C.1. Selected lemniscate and butterfly galleries}

The numerical curves in Figures~\ref{fig:selected-first-pair},
\ref{fig:selected-lemniscate-type-candidates}, and~\ref{fig:selected-butterfly-type-candidates}, and the diagnostics in
Table~\ref{tab:selected-critical-point-data}, were generated by the same doubled-angle direct minimisation used in the
proof, together with the scalar selection criterion supplied by the midpoint balance defect.  The computations are
illustrative only; the existence proof uses the compactness, window-selection, and transversality arguments in the main
text.

\smallskip
\noindent\textbf{Fixed-turning minimisation.}
For a prescribed lifted quarter-arc turning value \(\Phi\), and fixed \(\lambda>0\), we first compute a numerical
surrogate of the fixed-turning minimiser
\[
\inf_{(\ell,\theta)\in\mathcal A^{(2)}(\Phi)}J^{(2)}_\lambda(\ell,\theta),
\]
where \(\mathcal A^{(2)}(\Phi)\) and \(J^{(2)}_\lambda\) are defined in
\eqref{eq:Adouble} and~\eqref{eq:Jdouble}.  Passing to the fixed half-domain
\(t=s/\ell\in[0,1]\), we write
\[
        \phi(t)=\theta(\ell t),
        \qquad
        \phi(0)=\frac\pi2,
        \qquad
        \phi(1)=\frac\pi2+\Phi,
        \qquad
        \phi'(1)=0,
\]
and impose the fixed-domain closure constraint
\[
        \int_0^1\sin\phi(t)\,dt=0.
\]
The length variable is optimised explicitly from the fixed-domain bending value: if
\[
        A(\Phi,c):=\int_0^1(\phi''(t))^2\,dt,
\]
then the doubled energy is
\[
        J^{(2)}_\lambda(\ell,\phi)=\ell^{-3}A(\Phi,c)+2\lambda\ell,
\]
so the optimal numerical length is
\[
        \ell=\left(\frac{3A(\Phi,c)}{2\lambda}\right)^{1/4}.
\]
Thus the finite-dimensional optimisation is carried out over the Galerkin coefficients, subject only to the scalar
closure constraint; the length is updated from the virial/scale condition.

\smallskip
\noindent\textbf{Galerkin ansatz.}
We use the equality-profile cubic
\[
        H(t)=\frac32t-\frac12t^3,
\]
and write
\[
        \phi(t)=\frac\pi2+\Phi H(t)+\sum_{j=0}^{K-1}c_j\psi_j(t),
        \qquad 0\le t\le1.
\]
In the computations reported here,
\[
        \psi_j(t)=t(1-t)^2T_j(2t-1),
\]
where \(T_j\) denotes the Chebyshev polynomial of degree \(j\).  Hence
\[
        \psi_j(0)=\psi_j(1)=\psi_j'(1)=0,
\]
so the endpoint angle conditions and the midpoint seam condition \(\phi'(1)=0\) are imposed exactly for every
coefficient vector \(c\).  The basis does \emph{not} impose the third-derivative condition at the midpoint.  Indeed
\(\psi_j'''(1)\) is generally nonzero, and therefore the midpoint balance defect below is a genuine numerical diagnostic,
not a built-in artefact of the ansatz.

The half-angle is extended to the doubled interval by even reflection,
\[
        \phi(1+t)=\phi(1-t),\qquad 0\le t\le1,
\]
which gives the point-symmetric doubled arc after reconstruction.  All integrals are evaluated by high-order
Gauss--Legendre quadrature.  The displayed run used \(\lambda=1\), \(K=48\), and \(700\) quadrature nodes.

\smallskip
\noindent\textbf{The midpoint balance defect and selected turning values.}
For a fixed-\(\Phi\) minimiser the midpoint angle is constrained, so the doubled solution may carry a Dirac multiplier
at the midpoint.  Numerically we measure the associated balance defect by
\[
        Q(\Phi):=\theta_{sss}(\ell^-)=\ell^{-3}\phi'''(1^-).
\]
The balanced values are the roots of \(Q\).  Equivalently, along a smooth branch of fixed-turning minimisers, the
envelope identity gives
\[
        M_\lambda'(\Phi)=-2Q(\Phi),
\]
so the roots of \(Q\) are precisely the stationary points of the fixed-turning value function.  This is the
numerical version of the transversality mechanism used in the proof: when \(Q=0\), the midpoint Dirac mass vanishes
and the even midpoint reflection is smooth to third order, hence smooth by the ODE.

The numerical search proceeds in increasing \(\Phi\).  We first solve the fixed-turning constrained minimisation on an
ordered grid of \(\Phi/\pi\)-values, warm-starting each solve from nearby solutions.  We then record all adjacent grid
intervals on which \(Q\) changes sign.  Each sign-change interval is refined by a bracketed root search; at every trial
value of \(\Phi\), the fixed-turning minimisation problem is solved again, again using warm starts from the bracket.
A candidate is accepted when the refined value satisfies small closure residual and small \(|Q|\).  In the run reported
in the figures, the scan step was \(0.04\) in \(\Phi/\pi\), and the acceptance thresholds were
\[
        |Q|\le 5\times10^{-5},
        \qquad
        \left|\int_0^1\sin\phi(t)\,dt\right|\le 5\times10^{-7}.
\]

\smallskip
\noindent\textbf{Reconstruction, closing, and diagnostics.}
For each accepted selected value we reconstruct the doubled arc by integrating the unit tangent,
\[
        \gamma(s)=\int_0^s(\cos\theta(\sigma),\sin\theta(\sigma))\,d\sigma,
\]
and then close it by the same \(x\)-axis reflection with parameter reversal used analytically in
\eqref{eq:doubled-xaxis-close}.  The total length and energy reported in
Table~\ref{tab:selected-critical-point-data} are computed from the closed curve,
\[
        L[\Gamma]=4\ell,
        \qquad
        J_1[\Gamma]=\frac12\int_\Gamma k_s^2\,ds+L[\Gamma].
\]
We also monitor the fixed-domain closure residual, the midpoint balance defect \(Q\), the endpoint natural residual
\(\theta_{ss}(0)\), the midpoint seam residual \(\theta_s(\ell)\), and the virial residual.  These diagnostics are used
only to decide which numerical candidates are visually and quantitatively reliable enough to display.

\smallskip
\noindent\textbf{Curation and the two observed families.}
The root search produces two visually stable numerical subfamilies.  We refer to them as the
\emph{lemniscate-type} and \emph{butterfly-type} branches.  In the ordered list of accepted candidates, the first
candidate is the least-energy retained lemniscate-type curve and the second is the least-energy retained
butterfly-type curve.  The two types then alternate through candidate \(39\).  Candidate \(40\), although it occurs
where the alternating pattern would suggest a butterfly-type curve, is visually lemniscate-type.  Candidates \(41\),
\(42\), and \(43\) are classified as butterfly-type, lemniscate-type, and butterfly-type respectively.

Thus the displayed groups are
\[
        \mathcal L=\{1,3,5,\ldots,39,40,42\}
\]
for the lemniscate-type gallery, and
\[
        \mathcal B=\{2,4,6,\ldots,38,41,43\}
\]
for the butterfly-type gallery.

\subsection*{C.2. Circular waves: numerical shooting and tiling}

For the circular-wave plots in Figure~\ref{fig:circular-waves} we numerically solve the shooting boundary-value problem
for the curvature-jet system \eqref{eq:shooting-cwN} with endpoint conditions
\eqref{eq:IFT-endpoint-cwN} for a small branch parameter $\varepsilon=q(\ell)$, as in
Theorem~\ref{thm:IFT-cwN}.  The resulting half-arc is extended to the shooting cell $[0,2\ell]$ by reflection
(Lemma~\ref{lem:period-extension-cwN}) and then tiled using the translation-per-period relation
\eqref{eq:Gamma-translation-cwN}.  Finally, we rescale by dilation to normalise the stationary parameter to $\lambda=1$
as in Remark~\ref{rem:scale-cwN}, and we report shooting-cell diagnostics (energy and length) as in
Table~\ref{tab:circular-waves}.

\bibliographystyle{alpha}
\bibliography{refs_okabe_revised}

\begin{thebibliography}{AMWW20}

\bibitem[AMWW20]{AMWW20}
Ben Andrews, James McCoy, Glen Wheeler, and Valentina-Mira Wheeler.
\newblock Closed ideal planar curves.
\newblock {\em Geometry \& Topology}, 24(2):1019--1049, September 2020.

\bibitem[AW26]{AW26}
Ben Andrews and Glen Wheeler.
\newblock Jellyfish exist.
\newblock arXiv:2601.21227, January 2026.

\bibitem[HT10]{HararyTal2010EulerSpirals}
Gur Harary and Ayellet Tal.
\newblock {3D} {Euler} spirals for {3D} curve completion.
\newblock In {\em Proceedings of the Twenty-Sixth Annual Symposium on
  Computational Geometry}, SoCG '10, pages 393--402, New York, NY, USA, June
  2010. Association for Computing Machinery.

\bibitem[Lev08]{Levien2008EulerSpiralHistory}
Raph Levien.
\newblock The {Euler} spiral: a mathematical history.
\newblock Technical Report UCB/EECS-2008-111, EECS Department, University of
  California, Berkeley, September 2008.

\bibitem[Lev09]{Levien2009FromSpiralToSpline}
Raphael~Linus Levien.
\newblock {\em From Spiral to Spline: Optimal Techniques in Interactive Curve
  Design}.
\newblock PhD thesis, EECS Department, University of California, Berkeley,
  December 2009.

\bibitem[Mor92]{Moreton1992MVC}
Henry~Packard Moreton.
\newblock {\em Minimum Curvature Variation Curves, Networks, and Surfaces for
  Fair Free-Form Shape Design}.
\newblock PhD thesis, University of California, Berkeley, Berkeley, California,
  1992.
\newblock Also issued as Technical Report UCB/CSD-93-732, March 1993.

\bibitem[MS92]{MoretonSequin1992FairSurface}
Henry~P. Moreton and Carlo~H. S{\'e}quin.
\newblock Functional optimization for fair surface design.
\newblock In {\em Proceedings of the 19th Annual Conference on Computer
  Graphics and Interactive Techniques}, SIGGRAPH '92, pages 167--176, New York,
  NY, USA, July 1992. Association for Computing Machinery.

\bibitem[MW26a]{McW26}
James McCoy and Glen Wheeler.
\newblock On the generalised ideal flow of closed planar curves.
\newblock arXiv:2605.09379, May 2026.

\bibitem[MW26b]{MW26}
Tatsuya Miura and Glen Wheeler.
\newblock Scale-critical curve diffusion flows.
\newblock arXiv:2604.01716, April 2026.

\bibitem[MWW20]{McCoyWheelerWu2020SixthOrderBC}
James McCoy, Glen Wheeler, and Yuhan Wu.
\newblock A sixth order flow of plane curves with boundary conditions.
\newblock {\em Tohoku Mathematical Journal, Second Series}, 72(3):379--393,
  September 2020.

\bibitem[MWW22a]{McCoyWheelerWu2022HighOrderNeumann}
James McCoy, Glen Wheeler, and Yuhan Wu.
\newblock High order curvature flows of plane curves with generalised {Neumann}
  boundary conditions.
\newblock {\em Advances in Calculus of Variations}, 15(3):497--513, 2022.
\newblock First published online 5 February 2021.

\bibitem[MWW22b]{MWW}
James~A. McCoy, Glen~E. Wheeler, and Yuhan Wu.
\newblock A length-constrained ideal curve flow.
\newblock {\em The Quarterly Journal of Mathematics}, 73(2):685--699, June
  2022.
\newblock First published online 15 November 2021.

\bibitem[Ohl85]{O85}
S.~C. Ohlin.
\newblock {2-D and 3-D Curve Interpolation by Consistent Splines}.
\newblock Internal report, IBM Nederland N.V., CAD/CAM Systems Support Group,
  Amsterdam, The Netherlands, March 1985.

\bibitem[Ohl87]{O87}
S.~C. Ohlin.
\newblock Splines for engineers.
\newblock In Guy Mar{\'e}chal, editor, {\em Eurographics '87: Proceedings of
  the European Computer Graphics Conference and Exhibition}, pages 555--565,
  Amsterdam, The Netherlands, August 1987. Eurographics Association,
  North-Holland.

\bibitem[OY26]{OY26}
Shinya Okabe and Hikaru Yamaguchi.
\newblock The ideal flow for planar closed curves with local length constraint.
\newblock {\em Advances in Nonlinear Analysis}, 15(1):20250156, June 2026.

\bibitem[Pal79]{P79}
Richard~S. Palais.
\newblock The principle of symmetric criticality.
\newblock {\em Communications in Mathematical Physics}, 69(1):19--30, October
  1979.

\bibitem[Wu21]{Wu2021ShortTimeExistence}
Yuhan Wu.
\newblock Short time existence for higher order curvature flows with and
  without boundary conditions.
\newblock In David~R. Wood, Jan de~Gier, Cheryl~E. Praeger, and Terence Tao,
  editors, {\em 2019--20 {MATRIX} Annals}, volume~4 of {\em {MATRIX} Book
  Series}, pages 773--783. Springer, Cham, 2021.

\end{thebibliography}

\end{document}